\numberwithin{equation}{section}
\newtheorem{thm}{Theorem}[section]
\newtheorem{prop}[thm]{Proposition}
\newtheorem{lem}[thm]{Lemma}
\newtheorem{cor}[thm]{Corollary}
\newtheorem{rem}[thm]{Remark}
\newtheorem{defn}[thm]{Definition}
\theoremstyle{remark}
\newtheorem{ass}{Assumption}
\newenvironment{manualass}[1]{%
  \manualassinner
}{\endmanualassinner}
\newcommand{\R}{{\mathbb R}}
\newcommand{\Z}{{\mathbb Z}}
\newcommand{\N}{{\mathbb N}}
\newcommand{\C}{{\mathbb C}}
\newcommand{\LR}[1]{{\langle {#1} \rangle }}
\newcommand{\cross}{\times}
\newcommand{\e}{\varepsilon}
\newcommand{\F}{\mathcal{F}}
\newcommand{\ha}{\widehat}
\newcommand{\supp}{\operatorname{supp}}
\title[Subcritical well-posedness results for the ZK equation]{Subcritical well-posedness results for the Zakharov-Kuznetsov equation in dimension three and higher}
\author[S.~Herr]{Sebastian Herr}
\address[Sebastian Herr]{Universit\"{a}t Bielefeld
Fakult\"{a}t f\"{u}r Mathematik
Postfach 10 01 31
33501 Bielefeld
Germany}
\email[Sebastian Herr]{herr@math.uni-bielefeld.de}
\author[S.  Kinoshita]{Shinya Kinoshita}
\address[Shinya Kinoshita]{Universit\"{a}t Bielefeld
Fakult\"{a}t f\"{u}r Mathematik
Postfach 10 01 31
33501 Bielefeld
Germany}
\email[Shinya Kinoshita]{kinoshita@math.uni-bielefeld.de}
\subjclass[2010]{35Q55}
\keywords{well-posedness, Cauchy problem, Zakharov-Kuznetsov, bilinear estimate, nonlinear Loomis-Whitney inequality}
\begin{document}
\begin{abstract}
The Zakharov-Kuznetsov equation in space dimension $d\geq 3$ is
considered. It is proved that the Cauchy problem is locally well-posed
in $H^s(\R^d)$ in the full subcritical range $s>(d-4)/2$, which is
optimal up to the endpoint. As a corollary, global well-posedness in $L^2(\R^3)$ and, under a smallness condition, in $H^1(\R^4)$, follow.
\end{abstract}
\maketitle

\section{Introduction}
We consider the Zakharov-Kuznetsov equation with the quadratic nonlinearity
\begin{equation}\label{eq:ZK}
  \begin{split}
  \partial_t u+\partial_{x}\Delta u={}&\partial_{x}u^2 \text{ in
  }(-T,T) \times \R^d\\
  u(0,\cdot)={}&u_0 \in H^s(\R^d)
\end{split}
\end{equation}
where $u=u(t,x,\mathbf{y})$ is real-valued and
$\Delta$ denotes the Laplacian with respect to $(x,\mathbf{y})\in
\R\times \R^{d-1}$ .
The equation \eqref{eq:ZK} arises as an asymptotic model wave propagation in a magnetized
plasma \cite{B06,SS99}. It was introduced in \cite{ZK74} in $d=2,3$, see also \cite{LS82} for a formal derivation.
More recently, it was rigorously derived from the Euler-Poisson system as a long-wave and small-amplitude limit, see
\cite[Section 10.3.2.6]{LLS13}. The Zakharov-Kuznetsov equation \eqref{eq:ZK} generalizes the
Korteweg-de Vries equation (which is the case $d=1$). In particular,
it has solitary wave solutions. Recently, their asymptotic stability has been proven in \cite{CMPS16}.

Real-valued solutions of \eqref{eq:ZK} conserve the $L^2$-norm and the energy
\[
\frac12 \int_{\R^d}|\nabla_{x,\mathbf{y}}u(t,x,\mathbf{y})|^2dxd\mathbf{y} +\frac13\int_{\R^d}u(t,x,\mathbf{y})^3 dxd\mathbf{y}.
\]

If $u$ is a solution, then for any $\lambda>0$ the function
\[
u_\lambda(t,x,\mathbf{y})=\lambda^{2}u(\lambda^3 t, \lambda x,\lambda \mathbf{y})
\]
also solves \eqref{eq:ZK}. This implies that $s_c:=(d-4)/2$ is the critical
Sobolev regularity for \eqref{eq:ZK} in the sense that the corresponding (homogeneous) Sobolev norm is invariant under the rescaling described above.

In this paper, we will focus on the case of spatial dimensions $d\geq
3$ and prove local well-posedness of the Cauchy problem associated with \eqref{eq:ZK} in the full sub-critical
range. Let $H^s(\R^d)$ denote the Sobolev space of tempered distributions on $\R^d$ all derivatives up to order $s$ in $L^2(\R^d)$, see Subsection \ref{subsec:not} for a precise definition.

\begin{thm}\label{thm:main-zk}
 Let $d\geq 3$. For any $s > (d-4)/2$, the Cauchy problem for \eqref{eq:ZK} is locally
 well-posed in $H^s(\R^d)$.
\end{thm}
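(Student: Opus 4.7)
\emph{Plan.}
My approach will be to establish local well-posedness via Picard iteration on the Duhamel formula in a frequency-adapted function space, reducing the problem to one master bilinear estimate for the quadratic nonlinearity $\partial_x(uv)$.

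First, I would set up function spaces $F^s$ and $N^s$ built from $U^2/V^2$ atomic and variation spaces adapted to the linear ZK phase $\omega(\xi_1,\eta)=\xi_1|\xi|^2$ (with Fourier variables $(\xi_1,\eta)\in\R\times\R^{d-1}$ dual to $(x,\mathbf{y})$), assembled dyadically in frequency with $\ell^2$-summation weighted by $\langle N\rangle^s$. These spaces afford sharp control at the scaling critical regularity $s_c=(d-4)/2$, which is needed to reach the full subcritical range. Standard linear and energy estimates in such spaces then reduce the Cauchy problem to the master bilinear bound
\[
\|\partial_x(uv)\|_{N^s}\lesssim\|u\|_{F^s}\|v\|_{F^s}.
\]
After Littlewood--Paley decomposition this splits into dyadic cases indexed by the input frequencies $N_1,N_2$ and the output frequency $N_3$; the non-trivial cases are the various high--high interactions.

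The heart of the proof will be a sharp bilinear $L^2_{t,x}$ estimate for products of two free solutions with Fourier support in disjoint dyadic blocks on the characteristic hypersurface $\{\tau=\xi_1|\xi|^2\}$. By Plancherel this is a convolution bound on surface-supported measures, and squaring the bilinear form yields an equivalent $(d+1)$-linear expression. I would apply the nonlinear Loomis--Whitney inequality (in the spirit of Bennett--Carbery--Tao and Bejenaru--Herr--Tataru) to this form, using as geometric input a lower bound for the absolute value of the determinant of the $(d+1)$ spacetime normals $(1,-\nabla\omega(\xi^{(i)}))$, where $\nabla\omega(\xi_1,\eta)=(3\xi_1^2+|\eta|^2,\,2\xi_1\eta)$. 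This determinant is essentially a multilinear version of the ZK resonance function, and a favorable lower bound, combined with the derivative gain from $\partial_x$ and the subcritical gap $s-(d-4)/2>0$, will close the estimate by $\ell^2$-summation of the dyadic pieces.

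The main obstacle is the geometric degeneration of the ZK characteristic surface when $|\xi_1|\ll|\eta|$: in that regime the spacetime normals cluster near the hyperplane orthogonal to the $\xi_1$-direction and the naive transversality determinant degenerates. I would handle this by an additional angular dyadic decomposition of frequency space according to the size of $|\xi_1|/|\eta|$, and recover the loss either by high-modulation gains (exploiting the lower bound on the resonance function enforced by the degeneration, together with the $V^2\hookrightarrow L^\infty_t L^2_x$ control on pieces of fixed modulation) or by improved bilinear Strichartz estimates in the almost-degenerate sectors, where ZK still retains full dispersion in the transverse directions. Once these geometric estimates are assembled, the remainder--contraction mapping on a ball in $F^s$--is routine.
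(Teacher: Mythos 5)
Your high-level plan (iteration, a single bilinear estimate, transversality via nonlinear Loomis--Whitney, extra decompositions in the degenerate sector) is in the same spirit as the paper, but two ingredients do not hold up as stated, and one of them is the crux of the proof.

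First, the multilinear setup is not well-formed. Dualizing the bilinear estimate produces a \emph{trilinear} convolution form over three codimension-one hypersurfaces in $\R^{d+1}$ (three parabola-like characteristic surfaces constrained by $(\tau,\xi,\boldsymbol{\eta})=(\tau_1+\tau_2,\xi_1+\xi_2,\boldsymbol{\eta}_1+\boldsymbol{\eta}_2)$). For $d\geq 3$ there are only three normals $\mathfrak n_i\in\R^{d+1}$, so ``the determinant of the $(d+1)$ spacetime normals'' has no meaning, and squaring a bilinear $L^2$ bound gives a four-fold integral, not a $(d+1)$-linear one. The nonlinear Loomis--Whitney inequality available (Bennett--Carbery--Wright, Bejenaru--Herr--Tataru, Prop.\ 2.3 of this paper) is for \emph{three surfaces in $\R^3$}. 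The key step that your proposal omits, and on which the whole high-dimensional argument rests, is the \emph{slicing/foliation reduction}: after an angular decomposition one arranges that the transverse variables $\boldsymbol{\eta}'\in\R^{d-2}$ are confined to small boxes, freezes them, and applies the $\R^3$ trilinear Loomis--Whitney inequality (with $\delta\sim A^{-1}$) to the resulting sliced surfaces in the remaining $(\tau,\xi,\eta)$-variables, then integrates back over $\boldsymbol{\eta}'$ using Cauchy--Schwarz and support sizes (this is how one collects the $A^{-(d-3)/2}N_1^{(d-3)/2}$ gain in, e.g., Propositions \ref{prop3.4} and \ref{prop3.7}). Without this dimension reduction, there is no version of Loomis--Whitney that applies directly to three codimension-one surfaces in $\R^{d+1}$.

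Second, two smaller points. (i) The $U^2/V^2$ machinery is not needed: the result is strictly subcritical ($s>(d-4)/2$), and the paper closes the contraction in ordinary $X^{s,b}$ spaces with $b>\tfrac12$, $b'\in(b-1,0)$; the claim that variation spaces are ``needed to reach the full subcritical range'' is inaccurate. (ii) The degenerate sector $\max(|\xi_1|,|\xi_2|)\ll N_1$ is not dispatched merely by a high-modulation or bilinear-Strichartz gain: the paper introduces a further dyadic parameter $\alpha$ for the size of $\max(|\xi_1|,|\xi_2|)/N_1$ and reruns the slicing-plus-Loomis--Whitney scheme with anisotropic tiles adapted to $\alpha$ (Sections \ref{sec:proof-bil-2}--\ref{sec:proof-bil-3}); the transversality determinant there degenerates like $\alpha^{-2}$ but is recovered by an additional rescaling of the surfaces. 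That refinement is a genuine piece of the argument rather than a routine remark.
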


Note that the energy-subcritical dimensions are $d\leq 5$, and the $L^2$-subcritical dimensions are $d\leq 3$.
From the conservation laws mentioned above and the Gagliardo-Nirenberg inequality we deduce

\begin{cor}\label{cor:gwp}
If $d=3$, the Cauchy problem for \eqref{eq:ZK} is globally
well-posed for real-valued initial data in $L^2(\R^3)$.
If $d=4$, the Cauchy problem for \eqref{eq:ZK} is globally well-posed
for real-valued initial data in $H^1(\R^4)$ with sufficiently small $L^2(\R^4)$-norm.
\end{cor}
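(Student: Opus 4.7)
The plan is to combine the subcritical local well-posedness statement of Theorem \ref{thm:main-zk} with the conservation laws stated in the introduction to produce global-in-time solutions. As is standard for contraction-mapping arguments in the subcritical range, the length of the local existence interval furnished by Theorem \ref{thm:main-zk} depends only on the $H^s$-norm of the initial datum. Consequently, any uniform-in-time a priori bound on $\|u(t)\|_{H^s}$ upgrades the local solution to a global one by iteration, and it suffices to produce such a bound in each case.

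For $d=3$ the critical exponent is $s_c=-1/2$, so $L^2(\R^3)$ is subcritical and Theorem \ref{thm:main-zk} applies at the level $s=0$. Since the equation preserves real-valued data, the conservation of the $L^2$-norm gives $\|u(t)\|_{L^2}=\|u_0\|_{L^2}$ throughout the lifespan of the solution. This is the a priori bound at exactly the regularity of the local theory, and global well-posedness follows.

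For $d=4$ one has $s_c=0$, so $H^1(\R^4)$ is subcritical and Theorem \ref{thm:main-zk} applies at $s=1$. Here I would combine the conservation of the $L^2$-norm with that of the energy
\[
E(u)=\tfrac12\int_{\R^4}|\nabla u|^2\,dx+\tfrac13\int_{\R^4}u^3\,dx.
\]
The Gagliardo-Nirenberg inequality on $\R^4$ (with scaling exponent $\theta=2/3$) yields $\|u\|_{L^3}^3\leq C\|u\|_{L^2}\|\nabla u\|_{L^2}^2$, and substituting this into the energy identity leads to
\[
\Bigl(\tfrac12-\tfrac{C}{3}\|u_0\|_{L^2}\Bigr)\|\nabla u(t)\|_{L^2}^2\leq E(u_0).
\]
Provided $\|u_0\|_{L^2}$ is small enough that the parenthesis on the left is positive, this gives a uniform bound on $\|\nabla u(t)\|_{L^2}$; combined with mass conservation it furnishes the uniform $H^1$-bound required to iterate the local theory.

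The main (essentially only) obstacle is the $d=4$ case: because the cubic term in the four-dimensional energy is exactly scale-critical relative to the kinetic term (the power $2$ on $\|\nabla u\|_{L^2}$ produced by Gagliardo-Nirenberg matches that of the kinetic energy), smallness of $\|u_0\|_{L^2}$ is unavoidable in this elementary argument to retain coercivity of $E$. In $d=3$ no such difficulty arises and no smallness is required.
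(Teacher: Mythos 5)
Your argument is correct and is precisely the standard globalization argument the paper invokes when it writes ``From the conservation laws mentioned above and the Gagliardo-Nirenberg inequality we deduce'' the corollary; the paper leaves the details implicit, and your filled-in computation (subcriticality of $L^2(\R^3)$, resp.\ $H^1(\R^4)$, plus $L^2$/energy conservation and the $\R^4$ Gagliardo--Nirenberg estimate $\|u\|_{L^3}^3\lesssim\|u\|_{L^2}\|\nabla u\|_{L^2}^2$ yielding coercivity for small mass) matches it exactly.
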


\subsection*{Previous results}\label{subsec:prev}

The Cauchy problem for the Zakharov-Kuznetsov equation and the
so-called generalized Zakharov-Kuznetsov equation
\[
\partial_t u+\partial_{x}\Delta u={}\partial_{x}u^{k+1}, \quad ( k \in
\N),
\]
have been studied extensively. In dimension $d=2$ and for the quadratic
nonlinearity ($k=1$),
 Faminsk\u{\i} \cite{Fa95} proved
global well-poseness in  $H^1(\R^2)$,
Linares and Pastor \cite{LP09} proved local well-posedness for $s >
3/4$, Gr\"{u}nrock and Herr \cite{GH14} and Molinet and Pilod
\cite{MP15},  proved local well-posedness for $s > 1/2$,
and recently the second named author \cite{Ki2019} proved local
well-posedness for $s>-1/4$, which is optimal up to the end-point. In
dimension $d=2$ and for the cubic nonlinearity ($k=2$) Biagioni and
Linares \cite{BL03} proved global well-posedness in  $H^1(\R^2)$,
Linares and Pastor \cite{LP09} proved local well-posedness for $s >
3/4$ and \cite{LP11} global well-posedness for $s > 53/63$,
Ribaud and Vento \cite{RV12-gZK} proved local well-posedness for $s >
1/4$, and recently Bhattacharya, Farah and Roudenko \cite{BFR19arxiv} proved global well-posedness for $s>3/4$,
and the second named author \cite{Ki2019mZK} proved local well-posedness for $s=1/4$.
In dimension $d=3$ and for the quadratic nonlinearity ($k=1$)
Linares and Saut \cite{LS09} proved local well-posedness for $s >
9/8$, Ribaud and Vento \cite{RV12-ZK} proved local well-posedness for $s > 1$ and in $B_2^{1,1}(\R^3)$,
and Molinet and Pilod \cite{MP15} proved global well-posedness for $s
>1$.
In dimension $d \geq 3$ and for the cubic nonlinearity ($k=2$)
 Gr\"{u}nrock \cite{Gru14} proved local well-posedness in the full
 sub-critical range  $s > d/2-1$ and the second named author
 \cite{Ki2019mZK} proved small data global well-posedness at the scaling critical regularity $s=d/2-1$.
For more results in the case $k \geq 3$, we refer to the papers \cite{FLP12}, \cite{Gru15arxiv}, \cite{LP11}, \cite{RV12-gZK}, \cite{Gru15arxiv}.

\subsection*{Strategy of proof}\label{subsec:str}
Theorem \ref{thm:main-zk} will be proved by a contraction
argument in Fourier restriction spaces $X^{s,b}$, which is based on
the bilinear estimate provided in Theorem \ref{mainthm} below. Since
the general strategy of proof is standard, we
refer to \cite[Section 2]{GTV97} for details and
precise formulations and focus on the key bilinear estimate.

In the low-regularity analysis of the Korteweg-de Vries equation,
the set of time-resonances plays a crucial role, see
\cite{B93,KPV96}. It is the set of spatial frequencies allowing the product of
two solutions to the homogeneous equations to form another solution
to the homogeneous equation. In the case of the KdV equation, it is the set
\[
\{(\xi,\xi_1)\in \R^2: 3\xi(\xi-\xi_1)\xi_1=0\},
\]
while in the case of the ZK equation it is significantly
more complex. More precisely, it is the set
\[
\{(\xi,\boldsymbol{\eta},\xi_1,\boldsymbol{\eta}_1)\in \R^{2d}: 3\xi(\xi-\xi_1)\xi_1+\xi\boldsymbol{\eta}-\xi_1\boldsymbol{\eta}_1-(\xi-\xi_1)(\boldsymbol{\eta}-\boldsymbol{\eta}_1)=0\}.
\]
First well-posedness results \cite{GH14,MP15} did not rely on the structure of this set.
Recently, the second author \cite{Ki2019} established well-posedness of the
Zakharov-Kuznetsov equation in $d=2$ for $s>-1/4$, which turned out to
be optimal within the purely perturbative regime (up to the
endpoint). The key observation is that the bulk of
frequencies close to the resonant set are transversal. This allows to
invoke the Loomis-Whitney inequality \cite{LW49}, more
precisely a nonlinear generalization thereof
\cite{BCW05,BHT10,KS15}. This strategy has been used previously in the case of the
low-dimensional Zakharov system, see \cite{BHHT09,BH11}, in which case the resonant set is easier to understand.

\subsection*{Outline of the paper}\label{subsec:outline}
In Section \ref{sec:pre} we will discuss preliminaries concerning notation, Strichartz and transversal estimates. The key bilinear estimate is stated as Theorem \ref{mainthm} in Section \ref{sec:key-bil} and first reductions are performed. The proof of  Theorem \ref{mainthm} is then split into cases, which are treated in the following Sections \ref{sec:proof-bil-1} -- \ref{sec:proof-bil-3}. In an appendix we include a proof of the well-known transversal $L^2$ estimate from Section \ref{sec:pre}.

  \section{Preliminaries}\label{sec:pre}
\subsection{Notation}\label{subsec:not}
We write $A{\ \lesssim \ } B$ if there exists $C>0$ such that $A \le CB$ and $A\ll B$ if $A \le cB$ for some small enough $c<1$. Also, $A\sim B$ means $A{\ \lesssim \ } B$ and $B{\ \lesssim \ } A.$

Let $N$, $L \geq 1$ be dyadic numbers, i.e. there exist $n_1$, $n_2 \in \N_{0}$ such that 
$N= 2^{n_1}$ and $L=2^{n_2}$, and $\psi \in C^{\infty}_{0}((-2,2))$ be an even, non-negative function which satisfies $\psi (t)=1$ for $|t|\leq 1$ and letting 
$\psi_N (t):=\psi (t N^{-1})-\psi (2t N^{-1})$, 
$\psi_1(t):=\psi (t)$, 
the equality $\displaystyle{\sum_{N}\psi_{N}(t)=1}$ holds. 
Here and in the sequel we use the convention that capitalized summation indices run over $2^{\N_0}$.
Let $d \geq 3$. We distinguish the first and the other spatial variables. 
To be precise, $(x, \mathbf{y}) \in \R \times \R^{d-1}$ denotes the space variables. 
Similarly, $(\xi, \boldsymbol{\eta}) \in \R \times \R^{d-1}$ denotes the frequency variables of $(x, \mathbf{y})$.

Let $u=u(t,x,\mathbf{y}).\ \F_t u,\ \F_{x,\mathbf{y}} u$ denote the Fourier transform of $u$ in time, space, respectively. 
$\F_{t, x,\mathbf{y}} u = \ha{u}$ denotes the Fourier transform of $u$ in space and time.
For $s\in \R$ define $H^s(\R^d)$ to be the space of all tempered distributions $f=f(x, \mathbf{y})$ on $\R^d$ satisfying
\[
\|f\|_{H^s}:=\Big(\int_{\R^d}(1+|\xi|^2+|\boldsymbol{\eta}|^2)^s|\mathcal{F}_{x,\mathbf{y}}f(\xi, \boldsymbol{\eta})|^2d\xi d\boldsymbol{\eta}\Big)^{\frac12}<+\infty.
\]

We define frequency and modulation projections $P_N$, $Q_L$ as
\[
(\F_{x,\mathbf{y}}P_{N}u ):=  \psi_{N}(|\cdot| )(\F_{x,\mathbf{y}}{u}),\qquad
\widehat{Q_{L} u}(\tau ,\xi,\boldsymbol{\eta} ):=  \psi_{L}(\tau - \xi(\xi^2+|\boldsymbol{\eta}|^2))\widehat{u}(\tau ,\xi,\boldsymbol{\eta} ).
\]

Let $ B_r(p) \subset \R^d$ denote the open ball with radius $r>0$ and center $p \in \R^d$, and define the spatial Fourier multiplier $P_{B_r(p)} f= \F_{x,\mathbf{y}}^{-1} \mathbf{1}_{B_r(p)}\F_{x,\mathbf{y}}f$, where $\mathbf{1}_{B_r(p)}$ denotes the characteristic function of $B_r(p)$.

We now define $X^{s,b}(\R^{d+1})$ spaces. 
Let $s, b \in \R$.
\[ X^{s,\,b} (\R^{d+1}) :=\Big\{ f \in \mathcal{S}'(\R^{d+1}) \ | \ \|f  \|_{X^{s,\, b}} := \Big( \sum_{N,\, L} N^{2s} L^{2b} \|  P_N Q_L f \|^2_{L_{t,x,\mathbf{y}}^{2}(\R^{d+1})} \Big)^{1/2}<+\infty\Big\}.
\]
For convenience, we define the set in frequency as
\begin{equation*}
G_{N, L} := \{ (\tau, \xi,\boldsymbol{\eta}) \in \R^{d+1} \, | \, \psi_{L}(\tau - \xi(\xi^2+|\boldsymbol{\eta}|^2)) \psi_{N}(|(\xi,\boldsymbol{\eta})| ) \not= 0 .\}
\end{equation*}

A simple calculation shows
$\overline{{X}^{s,\,b}} = X^{s,\,b}$ and 
$(X^{s,\,b})^* =X^{-s,\,-b}$,
for $s$, $b \in \R$.

Define the propagator for the linear Zakharov-Kuznetsov equation $U(t) = e^{-t \partial_{x} \Delta}$ and the one-dimensional Fourier multiplier $|\partial_{x}|^s = \F_{x}^{-1} |\xi|^s \F_{x}$.

\subsection{Strichartz type estimates and transversal estimates}\label{sub:str}

We start with a Strichartz or (dual) restriction type
estimate, where curvature properties of the characteristic set of the
differential operator are used.
\begin{prop}\label{l4-strichartz}
 Let $d\geq 3$ and $0 \leq s \leq 1/4$. 
 Then, for all $p \in \R^d$ and $r>0$ we have
\begin{equation*}
\||\partial_{x}|^s U(t) P_{B_r(p)} f \|_{L_{t,x,\mathbf{y}}^4} 
\lesssim r^{\frac{d-3}{4}+s} \| P_{B_r(p)} f \|_{L_{x,\mathbf{y}}^2}.
\end{equation*}
\end{prop}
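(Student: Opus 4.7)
The inequality is of Strichartz type, and I would prove it via a Fourier restriction / $TT^*$ argument together with the scaling invariance of the linear equation. First, the scaling $(t,x,\mathbf{y})\mapsto(\lambda^3 t,\lambda x,\lambda\mathbf{y})$---under which the $L^4_{t,x,\mathbf{y}}$ norm scales by $\lambda^{-(d+3)/4}$, the $L^2_{x,\mathbf{y}}$ norm by $\lambda^{-d/2}$, and $|\partial_x|^s$ by $\lambda^{-s}$---reproduces exactly the asserted power of $r$, so it suffices to establish
\[
\||\partial_x|^s U(t) P_{B_1(p)} f\|_{L^4_{t,x,\mathbf{y}}} \lesssim \|P_{B_1(p)} f\|_{L^2_{x,\mathbf{y}}}\qquad\text{uniformly in }p\in\R^d.
\]

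The main tool is a bilinear Cauchy--Schwarz / $TT^*$ identity: by Plancherel, $\|Tf\|_{L^4}^4=\|\widehat{|Tf|^2}\|_{L^2}^2$ for $Tf:=|\partial_x|^s U(t)P_{B_1(p)}f$, and since $\widehat{Tf}$ is supported on the characteristic surface $\{\tau=\phi(\xi,\boldsymbol{\eta})\}$ with $\phi(\xi,\boldsymbol{\eta})=\xi(\xi^2+|\boldsymbol{\eta}|^2)$, the Fourier transform $\widehat{|Tf|^2}$ becomes an integral against the delta measure on the level set $\{\Phi=\tau\}$, where $\Phi(\xi_1,\boldsymbol{\eta}_1):=\phi(\xi_1,\boldsymbol{\eta}_1)-\phi(\xi_1-\xi,\boldsymbol{\eta}_1-\boldsymbol{\eta})$. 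Via bilinear Cauchy--Schwarz and a change of variables, the $L^4$-bound reduces to transversality/surface-measure estimates on this level set, for which the explicit gradients
\[
\partial_{\xi_1}\Phi = 3\xi(2\xi_1-\xi)+\boldsymbol{\eta}\cdot(2\boldsymbol{\eta}_1-\boldsymbol{\eta}),\qquad \nabla_{\boldsymbol{\eta}_1}\Phi = 2\xi_1\boldsymbol{\eta}+2\xi(\boldsymbol{\eta}_1-\boldsymbol{\eta})
\]
together with the coarea formula are the key tools.

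When $|p_1|\gg 1$, $|\xi_1|\sim|p_1|$ on the support forces at least one coordinate of $\nabla\Phi$ to be of size $\gtrsim|p_1|$ (the $\boldsymbol{\eta}_1$-derivative when $|\boldsymbol{\eta}|\gtrsim|p_1|^{-1}$, otherwise the $\xi_1$-derivative via the term $3\xi(2\xi_1-\xi)\sim|p_1|\xi$), yielding a Jacobian gain of $|p_1|^{-1}$ which absorbs the weight $|\xi_1|^{4s}\sim|p_1|^{4s}$ whenever $s\leq 1/4$. The main obstacle is the low-frequency regime $|p_1|\lesssim 1$ combined with the near-resonant set on which the Hessian $\phi''$ itself degenerates, namely the cone $3\xi_1^2=|\boldsymbol{\eta}_1|^2$ (recall $\det\phi''\propto\xi^{d-2}(3\xi^2-|\boldsymbol{\eta}|^2)$): here the naive sup-bound on the level-set measure diverges as $(\xi,\boldsymbol{\eta})\to 0$, so one must decompose $B_1(p)$ dyadically---both radially in $|(\xi_1,\boldsymbol{\eta}_1)|$ and angularly around the degenerate cone---and apply refined transversal bilinear estimates (in the spirit of the ``well-known transversal $L^2$ estimate'' treated in the appendix) on each pair of separated pieces, combining them by an almost-orthogonality argument to recover the global $L^4$-bound for $0\leq s\leq 1/4$.
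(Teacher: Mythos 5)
Your scaling reduction is correct and useful: the parabolic scaling $(t,x,\mathbf y)\mapsto(\lambda^3 t,\lambda x,\lambda\mathbf y)$ really does reproduce the factor $r^{(d-3)/4+s}$ and reduces the claim to a uniform-in-$p$ bound for unit balls. However, the paper's proof is not the restriction-theoretic $TT^*$ argument you sketch. It exploits the fibered structure of the symbol $\phi(\xi,\boldsymbol\eta)=\xi(\xi^2+|\boldsymbol\eta|^2)$: for each fixed $\xi$, the evolution in $\mathbf y$ is a Schr\"odinger flow $e^{it\xi\Delta_{\mathbf y}}$ on $\R^{d-1}$. One applies the classical non-endpoint Strichartz estimate $\|e^{is\Delta_{\mathbf y}}\psi\|_{L^4_s L^{q_0}_{\mathbf y}}\lesssim\|\psi\|_{L^2_{\mathbf y}}$ with $(4,q_0)$ admissible in $d-1$ dimensions, Sobolev embedding on the $r$-ball to pass from $L^{q_0}_{\mathbf y}$ to $L^4_{\mathbf y}$ (giving the factor $r^{(d-3)/4}$), the time rescaling $s=\xi t$ which produces $|\xi|^{-1/4}$ and exactly cancels $|\partial_x|^{1/4}$, and then Plancherel in $\xi$ together with H\"older. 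The degenerate cone $\det D^2\phi=0$ simply never appears, because for each fixed $\xi\neq 0$ the $\mathbf y$-evolution is a uniformly non-degenerate Schr\"odinger flow.

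There is a genuine gap in your route. Your transversality claim that ``at least one coordinate of $\nabla\Phi$ is of size $\gtrsim|p_1|$'' when $|p_1|\gg 1$ fails precisely near the cone $3\xi^2=|\boldsymbol\eta|^2$ that you yourself identify as the degenerate set, and this set reaches to arbitrarily large frequency. Since $\nabla\phi$ is quadratic, one has the exact identity $\nabla\Phi(\zeta_1)=D^2\phi\bigl(\zeta_1-\tfrac{\zeta}{2}\bigr)\zeta$; if the ball center $p$ lies on the cone, then $D^2\phi(p)$ has a nontrivial kernel (e.g.\ in $d=3$, $\det D^2\phi=8\xi(3\xi^2-|\boldsymbol\eta|^2)$), and for $\zeta$ in that kernel direction $\nabla\Phi$ vanishes at $\zeta_1\approx p+\zeta/2$ no matter how large $|p_1|$ is. In that configuration the level-set measure $\int d\sigma/|\nabla\Phi|$ is $O(1)$ rather than the $O(|p_1|^{-1})$ your weight-absorption step at $s=1/4$ needs, so the plain sup-bound in bilinear Cauchy--Schwarz does not close. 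The degeneracy is therefore not only a low-frequency phenomenon confined to $|p|\lesssim 1$, and your dichotomy breaks exactly at the endpoint you most need.

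Your proposed remedy---a dyadic-plus-angular decomposition around the cone with almost-orthogonality---is only a sketch, and it would have to be carried out without logarithmic losses in the decomposition parameter to reach the endpoint $s=1/4$; it is not clear this can be done without essentially re-deriving a fibered or transversal-bilinear estimate. The paper's one-line reduction to the $\xi$-parameterized family of Schr\"odinger flows is both shorter and exact at the endpoint, and is the route you should follow here.
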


\begin{proof}
It suffices to show the endpoint cases $s=0$ and $s=1/4$. 
We follow the proof of the Strichartz estimates of the KP-II type equations on cylinders, see Theorem 2 in \cite{GPS09} and \cite{Gru14}. 
The Littlewood-Paley theorem implies that it suffices to show the claim under the condition 
\begin{equation}
\supp \F_{x,\mathbf{y}} f \subset \{(\xi,\boldsymbol{\eta}) \, |\, 2^k \leq |\xi|\leq  2^{k+1} \} \cap B_r(p),
\label{est01-l4-strichartz}
\end{equation}
where $k$ is an arbitrary integer. 
Let $\psi : \R^{d-1} \to \C$. 
We recall the classical (non-endpoint) Strichartz estimate of the Schr\"{o}dinger
equations on $\R^{d-1}$, i.e.
\begin{equation*}
\| e^{it \Delta_{\mathbf{y}}} \psi \|_{L_t^p L_{\mathbf{y}}^q} \lesssim \| \psi\|_{L_{\mathbf{y}}^2},
\end{equation*}
where 
$(p,q)$ is an admissible pair satisfying $2 < p \leq \infty$ and $2/p = (d-1)(1/2 - 1/q)$. 
Let $(4, q_0)$ be admissible.
Since $f$ satisfies \eqref{est01-l4-strichartz}, if we fix $\xi\in \R$, 
by using the Sobolev inequality and the above Strichartz estimate, we easily get
\begin{align}
\|e^{it \xi \Delta_{\mathbf{y}}} \F_{x}f(\xi, \cdot) \|_{L_t^4 L_{\mathbf{y}}^4} & \leq 
r^{\frac{d-3}{4}} |\xi|^{-\frac{1}{4}} 
\|e^{it \Delta_{\mathbf{y}}} \F_{x}f(\xi, \cdot)\|_{L_t^4 L_{\mathbf{y}}^{q_0}}\notag \\
& \leq r^{\frac{d-3}{4}} |\xi|^{-\frac{1}{4}} \| \F_{x}f(\xi, \cdot)\|_{ L_{\mathbf{y}}^2}.\label{est02-l4-strichartz}
\end{align}
Therefore, it follows from Plancherel's theorem in $x$, H\"{o}lder's inequality, we get
\begin{align*}
&\| U(t)  P_{B_r(p)}  f \|_{L_{t,x,\mathbf{y}}^4}^2  = 
\|(U(t) P_{B_r(p)} f)(U(t) P_{B_r(p)} f)\|_{L_t^2 L_x^2}\\
\leq{}& \left\| 
\int_{\R} \left\|  
\bigl( e^{i t (\xi-\xi') \Delta_{\mathbf{y}}} \F_{x} f(\xi-\xi', \cdot) \bigr) 
\bigl( e^{i t \xi' \Delta_{\mathbf{y}}} \F_{x}  f(\xi', \cdot)\bigr) 
\right\|_{L_{t \mathbf{y}}^2} d \xi' 
 \right\|_{L_{\xi}^2}\\
\leq{}& \left\| 
\int_{\R} \|  e^{i t (\xi-\xi') \Delta_{\mathbf{y}}} \F_{x} f(\xi-\xi', \cdot)\|_{L_{t \mathbf{y}}^4}
\|  e^{i t \xi' \Delta_{\mathbf{y}}} \F_{x}  f(\xi', \cdot)\|_{L_{t \mathbf{y}}^4}
d \xi' 
                            \right\|_{L_{\xi}^2}
\end{align*}
Now, we use \eqref{est02-l4-strichartz} and continue with
\begin{align*}
\| U(t)  P_{B_r(p)}  f \|_{L_{t,x,\mathbf{y}}^4}^2  & \lesssim r^{\frac{d-3}{2}}  \left\| 
\int_{\R} (|\xi - \xi'| |\xi'|)^{-\frac{1}{4}} \|  \F_{x} f(\xi-\xi', \cdot)\|_{L_{\mathbf{y}}^2}
\| \F_{x}  f(\xi', \cdot)\|_{L_{\mathbf{y}}^2}
d \xi' 
 \right\|_{L_{\xi}^2}\\
& \lesssim r^{\frac{d-3}{2}} \left\| 
\int_{\R}  \|  \F_{x} f(\xi-\xi', \cdot)\|_{L_{\mathbf{y}}^2}
\| \F_{x}  f(\xi', \cdot)\|_{L_{\mathbf{y}}^2}
d \xi' 
 \right\|_{L_{\xi}^\infty}\\
& \leq r^{\frac{d-3}{2}} \|f\|_{L_{x}^2}^2.
\end{align*}
This completes the proof for $s=0$. Here we used $2^k \leq |\xi'| \leq 2^{k+1}$ and $2^k \leq  |\xi-\xi'| \leq  2^{k+1}$. Similarly, we have
\begin{align*}
\| |\partial_{x}|^{1/4} U(t)  P_{B_r(p)}  f \|_{L_{t,x,\mathbf{y}}^4}^2  & = 
\|(|\partial_{x}|^{1/4} U(t) P_{B_r(p)} f)( |\partial_{x}|^{1/4}U(t) P_{B_r(p)} f)
\|_{L_t^2 L_x^2}\\
& \lesssim r^{\frac{d-3}{2}}  \left\| 
\int_{\R} \|  \F_{x} f(\xi-\xi', \cdot)\|_{L_{\mathbf{y}}^2}
\| \F_{x}  f(\xi', \cdot)\|_{L_{\mathbf{y}}^2}
d \xi' 
 \right\|_{L_{\xi}^2}\\
& \lesssim r^{\frac{d-2}{2}} \left\| 
\int_{\R}  \|  \F_{x} f(\xi-\xi', \cdot)\|_{L_{\mathbf{y}}^2}
\| \F_{x}  f(\xi', \cdot)\|_{L_{\mathbf{y}}^2}
d \xi' 
 \right\|_{L_{\xi}^\infty}\\
& \leq r^{\frac{d-2}{2}} \|f\|_{L_{x,\mathbf{y}}^2}^2,
\end{align*}
where we used  \eqref{est01-l4-strichartz} again.
\end{proof}
Next, we recall the standard bilinear estimate which exploits
transversality, see
e.g. \cite[Lemma 2.6]{Candy2018}
for a proof with a general phase function and for references. We also provide a proof in
the appendix.
\begin{prop}\label{bilinear-transversal}
Let $d\geq 2$, $N_2 \leq N_1$, $\varphi (\xi,\boldsymbol{\eta}) = \xi(|\xi|^2+|\boldsymbol{\eta}|^2)$. 
Suppose that
\begin{equation*}
\supp \widehat{u}_{N_1,L_1} \subset G_{N_1,L_1} \cap (\R \times B_r(p)) , \quad 
\supp \widehat{v}_{N_2,L_2} \subset G_{N_2,L_2},
\end{equation*}
and there exists $K$ which satisfies $K \gtrsim r N_1$ and
\begin{equation*}
|\nabla \varphi(\xi_1,\boldsymbol{\eta}_1)-\nabla \varphi(\xi_2,\boldsymbol{\eta}_2)| \gtrsim K,
\end{equation*} 
for all $(\xi_1,\boldsymbol{\eta}_1)$, $(\xi_2,\boldsymbol{\eta}_2)$ in the spatial Fourier support of
$u_{N_1,L_1}$ and $v_{N_2,L_2}$, respectively. 
Then, we have
\begin{equation}
\| u_{N_1,L_1} \, v_{N_2, L_2} \|_{L_{t,x,\mathbf{y}}^2} \lesssim r^{\frac{d-1}{2}} K^{-\frac{1}{2}} 
(L_1 L_2)^{\frac{1}{2}} \| u_{N_1, L_1} \|_{L_{t,x,\mathbf{y}}^2} \|v_{N_2, L_2}  \|_{L_{t,x,\mathbf{y}}^2}.\label{est01-bilinear-transversal}
\end{equation}
In particular, if $N_2 \leq 2^{-3}N_1$ and 
\begin{equation*}
\supp \widehat{u}_{N_1,L_1} \subset G_{N_1,L_1}, \quad 
\supp \widehat{v}_{N_2,L_2} \subset G_{N_2,L_2},
\end{equation*}
we have
\begin{equation}
\| u_{N_1,L_1} \, v_{N_2, L_2} \|_{L_{t,x,\mathbf{y}}^2} \lesssim N_1^{-1} N_2^{\frac{d-1}{2}} 
(L_1 L_2)^{\frac{1}{2}} \| u_{N_1, L_1} \|_{L_{t,x,\mathbf{y}}^2} \|v_{N_2, L_2}  \|_{L_{t,x,\mathbf{y}}^2}.\label{est02-bilinear-transversal}
\end{equation}
\end{prop}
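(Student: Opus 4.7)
The plan is to reduce \eqref{est01-bilinear-transversal} via Plancherel to the $L^2$-norm of a convolution of the two Fourier transforms, and then bound this convolution by a measure estimate for the overlap region of the supports. Then \eqref{est02-bilinear-transversal} will follow by decomposing $u_{N_1,L_1}$ into pieces localized in balls of radius $\sim N_2$ and invoking almost orthogonality.

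More precisely, I would first write
$$\|u_{N_1,L_1} v_{N_2, L_2}\|_{L^2_{t,x,\mathbf{y}}}^2 = \|\widehat{u}_{N_1,L_1} \ast \widehat{v}_{N_2,L_2}\|_{L^2}^2,$$
apply Cauchy--Schwarz inside the convolution, and integrate in the outer variables to obtain
$$\|u_{N_1,L_1} v_{N_2, L_2}\|_{L^2_{t,x,\mathbf{y}}}^2 \lesssim \Bigl(\sup_{(\tau,\xi,\boldsymbol{\eta})} |E(\tau,\xi,\boldsymbol{\eta})|\Bigr) \|u_{N_1, L_1}\|_{L^2}^2 \|v_{N_2,L_2}\|_{L^2}^2,$$
where $E(\tau,\xi,\boldsymbol{\eta})$ is the set of $(\tau_1,\xi_1,\boldsymbol{\eta}_1) \in \supp \widehat{u}_{N_1,L_1}$ such that $(\tau-\tau_1, \xi-\xi_1, \boldsymbol{\eta}-\boldsymbol{\eta}_1) \in \supp \widehat{v}_{N_2,L_2}$. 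The key task is then to bound $|E|$ uniformly. Slicing in $\tau_1$ first, the two modulation constraints confine $\tau_1$ to an interval of length $\lesssim \min(L_1,L_2)$. Adding the constraints produces the single equation
$$|\tau - \varphi(\xi_1,\boldsymbol{\eta}_1) - \varphi(\xi-\xi_1, \boldsymbol{\eta}-\boldsymbol{\eta}_1)| \lesssim \max(L_1, L_2)$$
on $(\xi_1,\boldsymbol{\eta}_1) \in B_r(p)$. Because the gradient of the left-hand side in $(\xi_1,\boldsymbol{\eta}_1)$ equals $\nabla \varphi(\xi_1,\boldsymbol{\eta}_1) - \nabla \varphi(\xi-\xi_1,\boldsymbol{\eta}-\boldsymbol{\eta}_1)$, which is $\gtrsim K$ in magnitude by hypothesis, a coarea-style argument bounds this spatial slice by $r^{d-1} \max(L_1,L_2)/K$. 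Combining, $|E| \lesssim r^{d-1} L_1 L_2/K$, which yields \eqref{est01-bilinear-transversal}.

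For \eqref{est02-bilinear-transversal}, I would take $r \sim N_2$ and cover the annulus $|(\xi_1,\boldsymbol{\eta}_1)| \sim N_1$ by finitely-overlapping balls $B_{cN_2}(p_j)$, writing $u_{N_1, L_1} = \sum_j u_{N_1,L_1,j}$ with $u_{N_1,L_1,j}$ localized in $B_{cN_2}(p_j)$. Since $|(\xi_1, \boldsymbol{\eta}_1)| \sim N_1 \gg N_2 \gtrsim |(\xi_2, \boldsymbol{\eta}_2)|$, the explicit formula $\nabla \varphi(\xi, \boldsymbol{\eta}) = (3\xi^2 + |\boldsymbol{\eta}|^2, 2\xi \boldsymbol{\eta})$ gives $|\nabla \varphi(\xi_1,\boldsymbol{\eta}_1) - \nabla \varphi(\xi_2,\boldsymbol{\eta}_2)| \gtrsim N_1^2$, so one may take $K \sim N_1^2$ in \eqref{est01-bilinear-transversal}, which produces the per-piece bound. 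Finally, the spatial Fourier supports of the products $u_{N_1,L_1,j} v_{N_2,L_2}$ lie in $\sim N_2$-neighbourhoods of the $p_j$; for $|p_j - p_k|$ sufficiently large these are disjoint, so the products are almost orthogonal in $L^2_{t,x,\mathbf{y}}$ and the squared bounds sum in $j$ without loss, yielding \eqref{est02-bilinear-transversal}.

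The main technical point is the coarea estimate used in the second paragraph, namely that the set $\{(\xi_1,\boldsymbol{\eta}_1) \in B_r(p) : |F(\xi_1,\boldsymbol{\eta}_1)| \lesssim \delta\}$ has measure $\lesssim r^{d-1}\delta/K$ whenever $|\nabla F| \gtrsim K$ on $B_r(p)$; this needs to be applied uniformly in the outer parameters $(\tau,\xi,\boldsymbol{\eta})$. It is classical (follow a flow-box adapted to $\nabla F/|\nabla F|$, or slice by level sets in one transverse direction and use $d-1$-dimensional measure in the remaining directions). The rest of the argument is essentially bookkeeping.
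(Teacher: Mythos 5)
Your proposal is correct and follows essentially the same strategy as the paper: Plancherel plus Cauchy--Schwarz reduce the claim to a uniform bound on the overlap set $E(\tau,\zeta)$, slicing in $\tau_1$ gains $\min(L_1,L_2)$, and the remaining spatial constraint is controlled by the transversality of $\nabla\varphi$. The one place where you should be more careful is the ``coarea-style'' measure estimate $|\{\zeta_1\in B_r(p):|F(\zeta_1)|\lesssim\delta\}|\lesssim r^{d-1}\delta/K$: this is \emph{not} a consequence of $|\nabla F|\gtrsim K$ alone (without curvature control the level sets of $F$ inside $B_r(p)$ could have $(d-1)$-dimensional measure $\gg r^{d-1}$). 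The hypothesis $K\gtrsim rN_1$, together with $\sup_{i,j}|\partial_i\partial_j\varphi|\lesssim N_1$, is precisely what controls the variation of $\nabla F$ over a ball of radius $\sim r$; the paper makes this explicit by first shrinking the localization radius (a harmless almost-orthogonal decomposition) until the gradient direction is essentially fixed, and then finding a \emph{single} coordinate $j$ with $|\partial_j\varphi(\zeta_1)-\partial_j\varphi(\zeta_2)|\gtrsim K$ uniformly, after which the slicing is straightforward. Your flow-box heuristic is the right idea, but you should thread $K\gtrsim rN_1$ through it explicitly. For \eqref{est02-bilinear-transversal}, your almost-orthogonal decomposition of $u_{N_1,L_1}$ into pieces localized in $N_2$-balls is a legitimate way to deduce it from \eqref{est01-bilinear-transversal}; the paper is terser, simply verifying the gradient bound $|\nabla\varphi(\zeta_1)-\nabla\varphi(\zeta_2)|\gtrsim|\zeta_1|^2\sim N_1^2$ and letting the automatic localization of $\zeta_1$ (for fixed $\zeta$, the constraint $\zeta-\zeta_1\in\supp\widehat{v}_{N_2,L_2}$ already confines $\zeta_1$ to a ball of radius $\lesssim N_2$) supply the $r\sim N_2$; both routes are fine.
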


A trilinear estimate based on transversality is the following
generalization of the classical Loomis-Whitney inequality, which is Corollary 1.5 in \cite{BHT10}, see also \cite{BCW05,KS15}.

\begin{prop} \label{prop2.3}
Assume that for $i\in \{1,2,3\}$ the surface $S_i \subset \R^3$ 
is an open and bounded subset of $S_i^*$ which 
satisfies the following three conditions:

\textnormal{(i)} For a convex $U_i \subset \R^3$ such that
$\mathrm{dist}(S_i, U_i^c) \geq \mathrm{diam}(S_i)$ we have
\begin{equation*}
S_i^* = \{ {\lambda_i} \in U_i \ | \ \Phi_i({\lambda_i}) = 0 , \nabla \Phi_i \not= 0, \Phi_i \in C^{1,1} (U_i) \}.
\end{equation*}

\textnormal{(ii)} The unit normal vector field $\mathfrak{n}_i$ on $S_i^*$ satisfies the H\"{o}lder condition
\begin{equation*}
\sup_{\lambda, \lambda' \in S_i^*} \frac{|\mathfrak{n}_i(\lambda) - 
\mathfrak{n}_i(\lambda')|}{|\lambda - \lambda'|}
+ \frac{|\mathfrak{n}_i(\lambda) ({\lambda} - {\lambda}')|}{|{\lambda} - {\lambda}'|^2} \lesssim 1.
\end{equation*}

\textnormal{(iii)} There exists $\delta >0$ such that
$\mathrm{diam}({S_i}) \lesssim \delta$ and the matrix ${N}({\lambda_1}, {\lambda_2}, {\lambda_3}) = ({\mathfrak{n}_1} 
({\lambda_1}), {\mathfrak{n}_2}({\lambda_2}), {\mathfrak{n}_3}({\lambda_3}))$ 
satisfies the transversality condition
\begin{equation*}
\delta \leq |\textnormal{det} {N}({\lambda_1}, {\lambda_2}, {\lambda_3}) |
\leq 1, \quad
\text{ for all }({\lambda_1}, {\lambda_2}, {\lambda_3}) \in {S_1^*}
\cross {S_2^*} \cross {S_3^*}.
\end{equation*}

Then, for functions $f \in L^2 (S_1)$ and $g \in L^2 (S_2)$, the restriction of the convolution $f*g$ to 
$S_3$ is a well-defined $L^2(S_3)$-function which satisfies 
\begin{equation*}
\| f *g \|_{L^2(S_3)} \lesssim {\delta}^{-\frac12} \| f \|_{L^2(S_1)} \| g\|_{L^2(S_2)}.
\end{equation*}
\end{prop}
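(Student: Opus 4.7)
My plan is to prove Proposition \ref{prop2.3} as a nonlinear Loomis--Whitney inequality on $\R^3$, following the strategy of [BHT10, BCW05, KS15]. First, by duality I recast the convolution-restriction bound as a symmetric trilinear form estimate
\[
\bigg|\int \prod_{i=1}^3 f_i(\lambda_i)\, d\sigma\bigg| \lesssim \delta^{-1/2} \prod_{i=1}^3 \|f_i\|_{L^2(S_i')},
\]
where the integration runs over the constraint set $\{(\lambda_1,\lambda_2,\lambda_3) \in S_1' \times S_2' \times S_3' : \lambda_1+\lambda_2+\lambda_3=0\}$ (with $S_i'$ suitable reflections of the original $S_i$) and $d\sigma$ is the naturally induced measure. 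This symmetric form is cleaner to analyze than the convolution and treats the three surfaces on equal footing.

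Next, using condition (i), I parametrize each $S_i'$ as a graph $\phi_i: V_i \subset \R^2 \to \R^3$ over its tangent plane at a base point $\lambda_i^\ast$. Condition (ii), combining the H\"{o}lder regularity of the unit normal with the one-sided bound $|\mathfrak{n}_i(\lambda)\cdot(\lambda-\lambda')| \lesssim |\lambda-\lambda'|^2$, forces $\phi_i \in C^{1,1}$ with uniform bounds, so the deviation of $S_i'$ from its tangent plane is $O(\mathrm{diam}(S_i)^2) = O(\delta^2)$. Linearizing each $\phi_i$ around $\lambda_i^\ast$ converts the constraint $\lambda_1+\lambda_2+\lambda_3=0$ into a linear equation in $(u_1,u_2,u_3) \in \R^6$. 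A further linear change of variables in $\R^3$ sending the three tangent planes to the coordinate planes brings the trilinear form into the classical Loomis--Whitney shape
\[
\int_{\R^3} F_1(x_2,x_3) F_2(x_1,x_3) F_3(x_1,x_2)\, dx_1\, dx_2\, dx_3,
\]
which is controlled by $\prod_{i=1}^3 \|F_i\|_{L^2}$ via two applications of Cauchy--Schwarz in Fubini form. The Jacobian of this change of variables equals precisely $|\det N(\lambda_1^\ast,\lambda_2^\ast,\lambda_3^\ast)| \gtrsim \delta$, which produces the factor $\delta^{-1/2}$ upon taking square roots.

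The main obstacle is to justify the linearization quantitatively, i.e.\ to show that replacing each $\phi_i$ by its tangent map perturbs the estimate only by an absolute constant. This is where the smallness condition $\mathrm{diam}(S_i) \lesssim \delta$ plays a decisive role: since the curvature-induced deviation is $O(\delta^2)$, strictly smaller than the transversality scale $\delta$, the implicit function theorem presents the true nonlinear constraint manifold as a $C^1$ graph over the linearized one with Lipschitz constant $\ll 1$, and hence both the induced surface measure and the integrand perturb only by bounded factors uniformly in the base points. Carrying out this transfer rigorously and uniformly is the technical heart of the argument in [BHT10]; once completed, the flat Loomis--Whitney bound transfers to the curved surfaces with only an absolute multiplicative loss, yielding the desired estimate.
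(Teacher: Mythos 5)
The paper does not prove Proposition \ref{prop2.3}; it is stated verbatim as Corollary 1.5 of \cite{BHT10} and invoked as a black box, with \cite{BCW05,KS15} cited for further reading. So there is no internal argument to compare your sketch against --- the relevant question is whether your outline is a viable reconstruction of the proof in \cite{BHT10}.

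Your overall scaffolding (duality to a symmetric trilinear form over the constraint set $\lambda_1+\lambda_2+\lambda_3=0$, graph parametrizations, and the Jacobian $|\det N|\gtrsim\delta$ producing the $\delta^{-1/2}$ loss) is the right starting point and matches the setup in \cite{BHT10,BCW05}. However, there is a genuine gap in the step you label ``the technical heart.'' You propose to pass from the curved surfaces to the flat Loomis--Whitney inequality by linearizing once and then invoking the implicit function theorem to present the nonlinear constraint manifold as a $C^1$ graph over the linearized one ``with Lipschitz constant $\ll 1$,'' concluding that the measure and integrand ``perturb only by bounded factors.'' This is not sufficient. The flat Loomis--Whitney bound
\begin{equation*}
\int_{\R^3} F_1(x_2,x_3)F_2(x_1,x_3)F_3(x_1,x_2)\,dx \lesssim \prod_i\|F_i\|_{L^2}
\end{equation*}
rests on a rigid algebraic structure --- each factor depends on a fixed pair of coordinates, so two successive Cauchy--Schwarz applications in Fubini form close. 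After a nonlinear change of variables, $F_1$ no longer depends on exactly $(x_2,x_3)$ but on a curved two-dimensional quantity, so the Fubini/Cauchy--Schwarz mechanism does not directly apply; you cannot simply absorb the deviation ``into the measure.'' There is no off-the-shelf stability theorem for Loomis--Whitney under $C^1$-small perturbations of the three foliations, and establishing such stability is essentially the content of the proof. In \cite{BHT10} (and in \cite{BCW05}) this transfer is achieved by an induction-on-scales/Whitney decomposition: one decomposes the surfaces into pieces of decreasing diameter, on each of which the deviation from flat is geometrically smaller, proves a gain at each scale, and sums. Your sketch omits this iteration entirely and, as written, the $O(\delta^2)$ curvature error cannot be disposed of by a single linearization. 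Furthermore, the claim that the Lipschitz constant is $\ll 1$ relies implicitly on $\delta$ being small; for $\delta\sim 1$ the diameter and the curvature deviation are comparable and the single-step linearization loses uniformity, whereas the induction-on-scales argument gives a uniform constant in $\delta\in(0,1]$. In short, the skeleton is sound, but the linearization transfer as you describe it does not close, and the missing ingredient is precisely the multi-scale iteration in \cite{BHT10}.
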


\section{The key bilinear estimate}\label{sec:key-bil}
The main contribution of this paper is the following:
\begin{thm}\label{mainthm}
For any $s > (d-4)/2$, there exist $b \in (\frac12, 1)$ and $b'\in (b-1,0)$, such that
\begin{equation}
\|\partial_{x} (uv) \|_{X^{s,\,b'}} \  \lesssim \|u \|_{X^{s,\,b}} \|v \|_{X^{s,\,b}}.
\label{goal01-mthm}
\end{equation}
\end{thm}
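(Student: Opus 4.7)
By duality, Theorem \ref{mainthm} is equivalent to the trilinear estimate
\[
\Big|\int \partial_x(uv)\,w\,dt\,dx\,d\mathbf{y}\Big|\lesssim \|u\|_{X^{s,b}}\|v\|_{X^{s,b}}\|w\|_{X^{-s,-b'}}.
\]
Dyadically decomposing $u=\sum u_{N_1,L_1}$, $v=\sum v_{N_2,L_2}$, $w=\sum w_{N_3,L_3}$, it suffices to bound each piece $N_3\bigl|\int u_{N_1,L_1}v_{N_2,L_2}w_{N_3,L_3}\bigr|$ in such a way that the resulting factor is summable against the weight $N_1^s L_1^b N_2^s L_2^b N_3^{-s} L_3^{-b'}$. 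The Fourier-support conditions $\xi_1+\xi_2+\xi_3=0$ and $\boldsymbol{\eta}_1+\boldsymbol{\eta}_2+\boldsymbol{\eta}_3=0$ yield the modulation identity $\max(L_1,L_2,L_3)\gtrsim |H|$ with resonance function
\[
H=\sum_{i=1}^{3}\xi_i(\xi_i^2+|\boldsymbol{\eta}_i|^2)=3\xi_1\xi_2\xi_3+\sum_{i=1}^{3}\xi_i|\boldsymbol{\eta}_i|^2,
\]
and after ordering $N_1\ge N_2\ge N_3$ the analysis splits naturally into the \emph{high-low} regime $N_1\sim N_2\gg N_3$ and the \emph{high-high-high} regime $N_1\sim N_2\sim N_3=N$.

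\textbf{High-low via transversal bounds.} Here the gradients $\nabla\varphi(\xi_i,\boldsymbol{\eta}_i)$ at the supports of $u_{N_1}$ and $w_{N_3}$ differ by $\sim N_1^2$, so Proposition \ref{bilinear-transversal} (second form) yields
\[
\|u_{N_1,L_1}w_{N_3,L_3}\|_{L^2}\lesssim N_1^{-1}N_3^{(d-1)/2}(L_1L_3)^{1/2}\|u_{N_1,L_1}\|_{L^2}\|w_{N_3,L_3}\|_{L^2}.
\]
Pairing with $\|v_{N_2,L_2}\|_{L^2}$ via Cauchy--Schwarz and accounting for the derivative loss $\sim N_3$ (the \emph{output} frequency, which is small in this regime), one encounters an insufficiency in the $L_3$-summation caused by the tension between the factor $L_3^{1/2}$ in the transversal bound and the requirement $b'<0$. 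This is overcome by trading modulation for frequency: when $L_3\gtrsim|H|$ one exploits the resonance identity to bound $L_3^{1/2}\lesssim|H|^{1/2}\sim N_1 N_3^{1/2}$; in the complementary range one combines the transversal bound in a different pair with the $L^4$-Strichartz bound of Proposition \ref{l4-strichartz}. A case-by-case interpolation gives summability for every $s>(d-4)/2$.

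\textbf{High-high-high via Loomis--Whitney.} When $N_1\sim N_2\sim N_3=N$ pairwise transversality degenerates, and one must exploit the \emph{trilinear} geometry of the three characteristic surfaces $\tau_i=\xi_i(\xi_i^2+|\boldsymbol{\eta}_i|^2)$. The plan is to further decompose each dyadic annulus into frequency balls $B_r(p_i)$ of radius $r\ll N$ compatible with the convolution constraint, and to split by the size of $|H|$. On each ball-triple the surfaces are $C^{1,1}$-graphs; from $\nabla\varphi=(3\xi^2+|\boldsymbol{\eta}|^2,\,2\xi\boldsymbol{\eta})$ one verifies both the H\"older condition on the unit normals and a quantitative transversality $\delta=\delta(r,N)$ outside a degenerate sub-locus, thereby meeting the hypotheses of Proposition \ref{prop2.3}. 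Since Proposition \ref{prop2.3} lives in $\R^3$ while the ambient space is $\R^{d+1}$, I would first reduce the dimension by Plancherel in $d-2$ spectator $\boldsymbol{\eta}$-coordinates (the $O(d-1)$-rotational symmetry of $\varphi$ in $\boldsymbol{\eta}$ makes the choice of distinguished coordinate flexible), apply Proposition \ref{prop2.3} in the remaining three variables, and reassemble via Cauchy--Schwarz and the orthogonality of the ball decomposition.

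\textbf{Main obstacle.} The principal difficulty is the optimization in the high-high-high case: the $\delta^{-1/2}$-gain from Loomis--Whitney has to beat the $\sim(N/r)^{d-1}$ loss from summing over compatible ball-triples, and the break-even scale lands precisely at $s=(d-4)/2$. The degenerate loci where $|\det N(\lambda_1,\lambda_2,\lambda_3)|$ is small, and Proposition \ref{prop2.3} is ineffective, must be peeled off and handled separately by a modulation-driven or Strichartz-based argument, with careful matching along the critical scale. This optimization---together with the rigorous dimensional reduction from $\R^{d+1}$ to $\R^3$ within the ball decomposition---is the technical core of the proof and is what justifies splitting the remaining argument into three case-specific sections.
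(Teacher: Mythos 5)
The proposal has the right high-level ingredients—duality, dyadic decomposition, $L^4$-Strichartz, transversal bilinear estimates, and the nonlinear Loomis--Whitney inequality—but the dichotomy you propose is wrong, and as a result one of the two branches would fail.

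\textbf{The high-low regime cannot be closed by the transversal bound alone.} You claim that for $N_1\sim N_2\gg N_0$ (your $N_3$) one uses Proposition \ref{bilinear-transversal} to get $\|u_{N_1,L_1}w_{N_0,L_0}\|_{L^2}\lesssim N_1^{-1}N_0^{(d-1)/2}(L_1L_0)^{1/2}\|\cdot\|\|\cdot\|$, pairs with $v_{N_2,L_2}$, and then fixes the $L$-summation. But the $N_0$-summation is already lost: after weighting by $N_0\cdot N_0^{-s}N_1^{-2s}$ and summing $N_0\lesssim N_1$, the resulting exponent of $N_1$ is $\tfrac12-\epsilon>0$ at $s=(d-4)/2+\epsilon$, so the series in $N_1$ diverges. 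No modulation-trading fixes this. In the paper, the estimate actually proved for this regime (Proposition \ref{prop3.2}) has the stronger form $N_0^{(d-4)/2+2\e}N_1^{-1-\e}(L_0L_1L_2)^{1/2}$, and obtaining that extra $N_1^{-\e}$ and the smaller power of $N_0$ requires the angular Whitney decomposition plus the nonlinear Loomis--Whitney inequality \emph{already in the high-low case}, not just in high-high-high. The elementary transversal bound \eqref{est02-bilinear-transversal} only suffices in the paper's reduction when $N_{012}^{\min}\sim 1$ (Subsection \ref{subsec:red}).

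\textbf{You miss the key anisotropic split.} The ZK phase $\xi(\xi^2+|\boldsymbol{\eta}|^2)$ distinguishes the $\xi$-direction from $\boldsymbol{\eta}$, and the paper's fundamental case split is by the size of $\max(|\xi_1|,|\xi_2|)$ relative to $N_{012}^{\max}$: Section \ref{sec:proof-bil-1} treats $\max(|\xi_1|,|\xi_2|)\geq 2^{-5}N_1$, while Sections \ref{sec:proof-bil-2}--\ref{sec:proof-bil-3} treat the complementary range where $\xi_j$ is smaller by a dyadic factor $\alpha^{-1}$. When $\xi_j$ is small both the Strichartz gain and the transversality degrade, and the anisotropic cube decomposition (side $A^{-1}\alpha^{-1}N_1$ in $\xi$ but $A^{-1}N_1$ in $\boldsymbol{\eta}$) is what restores the correct scaling. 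This split is absent from your plan; organizing by $N_j$ alone as ``high-low'' versus ``high-high-high'' overlooks it.

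\textbf{Inside the high-high-high regime a genuinely 2D reduction is needed.} Your plan of ``Plancherel in $d-2$ spectator $\boldsymbol{\eta}$-coordinates, then apply the $\R^3$ Loomis--Whitney'' is half of the story, but the resulting three-variable convolution problem is precisely the low-regularity 2D Zakharov--Kuznetsov bilinear estimate, which is itself highly nontrivial. The paper exploits this by applying the shear $(\xi_j,\eta_j)\mapsto(\xi_j+\eta_j,\sqrt3(\xi_j-\eta_j))$ to pass to the symmetrized 2D ZK phase (Proposition \ref{prop3.9}, Subsection \ref{subsec:proof-prop}) and then imports the Whitney-type decompositions, exceptional-set ($\mathcal{K},\mathcal{K}',\mathcal{I}$) analysis, and almost-orthogonality lemmas from \cite{Ki2019}. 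These are not generated by ``peeling off degenerate loci and handling them by Strichartz''; they are the substance of the 2D result. You are correct that the final counting balances against $s=(d-4)/2$, but the specific mechanism—the $\e$-loss from summing the angular scale $A$, offset by the gain from the transversality being $\gtrsim A^{-1}$ on each piece—needs to be stated for the sum to close, and is more delicate than the ``$\delta^{-1/2}$ beats $(N/r)^{d-1}$'' heuristic.

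In short, you have correctly identified the toolbox, but you need Loomis--Whitney (not just bilinear transversality) across all high-frequency cases, an explicit split by the size of $\max(|\xi_1|,|\xi_2|)$, and the reduction to the symmetrized 2D ZK estimate; without these the argument does not close.
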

The remainder of the paper will be devoted to its proof.
By a duality argument and dyadic decompositions, we observe that 
\begin{equation}\label{eq:goal01-mthm-equiv}
\eqref{goal01-mthm} 
\iff \  \left| \int{ w \partial_{x} (uv)} dtdxd \mathbf{y} \right| \lesssim \|u \|_{X^{s,\,b}} \|v \|_{X^{s,\,b}} 
         \|w \|_{X^{-s,\,-b'}}.
       \end{equation}

We will use the shorthand notations
\[
 w_{N_0, L_0} := Q_{L_0} P_{N_0}w, \ u_{N_1, L_1} := Q_{L_1} P_{N_1} u, \ 
v_{N_2, L_2} := Q_{L_2} P_{N_2}v. 
\]

Obviously, \eqref{eq:goal01-mthm-equiv} follows from
\begin{equation}\label{goal02-mthm}
\sum_{{{\substack{N_j, L_j\\ \tiny{(j=0,1,2)}}}}}
 \left|\int{ \left(\partial_{x} w_{N_0, L_0} \right) 
 u_{N_1, L_1}  v_{N_2, L_2}
} 
dt dx d \mathbf{y} \right|
\lesssim{}\|u \|_{X^{s,\,b}} \|v \|_{X^{s,\,b}} 
\|w \|_{X^{-s,\,-b'}}.
\end{equation}

For brevity, we write
\[L_{012}^{\max} := \max (L_0, L_1, L_2), \  
N_{012}^{\max} := \max (N_0, N_1, N_2), \ N_{012}^{\min} := \min (N_0, N_1, N_2).\]

\subsection{Reductions}\label{subsec:red}
Here, we prove \eqref{goal02-mthm} in the following relatively simple
cases:
\begin{enumerate}
\item[(1)] $L_{012}^{\max} \gtrsim (N_{012}^{\max})^3$, \quad
\item[(2)] $N_{012}^{\min} \sim 1$ and $L_{012}^{\max} \ll
  (N_{012}^{\max})^3$
\end{enumerate}
We will use Propositions \ref{l4-strichartz} and \ref{bilinear-transversal}, respectively.

We first assume $L_{012}^{\max} \gtrsim (N_{012}^{\max})^3$ and show 
\begin{equation}
\begin{split}
\left|\int{ w_{N_0, L_0} \, 
 u_{N_1, L_1} \, v_{N_2, L_2}
} 
dt dx d \mathbf{y} \right| &\\
\lesssim 
(N_{012}^{\min})^{\frac{d-3}{2}}  (N_{012}^{\max}  )^{-\frac{3}{2} + 3 \e} & 
L_0^{\frac{1}{2}- \e}  (L_1 L_2)^{\frac{1}{2}} 
\|u_{N_1, L_1} \|_{L^2} \|v_{N_2, L_2} \|_{L^2}
\|w_{N_0, L_0} \|_{L^2}
\end{split}\label{est01-mthm}
\end{equation}
for some small $\e>0$.
Clearly, this inequality gives \eqref{goal02-mthm}. 
Here we only consider the case $L_0 \gtrsim (N_{012}^{\max})^3$. 
The other two cases $L_1 \gtrsim (N_{012}^{\max})^3$ and $L_2 \gtrsim (N_{012}^{\max})^3$ can be treated similarly. 
By the almost orthogonality, we may replace $ u_{N_1, L_1}$ and $v_{N_2, L_2}$ by 
$P_{B} u_{N_1, L_1}$ and 
$P_{B'}v_{N_2, L_2}$ where $P_{B}$ and $P_{B'}$ denote the spatial frequency localization operators 
for some fixed balls $B$ and $B'$ with radius $N_{012}^{\min}$, respectively. 
It follows from the H\"{o}lder's inequality and Proposition \ref{l4-strichartz} that
\begin{align*}
& \left|\int{ w_{N_0, L_0} \, 
 (P_{B}u_{N_1, L_1}) \, 
(P_{B'} v_{N_2, L_2})
} 
dt dx d \mathbf{y} \right|\\
& \lesssim \|w_{N_0, L_0}\|_{L^2} \|P_{B} u_{N_1, L_1}\|_{L^4} 
\| P_{B'}v_{N_2, L_2}\|_{L^4} \\
& \lesssim (N_{012}^{\min})^{\frac{d-3}{2}}  (N_{012}^{\max} )^{-\frac{3}{2} + 3 \e} 
L_0^{\frac{1}{2}- \e} (L_1 L_2)^{\frac{1}{2}}
\|u_{N_1, L_1} \|_{L^2} \|v_{N_2, L_2} \|_{L^2}
\|w_{N_0, L_0} \|_{L^2},
\end{align*}
which completes the proof of \eqref{est01-mthm}. 

Next we deal with the case $N_{012}^{\min} \sim 1$ and $L_{012}^{\max} \ll (N_{012}^{\max})^3$. 
If $1 \sim N_0 \sim N_1 \sim N_2$, by using the $L^4$ Strichartz estimate, we get
\begin{align*}
 \left|\int{ w_{N_0, L_0} \, 
u_{N_1, L_1} \, 
v_{N_2, L_2}
} 
dt dx d \mathbf{y} \right| 
& \lesssim \|w_{N_0, L_0}\|_{L^2} \| u_{N_1, L_1}\|_{L^4} 
\|v_{N_2, L_2}\|_{L^4} \\
& \lesssim 
 (L_1 L_2)^{\frac{1}{2}}
\|u_{N_1, L_1} \|_{L^2} \|v_{N_2, L_2} \|_{L^2}
\|w_{N_0, L_0} \|_{L^2},
\end{align*}
which implies \eqref{goal02-mthm}. 
Thus, by symmetry, we only need to consider $1 \sim N_0 \ll N_1 \sim N_2$ and 
$1 \sim N_2 \ll N_0 \sim N_1$. 
The both cases are treated by Proposition \ref{bilinear-transversal}. 
First we assume $1 \sim N_0 \ll N_1 \sim N_2$ and show the following.
\begin{equation}
\begin{split}
\left|\int{ w_{N_0, L_0} \, 
 u_{N_1, L_1} \, v_{N_2, L_2}
} 
dt dx d \mathbf{y} \right| &\\
\lesssim 
N_1^{-1}    (L_0 L_1)^{\frac{1}{2}} & 
\|u_{N_1, L_1} \|_{L^2} \|v_{N_2, L_2} \|_{L^2}
\|w_{N_0, L_0} \|_{L^2},
\end{split}\label{est02-mthm-1}
\end{equation}
which immediately yields \eqref{goal02-mthm} since $-2s < 4-d \leq 1$ and $L_0 \ll N_1^3$. 
We deduce from $N_0 \sim 1$ and the almost orthogonality, we can replace $ u_{N_1, L_1}$ by 
$P_B u_{N_1, L_1}$ with a fixed ball of spatial frequency $B$ whose radius is $1$. 
Thus, by the H\"{o}lder's inequality and Proposition \ref{bilinear-transversal}, we observe
\begin{align*}
 \left|\int{ w_{N_0, L_0} \, 
 (P_B u_{N_1, L_1}) \, v_{N_2, L_2}
} 
dt dx d \mathbf{y} \right|
& \leq \| w_{N_0, L_0} 
 (P_B u_{N_1, L_1}) \|_{L^2} \|v_{N_2, L_2}\|_{L^2}\\
& \lesssim N_1^{-1}    (L_0 L_1)^{\frac{1}{2}} 
\|u_{N_1, L_1} \|_{L^2} \|v_{N_2, L_2} \|_{L^2}
\|w_{N_0, L_0} \|_{L^2},
\end{align*}
which completes the proof of \eqref{est02-mthm-1}. 
Similarly, if $1 \sim N_2 \ll N_0 \sim N_1$, by replacing $ u_{N_1, L_1}$ with 
$P_B u_{N_1, L_1}$, it follows from the H\"{o}lder's inequality and 
Proposition \ref{bilinear-transversal} that
\begin{align*}
 \left|\int{ w_{N_0, L_0} \, 
 (P_B u_{N_1, L_1}) \, v_{N_2, L_2}
} 
dt dx d \mathbf{y} \right|
& \leq \|  (P_B u_{N_1, L_1}) v_{N_2, L_2}\|_{L^2} \|w_{N_0, L_0}\|_{L^2}\\
& \lesssim N_1^{-1}  (L_1 L_2)^{\frac{1}{2}} 
\|u_{N_1, L_1} \|_{L^2} \|v_{N_2, L_2} \|_{L^2}
\|w_{N_0, L_0} \|_{L^2},
\end{align*}
which verifies \eqref{goal02-mthm}.

As a consequence, we can assume $L_{012}^{\max} \ll (N_{012}^{\max})^3$ and 
$1 \ll N_{012}^{\min}$ in the sequel.

\section{Proof of the key bilinear estimate: Case 1}\label{sec:proof-bil-1}

The goal of this section is to establish \eqref{goal02-mthm} under the following assumptions.
\begin{ass}\label{assumption1}$ \, $\\
(1) $L_{012}^{\max} \ll (N_{012}^{\max})^3$, $ \ $ 
(2) $1 \ll N_0 \lesssim N_1 \sim N_2$,  $ \ $ 
(3) $\textnormal{max}(|\xi_1|, |\xi_2|) \geq 2^{-5} N_1$.
\end{ass}
\begin{prop}\label{prop3.2}
Assume \textit{Assumption} \textnormal{\ref{assumption1}}. Then we get
\begin{equation}
\begin{split}
& \left|\int_{*}{  \ha{w}_{{N_0, L_0}}(\tau, \xi, \boldsymbol{\eta}) 
\ha{u}_{N_1, L_1}(\tau_1, \xi_1, \boldsymbol{\eta_1})  \ha{v}_{N_2, L_2}(\tau_2, \xi_2, \boldsymbol{\eta_2}) 
}
d\sigma_1 d\sigma_2 \right| \\
& \qquad \qquad  \lesssim N_0^{\frac{d-4}{2} +2 \e}  N_{1}^{-1-\e}   (L_0 L_1 L_2)^{\frac{1}{2}} \|\ha{u}_{N_1, L_1} \|_{L^2} \| \ha{v}_{N_2, L_2} \|_{L^2} 
\|\ha{w}_{{N_0, L_0}} \|_{L^2},\label{goal01-prop3.2}
\end{split}
\end{equation}
where $d \sigma_j = d\tau_j d \xi_j d \boldsymbol{\eta}_j $ and $*$ denotes $(\tau, \xi, \boldsymbol{\eta}) = (\tau_1 + \tau_2, \xi_1+ \xi_2, \boldsymbol{\eta_1} + \boldsymbol{\eta_2}).$
\end{prop}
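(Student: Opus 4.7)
The plan is to establish \eqref{goal01-prop3.2} via the nonlinear Loomis--Whitney inequality (Proposition~\ref{prop2.3}), applied to an effective three-dimensional reduction of the $(d+1)$-dimensional characteristic surface $\tau = \xi(\xi^2 + |\boldsymbol{\eta}|^2)$ of the Zakharov--Kuznetsov equation. The underlying strategy follows \cite{Ki2019} in the two-dimensional case; the new ingredient for $d\geq 3$ will be a slicing in the transverse $(d-2)$-dimensional component of $\boldsymbol{\eta}$ that returns the problem to a 3D geometry on which Proposition~\ref{prop2.3} can be directly invoked.

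First, I would localize in frequency. Since $|\boldsymbol{\eta}|\lesssim N_0\ll N_1\sim |\boldsymbol{\eta}_1|\sim |\boldsymbol{\eta}_2|$ and $\boldsymbol{\eta}=\boldsymbol{\eta}_1+\boldsymbol{\eta}_2$, the variables $\boldsymbol{\eta}_1,\boldsymbol{\eta}_2$ of $\hat u_{N_1,L_1}$ and $\hat v_{N_2,L_2}$ are constrained to antipodal balls of radius $\sim N_0$, and after an almost-orthogonality decomposition I may fix one such pair. Combined with Assumption~\ref{assumption1}(3) and $|\xi|\lesssim N_0\ll N_1$, this restriction also forces $|\xi_1|,|\xi_2|\sim N_1$ on the relevant piece, which is the configuration in which the transversality argument will be non-degenerate.

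Next, I would write $\boldsymbol{\eta}=(\eta^\parallel,\eta^\perp)$, with $\eta^\parallel\in\R$ chosen in the direction of the principal curvature of the ZK surface at the localization (essentially along $\boldsymbol{\eta}_1$) and $\eta^\perp\in\R^{d-2}$, and decompose the $\eta^\perp$-supports of $\hat u,\hat v,\hat w$ into cubes of side $\mu\sim N_0$ compatibly with the convolution constraint $\eta^\perp=\eta_1^\perp+\eta_2^\perp$. For each coupled triple of cubes, fixing the $\eta^\perp$-centers reduces the remaining integration to a bilinear $L^2$ convolution estimate between three smooth 2-surfaces in $(\tau,\xi,\eta^\parallel)$-space. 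I would verify the hypotheses of Proposition~\ref{prop2.3}: the $C^{1,1}$ graph form and H\"older regularity of the Gauss map follow from smoothness of $\Phi(\xi,\boldsymbol{\eta})=\xi^3+\xi|\boldsymbol{\eta}|^2$, while a direct computation of the $3\times 3$ determinant of unit normals, using $|\xi_1|,|\xi_2|\sim N_1$ from Step~1 together with $|\xi|\sim N_0$, should yield the transversality $\delta$ that makes Proposition~\ref{prop2.3} produce an $L^2$ bilinear bound on each slice with the appropriate gain and the standard modulation factor $(L_0L_1L_2)^{1/2}$.

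Finally, integrating in $\eta^\perp$ and summing over the coupled cubes via Cauchy--Schwarz should produce a volume factor $\sim N_0^{(d-2)/2}$ from the $(d-2)$-dimensional transverse parameter space. Together with the gain from Proposition~\ref{prop2.3} on each slice and the $\varepsilon$-losses absorbed from dyadic summations, this reassembly should yield the claimed bound $N_0^{(d-4)/2+2\varepsilon}N_1^{-1-\varepsilon}(L_0L_1L_2)^{1/2}$. The hardest step will be the construction of the $\eta^\perp$-decomposition: it must simultaneously be compatible with the convolution constraint on all three frequencies, fine enough that the phase $\xi|\boldsymbol{\eta}|^2$ can be linearized on each cube up to a perturbative error, and coarse enough that the transversality determinant does not degenerate. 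Verifying condition (iii) of Proposition~\ref{prop2.3} uniformly across all $(d-2)$-dimensional slices, and matching the resulting exponents exactly with the scaling-critical value $(d-4)/2$, is expected to be the technical core of the argument.
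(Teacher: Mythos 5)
There is a genuine gap. Your first localization step already misreads the hypotheses: \textit{Assumption}~\ref{assumption1} gives $N_0\lesssim N_1$ (not $N_0\ll N_1$) and only $\max(|\xi_1|,|\xi_2|)\geq 2^{-5}N_1$; it does not force $|\boldsymbol{\eta}_1|\sim|\boldsymbol{\eta}_2|\sim N_1$. In fact the paper must separately treat the three regimes (Ia) $\max(|\boldsymbol{\eta}_1|,|\boldsymbol{\eta}_2|)\ll N_1$, (Ib) exactly one of them $\sim N_1$, and (Ic) both $\sim N_1$; your proposal only addresses the configuration of case (Ic) under an extra restriction $N_0\ll N_1$.

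More seriously, the transversality claim is backwards. If $N_0\ll N_1$ and $|\boldsymbol{\eta}_j|\sim|\xi_j|\sim N_1$, then $\boldsymbol{\eta}_1\approx-\boldsymbol{\eta}_2$ and $\xi_1\approx-\xi_2$, i.e.\ $(\xi_1,\boldsymbol{\eta}_1)\approx-(\xi_2,\boldsymbol{\eta}_2)$. But $\nabla\varphi(\xi,\boldsymbol{\eta})=(3\xi^2+|\boldsymbol{\eta}|^2,\,2\xi\boldsymbol{\eta})$ is an even function, so the unit normals $\mathfrak{n}_1(\lambda_1)$ and $\mathfrak{n}_2(\lambda_2)$ to the graph of $\varphi$ at approximately antipodal base points are approximately equal, and $\det N(\lambda_1,\lambda_2,\lambda_3)$ is small. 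The near-antipodal configuration you single out as the ``non-degenerate'' one is precisely where the Loomis--Whitney hypothesis (iii) is on the verge of failing, and without a quantitative measure of how far from antipodal the pair is, Proposition~\ref{prop2.3} cannot be applied with any useful $\delta$. This is exactly why the paper's proof introduces the angular decomposition into caps $\omega_A^j$ (of variable aperture $A^{-1}$, not of fixed scale $N_0$), shows that the transversality determinant is $\gtrsim A^{-1}$ when $\alpha(j_1,j_2)\sim A^{-1}$, and then sums the resulting bounds $\sim A^{-\frac{d-3}{2}}N_1^{\frac{d-6}{2}}$ dyadically over $A$ (Propositions~\ref{prop3.3}--\ref{prop3.5}, \ref{prop3.7}). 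The Whitney-type decomposition in tiles together with the auxiliary sets $\mathcal{K},\mathcal{K}'$ and the reduction to the symmetrized 2D problem borrowed from~\cite{Ki2019} are further necessary to control the regions where the transversality degenerates even within a fixed cap. Your plan has no analogue of this angular dyadic summation, so it cannot produce the claimed bound.

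A smaller but related issue: your slicing of the transverse directions $\eta^\perp$ into cubes of side $\mu\sim N_0$ with Cauchy--Schwarz ``producing a volume factor $\sim N_0^{(d-2)/2}$'' does not match the actual bookkeeping. In the paper the transverse components $\boldsymbol{\eta}_j'$ are localized to balls of radius $\sim A^{-1}N_1$ (tied to the angular scale, not to $N_0$), and the integration in these parameters is folded into the $L^2$ norms without a separate Cauchy--Schwarz loss; see the passage from~\eqref{est02-prop3.4} to~\eqref{est01-prop3.4}. Matching the exponents to $N_0^{\frac{d-4}{2}+2\e}N_1^{-1-\e}$ requires this precise coupling of the transverse scale to $A$, and summing over $A$; a fixed-scale slicing at $\mu\sim N_0$ will not reproduce it.
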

\begin{rem}\label{remark3.1}
(i) Since $L_{012}^{\max} \ll (N_{012}^{\max})^3$, it is easily observed that \eqref{goal01-prop3.2} yields \eqref{goal02-mthm}.\\
(ii) By replacing the role of $\ha{w}_{{N_0, L_0}}$ with that of $\ha{v}_{N_2, L_2}$, 
we can show 
\begin{equation}
\begin{split}
& \left|\int_{*}{  \ha{v}_{N_2, L_2}(\tau, \xi, \boldsymbol{\eta}) 
\ha{u}_{N_1, L_1}(\tau_1, \xi_1, \boldsymbol{\eta_1})  \ha{w}_{{N_0, L_0}}(\tau_2, \xi_2, \boldsymbol{\eta_2}) 
}
d\sigma_1 d\sigma_2 \right| \\
& \qquad \qquad  \lesssim N_2^{\frac{d-4}{2} +2 \e}  N_{1}^{-1-\e}   (L_0 L_1 L_2)^{\frac{1}{2}} \|\ha{u}_{N_1, L_1} \|_{L^2} \| \ha{v}_{N_2, L_2} \|_{L^2} 
\|\ha{w}_{{N_0, L_0}} \|_{L^2},
\end{split}\label{goal-n2low}
\end{equation}
under the assumptions\\
(1) $L_{012}^{\max} \ll (N_{012}^{\max})^3$,  $ \ $ 
(2') $1 \ll N_2 \lesssim N_0 \sim N_1$,  $ \ $ 
(3) $\textnormal{max}(|\xi_1|, |\xi_2|) \geq 2^{-5} N_1$.\\
Clearly, \eqref{goal-n2low} gives \eqref{goal02-mthm} under the same assumptions as above.
\end{rem}
We divide the proof of Proposition \ref{prop3.2} into the three cases.\\
(Ia) $\textnormal{max}(|\boldsymbol{\eta_1}|, |\boldsymbol{\eta_2}|) \ll N_1$,\\
(Ib) $\textnormal{min}(|\boldsymbol{\eta_1}|, |\boldsymbol{\eta_2}|) \ll N_1$,  
$\textnormal{max}(|\boldsymbol{\eta_1}|, |\boldsymbol{\eta_2}|) \sim N_1$,\\
(Ic) $|\boldsymbol{\eta_1}| \sim |\boldsymbol{\eta_2}| \sim N_1$.

First, we consider the case (Ia): Note that the assumptions 
$1 \ll N_0 \lesssim N_1 \sim N_2$ and 
$\textnormal{max}(|\boldsymbol{\eta_1}|, |\boldsymbol{\eta_2}|) \ll N_1$ imply $|\xi_1| \sim |\xi_2|\sim N_1$.

Following \cite{BH11}, for $A\in \N$ we choose a maximally separated set $\{\omega_A^j \}_{j \in \Omega_A} $ of spherical caps of ${\mathbb S}^{d-1}$ of aperture $A$, i.e. the angle $\angle{(\theta_1,\theta_2)}$ between 
any two vectors in $\theta_1$, $\theta_2 \in \omega_A^j$ satisfies
$
\left| \angle{(\theta_1,\theta_2)} \right| \leq A^{-1}.
$
and the characteristic functions $\{ {\mathbf 1}_{\omega_A^j} \}$ satisfy
$
1 \leq \sum_{j \in \Omega_A} {\mathbf 1}_{\omega_A^j}(\theta) \leq 2^d$, for all $ \theta \in {\mathbb S}^{d-1}$.

Further, we define the function
\begin{equation*}
\alpha (j_1,j_2) = \inf \left\{ \left| \angle{( \pm \theta_1, \theta_2)} \right| : \ \theta_1 \in \omega_A^{j_1}, \ \theta_2 \in \omega_A^{j_2} \right\}
\end{equation*}
which measures the minimal angle between any two straight lines through the spherical caps $\omega_A^{j_1}$ and 
$\omega_A^{j_2}$, respectively. It is easily observed that for any fixed $j_1 \in \Omega_A$ there exist only a finite number of $j_2 \in \Omega_A$ which satisfies $\alpha (j_1,j_2) \sim A^{-1}$.
Based on the above construction, for each $j \in \Omega_A$ we define 
\begin{equation*}
{\mathcal{S}}_{j}^A = \Big\{ (\tau, \xi, \boldsymbol{\eta}) \in \R \cross (\R^d \setminus \{0\}) \, : \, 
 \frac{(\xi,\boldsymbol{\eta})}{|(\xi,\boldsymbol{\eta})|} \in \omega_A^j \Big\}
\end{equation*}
and the corresponding localization operator
\begin{equation*}
\F (R_j^A u) (\tau, \xi,\boldsymbol{\eta}) 
= \mathbf{1}_{\omega_j^A} \Big(\frac{(\xi,\boldsymbol{\eta})}{|(\xi,\boldsymbol{\eta})|} \Big) \F u (\tau , \xi,\boldsymbol{\eta}).
\end{equation*}
In addition, we define 
\begin{align*}
 C_{\textnormal{(Ia)}} = \{ (\xi,\boldsymbol{\eta} ) \in \mathbb{S}^{d-1} \, | \, |\boldsymbol{\eta}| \ll 1 . \},\qquad
& \Omega_{A, \textnormal{(Ia)}} = \{ j \in \Omega_A \, | \, \omega_A^j \cap C_{\textnormal{(Ia)}} \not= 
\emptyset.\}
\end{align*}

\begin{prop}\label{prop3.3}
Assume \textit{Assumption} \textnormal{\ref{assumption1}}. Let $A \gg 1$ be dyadic, 
$j_1$, $j_2 \in \Omega_{A, \textnormal{(Ia)}}$ and $\alpha(j_1, j_2) \lesssim A^{-1}$. 
Then we have
\begin{align}
& \left\| {\mathbf{1}}_{G_{N_1, L_1} \cap \mathcal{S}_{j_1}^A} \int \ha{v}_{N_2, L_2}|_{\mathcal{S}_{j_2}^A} 
(\tau_2,\xi_2,\boldsymbol{\eta_2}) 
\ha{w}_{N_0, L_0} (\tau_1+ \tau_2, \xi_1+\xi_2, \boldsymbol{\eta_1}+ \boldsymbol{\eta_2}) d\sigma_2 
\right\|_{L^2}
\notag \\ 
& \qquad \qquad \qquad \qquad \qquad 
\lesssim A^{-\frac{d-1}{2}} N_1^{\frac{d-3}{2}} (L_0 L_2)^{\frac{1}{2}} 
\|\ha{v}_{N_2, L_2}|_{\mathcal{S}_{j_2}^A} \|_{L^2} 
\|\ha{w}_{N_0, L_0}\|_{L^2}, \label{bilinearStrichartz-2}\\
& \left\| {\mathbf{1}}_{G_{N_2,L_2} \cap \mathcal{S}_{j_2}^A} \int 
\ha{w}_{N_0, L_0} (\tau_1+ \tau_2, \xi_1+\xi_2, \boldsymbol{\eta_1}+ \boldsymbol{\eta_2} )   \ha{u}_{N_1, L_1}|_{\mathcal{S}_{j_1}^A}(\tau_1, \xi_1, \boldsymbol{\eta_1} )  d \sigma_1 
\right\|_{L^2} \notag \\ 
& \qquad \qquad \qquad \qquad \qquad 
\lesssim A^{-\frac{d-1}{2}} N_1^{\frac{d-3}{2}} (L_0 L_1)^{\frac{1}{2}} \|\ha{w}_{N_0, L_0} \|_{L^2}
\|\ha{u}_{N_1, L_1}|_{\mathcal{S}_{j_1}^A} \|_{L^2} .\label{bilinearStrichartz-3}
\end{align}
In addition, if $| \xi | \gg A^{-1} N_1$, we get
\begin{align}
& \left\| {\mathbf{1}}_{G_{N_0, L_0}} \int  \ha{u}_{N_1, L_1}|_{\mathcal{S}_{j_1}^A}(\tau_1, \xi_1, \boldsymbol{\eta_1} ) 
\ha{v}_{N_2, L_2}|_{\mathcal{S}_{j_2}^A} (\tau- \tau_1, \xi-\xi_1, \boldsymbol{\eta}- \boldsymbol{\eta_1} ) d\sigma_1 \right\|_{L^2} \notag \\ 
& \qquad \qquad \qquad \qquad \qquad 
\lesssim A^{-\frac{d-2}{2}} N_1^{\frac{d-3}{2}} (L_1 L_2)^{\frac{1}{2}} \| \ha{u}_{N_1, L_1}|_{\mathcal{S}_{j_1}^A}\|_{L^2} 
\|\ha{v}_{N_2, L_2}|_{\mathcal{S}_{j_2}^A} \|_{L^2}.\label{bilinearStrichartz-1}
\end{align}
\end{prop}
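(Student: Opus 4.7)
The plan is to prove all three inequalities in Proposition~\ref{prop3.3} by reducing each to a pointwise bilinear $L^2_{t,x,\mathbf{y}}$-estimate and then applying the bilinear transversal estimate, Proposition~\ref{bilinear-transversal}. The three bounds differ only in which two of $u$, $v$, $w$ are being multiplied and in the transversality constant $K$ one extracts.

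First I would use Plancherel together with the convolution--product duality $\widehat{fg}=\ha{f}\ast\ha{g}$ (up to a harmless reflection of one frequency variable, which preserves the relevant localization sets $G_{N,L}$ and $\mathcal{S}_j^A$) and the fact that $\mathbf{1}_E$ is an $L^2$-contraction to rewrite each left-hand side as the $L^2_{t,x,\mathbf{y}}$-norm of the product of two cap- or frequency-localized functions. In every case the cap-restricted factor has spatial Fourier support contained in a ball of radius $r\sim N_1/A$, because a spherical cap of aperture $A^{-1}$ at radial distance $\sim N_1$ fits inside such a ball. Applying Proposition~\ref{bilinear-transversal} then produces a bound of the form $r^{(d-1)/2}K^{-1/2}(L_iL_j)^{1/2}$, and matching with the target exponents $A^{-(d-1)/2}N_1^{(d-3)/2}$ in \eqref{bilinearStrichartz-2}, \eqref{bilinearStrichartz-3} and $A^{-(d-2)/2}N_1^{(d-3)/2}$ in \eqref{bilinearStrichartz-1} forces the transversality parameters $K\sim N_1^2$ and $K\sim N_1^2/A$ respectively.

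The remaining task, which is the heart of the proof, is to verify the transversality hypothesis $|\nabla\varphi(\xi_1,\boldsymbol{\eta}_1)-\nabla\varphi(\xi_2,\boldsymbol{\eta}_2)|\gtrsim K$. For this I would exploit the homogeneity $\nabla\varphi(\rho\theta)=\rho^{2}\nabla\varphi(\theta)$ together with the Taylor expansion $\nabla\varphi(\theta)\approx(3,\pm 2z)$, valid for $\theta\approx(\pm 1,z)$ with $|z|\ll 1$, which is precisely the regime enforced by case (Ia). For \eqref{bilinearStrichartz-2} and \eqref{bilinearStrichartz-3}, the high-frequency factor has $|\xi|\sim N_1$ with a sign fixed by the cap, so the first coordinate of its $\nabla\varphi$ is $\sim 3N_1^2$, while the other factor satisfies $|\nabla\varphi|\lesssim N_0^2\le N_1^2$; the first coordinate then yields the separation $K\sim N_1^2$, with a further subcase argument when $N_0\sim N_1$ that uses the fixed sign of $\xi$ on the cap to rule out cancellation. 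For \eqref{bilinearStrichartz-1} I would split on whether the two caps $\omega_A^{j_1}$, $\omega_A^{j_2}$ lie near the same or opposite ends of the $\xi$-axis: in the antiparallel subcase $\xi_1+\xi_2\approx\rho_1-\rho_2$, and the hypothesis $|\xi|\gg A^{-1}N_1$ directly produces $|\rho_1^2-\rho_2^2|\sim N_1|\rho_1-\rho_2|\gg N_1^2/A$ via the first coordinate.

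The main obstacle I anticipate is the parallel-cap subcase of \eqref{bilinearStrichartz-1}: there the difference $\nabla\varphi(\xi_1,\boldsymbol{\eta}_1)-\nabla\varphi(\xi_2,\boldsymbol{\eta}_2)$ vanishes on the diagonal $(\xi_1,\boldsymbol{\eta}_1)=(\xi_2,\boldsymbol{\eta}_2)$, so a uniform lower bound fails at face value. To get around this I would refine each cap into a second, finer almost-orthogonal decomposition indexed by the radial coordinate $\rho$ and by sub-caps in $z$, and treat two regimes. When $|\rho_1-\rho_2|\gtrsim A^{-1}N_1$ the first coordinate of $\nabla\varphi_1-\nabla\varphi_2$ is $\gtrsim N_1^2/A$; when instead $\rho_1\approx\rho_2$, the $\boldsymbol{\eta}$-component reduces essentially to $2\rho_1^2(z_1-z_2)$, and the output localization $G_{N_0,L_0}$ together with the radial balance forces angular separation $|z_1-z_2|\gtrsim A^{-1}$ between the tiles that can actually contribute, recovering the needed bound. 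The finitely overlapping contributions are then summed by $\ell^2$-orthogonality of the finer decomposition together with Cauchy--Schwarz.
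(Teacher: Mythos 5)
Your plan hinges on the claim that the cap-restricted factor ``has spatial Fourier support contained in a ball of radius $r\sim N_1/A$,'' but this is false. The set $\mathcal{S}_j^A\cap G_{N,L}$ is a truncated cone: its angular extent is $\sim N_1/A$ in each of the $d-1$ tangential directions, but its radial extent is $\sim N_1$ (the full dyadic annulus $|(\xi,\boldsymbol{\eta})|\sim N_1$). It fits only in a ball of radius $\sim N_1$, so Proposition \ref{bilinear-transversal} with $r\sim N_1/A$ does not apply; matching exponents to determine $K$ is not a substitute for verifying the hypotheses. Trying to repair this by slicing the cap radially into $\sim A$ pieces of thickness $N_1/A$ also fails: in the regime $N_0\gg N_1/A$ the output supports then overlap $\sim AN_0/N_1$-fold, introducing a loss of $(AN_0/N_1)^{1/2}$ that destroys the estimate. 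The paper's proof sidesteps this by computing the measure of the relevant frequency set directly in polar coordinates: the angular factor $A^{-(d-1)}N_1^{d-1}$ comes from the cap aperture, while the radial factor $\lesssim\max(L_i,L_j)/N_1^2$ (with one extra factor $A$ in the case of \eqref{bilinearStrichartz-1}) comes from a lower bound on the radial derivative $|\partial_{r}\Phi|$ of the appropriate resonance function, not from a spatial ball constraint. These two gains are independent, and a ball-based reduction necessarily couples them.

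A smaller but instructive point: the ``parallel-cap subcase'' of \eqref{bilinearStrichartz-1} that you single out as the main obstacle does not occur in case (Ia). There $|\xi_1|\sim|\xi_2|\sim N_1$, and as the paper shows $L_{012}^{\max}\ll N_1^3$ forces $|\xi_1+\xi_2|\ll N_1$, so $\xi_1$ and $\xi_2$ have opposite signs and the two caps are necessarily antipodal. The effort you invest in a second decomposition for the parallel subcase should instead go into the radial/angular measure computation described above, which is where the content of the proposition actually lies.
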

\begin{proof}
First, we show \eqref{bilinearStrichartz-1}.
We observe that $L_{012}^{\max} \ll N_1^3$ yields $|\xi| \ll N_1$. Indeed, we calculate that
\begin{align*}
3 L_{012}^{\max} & \geq \bigl| \bigl( \tau-\xi(\xi^2+|\boldsymbol{\eta}|^2 ) \bigr) - 
\bigl( \tau_1-\xi_1(\xi_1^2 + |\boldsymbol{\eta_1}|^2 )\bigr) \\
& \qquad \qquad \qquad \qquad 
- \bigl( (\tau-\tau_1) - (\xi-\xi_1)((\xi-\xi_1)^2 + |\boldsymbol{\eta}-\boldsymbol{\eta_1}|^2
 )\bigr) \bigr|\\
& =  \bigl|3 \xi \xi_1(\xi-\xi_1) + \xi |\boldsymbol{\eta}|^2 - \xi_1  |\boldsymbol{\eta_1}|^2 - 
(\xi- \xi_1) |\boldsymbol{\eta} - \boldsymbol{\eta_1}|^2 \bigr|\\
& =: | \Phi_{\xi,\boldsymbol{\eta}}(\xi_1,\boldsymbol{\eta_1})|.
\end{align*}
Since $j_1$, $j_2 \in \Omega_{A, \textnormal{(Ia)}}$ and $L_{012}^{\max} \ll N_1^3$, 
 $|\xi_1| \sim |\xi-\xi_1| \sim N_1$ and 
$|\boldsymbol{\eta_1}| \ll N_1$, $|\boldsymbol{\eta} - \boldsymbol{\eta_1}| \ll N_1$, the above inequality implies $|\xi| \ll N_1$. 
By following the standard Cauchy-Schwarz argument, we get
\begin{align*}
& \left\| {\mathbf{1}}_{G_{N_0, L_0}} \int  \ha{u}_{N_1, L_1}|_{\mathcal{S}_{j_1}^A}(\tau_1, \xi_1, \boldsymbol{\eta_1} ) 
\ha{v}_{N_2, L_2}|_{\mathcal{S}_{j_2}^A} (\tau- \tau_1, \xi-\xi_1, \boldsymbol{\eta}- \boldsymbol{\eta_1} ) d\sigma_1 \right\|_{L^2} \notag \\ 
\leq & \sup_{(\tau, \xi, \boldsymbol{\eta} ) \in G_{N_0, L_0}} |E(\tau, \xi, \boldsymbol{\eta} )|^{1/2} 
 \left\| \left| 
 \ha{u}_{N_1, L_1}|_{\mathcal{S}_{j_1}^A}\right|^2 * 
\left| \ha{v}_{N_2, L_2}|_{\mathcal{S}_{j_2}^A}  \right|^2
\right\|_{L^1}^{1/2}\\
\leq & \sup_{(\tau, \xi, \boldsymbol{\eta} ) \in G_{N_0, L_0}} |E(\tau, \xi, \boldsymbol{\eta} )|^{1/2} 
\|\ha{u}_{N_1, L_1}|_{\mathcal{S}_{j_1}^A} \|_{L^2} 
\| \ha{v}_{N_2, L_2}|_{\mathcal{S}_{j_2}^A} \|_{L^2},
\end{align*}
where $E(\tau, \xi, \boldsymbol{\eta} ) \subset \R^{d+1}$ is defined by
\begin{equation*}
E(\tau, \xi, \boldsymbol{\eta} ) = \{ (\tau_1, \xi_1, \boldsymbol{\eta_1} ) \in G_{N_1, L_1} \cap {\mathcal{S}_{j_1}^A} 
\, | \, (\tau-\tau_1, \xi- \xi_1, \boldsymbol{\eta}-\boldsymbol{\eta_1} ) \in G_{N_2,L_2} \cap  {\mathcal{S}_{j_2}^A}  \}.
\end{equation*}
Thus, it suffices to show
\begin{equation}
\sup_{(\tau, \xi, \boldsymbol{\eta} ) \in G_{N_0, L_0}} |E(\tau, \xi, \boldsymbol{\eta} )| 
\lesssim A^{-(d-2)} N_1^{d-3}L_1 L_2.\label{est01-prop3.3}
\end{equation}
Clearly, for fixed $(\xi_1, \boldsymbol{\eta_1} )$, it holds
\begin{equation}
\sup_{(\tau, \xi, \boldsymbol{\eta} ) \in G_{N_0, L_0}}
| \{ \tau_1 \, | \, (\tau_1, \xi_1, \boldsymbol{\eta_1} ) \in E(\tau, \xi, \boldsymbol{\eta} ) \}| 
\lesssim \min(L_1, L_2).\label{est02-prop3.3}
\end{equation}
A simple calculation yields
\begin{align}
& |\partial_{\xi_1} \Phi_{\xi,\boldsymbol{\eta} }(\xi_1,\boldsymbol{\eta_1} )|  = 
|3 \xi (\xi- 2 \xi_1) + \boldsymbol{\eta} \cdot (\boldsymbol{\eta} - 2 \boldsymbol{\eta_1})|, 
\label{est02a-prop3.3}\\
& |\nabla_{\boldsymbol{\eta_1}} \Phi_{\xi,\boldsymbol{\eta} }(\xi_1,\boldsymbol{\eta_1} )|  = 
2|(\xi-\xi_1) \boldsymbol{\eta} - \xi \boldsymbol{\eta_1}| \label{est02b-prop3.3}.
\end{align}
Let $r_1=|(\xi_1,\boldsymbol{\eta_1} )|$, $r_2=|(\xi-\xi_1,\boldsymbol{\eta}-\boldsymbol{\eta_1} )|$ and $(\theta_1, \boldsymbol{\theta_1'})= (\xi_1/r_1, \boldsymbol{\eta_1}/r_1)$, 
$(\theta_2, \boldsymbol{\theta_2'}) =
 ((\xi-\xi_1)/r_2, (\boldsymbol{\eta}-\boldsymbol{\eta_1})/r_2 )\in \mathbb{S}^{d-1}$. 
Since 
$(\theta_1, \boldsymbol{\theta_1'}) \times (\theta_2, \boldsymbol{\theta_2'}) 
\in \omega_A^{j_1} \times \omega_A^{j_2}$ with $\alpha(j_1,j_2) \lesssim A^{-1}$ and $|\xi| \ll N_1$, 
we have $|(\theta_1, \boldsymbol{\theta_1'}) + (\theta_2, \boldsymbol{\theta_2'}) | \lesssim A^{-1}$. 
Furthermore, the assumption $(\theta_j, \boldsymbol{\theta_j'}) \in C_{\textnormal{(Ia)}}$ implies 
$|\theta_1| \sim |\theta_2| \sim 1$ and 
$ \max (|\boldsymbol{\theta_1'}|, |\boldsymbol{\theta_2'}|) \ll 1$. 
Therefore, we deduce from the assumption $A^{-1} N_1 \ll |\xi|$ that
\begin{align*}
|\boldsymbol{\eta}|& = |r_1\boldsymbol{\theta_1'}+ r_2 \boldsymbol{\theta_2'}| \leq  |r_1-r_2| |\boldsymbol{\theta_1'}| + r_2| \boldsymbol{\theta_1'}+ \boldsymbol{\theta_2'}|\\
& \leq |r_1 \theta_1 - r_2 \theta_1| + r_2 | \boldsymbol{\theta_1'}+ \boldsymbol{\theta_2'}|
 \leq |r_1 \theta_1 + r_2 \theta_2| + 2r_2 |  (\theta_1,\boldsymbol{\theta_1'})+ 
(\theta_2, \boldsymbol{\theta_2'}) | \leq 2 |\xi|.
\end{align*}
Hence, if $ (\tau_1, \xi_1, \boldsymbol{\eta_1} ) \in E(\tau, \xi, \boldsymbol{\eta} )$, 
the above inequality and \eqref{est02a-prop3.3}, \eqref{est02b-prop3.3} yield
\begin{align*}
 |\partial_{r_1} \Phi_{\xi,\boldsymbol{\eta} }(\xi_1,\boldsymbol{\eta_1} )| & =
|(\theta_1\partial_{\xi_1} + \boldsymbol{\theta_1'} \cdot  \nabla_{\boldsymbol{\eta_1}}) \Phi_{\xi,\boldsymbol{\eta} }(\xi_1,\boldsymbol{\eta_1} ) | \\
& \geq |\theta_1| |\partial_{\xi_1} \Phi_{\xi,\boldsymbol{\eta} }(\xi_1,\boldsymbol{\eta_1} )| - 
|\boldsymbol{\theta_1'}| |\nabla_{\boldsymbol{\eta_1}} \Phi_{\xi,\boldsymbol{\eta} }(\xi_1,\boldsymbol{\eta_1} ) | \\
& \gtrsim  |\xi| N_1 \geq A^{-1} N_1^2,
\end{align*}
which implies that $r_1$ is confined to a set of measure 
$\sim A \max(L_1,L_2)/N_1^2$ for fixed $\theta_1$, $\boldsymbol{\theta_1'}$ since, 
as we saw above, it holds that
\begin{align*}
\max (L_1, L_2) & \gtrsim \bigl| 
\bigl( \tau_1-\xi_1(\xi_1^2 + |\boldsymbol{\eta_1}|^2 )\bigr) 
+ \bigl( (\tau-\tau_1) - (\xi-\xi_1)((\xi-\xi_1)^2 + |\boldsymbol{\eta}-\boldsymbol{\eta_1}|^2
 )\bigr) \bigr|\\
& = |\bigl( \tau-\xi(\xi^2+|\boldsymbol{\eta}|^2 ) \bigr) + \Phi_{\xi,\boldsymbol{\eta} }(\xi_1,\boldsymbol{\eta_1} )|.
\end{align*}
Therefore, we get
\begin{align*}
&  | \{ (\xi_1, \boldsymbol{\eta_1} ) \ | \ (\tau_1, \xi_1, \boldsymbol{\eta_1} ) \in E(\tau, \xi,\boldsymbol{\eta} ) \} |\\
= &  \int_{\boldsymbol{\theta_1'}}\int_{\theta_1} \int_{r_1} {{\mathbf{1}}}_{E(\tau,\xi,\boldsymbol{\eta} )}
 (r_1, \theta_1, \boldsymbol{\theta_1'}) r_1^{d-1}  d r_1 d \theta_1d \boldsymbol{\theta_1'} \\
\lesssim & A^{-(d-2)} N_1^{d-3}\max (L_1, L_2).
\end{align*}
This and \eqref{est02-prop3.3} give \eqref{est01-prop3.3}.

Next, we consider \eqref{bilinearStrichartz-2}. By the same argument it suffices to prove
\begin{equation}
\sup_{(\tau_1, \xi_1, \boldsymbol{\eta}_1 ) \in G_{N_1, L_1}} |E_1(\tau_1, \xi_1, \boldsymbol{\eta}_1 )| 
\lesssim A^{-(d-1)} N_1^{d-3}L_0 L_2,\label{est11-prop3.3}
\end{equation}
where
\[
E_1(\tau_1, \xi_1, \boldsymbol{\eta}_1 )=\big\{ (\tau_2,\xi_2,\boldsymbol{\eta}_2)\in G_{N_2,L_2}\cap \mathcal{S}_{j_2}^A \, | \,  (\tau_1+\tau_2,\xi_1+\xi_2,\boldsymbol{\eta}_1+\boldsymbol{\eta}_2)\in G_{N_0,L_0}\big\}.
\]
As above, we have
\begin{align*}
3 L_{012}^{\max} & \geq  \bigl|3 (\xi_1+\xi_2) \xi_1\xi_2 + (\xi_1+\xi_2) |\boldsymbol{\eta}_1+\boldsymbol{\eta}_2|^2 - \xi_1  |\boldsymbol{\eta_1}|^2 - 
\xi_2 |\boldsymbol{\eta_2}|^2 \bigr|\\
                 & =: | \Phi^{(1)}_{\xi_1,\boldsymbol{\eta_1}}(\xi_2,\boldsymbol{\eta_2})|.
     \end{align*}
     We compute
     $
|\partial_{\xi_2}\Phi^{(1)}_{\xi_1,\boldsymbol{\eta_1}}(\xi_2,\boldsymbol{\eta_2})|\gtrsim N_1^2$,  $|\nabla_{\boldsymbol{\eta_1}}\Phi^{(1)}_{\xi_1,\boldsymbol{\eta_1}}(\xi_2,\boldsymbol{\eta_2})|\ll N_1^2,
     $
and with the same notation for polar coordinates as above , we obtain
     \[
|\partial_{r_2} \Phi^{(1)}_{\xi_1,\boldsymbol{\eta_1}}(\xi_2,\boldsymbol{\eta_2})|\gtrsim N_1^2.
     \]
 If $(\tau_2,\xi_2,\boldsymbol{\eta}_2) \in E_1(\tau_1, \xi_1, \boldsymbol{\eta}_1 ) $, then for a fixed angular part $(\theta_2,\boldsymbol{\theta_2'})$ of $(\xi_2,\boldsymbol{\eta}_2)$, the radial direction $r_2$ is confined to an interval of length $\lesssim \max(L_0,L_2)/N_1^2$. By the analogue of \eqref{est02-prop3.3} we conclude \eqref{est11-prop3.3} and the proof of \eqref{bilinearStrichartz-2} is complete.
     
Finally, \eqref{bilinearStrichartz-3} follows by symmetry.
\end{proof}
\begin{prop}\label{prop3.4}
Assume \textit{Assumption} \textnormal{\ref{assumption1}}. 
Let $A \gg 1$ be dyadic, 
$j_1$, $j_2 \in \Omega_{A, \textnormal{(Ia)}}$, $\alpha(j_1, j_2) \sim A^{-1}$ and 
$|\xi_1+\xi_2| \lesssim A^{-1} N_1$. 
Then we get
\begin{equation}
\begin{split}
& \left|\int_{*}{  \ha{w}_{{N_0, L_0}}(\tau, \xi, \boldsymbol{\eta} ) 
\ha{u}_{N_1, L_1}|_{\mathcal{S}_{j_1}^A}(\tau_1, \xi_1, \boldsymbol{\eta_1} )  
\ha{v}_{N_2, L_2}|_{\mathcal{S}_{j_2}^A}(\tau_2, \xi_2, \boldsymbol{\eta_2} ) 
}
d\sigma_1 d\sigma_2 \right| \\
& \qquad \qquad  \lesssim  A^{-\frac{d-3}{2}} 
N_1^{\frac{d-6}{2}} (L_0 L_1 L_2)^{\frac{1}{2}} \|\ha{u}_{N_1, L_1} \|_{L^2} \| \ha{v}_{N_2, L_2} \|_{L^2} 
\|\ha{w}_{{N_0, L_0}} \|_{L^2}.\label{est01-prop3.4}
\end{split}
\end{equation}
\end{prop}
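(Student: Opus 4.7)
The plan is to prove Proposition~\ref{prop3.4} by applying the nonlinear Loomis--Whitney inequality (Proposition~\ref{prop2.3}) on a three-dimensional slice of frequency space. The bilinear estimate \eqref{bilinearStrichartz-1} is ruled out by the hypothesis $|\xi_1+\xi_2|\lesssim A^{-1}N_1$, so we must exploit the full trilinear resonance structure. Geometrically, this hypothesis combined with $|\xi_1|,|\xi_2|\sim N_1$ forces $\theta_1\approx-\theta_2$, and the angular mismatch between $-\theta_1$ and $\theta_2$ lies in a distinguished direction of $\mathbb{S}^{d-1}$. By rotational invariance in $\mathbf{y}$, I may assume this direction is $e_1\in\R^{d-1}$ and split $\boldsymbol{\eta}=(\eta^{(1)},\boldsymbol{\eta}')$ with $\boldsymbol{\eta}'\in\R^{d-2}$, similarly for $\boldsymbol{\eta}_1,\boldsymbol{\eta}_2$. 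The variables $\boldsymbol{\eta}'$ and $\boldsymbol{\eta}'_1$ will be spectators.

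Freezing the spectators, the trilinear form $T_{\boldsymbol{\eta}',\boldsymbol{\eta}'_1}$ reduces to a three-dimensional convolution integral in $(\tau,\xi,\eta^{(1)})$. A rescaling $(\tau,\xi,\eta^{(1)})\mapsto(N_1^{-3}\tau,N_1^{-1}\xi,N_1^{-1}\eta^{(1)})$ preserves the form of the ZK dispersion and brings the three sliced characteristic surfaces into the unit-diameter regime required by Proposition~\ref{prop2.3}. The key transversality input is the unit-normal determinant in rescaled coordinates, which by a direct cofactor expansion simplifies (modulo subdominant $|\boldsymbol{\eta}_j|^2$ contributions) to the rescaled version of
\[
6(\xi^2-\xi_1\xi_2)(\xi_2\eta_1^{(1)}-\xi_1\eta_2^{(1)})
\]
after using $\xi=\xi_1+\xi_2$ and $\eta^{(1)}=\eta_1^{(1)}+\eta_2^{(1)}$. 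In rescaled coordinates $|\tilde\xi^2-\tilde\xi_1\tilde\xi_2|\sim 1$, while the second factor has size $\sim|\tilde\eta^{(1)}|\sim A^{-1}$ by the chosen orientation of the mismatch direction. Hence $\delta\sim A^{-1}$, and the diameter hypothesis of Proposition~\ref{prop2.3} is met since each rescaled cap also has aperture $\sim A^{-1}$.

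Applying Proposition~\ref{prop2.3}, together with the standard dressing of surface-supported functions by modulation (which produces the factor $(L_0L_1L_2)^{1/2}$), and then undoing the scaling (which contributes $N_1^{-2}$), yields the slicewise estimate
\[
|T_{\boldsymbol{\eta}',\boldsymbol{\eta}'_1}|\lesssim A^{1/2}N_1^{-2}(L_0L_1L_2)^{1/2}\|\ha w(\cdot,\boldsymbol{\eta}')\|_{L^2}\|\ha u(\cdot,\boldsymbol{\eta}'_1)\|_{L^2}\|\ha v(\cdot,\boldsymbol{\eta}'-\boldsymbol{\eta}'_1)\|_{L^2}.
\]
I then integrate over $(\boldsymbol{\eta}',\boldsymbol{\eta}'_1)\in\R^{2(d-2)}$. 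Cauchy--Schwarz in $\boldsymbol{\eta}'_1$ (combined with Fubini) bounds the product of the $\ha u$- and $\ha v$-slice norms by $\|\ha u\|_{L^2(\R^{d+1})}\|\ha v\|_{L^2(\R^{d+1})}$ with no loss. A final Cauchy--Schwarz in $\boldsymbol{\eta}'$ produces the factor $\bigl(\mathrm{vol}(\mathrm{supp}_{\boldsymbol{\eta}'}\ha w)\bigr)^{1/2}\lesssim(A^{-1}N_1)^{(d-2)/2}$, using that $|\boldsymbol{\eta}|\lesssim A^{-1}N_1$ on $\mathrm{supp}\,\ha w$ (forced by $|\xi|\lesssim A^{-1}N_1$ and $\alpha(j_1,j_2)\sim A^{-1}$). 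Multiplying, $A^{1/2}N_1^{-2}(A^{-1}N_1)^{(d-2)/2}=A^{-(d-3)/2}N_1^{(d-6)/2}$, which is precisely the claimed bound \eqref{est01-prop3.4}.

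The main obstacle is the transversality step: one must select the rotation of $\mathbf{y}$-coordinates so that the angular mismatch of the two caps is realized in the $e_1$ direction, and then verify the determinant lower bound $|\det|\sim A^{-1}$ uniformly. A subtlety is that the mismatch can in principle be radial (in $\xi$) or transverse (in $\boldsymbol{\eta}$); only the transverse part produces the factor $|\tilde\eta^{(1)}|\sim A^{-1}$ in the determinant, so a further mild dyadic subdivision of the caps may be needed to isolate the regime where the transverse part dominates. Once this transversality has been secured, the remaining scaling bookkeeping and the two Cauchy--Schwarz steps are routine.
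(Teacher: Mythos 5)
Your plan — freeze the spectator variables $\boldsymbol{\eta}'_j\in\R^{d-2}$, rescale by $N_1$, apply the nonlinear Loomis--Whitney inequality of Proposition~\ref{prop2.3} on the resulting three-dimensional slices with transversality $\delta\sim A^{-1}$, then integrate over the spectators using the support constraint $|\boldsymbol{\eta}'_j|\lesssim A^{-1}N_1$ — is exactly the paper's. The determinant simplification and the exponent bookkeeping $A^{1/2}N_1^{-2}\cdot(A^{-1}N_1)^{(d-2)/2}=A^{-(d-3)/2}N_1^{(d-6)/2}$ both check out.

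The soft spot is how you dispose of the angular mismatch direction. First, the claim that ``by rotational invariance in $\mathbf{y}$ I may assume this direction is $e_1$'' does not work on the nose: for caps of aperture $A^{-1}$ that are $\sim A^{-1}$ apart, the tangent direction from $-\theta_1$ to $\theta_2$ can turn by $O(1)$ as $(\theta_1,\theta_2)$ ranges over $\omega_A^{j_1}\times\omega_A^{j_2}$, so there is no single $\boldsymbol{\eta}$-axis into which all of it can be rotated. Second, your ``radial (in $\xi$) versus transverse (in $\boldsymbol{\eta}$)'' dichotomy is not the operative one: because $|\xi_j|\sim N_1$, $|\boldsymbol{\eta}_j|\ll N_1$ and $\boldsymbol{\eta}_1\approx-\boldsymbol{\eta}_2$ up to $O(A^{-1}N_1)$ in case (Ia), the components of $\zeta_1\wedge\zeta_2$ that do not involve $\xi$ are automatically $\ll A^{-1}N_1^2$, so the radial part is never the competitor. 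What actually needs to be settled is \emph{which} of the $d-1$ scalars $\xi_1\eta_{2,k}-\xi_2\eta_{1,k}$ attains $\sim A^{-1}N_1^2$. The paper splits pointwise — a finite, $d$-dependent case distinction, not a dyadic subdivision — into $|\xi_1\eta_2-\xi_2\eta_1|\sim A^{-1}N_1^2$ (your case, slice coordinates $(\tau,\xi,\eta)$) and $|\xi_1\boldsymbol{\eta}'_2-\xi_2\boldsymbol{\eta}'_1|\sim A^{-1}N_1^2$, where the identical Loomis--Whitney argument is run on the slice $(\tau,\xi,\eta')$ for the dominant component $\eta'$ of $\boldsymbol{\eta}'$, with the remaining $\boldsymbol{\eta}$-coordinates as spectators. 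With that replacement your argument closes.
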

\begin{proof}
In the proof of Proposition \ref{prop3.3} we proved
$|\boldsymbol{\eta}| \lesssim |\xi|+A^{-1} N_1 \lesssim A^{-1} N_1$. 
Thus, by almost orthogonality, we may assume that
$\boldsymbol{\eta_j}$ is confined to a ball whose radius is comparable
to $A^{-1} N_1$. We write $\boldsymbol{\eta_j}=(\eta_j, \boldsymbol{\eta_j'})$.
Further, without loss of generality, we can assume $\max(|\boldsymbol{\eta_1'}|,|\boldsymbol{\eta_2'}|) \lesssim A^{-1} N_1$. 
Indeed, we can apply a rotation in the $\boldsymbol{\eta}$-subspace, since the phase function is invariant under such rotations.

Since $\alpha(j_1, j_2) \sim A^{-1}$, we have 
$\max(|\xi_1 \eta_2-\xi_2 \eta_1|, |\xi_1 \boldsymbol{\eta_2'} - \xi_2 \boldsymbol{\eta_1'}|) \sim A^{-1} N_1^2$. 
We first consider the case $|\xi_1 \eta_2-\xi_2 \eta_1| \sim A^{-1} N_1^2$. 
For fixed $\boldsymbol{\eta_1'}$, $\boldsymbol{\eta_2'}$, we will show that
\begin{equation}
\begin{split}
& \left|\int_{\hat{*}}{  \ha{w}_{{N_0, L_0}}(\tau, \xi, \boldsymbol{\eta} ) 
\ha{u}_{N_1, L_1}|_{\mathcal{S}_{j_1}^A}(\tau_1, \xi_1, \boldsymbol{\eta_1} )  
\ha{v}_{N_2, L_2}|_{\mathcal{S}_{j_2}^A} (\tau_2, \xi_2, \boldsymbol{\eta_2} ) 
}
d\hat{\sigma}_1 d\hat{\sigma}_2 \right| \\
& \qquad \lesssim A^{\frac{1}{2}} N_{1}^{-2}   (L_0 L_1 L_2)^{\frac{1}{2}} 
\|\ha{u}_{N_1, L_1} (\boldsymbol{\eta_1'}) \|_{L^2_{\tau \xi \eta}} 
\| \ha{v}_{N_2, L_2} (\boldsymbol{\eta_2'})\|_{L^2_{\tau \xi \eta}} 
\|\ha{w}_{{N_0, L_0}}(\boldsymbol{\eta'}) \|_{L^2_{\tau \xi \eta}} ,\label{est02-prop3.4}
\end{split}
\end{equation}
where $d \hat{\sigma}_j = d\tau_j d \xi_j d \eta_j$ and 
$\hat{*}$ denotes $(\tau,\xi, \eta) = (\tau_1 + \tau_2,\xi_1+\xi_2, \eta_1 + \eta_2).$ 
\eqref{est02-prop3.4} implies \eqref{est01-prop3.4} because 
\begin{align*}
& \left|\int_{*}{  \ha{w}_{{N_0, L_0}}(\tau, \xi, \boldsymbol{\eta} ) 
\ha{u}_{N_1, L_1}|_{\mathcal{S}_{j_1}^A}(\tau_1, \xi_1, \boldsymbol{\eta_1} )  
\ha{v}_{N_2, L_2}|_{\mathcal{S}_{j_2}^A} (\tau_2, \xi_2, \boldsymbol{\eta_2} ) 
}
d\sigma_1 d\sigma_2 \right| \\
\lesssim & \int \left|\int_{\hat{*}}{  \ha{w}_{{N_0, L_0}}(\tau, \xi, \boldsymbol{\eta} ) 
\ha{u}_{N_1, L_1}|_{\mathcal{S}_{j_1}^A}(\tau_1, \xi_1, \boldsymbol{\eta_1} )  
\ha{v}_{N_2, L_2}|_{\mathcal{S}_{j_2}^A} (\tau_2, \xi_2, \boldsymbol{\eta_2} ) 
}
d\hat{\sigma}_1 d\hat{\sigma}_2 \right| d \boldsymbol{\eta_1'} d \boldsymbol{\eta_2'}\\
\lesssim & \, A^{\frac{1}{2}} N_{1}^{-2}   (L_0 L_1 L_2)^{\frac{1}{2}} \\
& \qquad \times
\int \|\ha{u}_{N_1, L_1}|_{\mathcal{S}_{j_1}^A} (\boldsymbol{\eta_1'}) \|_{L^2_{\tau \xi \eta}} 
\| \ha{v}_{N_2, L_2}|_{\mathcal{S}_{j_2}^A} (\boldsymbol{\eta_2'})\|_{L^2_{\tau \xi \eta}} 
\|\ha{w}_{{N_0, L_0}}(\boldsymbol{\eta_1'}+\boldsymbol{\eta_2'}) \|_{L^2_{\tau \xi \eta}}d \boldsymbol{\eta_1'} d \boldsymbol{\eta_2'}\\
\lesssim & \, 
A^{-\frac{d-3}{2}} 
N_1^{\frac{d-6}{2}}  (L_0 L_1 L_2)^{\frac{1}{2}} \|\ha{u}_{N_1, L_1} \|_{L^2} \| \ha{v}_{N_2, L_2} \|_{L^2} 
\|\ha{w}_{{N_0, L_0}} \|_{L^2},
\end{align*}
where we used the
support conditions in the last step.

Now, we prove \eqref{est02-prop3.4}. We use the shorthand notation
\begin{align*}
& f_{\boldsymbol{\eta_1'}}(\tau_1,\xi_1, \eta_1) = 
\ha{u}_{N_1, L_1}|_{\mathcal{S}_{j_1}^A}  (\tau_1, \xi_1,
                 \boldsymbol{\eta_1} ),  \quad g_{\boldsymbol{\eta_2'}}(\tau_2, \xi_2, \eta_2) = 
\ha{v}_{N_2, L_2}|_{\mathcal{S}_{j_2}^A}  (\tau_2, \xi_2, \boldsymbol{\eta_2} ), \\
& h_{\boldsymbol{\eta'}} (\tau, \xi, \eta) = \ha{w}_{{N_0, L_0}}(\tau, \xi, \boldsymbol{\eta} ),
\end{align*}
and show
\begin{equation}
\begin{split}
& \left|\int_{\hat{*}}{   h_{\boldsymbol{\eta'}} (\tau, \xi, \eta)
f_{\boldsymbol{\eta_1'}}(\tau_1,\xi_1, \eta_1) 
g_{\boldsymbol{\eta_2'}}(\tau_2, \xi_2, \eta_2)  
}
d\hat{\sigma}_1 d\hat{\sigma}_2 \right| \\
& \qquad \qquad \qquad \lesssim A^{\frac{1}{2}} N_{1}^{-2}   (L_0 L_1 L_2)^{\frac{1}{2}} 
\|f_{\boldsymbol{\eta_1'}} \|_{L^2_{\tau \xi \eta}} 
\| g_{\boldsymbol{\eta_2'}}\|_{L^2_{\tau \xi \eta}} 
\| h_{\boldsymbol{\eta'}} \|_{L^2_{\tau \xi \eta}}.\label{est03-prop3.4}
\end{split}
\end{equation}
Applying the transformation $\tau_1 = \xi_1(\xi_1^2 + \eta_1^2 +|\boldsymbol{\eta_1'}|^2) + c_1$ and 
$\tau_2 = \xi_2(\xi_2^2 + \eta_2^2 +|{\boldsymbol{\eta_2'}}|^2) + c_2$ and Fubini's theorem, we find that it suffices to prove
\begin{align}
& \left| \int h_{\boldsymbol{\eta'}} (\phi_{\boldsymbol{\eta_1'},c_1} (\xi_1, \eta_1) + \phi_{\boldsymbol{\eta_2'},c_2} (\xi_2, \eta_2))  
f_{\boldsymbol{\eta_1'}} (\phi_{\boldsymbol{\eta_1'},c_1} (\xi_1, \eta_1) ) 
g_{\boldsymbol{\eta_2'}} (\phi_{\boldsymbol{\eta_2'},c_2}(\xi_2, \eta_2)) 
d \xi_1d \eta_1 d\xi_2 d\eta_2 \right| \notag\\
& \qquad \qquad \qquad \qquad \qquad  \lesssim  A^{\frac{1}{2}}N_1^{-2} 
\| f_{\boldsymbol{\eta_1'}} \circ \phi_{\boldsymbol{\eta_1'},c_1}\|_{L_{\xi \eta}^2} \|g_{\boldsymbol{\eta_2'}} 
\circ \phi_{\boldsymbol{\eta_2'},c_2} \|_{L_{\xi \eta}^2} 
\|h_{\boldsymbol{\eta'}} \|_{L_{\tau \xi \eta }^2}, \label{est04-prop3.4}
\end{align}
where $h_{\boldsymbol{\eta'}} (\tau, \xi, \eta)$ 
is supported in $c_0 \leq \tau - \xi(\xi^2 + \eta^2 + |{\boldsymbol{\eta'}}|^2) \leq c_0 +1$ and 
\begin{equation*}
\phi_{\boldsymbol{\eta_j'}, c_j} (\xi_j,\eta_j) = (\xi_j(\xi_j^2 + \eta_j^2 + |\boldsymbol{{\eta_j'}}|^2) + c_j, \, \xi_j, \, \eta_j) \quad \textnormal{for} \ j=1,2.
\end{equation*}
We use the scaling $(\tau, \, \xi, \, \eta) \to (N_1^3 \tau , \, N_1 \xi, \, N_1 \eta)$ to define
\begin{align*}
 &\tilde{f}_{\boldsymbol{\eta_1'}} (\tau_1, \xi_1 , \eta_1)  = f_{\boldsymbol{\eta_1'}} (N_1^3 \tau_1 , N_1 \xi_1, N_1 \eta_1), \quad
 \tilde{g}_{\boldsymbol{\eta_2'}} (\tau_2, \xi_2, \eta_2)  = g_{\boldsymbol{\eta_2'}} (N_1^3 \tau_2, N_1 \xi_2, N_1 \eta_2), \\
& \tilde{h}_{\boldsymbol{\eta'}} (\tau, \xi, \eta)  = h_{\boldsymbol{\eta'}} (N_1^3 \tau, N_1 \xi, N_1 \eta). 
\end{align*}
Let $\boldsymbol{\tilde{\eta_j}} = N_1^{-1} \boldsymbol{\eta_j'}$, 
$\tilde{c_j} = N_1^{-3} c_j$. The inequality (\ref{est04-prop3.4}) reduces to
\begin{equation*}
\begin{split}
& \left| \int \tilde{h}_{\boldsymbol{\eta'}} (\phi_{\boldsymbol{\tilde{\eta}_1}, \tilde{c}_1} (\xi_1, \eta_1) + 
\phi_{\boldsymbol{\tilde{\eta}_2}, \tilde{c}_2} (\xi_2, \eta_2))  \tilde{f}_{\boldsymbol{\eta_1'}}
 (\phi_{\boldsymbol{\tilde{\eta}_1}, \tilde{c}_1} (\xi_1, \eta_1) ) \tilde{g}_{\boldsymbol{\eta_2'}} (\phi_{\boldsymbol{\boldsymbol{\tilde{\eta}_2}}, \tilde{c}_2}
(\xi_2, \eta_2)) d \xi_1d \eta_1 d\xi_2 d\eta_2 \right| \\
&\qquad \qquad \qquad \qquad \qquad \qquad  \lesssim  A^{\frac{1}{2}}N_1^{-\frac{3}{2}} 
\| \tilde{f}_{\boldsymbol{\eta_1'}}
 \circ \phi_{\boldsymbol{\tilde{\eta}_1}, \tilde{c}_1} \|_{L_{\xi \eta}^2} 
\| \tilde{g}_{\boldsymbol{\eta_2'}}  \circ \phi_{\boldsymbol{\tilde{\eta}_2}, \tilde{c}_2} \|_{L_{\xi \eta}^2} 
\|\tilde{h}_{\boldsymbol{\eta'}} \|_{L_{\tau \xi \eta}^2},
\end{split}
\end{equation*}
Note that $|\boldsymbol{\tilde{\eta_j}}| \lesssim A^{-1}$ and we easily see $|\xi_j| \sim 1$, $|\eta_j| \ll 1$ 
and $|\xi_1 \eta_2-\xi_2 \eta_1| \sim A^{-1}$ if 
$(\xi_1,\eta_1) \in \supp (\tilde{f}_{\boldsymbol{\eta_1'}}\circ \phi_{\boldsymbol{\tilde{\eta}_1}, \tilde{c}_1})$, $(\xi_2,\eta_2) \in \supp (\tilde{g}_{\boldsymbol{\eta_2'}}\circ \phi_{\boldsymbol{\tilde{\eta}_2}, \tilde{c}_2})$. 
Therefore, letting $\boldsymbol{\tilde{\eta}} = N_1^{-1}\boldsymbol{\eta'}$, we can assume that $\tilde{h}_{\boldsymbol{\eta'}}$ is supported in $S_3 (N_1^{-3})$ where 
\begin{equation*}
S_3 (N_1^{-3}) = \left\{ (\tau, \xi, \eta) 
\ \Bigl| \  |(\xi, \eta) | \lesssim A^{-1}, \ \frac{c_0}{N_1^{3}}  \leq \tau - \xi(\xi^2 + \eta^2 + 
|\boldsymbol{\tilde{\eta}}|^2) \leq  \frac{c_0+1}{N_1^{3}} \right\}.
\end{equation*}
By density and duality, it suffices to show that for continuous 
$\tilde{f}_{\boldsymbol{\eta_1'}}$ and $\tilde{g}_{\boldsymbol{\eta_2'}}$ it holds that
\begin{equation}
\| \tilde{f}_{\boldsymbol{\eta_1'}} |_{S_1} * \tilde{g}_{\boldsymbol{\eta_2'}} |_{S_2} \|_{L^2(S_3 (N_1^{-3}))} 
\lesssim A^{\frac{1}{2}} N_1^{-\frac{3}{2}} 
\| \tilde{f}_{\boldsymbol{\eta_1'}} \|_{L^2(S_1)} \| \tilde{g}_{\boldsymbol{\eta_2'}} \|_{L^2(S_2)}\label{est05-prop3.4}
\end{equation}
where $S_1$, $S_2$ denote the following surfaces 
\begin{align*}
S_1 =&  \{ \phi_{\boldsymbol{\tilde{\eta}_1},\tilde{c}_1} (\xi_1, \eta_1) \in \R^3 \ | \ 
(\xi_1, \eta_1) \in \supp (\tilde{f}_{\boldsymbol{\eta_1'}}\circ \phi_{\boldsymbol{\tilde{\eta}_1}, \tilde{c}_1})\}, \\
S_2 =&  \{ \phi_{\boldsymbol{\tilde{\eta}_2}, \tilde{c}_2}(\xi_2,\eta_2) \in \R^3 \ | \ (\xi_2, \eta_2) \in \supp (\tilde{g}_{\boldsymbol{\eta_2'}}\circ \phi_{\boldsymbol{\tilde{\eta}_2}, \tilde{c}_2}) \}.
\end{align*}
(\ref{est05-prop3.4}) is immediately obtained by the following.
\begin{equation}
\| \tilde{f}_{\boldsymbol{\eta_1'}} |_{S_1} * \tilde{g}_{\boldsymbol{\eta_2'}} |_{S_2} \|_{L^2(S_3)} \lesssim A^{\frac{1}{2}}  
\| \tilde{f}_{\boldsymbol{\eta_1'}} \|_{L^2(S_1)} \| \tilde{g}_{\boldsymbol{\eta_2'}} \|_{L^2(S_2)}\label{est06-prop3.4}
\end{equation}
where 
\begin{equation*}
S_3 = \left\{ (\psi_{\boldsymbol{\tilde{\eta}}} (\xi,\eta), \xi, \eta) \in \R^3  \ | \  |(\xi, \eta)| \lesssim A^{-1},
\  \psi_{\boldsymbol{\tilde{\eta}}} (\xi,\eta) = \xi(\xi^2 + \eta^2 + |\boldsymbol{\tilde{\eta}}|^2) + \frac{c_0'}{N_1^{3}} \right\},
\end{equation*} 
for any fixed $c_0' \in [c_0, \, c_0 +1]$. Since $\textnormal{diam} (S_3) \lesssim A^{-1}$, by the almost 
orthogonality and harmless decompositions, we may assume
\begin{equation}
\textnormal{diam} (S_i) \ll A^{-1} \qquad \textnormal{for} \ \, i=1,2,3.\label{diam-prop3.4}
\end{equation}
For any $\lambda_i \in S_i$, there exist $(\xi_1, \eta_1)$, $(\xi_2, \eta_2)$, $(\xi, \eta)$ such that
\begin{equation*}
\lambda_1=\phi_{\boldsymbol{\tilde{\eta}_1},\tilde{c}_1}  (\xi_1, \eta_1), \quad 
\lambda_2 = \phi_{\boldsymbol{\tilde{\eta}_2}, \tilde{c}_2} (\xi_2,\eta_2), \quad
\lambda_3 = (\psi_{\boldsymbol{\tilde{\eta}}} (\xi,\eta), \xi,\eta),
\end{equation*}
and the unit normals ${\mathfrak{n}}_i$ on $\lambda_i$ are written as
\[
{\mathfrak{n}}_i(\lambda_i) = 
\frac{1}{\sqrt{1+ (3 \xi_i^2 + \eta_i^2+|\boldsymbol{\tilde{\eta}_i}|^2)^2 + 4\xi_i^2 \eta_i^2}} 
\left(-1, \ 3 \xi_i^2 + \eta_i^2+|\boldsymbol{\tilde{\eta}_i}|^2, \ 2 \xi_i \eta_i \right)
\]
for $i=1,2$, and the same for $\mathfrak{n}_3(\lambda_3)$.
Clearly, the surfaces $S_1$, $S_2$, $S_3$ satisfy the following 
H\"{o}lder condition.
\begin{equation}
\sup_{\lambda_i, \widehat{\lambda}_i \in S_i} \frac{|\mathfrak{n}_i(\lambda_i) - 
\mathfrak{n}_i(\widehat{\lambda}_i)|}{|\lambda_i - \widehat{\lambda}_i|}
+ \frac{|\mathfrak{n}_i(\lambda_i) (\lambda_i -
  \widehat{\lambda}_i)|}{|\lambda_i - \widehat{\lambda}_i|^2} \lesssim
1.\label{normals00-prop3.4}
\end{equation}
We may assume that there exist $(\widehat{\xi}_1, \widehat{\eta}_1)$, $(\widehat{\xi}_2, \widehat{\eta}_2)$, $(\widehat{\xi}, \widehat{\eta})$ such that
\begin{align*}
& (\widehat{\xi}_1, \widehat{\eta}_1) + (\widehat{\xi}_2, \widehat{\eta}_2) = (\widehat{\xi}, \widehat{\eta}), \\  
\phi_{\boldsymbol{\tilde{\eta}_1},\tilde{c}_1} (\widehat{\xi}_1, \widehat{\eta}_1) & \in S_1, \ \ 
\phi_{\boldsymbol{\tilde{\eta}_2}, \tilde{c}_2} (\widehat{\xi}_2, \widehat{\eta}_2)\in S_2, \ \ 
(\psi_{\boldsymbol{\tilde{\eta}}} (\widehat{\xi},\widehat{\eta}), \widehat{\xi}, \widehat{\eta}) \in S_3,
\end{align*}
otherwise the left-hand side of \eqref{est06-prop3.4} vanishes. 
Let $\widehat{\lambda}_1 = \phi_{\boldsymbol{\tilde{\eta}_1},\tilde{c}_1} (\widehat{\xi}_1, \widehat{\eta}_1)$, 
$\widehat{\lambda}_2 = \phi_{\boldsymbol{\tilde{\eta}_2}, \tilde{c}_2} (\widehat{\xi}_2, \widehat{\eta}_2)$, 
$\widehat{\lambda}_3 = (\psi_{\boldsymbol{\tilde{\eta}}} (\widehat{\xi},\widehat{\eta}), \widehat{\xi},\widehat{\eta})$. 
For any $i=1,2,3$ and $\lambda_i$, $\widehat{\lambda}_i \in S_i$ 
 \eqref{diam-prop3.4} implies that
\begin{equation}
|{\mathfrak{n}}_i(\lambda_i) - {\mathfrak{n}}_i(\widehat{\lambda}_i)| \ll A^{-1}.\label{normal01-prop3.4}
\end{equation}
From \eqref{diam-prop3.4} and \eqref{normals00-prop3.4}, 
once the following transversality condition \begin{equation}
A^{-1} \lesssim |\textnormal{det} N(\lambda_1, \lambda_2, \lambda_3)| \quad 
\textnormal{for any} \ \lambda_i \in S_i.\label{trans-prop3.4}
\end{equation}
is verified, we obtain the desired estimate \eqref{est06-prop3.4} by
applying the nonlinear Loomis-Whitney inequality from Proposition \ref{prop2.3}.
Using $|\widehat{\xi_j}| \sim 1$, $|\widehat{\eta_j}| \ll 1$, $|\boldsymbol{\tilde{\eta_j}}| \lesssim A^{-1}$ and 
$|\widehat{\xi}_1 \widehat{\eta}_2 - \widehat{\xi}_2 \widehat{\eta}_1 | \sim A^{-1}$,
 we compute
\begin{align*}
|\textnormal{det} N(\widehat{\lambda}_1, \widehat{\lambda}_2, \widehat{\lambda}_3)|  \gtrsim{} & 
 \left|\textnormal{det}
\begin{pmatrix}
-1 & -1 & - 1 \\
3 {\widehat{\xi}_1}^2 + {\widehat{\eta}_1}^2+|\boldsymbol{\tilde{\eta}_1}|^2  & 3 {\widehat{\xi}_2}^2 + {\widehat{\eta}_2}^2+|\boldsymbol{\tilde{\eta}_2}|^2  
& 3 {\widehat{\xi}}^2 + {\widehat{\eta}}^2+|\boldsymbol{\tilde{\eta}}|^2 \\
2 \widehat{\xi}_1 \widehat{\eta}_1   & 2 \widehat{\xi}_2 \widehat{\eta}_2  & 2 \widehat{\xi} \widehat{\eta}
\end{pmatrix} \right| \notag \\
\gtrsim{} &  \bigl| (\widehat{\xi}_1 \widehat{\eta}_2 - \widehat{\xi}_2 \widehat{\eta}_1)\bigl( 3 ({\widehat{\xi}_1}^2 + \widehat{\xi}_1 \widehat{\xi}_2 + {\widehat{\xi}_2}^2 ) - 
({\widehat{\eta}_1}^2 + \widehat{\eta}_1 \widehat{\eta}_2 + {\widehat{\eta}_2}^2) \bigr)\\
&{}+ (\widehat{\xi}_1 \widehat{\eta}_2 + \widehat{\xi}_2(\widehat{\eta}_1 +\widehat{\eta}_2))|\boldsymbol{\tilde{\eta}_1}|^2 - 2 (\widehat{\xi}_1 \widehat{\eta}_1 - \widehat{\xi}_2 \widehat{\eta}_2) 
\boldsymbol{\tilde{\eta}_1} \cdot \boldsymbol{\tilde{\eta}_2} - (\widehat{\xi}_1(\widehat{\eta}_1 + \widehat{\eta}_2) + \widehat{\xi}_2 \widehat{\eta}_1) |\boldsymbol{\tilde{\eta}_2}|^2 \bigr|\\
\gtrsim{} & A^{-1},
\end{align*}
which implies \eqref{trans-prop3.4} due to
\eqref{normal01-prop3.4}. In the above computation, we used
multi-linearity in the columns to separate the contributions of
$\boldsymbol{\tilde{\eta}_1}$, $\boldsymbol{\tilde{\eta}_2}$ and
$\boldsymbol{\tilde{\eta}}$ from the main one corresponding to the first
line above.

Next, we treat the case $|\xi_1 \boldsymbol{\eta_2'} - \xi_2 \boldsymbol{\eta_1'}| \sim A^{-1} N_1^2$. 
Without loss of generality, we assume $|\xi_1 \eta_2' - \xi_2 \eta_1'| \sim A^{-1} N_1^2$ where 
$\eta_1'$ and $\eta_2'$ are the first components of $\boldsymbol{\eta_1'}$ and 
$\boldsymbol{\eta_2'}$, respectively. 
By replacing the role of $(\eta_1,\eta_2)$ with that of $(\eta_1',\eta_2')$ in the
proof in the previous case, it suffices to show
\begin{equation}
\begin{split}
& \left|\int_{\bar{*}}{  \ha{w}_{{N_0, L_0}}(\tau, \xi, \boldsymbol{\eta} ) 
\ha{u}_{N_1, L_1}|_{\mathcal{S}_{j_1}^A}(\tau_1, \xi_1, \boldsymbol{\eta_1} )  
\ha{v}_{N_2, L_2}|_{\mathcal{S}_{j_2}^A} (\tau_2, \xi_2, \boldsymbol{\eta_2} ) 
}
d\bar{\sigma}_1 d\bar{\sigma}_2 \right| \\
& \quad \lesssim A^{\frac{1}{2}} N_{1}^{-2}   (L_0 L_1 L_2)^{\frac{1}{2}} 
\|\ha{u}_{N_1, L_1} (\boldsymbol{\bar{\eta}_1}) \|_{L^2_{\tau \xi \eta'}} 
\| \ha{v}_{N_2, L_2} (\boldsymbol{\bar{\eta}_2})\|_{L^2_{\tau \xi \eta'}} 
\|\ha{w}_{{N_0, L_0}}(\boldsymbol{\bar{\eta}}) \|_{L^2_{\tau \xi \eta'}} ,\label{est07-prop3.4}
\end{split}
\end{equation}
where $\boldsymbol{\bar{\eta}_j} \in \R^{d-2}$ denotes $\boldsymbol{\eta_j}$ excluding $\eta_j'$, $d \bar{\sigma}_j = d\tau_j d \xi_j d \eta_j'$ and 
$\bar{*}$ denotes $(\tau,\xi, \eta') = (\tau_1 + \tau_2,\xi_1+\xi_2, \eta_1' + \eta_2').$ 
Similarly to the previous case, \eqref{est07-prop3.4} is established by the nonlinear Loomis-Whitney inequality. 
To avoid redundancy, we here only consider the transversality
condition, which is given by
\begin{align*}
& \bigl| (\xi_1 \eta_2' - \xi_2 \eta_1') \bigl( 3 ({{\xi}_1}^2 + {\xi}_1 {\xi}_2 + {{\xi}_2}^2 ) - 
({{\eta}_1'}^2 + {\eta}_1' {\eta}_2' + {{\eta}_2'}^2) \bigr)\\
& \quad + ({\xi}_1 {\eta}_2' + {\xi}_2({\eta}_1' +{\eta}_2'))|\boldsymbol{\bar{\eta}_1}|^2 - 2 ({\xi}_1 {\eta}_1' - {\xi}_2 {\eta}_2') 
\boldsymbol{\bar{\eta}_1} \cdot \boldsymbol{\bar{\eta}_2} - ({\xi}_1({\eta}_1' + {\eta}_2') + {\xi}_2 {\eta}_1') |\boldsymbol{\bar{\eta}_2}|^2 \bigr|\\
& \gtrsim A^{-1} N_1^4,
\end{align*}
where we used $|\xi_1 \eta_2' - \xi_2 \eta_1'| \sim A^{-1} N_1^2$, 
$|\eta_j'| \lesssim A^{-1} N_1$ and 
$|\boldsymbol{\bar{\eta}_j}| \ll N_1$.
\end{proof}

\begin{prop}\label{prop3.5}
Assume \textit{Assumption} \textnormal{\ref{assumption1}}. 
Let $A \gg 1$ be dyadic, 
$j_1$, $j_2 \in \Omega_{A, \textnormal{(Ia)}}$, $\alpha(j_1, j_2) \sim A^{-1}$. 
Then we get
\begin{equation}
\begin{split}
& \left|\int_{*}{  \ha{w}_{{N_0, L_0}}(\tau, \xi, \boldsymbol{\eta} ) 
\ha{u}_{N_1, L_1}|_{\mathcal{S}_{j_1}^A}(\tau_1, \xi_1, \boldsymbol{\eta_1} )  
\ha{v}_{N_2, L_2}|_{\mathcal{S}_{j_2}^A}(\tau_2, \xi_2, \boldsymbol{\eta_2} ) 
}
d\sigma_1 d\sigma_2 \right| \\
& \qquad \qquad  \lesssim  A^{-\frac{d-3}{2}} 
N_1^{\frac{d-6}{2}} (L_0 L_1 L_2)^{\frac{1}{2}} \|\ha{u}_{N_1, L_1} \|_{L^2} \| \ha{v}_{N_2, L_2} \|_{L^2} 
\|\ha{w}_{{N_0, L_0}} \|_{L^2}.\label{goal-prop3.5}
\end{split}
\end{equation}
\end{prop}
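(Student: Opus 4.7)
My plan is to treat the regime not covered by Proposition \ref{prop3.4}, namely $|\xi|=|\xi_1+\xi_2|\gg A^{-1}N_1$, by combining the bilinear Strichartz estimates of Proposition \ref{prop3.3} with a modulation analysis. The key geometric observation is that in Case (Ia) with $\alpha(j_1,j_2)\sim A^{-1}$, the two caps are either nearly \emph{parallel} ($\theta_1\approx\theta_2$) or nearly \emph{antiparallel} ($\theta_1\approx-\theta_2$), both up to angle $A^{-1}$. Using the identity
\[
\Phi_{\xi,\boldsymbol{\eta}}(\xi_1,\boldsymbol{\eta}_1)=3\xi\xi_1\xi_2+2\xi\,\boldsymbol{\eta}_1\cdot\boldsymbol{\eta}_2+\xi_1|\boldsymbol{\eta}_2|^2+\xi_2|\boldsymbol{\eta}_1|^2,
\]
I first rule out the parallel case: there $\xi_1\xi_2>0$ and $|\xi|\sim|\xi_j|\sim N_1$, so $|3\xi\xi_1\xi_2|\sim N_1^3$, while $|\boldsymbol{\eta}_j|\ll N_1$ forces the remaining three terms to be $\ll N_1^3$. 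This yields $L_{012}^{\max}\gtrsim|\Phi|\sim N_1^3$, contradicting Assumption \ref{assumption1}.

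In the antiparallel case $\xi_1\xi_2<0$ with $|\xi_1\xi_2|\sim N_1^2$. I split by $|\xi|$: when $|\xi|\lesssim A^{-1}N_1$, Proposition \ref{prop3.4} applies directly; when $|\xi|\gg A^{-1}N_1$, the leading term satisfies $|3\xi\xi_1\xi_2|\sim|\xi|N_1^2\gg A^{-1}N_1^3$, and, provided the lower-order $\boldsymbol{\eta}$-terms do not cancel it, one concludes $L_{012}^{\max}\gtrsim A^{-1}N_1^3$. Depending on which of $L_0,L_1,L_2$ attains the maximum, I then combine Cauchy-Schwarz with \eqref{bilinearStrichartz-1}, \eqref{bilinearStrichartz-2}, or \eqref{bilinearStrichartz-3} of Proposition \ref{prop3.3}. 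A direct bookkeeping shows that in each of the three sub-cases the shortfall relative to \eqref{goal-prop3.5} is at most $A^{-1/2}N_1^{3/2}L_{\max}^{-1/2}$, which is $\lesssim 1$ precisely by the modulation lower bound.

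The main obstacle is establishing $|\Phi|\gtrsim A^{-1}N_1^3$ uniformly in the antiparallel $|\xi|\gg A^{-1}N_1$ regime: writing $|\boldsymbol{\eta}_j|\sim N_1\mu_j$ with $\mu_j\ll 1$, the correction $\xi_j|\boldsymbol{\eta}_k|^2\sim N_1^3\mu_k^2$ becomes competitive with $|\xi|N_1^2$ as soon as $\mu_k\gtrsim\sqrt{|\xi|/N_1}$, so direct dominance of the main term is not available. I would handle this by a further dyadic decomposition in $|\boldsymbol{\eta}_j|$ combined with angular refinement at a scale $A'^{-1}\sim|\xi|/N_1$, reapplying Proposition \ref{prop3.3} at the refined scale. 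In the residual ``thick-$\boldsymbol{\eta}$'' sub-regime where these reductions fail, I would fall back on a nonlinear Loomis--Whitney argument in the spirit of Proposition \ref{prop3.4}, with transversality recomputed at the refined angular scale.
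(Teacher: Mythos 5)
Your overall strategy is the paper's: reduce to $|\xi_1+\xi_2|\gg A^{-1}N_1$ via Proposition~\ref{prop3.4}, show $|\Phi|\gtrsim A^{-1}N_1^3$ there, and then conclude via Proposition~\ref{prop3.3}. Your bookkeeping for how the three bilinear estimates combine with Cauchy--Schwarz and the modulation lower bound is also essentially correct. However, you have identified a ``main obstacle'' which in fact is not there, and your proposed workaround (angular refinement at scale $A'^{-1}\sim|\xi|/N_1$, a ``thick-$\boldsymbol{\eta}$'' residual regime, fallback to Loomis--Whitney) is both unnecessary and too vague to evaluate; that is where the proposal fails.

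The point you missed is the cancellation between $\xi_1|\boldsymbol{\eta}_2|^2$ and $\xi_2|\boldsymbol{\eta}_1|^2$. You worried that each term can be of size $N_1^3\mu_j^2$, competitive with $|\xi|N_1^2$ when $\mu_j\gtrsim\sqrt{|\xi|/N_1}$. But in the antiparallel regime (forced by $|\xi|\ll N_1$, $|\xi_j|\sim N_1$) these two terms nearly cancel. Concretely, write
\[
\xi_1|\boldsymbol{\eta}_2|^2+\xi_2|\boldsymbol{\eta}_1|^2
=(\xi_1+\xi_2)|\boldsymbol{\eta}_2|^2+\xi_2\bigl(|\boldsymbol{\eta}_1|^2-|\boldsymbol{\eta}_2|^2\bigr).
\]
The first piece is $\lesssim|\xi|\,|\boldsymbol{\eta}_2|^2\ll|\xi|N_1^2$. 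For the second, use
$\bigl||\boldsymbol{\eta}_1|-|\boldsymbol{\eta}_2|\bigr|\leq|\boldsymbol{\eta}_1+\boldsymbol{\eta}_2|$
together with the bound $|\boldsymbol{\eta}_1+\boldsymbol{\eta}_2|\lesssim|\xi_1+\xi_2|$ that was established inside the proof of Proposition~\ref{prop3.3} (precisely in the sub-case $|\xi|\gg A^{-1}N_1$, which is the regime at hand). Then
\[
\bigl|\xi_2(|\boldsymbol{\eta}_1|^2-|\boldsymbol{\eta}_2|^2)\bigr|
\lesssim N_1\cdot|\xi|\cdot\bigl(|\boldsymbol{\eta}_1|+|\boldsymbol{\eta}_2|\bigr)\ll|\xi|N_1^2
\]
since $|\boldsymbol{\eta}_j|\ll N_1$ in case (Ia). The cross term $2\xi\,\boldsymbol{\eta}_1\cdot\boldsymbol{\eta}_2$ is trivially $\ll|\xi|N_1^2$. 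Hence $|\Phi|\gtrsim|\xi|N_1^2\gg A^{-1}N_1^3$ with no further decomposition needed, and the proposed dyadic/angular refinement and Loomis--Whitney fallback should be deleted; with this one algebraic rewriting, the rest of your argument closes.
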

\begin{proof}
Proposition \ref{prop3.4} gives \eqref{goal-prop3.5} if $|\xi_1+ \xi_2| \lesssim A^{-1} N_1$. 
Thus we assume $|\xi_1+ \xi_2| \gg A^{-1} N_1$. 
We show that $|\xi_1+\xi_2| \gg A^{-1} N_1$ provides
\begin{align}
|\Phi(\xi_1,\boldsymbol{\eta_1} , \xi_2,\boldsymbol{\eta_2})| := &
\bigl| 3 \xi_1 \xi_2 (\xi_1+ \xi_2) +  (\xi_1 + \xi_2) |\boldsymbol{\eta_1} + \boldsymbol{\eta_2}|^2 - \xi_1  |\boldsymbol{\eta_1}|^2 - 
\xi_2  |\boldsymbol{\eta_2}|^2 \bigr| \notag\\
\gtrsim & A^{-1} N_1^3.\label{est01-prop3.5}
\end{align}
Since $L_{012}^{\max} \gtrsim |\Phi(\xi_1,\boldsymbol{\eta_1} , \xi_2,\boldsymbol{\eta_2})|$, this and Proposition \ref{prop3.3} verify \eqref{goal-prop3.5}. 
Recall that the assumptions $j_1$, $j_2 \in \Omega_{A, \textnormal{(Ia)}}$ 
and $|\xi_1+\xi_2| \gg A^{-1} N_1$ imply 
$\max (|\boldsymbol{\eta_1}|, |\boldsymbol{\eta_2}|) \ll N_1$, 
$|\boldsymbol{\eta_1} + \boldsymbol{\eta_2}| \lesssim |\xi_1 +\xi_2|$. 
Therefore, we have
\begin{align*}
& | (\xi_1 + \xi_2) |\boldsymbol{\eta_1} + \boldsymbol{\eta_2}|^2 - \xi_1  |\boldsymbol{\eta_1}|^2 - 
\xi_2  |\boldsymbol{\eta_2}|^2 | \\ \leq & 
| \xi_1 + \xi_2| (|\boldsymbol{\eta_1} + \boldsymbol{\eta_2}|^2  +  |\boldsymbol{\eta_1}|^2)
 + |\xi_2 (|\boldsymbol{\eta_1}|^2 - |\boldsymbol{\eta_2}|^2) |\\
 \ll & N_1^2 |\xi_1 + \xi_2|,
\end{align*}
which immediately yields \eqref{est01-prop3.5}.
\end{proof}
\begin{proof}[\underline{Proof of \eqref{goal01-prop3.2} in the case \textnormal{(Ia)}}]
Assume that $(\xi_j,\boldsymbol{\eta}_j)/|(\xi_j,\boldsymbol{\eta}_j)| \in C_{\textnormal{(Ia)}}$. 
Define 
\begin{equation*}
I_{j_1,j_2}^A = \left|\int_{*}{  \ha{w}_{{N_0, L_0}}(\tau, \xi, \boldsymbol{\eta} ) 
\ha{u}_{N_1, L_1}|_{\mathcal{S}_{j_1}^A}(\tau_1, \xi_1, \boldsymbol{\eta_1} )  
\ha{v}_{N_2, L_2}|_{\mathcal{S}_{j_2}^A}(\tau_2, \xi_2, \boldsymbol{\eta_2} ) 
}
d\sigma_1 d\sigma_2 \right|.
\end{equation*}
We observe
\begin{align*}
& \left|\int_{*}{  \ha{w}_{{N_0, L_0}}(\tau, \xi, \boldsymbol{\eta} ) 
\ha{u}_{N_1, L_1}(\tau_1, \xi_1, \boldsymbol{\eta_1} )  \ha{v}_{N_2, L_2}(\tau_2, \xi_2, \boldsymbol{\eta_2} ) 
}
d\sigma_1 d\sigma_2 \right| \\
\lesssim & \sum_{1 \ll A \leq N_1^{3/2}} \sum_{\alpha(j_1, j_2) \sim A^{-1}}
I_{j_1,j_2}^A  + \sum_{\alpha(j_1, j_2) \lesssim N_1^{-3/2}} I_{j_1,j_2}^{N_1^{3/2}}.
\end{align*}
Note that $\LR{|(\xi_1+ \xi_2, \boldsymbol{\eta_1} + \boldsymbol{\eta_2})|} \sim N_0 \gtrsim A^{-1} N_1$ if 
$(\tau_1, \xi_1, \boldsymbol{\eta_1} ) \times 
(\tau_2, \xi_2, \boldsymbol{\eta_2} ) \in {\mathcal{S}}_{j_1}^A \times {\mathcal{S}}_{j_2}^A$ with 
$\alpha(j_1, j_2) \sim A^{-1}$. 
Thus, the former term is estimated by using Proposition \ref{prop3.5} as
\begin{align*}
& \sum_{1 \ll A \leq N_1^{3/2}} \sum_{\alpha(j_1, j_2) \sim A^{-1}}
I_{j_1,j_2}^A \\
\lesssim & 
\sum_{1 \ll A \leq N_1^{3/2}} \sum_{\alpha(j_1, j_2) \sim A^{-1}} N_0^{ \frac{d-3}{2}} N_1^{-\frac{3}{2}} (L_0 L_1 L_2)^{\frac{1}{2}} \|\ha{u}_{N_1, L_1}|_{\mathcal{S}_{j_1}^A} \|_{L^2} 
\| \ha{v}_{N_2, L_2}|_{\mathcal{S}_{j_2}^A} \|_{L^2} 
\|\ha{w}_{{N_0, L_0}} \|_{L^2}\\
\lesssim & 
\sum_{1 \ll A \leq N_1^{3/2}} N_0^{ \frac{d-3}{2}} N_1^{-\frac{3}{2}} (L_0 L_1 L_2)^{\frac{1}{2}} 
\|\ha{u}_{N_1, L_1} \|_{L^2} 
\| \ha{v}_{N_2, L_2}\|_{L^2} 
\|\ha{w}_{{N_0, L_0}} \|_{L^2}\\
\lesssim & \, (\log{N_1}) N_0^{ \frac{d-3}{2}} N_1^{-\frac{3}{2}} (L_0 L_1 L_2)^{\frac{1}{2}} 
\|\ha{u}_{N_1, L_1} \|_{L^2} 
\| \ha{v}_{N_2, L_2}\|_{L^2} 
\|\ha{w}_{{N_0, L_0}} \|_{L^2}.
\end{align*}
By using \eqref{bilinearStrichartz-2} in Proposition \ref{prop3.3} for the latter term, we completes the proof.
\end{proof}
Next, we treat the case (Ib) $\textnormal{min}(|\boldsymbol{\eta_1} |, |\boldsymbol{\eta_2} |) \ll N_1$,  $\textnormal{max}(|\boldsymbol{\eta_1} |, |\boldsymbol{\eta_2}|) \sim N_1$. 
Without loss of generality, we assume $|{\eta_1}| \sim N_1$ and 
$|\boldsymbol{\eta_2} | \ll N_1$. 
Note that $N_1 \sim N_2$ and $|\boldsymbol{\eta_2} | \ll N_1$ imply $|\xi_2| \sim N_1$. 
We define
\begin{align*}
& F(\xi_1, \boldsymbol{\eta_1} ,\xi_2,\boldsymbol{\eta_2}) \\
& =  
(\xi_1 \eta_2 - \xi_2 \eta_1)\bigl( 3 (\xi_1^2 + \xi_1 \xi_2 + \xi_2^2 ) - 
(\eta_1^2 + \eta_1 \eta_2 + \eta_2^2) \bigr)\\
& \qquad + ({\xi}_1 {\eta}_2 + {\xi}_2({\eta}_1 +{\eta}_2))|\boldsymbol{{\eta}_1'}|^2 - 2 ({\xi}_1 {\eta}_1 - {\xi}_2 {\eta}_2) 
\boldsymbol{{\eta}_1'} \cdot \boldsymbol{{\eta}_2'} - ({\xi}_1({\eta}_1 + {\eta}_2) + {\xi}_2 {\eta}_1) |\boldsymbol{{\eta}_2'}|^2 .
\end{align*}
Recall that this function $F(\xi_1, \boldsymbol{\eta_1} ,\xi_2,\boldsymbol{\eta_2})$ appeared in the proof of Proposition \ref{prop3.4} and provided a transversality of the three hypersurfaces. 
\begin{lem}\label{lemma3.6}
Assume \textit{Assumption} \textnormal{\ref{assumption1}}, $|{\eta_1}| \sim N_1$ and 
$|\boldsymbol{\eta_2} | \ll N_1$. Then we have 
$|F(\xi_1, \boldsymbol{\eta_1} ,\xi_2,\boldsymbol{\eta_2})| \gtrsim N_1^4$.
\end{lem}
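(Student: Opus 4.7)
The polynomial $F$ from Proposition~\ref{prop3.4} collapses significantly in regime (Ib). My plan is to (a) discard every term of $F$ containing a factor of $\eta_2$ or $\boldsymbol{\eta_2'}$, which contribute only $o(N_1^4)$ since $|\eta_2|, |\boldsymbol{\eta_2'}| \leq |\boldsymbol{\eta_2}| \ll N_1$ while every cofactor is of size $\lesssim N_1^2$; (b) use $\eta_1^2 + |\boldsymbol{\eta_1'}|^2 = |\boldsymbol{\eta_1}|^2$ to absorb the surviving $|\boldsymbol{\eta_1'}|^2$-contribution into the main polynomial, leaving
\[
F = \xi_2\eta_1\bigl[\,|\boldsymbol{\eta_1}|^2 - 3(\xi_1^2+\xi_1\xi_2+\xi_2^2)\,\bigr] + o(N_1^4);
\]
and (c) invoke the resonance constraint $|\Phi|\ll N_1^3$ coming from Assumption~\ref{assumption1}(1) to show the bracket is coercive of order $N_1^2$.

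Step (c) is where the crucial input enters. For frequencies in the Fourier supports appearing in Proposition~\ref{prop3.2}, the resonance phase (computed in the proof of Proposition~\ref{prop3.3}) satisfies $|\Phi(\xi_1,\boldsymbol{\eta_1},\xi_2,\boldsymbol{\eta_2})| \lesssim L_{012}^{\max} \ll N_1^3$. Specializing to $|\boldsymbol{\eta_2}|\ll N_1$ reduces $\Phi$ to $3\xi_1\xi_2(\xi_1+\xi_2) + \xi_2|\boldsymbol{\eta_1}|^2 + o(N_1^3)$, and dividing by $|\xi_2|\sim N_1$ yields the key identity $|\boldsymbol{\eta_1}|^2 + 3\xi_1^2 + 3\xi_1\xi_2 = o(N_1^2)$. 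Substituting into the bracket and using $2\xi_1^2 + 2\xi_1\xi_2 + \xi_2^2 = \xi_1^2 + (\xi_1+\xi_2)^2 \geq \xi_2^2/2$ (by $a^2+b^2 \geq (a-b)^2/2$) gives
\[
|\boldsymbol{\eta_1}|^2 - 3(\xi_1^2+\xi_1\xi_2+\xi_2^2) = -3\bigl(\xi_1^2 + (\xi_1+\xi_2)^2\bigr) + o(N_1^2),
\]
whose absolute value is at least $\tfrac{3}{2}\xi_2^2 - o(N_1^2) \gtrsim N_1^2$. Multiplying by $|\xi_2||\eta_1|\sim N_1^2$ completes the proof.

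The main obstacle is calibrating the implicit constants: the small parameters hidden in $|\boldsymbol{\eta_2}|\ll N_1$ and $L_{012}^{\max}\ll N_1^3$ must be chosen strictly smaller than those governing $|\xi_2|\sim N_1$ and $|\eta_1|\sim N_1$, so that the $o(N_1^2)$ and $o(N_1^4)$ errors remain genuinely dominated by the coercive $\tfrac{3}{2}|\xi_2|^3|\eta_1|$ contribution. Note that a purely algebraic argument, without using $|\Phi|\ll N_1^3$, cannot succeed: the bracket vanishes for configurations such as $\xi_1 = 0$ with $\eta_1^2 = 3\xi_2^2$, and these are precisely the configurations that the resonance constraint rules out.
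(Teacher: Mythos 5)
Your proposal is correct and amounts to the same algebraic argument as the paper's, merely flipped from contrapositive to direct form. The paper assumes $|F| \ll N_1^4$ and deduces $|\Phi| \gtrsim N_1^3$ (contradicting Assumption~\ref{assumption1}(1)); you assume $|\Phi| \ll N_1^3$ and deduce $|F| \gtrsim N_1^4$. Both hinge on the identical reductions $F = \xi_2\eta_1\bigl[|\boldsymbol{\eta_1}|^2 - 3(\xi_1^2+\xi_1\xi_2+\xi_2^2)\bigr] + o(N_1^4)$ and $\Phi = \xi_2\bigl[3\xi_1(\xi_1+\xi_2)+|\boldsymbol{\eta_1}|^2\bigr]+o(N_1^3)$, together with the algebraic observation that the two bracketed quantities differ by $3(2\xi_1^2+2\xi_1\xi_2+\xi_2^2) = 3\bigl(\xi_1^2+(\xi_1+\xi_2)^2\bigr)\geq \tfrac{3}{2}\xi_2^2\sim N_1^2$, so at most one of them can be small. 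Your closing remark on the necessity of the resonance constraint is a nice sanity check that the paper leaves implicit, but the core argument is the same.
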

\begin{proof}
We show $L_{012}^{\max} \gtrsim N_1^3$ if $|{\eta_1}| \sim N_1$, 
$|\boldsymbol{\eta_2}| \ll N_1$ and $|F(\xi_1, \boldsymbol{\eta_1} ,\xi_2,\boldsymbol{\eta_2})| \ll N_1^4$. 
Since $|{\eta_1}| \sim N_1$, $|\boldsymbol{\eta_2}| \ll N_1$, it is observed that
\begin{align}
 |F(\xi_1, \boldsymbol{\eta_1} ,\xi_2,\boldsymbol{\eta_2})| \ll N_1^4 \Longrightarrow & 
\bigl| \xi_2 \eta_1 \bigl( 3 (\xi_1^2 + \xi_1 \xi_2 + \xi_2^2 ) - 
\eta_1^2 \bigr) - \xi_2 \eta_1 |{\boldsymbol{\eta_1'}}|^2 \bigr| \ll N_1^4 \notag\\
\Longrightarrow & 
\bigl||{\boldsymbol{\eta_1}}|^2-   3 (\xi_1^2 + \xi_1 \xi_2 + \xi_2^2 )   \bigr| 
\ll N_1^2.\label{est01-lemma3.6}
\end{align}
We use the function $\Phi(\xi_1,\boldsymbol{\eta_1} , \xi_2,\boldsymbol{\eta_2})$ which was defined in the proof of Proposition \ref{prop3.5}. It follows from $|\boldsymbol{\eta_2}| \ll N_1$ and \eqref{est01-lemma3.6} that there is $0 < c \ll 1$ such that
\begin{align*}
|\Phi(\xi_1,\boldsymbol{\eta_1} , \xi_2,\boldsymbol{\eta_2})| & \geq |3 \xi_1 \xi_2 (\xi_1+\xi_2) 
+ \xi_2 |{\boldsymbol{\eta_1}}|^2| - |2(\xi_1+ \xi_2)\boldsymbol{\eta_1} \cdot \boldsymbol{\eta_2} 
+\xi_1 |{\boldsymbol{\eta_2}}|^2|\\
& \geq |\xi_2| |3 \xi_1 (\xi_1+\xi_2) + 3 (\xi_1^2 + \xi_1 \xi_2 + \xi_2^2 )  
 | - c N_1^3\\
& = 3|\xi_2| |2\xi_1^2 + 2 \xi_1 \xi_2 + \xi_2^2| - c N_1^3 \\
& \gtrsim N_1^3,
\end{align*}
which completes the proof.
\end{proof}
Lemma \ref{lemma3.6} suggests that we can obtain \eqref{goal01-prop3.2} by the same argument as in the proof of 
Proposition \ref{prop3.4}. We omit the details.

Lastly, we consider the case (Ic) $|\boldsymbol{\eta_1}| \sim |\boldsymbol{\eta_2}| \sim N_1$.

In this case, we perform an angular decomposition in the
$\boldsymbol{\eta}$-space.
In the same way as above (see \cite{BH11}), for $A\in \N$ we choose a maximally separated set $\{\overline{\omega}_A^j \}_{j \in \overline{\Omega}_A} $ of spherical caps of ${\mathbb S}^{d-2}$ of aperture $A^{-1}$, i.e. the angle $\angle{(\theta_1,\theta_2)}$ between 
any two vectors in $\theta_1$, $\theta_2 \in \overline{\omega}_A^j$ satisfies
$
\left| \angle{(\theta_1,\theta_2)} \right| \leq A^{-1}
$
and the characteristic functions $\{ {\mathbf 1}_{\overline{\omega}_A^j} \}$ satisfy
$
1 \leq \sum_{j \in \overline{\Omega}_A} {\mathbf 1}_{\overline{\omega}_A^j}(\theta) \leq 2^d$, for all $ \theta \in {\mathbb S}^{d-2}$.
Further, we define the function
\begin{equation*}
\overline{\alpha} (j_1,j_2) = \inf \left\{ \left| \angle{( \pm \theta_1, \theta_2)} \right| : \ \theta_1 \in \overline{\omega}_A^{j_1}, \ \theta_2 \in \overline{\omega}_A^{j_2} \right\}.
\end{equation*}
For each $j \in \overline{\Omega}_A$ we define 
\begin{equation*}
{\overline{\mathcal{S}}}_{j}^A = 
\left\{ (\tau, \xi, \boldsymbol{\eta}) \in \R \cross \R \cross (\R^{d-1} \setminus \{0\}) \, : \, 
 \frac{\boldsymbol{\eta}}{|\boldsymbol{\eta}|} \in \overline{\omega}_A^j \right\}
\end{equation*}
and the corresponding localization operator
\begin{equation*}
\F (\overline{R}_j^A u) (\tau, \xi,\boldsymbol{\eta}) 
= {\mathbf{1}}_{\omega_j^A} \bigl( \frac{\boldsymbol{\eta}}{|\boldsymbol{\eta}|} \bigr)
 \F u (\tau , \xi,\boldsymbol{\eta}).
\end{equation*}
Let $k = ( k_{(1)}, \ldots, k_{(d)} ) \in \Z^d$. We define regular cubes 
$\{ \mathcal{C}_k^A\}_{k \in \Z^d}$ whose side length is $A^{-1} N_1 $ and $\{ \tilde{\mathcal{C}}_k^A\}_{k \in \Z^d}$ as
\[ \mathcal{C}_k^A  = \{ x=(x_1, \ldots, x_d) \in \R^d  \ | \ x_i \in A^{-1} N_1 [k_{(i)}, k_{(i)}+1) \ 
\textnormal{for all} \ i=1, \ldots, d.
\},\]
we set
$\tilde{\mathcal{C}}_k^A  = \R \times \mathcal{C}_k^A$, and
lastly we define $\mathcal{E}_{j,k}^{A} = {\overline{\mathcal{S}}}_{j}^A \cap \tilde{\mathcal{C}}_k^A$.

\begin{prop}\label{prop3.7}
Assume \textit{Assumption} \textnormal{\ref{assumption1}} 
and $|\boldsymbol{\eta_1}| \sim |\boldsymbol{\eta_2}| \sim N_1$. 
Let $\bar{\alpha}(j_1,j_2) \sim A^{-1}$ and $k_1$, $k_2 \in \Z^d$. Then we get
\begin{align}
& \left|\int_{*}{  \ha{w}_{{N_0, L_0}}(\tau, \xi, \boldsymbol{\eta} ) 
\ha{u}_{N_1, L_1}|_{\mathcal{E}_{j_1,k_1}^{A} } (\tau_1, \xi_1, \boldsymbol{\eta_1} )  
\ha{v}_{N_2, L_2}|_{\mathcal{E}_{j_2,k_2}^{A} } (\tau_2, \xi_2, \boldsymbol{\eta_2} ) 
}
d\sigma_1 d\sigma_2 \right|\notag \\
& \qquad \qquad  \lesssim  A^{-\frac{d-3}{2}} 
N_1^{\frac{d-6}{2}}   (L_0 L_1 L_2)^{\frac{1}{2}} \|\ha{u}_{N_1, L_1} \|_{L^2} \| \ha{v}_{N_2, L_2} \|_{L^2} 
\|\ha{w}_{{N_0, L_0}} \|_{L^2},\label{est01-prop3.7}
\end{align}
where $d \sigma_j = d\tau_j d \xi_j d \boldsymbol{\eta}_j $ and $*$ denotes 
$(\tau, \xi, \boldsymbol{\eta}) = 
(\tau_1 + \tau_2, \xi_1+ \xi_2, \boldsymbol{\eta_1} + \boldsymbol{\eta_2}).$
\end{prop}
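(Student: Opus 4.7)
The proof follows the template of Proposition~\ref{prop3.4}, with the angular decomposition now performed in the $\boldsymbol{\eta}$-subspace via $\overline{\omega}_A^j \subset \mathbb{S}^{d-2}$, exploiting the rotational invariance of the phase $\xi(\xi^2 + |\boldsymbol{\eta}|^2)$ in $\boldsymbol{\eta}$.

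First, the assumption $|\boldsymbol{\eta_1}| \sim |\boldsymbol{\eta_2}| \sim N_1$ together with $\bar{\alpha}(j_1,j_2) \sim A^{-1}$ implies that the exterior product $\boldsymbol{\eta_1} \wedge \boldsymbol{\eta_2} \in \Lambda^2 \R^{d-1}$ has magnitude $\sim A^{-1} N_1^2$. By the pigeonhole principle, at least one scalar component $\eta_1^{(a)} \eta_2^{(b)} - \eta_1^{(b)} \eta_2^{(a)}$ attains this size; by a rotation in $\boldsymbol{\eta}$-space (which leaves the phase invariant) we may assume this component corresponds to the first two coordinates of $\boldsymbol{\eta_j}$. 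Writing $\boldsymbol{\eta_j} = (\eta_j, \eta_j'', \boldsymbol{\eta_j'''}) \in \R \times \R \times \R^{d-3}$, this yields $|\eta_1 \eta_2'' - \eta_2 \eta_1''| \sim A^{-1} N_1^2$. The cube decomposition $\tilde{\mathcal{C}}_{k_j}^A$ further confines $\xi_j$ and $\boldsymbol{\eta_j'''}$ to sets of diameter $\lesssim A^{-1} N_1$.

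Next, following the reduction in Proposition~\ref{prop3.4}, we freeze the transverse variables $\xi_j$ and $\boldsymbol{\eta_j'''}$ by Fubini and reduce to a 3D trilinear convolution estimate in the active variables $(\tau_j, \eta_j, \eta_j'')$. After the rescaling $(\tau, \eta, \eta'') \mapsto (N_1^3 \tau, N_1 \eta, N_1 \eta'')$, the spatial supports of the three surfaces have diameter $\lesssim A^{-1}$, and the hypotheses of the nonlinear Loomis--Whitney inequality (Proposition~\ref{prop2.3}) can be verified: the unit normals to the rescaled surfaces are, up to normalization, of the form $(-1, 2\tilde{\xi}_j \tilde{\eta}_j, 2\tilde{\xi}_j \tilde{\eta}_j'')$, and a direct determinant computation gives
\[
|\det N| \gtrsim |(\tilde{\eta}_1 \tilde{\eta}_2'' - \tilde{\eta}_2 \tilde{\eta}_1'')(\tilde{\xi}_1^2 + \tilde{\xi}_1 \tilde{\xi}_2 + \tilde{\xi}_2^2)| \gtrsim A^{-1},
\]
where $\max(|\tilde{\xi}_1|, |\tilde{\xi}_2|) \gtrsim 1$ from Assumption~\ref{assumption1}(3). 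Proposition~\ref{prop2.3} then yields a 3D bound with factor $A^{1/2}$, which in unscaled variables becomes $A^{1/2} N_1^{-2} (L_0 L_1 L_2)^{1/2}$. Integrating over the $d-2$ frozen variables per factor via Cauchy--Schwarz contributes a factor of $(A^{-1} N_1)^{(d-2)/2}$, yielding the target estimate.

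The main obstacle will be handling the rotation in $\boldsymbol{\eta}$-space cleanly: the rotation depends on the pair $(j_1,j_2)$, and one must check that the cube structure remains compatible with the angular decomposition after this rotation. A secondary issue is ensuring that the specific component $|\eta_1 \eta_2'' - \eta_2 \eta_1''|$ selected by pigeonhole is stable under small perturbations within the supports of $u$ and $v$; this may require further dyadic decomposition of the caps $\overline{\omega}_A^j$ to pin down the direction of the wedge product. Once these geometric issues are resolved, the determinant lower bound and the assembly via Cauchy--Schwarz proceed as in the last part of Proposition~\ref{prop3.4}.
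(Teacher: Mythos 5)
Your proposal is correct and follows essentially the same route as the paper: rotate in $\boldsymbol{\eta}$-space to isolate a component of $\boldsymbol{\eta_1}\wedge\boldsymbol{\eta_2}$ of size $\sim A^{-1}N_1^2$, freeze $\xi_j$ and the remaining $d-3$ components of $\boldsymbol{\eta_j}$, rescale, apply the nonlinear Loomis--Whitney inequality with transversality $|\det N|\gtrsim|(\widehat{\eta_1}\widehat{\eta_2}'-\widehat{\eta_2}\widehat{\eta_1}')(\tilde{\xi}_1^2+\tilde{\xi}_1\tilde{\xi}_2+\tilde{\xi}_2^2)|\gtrsim A^{-1}$ (where Assumption~\ref{assumption1}(3) gives $\max(|\tilde\xi_1|,|\tilde\xi_2|)\sim 1$), and reassemble via Cauchy--Schwarz over the frozen variables picking up $(A^{-1}N_1)^{(d-2)/2}$. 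The geometric caveats you flag (stability of the selected wedge component, compatibility of cube and cap decompositions after rotation) are handled implicitly in the paper by the choice of cubes $\mathcal{C}_k^A$ of side $A^{-1}N_1$, which is sufficient.
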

\begin{proof}
After rotation, we can assume $|\eta_1 \eta_2' - \eta_2 \eta_1'| \sim A^{-1} N_1^2$ and 
$|\boldsymbol{\eta_j'}| \lesssim A^{-1} N_1$. 
Recall that $\eta_j$ and $\eta_j'$ are first and 
second components of $\boldsymbol{\eta_j}$, respectively. 
For simplicity, we use 
$\boldsymbol{\check{\eta}_j} \in \R^{d-3}$ which satisfies 
$\boldsymbol{\eta_j} = (\eta_j, \boldsymbol{\eta_j'}) = (\eta_j, \eta_j', \boldsymbol{\check{\eta}_j})$. 
Similarly to the proof of Proposition \ref{prop3.4}, for fixed $\xi_1$, $\xi_2$, $\boldsymbol{\check{\eta}_1}$, 
$\boldsymbol{\check{\eta}_2}$, it suffices to show
\begin{align}
& \left|\int_{\tilde{*}}{  \ha{w}_{{N_0, L_0}}(\tau, \xi, \boldsymbol{\eta} ) 
\ha{u}_{N_1, L_1}|_{\mathcal{E}_{j_1,k_1}^{A} } (\tau_1, \xi_1, \boldsymbol{\eta_1} )  
\ha{v}_{N_2, L_2}|_{\mathcal{E}_{j_2,k_2}^{A} } (\tau_2, \xi_2, \boldsymbol{\eta_2} ) 
}
d\tilde{\sigma}_1 d\tilde{\sigma}_2 \right| \notag \\
 \lesssim & A^{\frac{1}{2}} N_{1}^{-2}   (L_0 L_1 L_2)^{\frac{1}{2}} 
\|\ha{u}_{N_1, L_1} (\xi_1,\boldsymbol{\check{\eta}_1}) \|_{L^2_{\tau \eta \eta' }} \| \ha{v}_{N_2, L_2} (\xi_2,\boldsymbol{\check{\eta}_2})\|_{L^2_{\tau \eta \eta'  }} 
\|\ha{w}_{{N_0, L_0}}(\xi, \boldsymbol{\check{\eta}}) \|_{L^2_{\tau \eta \eta' }},\label{est02-prop3.7}
\end{align}
where $d \tilde{\sigma}_j = d\tau_j d \eta_j d \eta_j' $ and 
$\tilde{*}$ denotes $(\tau, \eta, \eta' ) = 
(\tau_1 + \tau_2, \eta_1 + \eta_2,  \eta_1'+ \eta_2').$ 
We follow the proof of Proposition \ref{prop3.4}. 
Assume that $\xi_1$, $\xi_2$, $\boldsymbol{\check{\eta}_1}$, 
$\boldsymbol{\check{\eta}_2}$ are fixed. 
We use the functions $\tilde{f}_{\xi_1, \boldsymbol{\check{\eta}_1}}$, $\tilde{g}_{\xi_2, \boldsymbol{\check{\eta}_2}}$ on $\R^3$ that are defined as
\begin{align*}
\tilde{f}_{\xi_1, \boldsymbol{\check{\eta}_1}}(\tau_1, \eta_1, \eta_1' ) & = 
\ha{u}_{N_1, L_1}|_{\mathcal{E}_{j_1,k_1}^{A} } (N_1^3\tau_1, \xi_1, N_1 \eta_1, N_1 \eta_1',\boldsymbol{\check{\eta}_1} ), \\
\tilde{g}_{\xi_2, \boldsymbol{\check{\eta}_2}}(\tau_2, \eta_2, \eta_2') & = 
\ha{v}_{N_2, L_2}|_{\mathcal{E}_{j_2,k_2}^{A} } (N_1^3\tau_2, \xi_2, N_1\eta_2, N_1 \eta_2', \boldsymbol{\check{\eta}_2}),
\end{align*}
and show the following estimate:
\begin{equation}
\| \tilde{f}_{\xi_1, \boldsymbol{\check{\eta}_1}} |_{S_1} * \tilde{g}_{\xi_2, \boldsymbol{\check{\eta}_2}} |_{S_2} \|_{L^2(S_3)} \lesssim A^{\frac{1}{2}}  
\| \tilde{f}_{\xi_1, \boldsymbol{\check{\eta}_1}} \|_{L^2(S_1)} \| \tilde{g}_{\xi_2, \boldsymbol{\check{\eta}_2}} \|_{L^2(S_2)}.\label{est06-prop3.7}
\end{equation}
Here, $c_0$, $c_1$, $c_2 \in \R$, $\tilde{\xi} = N_1^{-1} \xi$, $\tilde{\xi}_j = N_1^{-1} \xi_j$, 
$\boldsymbol{\overline{\eta}_j} = N_1^{-1}\boldsymbol{\check{\eta}_j}$, 
$\boldsymbol{\overline{\eta}} = N_1^{-1}\boldsymbol{\check{\eta}}$ and for
\[
\phi_{\xi_j, \boldsymbol{\check{\eta}_j}, c_j}  (\eta, \eta')  = (\xi_j(\xi_j^2 + |\boldsymbol{\eta}|^2) + c_j, 
\eta, \eta' ),\]
the surfaces are given as \begin{align*}
S_1 =  \{ & \phi_{\tilde{\xi}_1, \boldsymbol{\overline{\eta}_1}, c_1} (\eta_1, \eta_1') \in \R^3 \ | \ 
(\eta_1, \eta_1')  \in \supp (\tilde{f}_{\xi_1, \boldsymbol{\check{\eta}_1}}\circ \phi_{\tilde{\xi}_1, \boldsymbol{\overline{\eta}_1}, c_1})\}, \\
S_2 = \{ & \phi_{\tilde{\xi}_2, \boldsymbol{\overline{\eta}_2}, c_2}(\eta_2, \eta_2') \in \R^3 \ | \ 
(\eta_2, \eta_2')   
\in \supp (\tilde{g}_{\xi_2, \boldsymbol{\check{\eta}_2}}\circ \phi_{\tilde{\xi}_2, \boldsymbol{\overline{\eta}_2}, c_2}) \},\\
S_3 = \bigl\{ & (\psi_{\tilde{\xi},  \boldsymbol{\bar{\eta}} } (\eta, \eta'), \eta, \eta' ) \in \R^3  \ | \  
\psi_{\xi, \boldsymbol{\check{\eta}}}  (\eta, \eta') = \xi(\xi^2 + |\boldsymbol{\eta}|^2 ) + c_0 \bigr\}.
\end{align*}
Since $\textnormal{diam} (S_1) \lesssim A^{-1}$, $\textnormal{diam} (S_2) \lesssim A^{-1}$, we can assume 
$\textnormal{diam} (S_3) \lesssim A^{-1}$. We easily confirm that $S_1$, $S_2$, $S_3$ satisfy the necessary regularity and diameter conditions  to use the nonlinear Loomis-Whitney inequality. 
Thus, here we only confirm that $S_1$, $S_2$, $S_3$ satisfy the suitable transversality condition. 
We define $\lambda_i \in S_i$ as 
\begin{equation*}
\lambda_1=\phi_{\tilde{\xi}_1, \boldsymbol{\overline{\eta}_1}, c_1} (\eta_1, \eta_1'), \quad 
\lambda_2 = \phi_{\tilde{\xi}_2, \boldsymbol{\overline{\eta}_2}, c_2}(\eta_2, \eta_2'), \quad
\lambda_3 =  (\psi_{\tilde{\xi},  \boldsymbol{\bar{\eta}} } (\eta, \eta'), \eta, \eta' ).
\end{equation*}
The unit normals ${\mathfrak{n}}_i$ on $\lambda_i$ are described explicitly as
\begin{equation*}
{\mathfrak{n}}_i(\lambda_i) = 
\frac{1}{\sqrt{1+ 4{\tilde{\xi}_i}^2 (\eta_i^2+{\eta_i'}^2) }} 
\left(-1, \ 2 \tilde{\xi}_i \eta_i, \ 2 \tilde{\xi}_i \eta_i' \right),
\end{equation*}
for $i=1$, $2$, and the same for ${\mathfrak{n}}_3(\lambda_3)$. 
Letting $\widehat{\boldsymbol{\eta_1}}  = (\widehat{\eta_1}, \widehat{\eta_1}')$, 
$\widehat{\boldsymbol{\eta_2}}  = (\widehat{\eta_2}, \widehat{\eta_2}')$, 
$\widehat{\boldsymbol{\eta}}  = (\widehat{\eta}, {\widehat{\eta}}')$, 
$\widehat{\boldsymbol{\eta_1}}  + \widehat{\boldsymbol{\eta_2}}  = 
\widehat{\boldsymbol{\eta}}$ and
\begin{equation*} 
\widehat{\lambda}_1:=\phi_{\tilde{\xi}_1, \boldsymbol{\overline{\eta}_1}, c_1} (\widehat{\boldsymbol{\eta_1}} )  \in S_1, \ \ 
\widehat{\lambda}_2:=\phi_{\tilde{\xi}_2, \boldsymbol{\overline{\eta}_2}, c_2} (\widehat{\boldsymbol{\eta_2}} )\in S_2, \ \ 
\widehat{\lambda}_3:=(\psi_{\tilde{\xi},  \boldsymbol{\bar{\eta}} } (\widehat{\boldsymbol{\eta}}), \widehat{\boldsymbol{\eta}} ) \in S_3,
\end{equation*}
we show
\begin{equation*}
A^{-1} \lesssim |\textnormal{det} N(\widehat{\lambda}_1, \widehat{\lambda}_2, \widehat{\lambda}_3)|,
\end{equation*}
which means the transversality of $S_1$, $S_2$, $S_3$ and completes the proof. 
We observe
\begin{align*}
 |\textnormal{det} N(\widehat{\lambda}_1, \widehat{\lambda}_2, \widehat{\lambda}_3)|  \gtrsim & 
 \left|\textnormal{det}
\begin{pmatrix}
-1 & -1 & - 1 \\
2 \tilde{\xi}_1 \widehat{\eta_1}  & 2 \tilde{\xi}_2 \widehat{\eta_2} & 2 \tilde{\xi} \widehat{\eta} \\
2 \tilde{\xi}_1 \widehat{\eta_1}'   & 2 \tilde{\xi}_2 \widehat{\eta_2}'  & 2 \tilde{\xi} \widehat{\eta}'
\end{pmatrix} \right| \notag \\
\gtrsim &  \bigl| (\widehat{\eta_1} \widehat{\eta_2}' - \widehat{\eta_2} \widehat{\eta_1}')(\tilde{\xi}_1^2 + \tilde{\xi}_1 
\tilde{\xi}_2 + \tilde{\xi}_2^2 ) \bigr|\\
\gtrsim & A^{-1}.
\end{align*}
Here we used the assumptions $|\eta_1 \eta_2' - \eta_2 \eta_1'| \sim A^{-1} N_1^2$ and 
$\max(|\xi_1|, |\xi_2|) \sim N_1$ which imply 
$|\widehat{{\eta_1}} \widehat{{\eta_2}}' - 
\widehat{{\eta_2}} \widehat{{\eta_1}}'|\sim A^{-1}$ and $\max(|\tilde{\xi}_1|, |\tilde{\xi}_2|) \sim 1$, respectively.
\end{proof}
\begin{prop}\label{prop3.8}
Assume \textit{Assumption} \textnormal{\ref{assumption1}} 
and $|\boldsymbol{\eta_1}| \sim |\boldsymbol{\eta_2}| \sim N_1$. 
Let $\bar{\alpha}(j_1,j_2) \sim A^{-1}$. Then we get
\begin{align*}
& \left|\int_{*}{  \ha{w}_{{N_0, L_0}}(\tau, \xi, \boldsymbol{\eta} ) 
\ha{u}_{N_1, L_1}|_{ {\overline{\mathcal{S}}}_{j_1}^A } (\tau_1, \xi_1, \boldsymbol{\eta_1} )  
\ha{v}_{N_2, L_2}|_{ {\overline{\mathcal{S}}}_{j_2}^A } (\tau_2, \xi_2, \boldsymbol{\eta_2} ) 
}
d\sigma_1 d\sigma_2 \right| \\
& \qquad \qquad \qquad \lesssim  N_0^{\frac{d-4}{2}+2 \e} N_{1}^{-1-\frac{3}{2}\e}    (L_0 L_1 L_2)^{\frac{1}{2}} \|\ha{u}_{N_1, L_1} \|_{L^2} \| \ha{v}_{N_2, L_2} \|_{L^2} 
\|\ha{w}_{{N_0, L_0}} \|_{L^2},
\end{align*}
where $d \sigma_j = d\tau_j d \xi_j d \boldsymbol{\eta}_j$ and $*$ denotes $(\tau, \xi, \boldsymbol{\eta}) 
= (\tau_1 + \tau_2, \xi_1+ \xi_2, \boldsymbol{\eta_1} + \boldsymbol{\eta_2} ).$
\end{prop}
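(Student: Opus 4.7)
The plan is to adapt the strategy of Propositions~\ref{prop3.3}--\ref{prop3.5} (case~(Ia)) to the $\boldsymbol\eta$-angular decomposition of case~(Ic). First, I further decompose $\ha u_{N_1,L_1}|_{\overline{\mathcal S}_{j_1}^A}$ and $\ha v_{N_2,L_2}|_{\overline{\mathcal S}_{j_2}^A}$ into cube pieces supported in $\mathcal E_{j_i,k_i}^A$, and decompose $\ha w_{N_0,L_0}$ into pieces supported in $\tilde{\mathcal C}_{k_0}^A$. Proposition~\ref{prop3.7} applied to each triple $(k_0, k_1, k_2)$ (only those with $k_0 \approx k_1+k_2$ contribute, by Fourier support) yields a per-piece bound of order $A^{-(d-3)/2}N_1^{(d-6)/2}(L_0L_1L_2)^{1/2}$.

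The key geometric step is to count the admissible triples sharply. Because $\overline{\omega}_{j_2}^A$ lies at angle $\bar\alpha(j_1,j_2) \sim A^{-1}$ from $-\overline{\omega}_{j_1}^A$, the Minkowski sum $\overline{\mathcal S}_{j_1}^A + \overline{\mathcal S}_{j_2}^A$ is a tube-like parallelepiped whose $(d-2)$-dimensional cross-section perpendicular to $\hat\omega_{j_1}$ has diameter $\sim A^{-1}N_1$. Intersecting with the support $B(0,N_0)$ of $\ha w$---and using $A^{-1}N_1 \le N_0$, which follows from $A \gtrsim N_1/N_0$---this intersection has only two ``long'' axes (the $\xi$-axis and the axis along $\hat\omega_{j_1}$ in $\boldsymbol\eta$-space), so the effective number of $k_0$-cubes of side $A^{-1}N_1$ is only $\sim (AN_0/N_1)^2$, rather than the naive $(AN_0/N_1)^d$. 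Young's convolution inequality then yields
\[
\sum_{k_0,k_1,k_2} \|\ha u|_{\mathcal E_{j_1,k_1}^A}\|_{L^2}\|\ha v|_{\mathcal E_{j_2,k_2}^A}\|_{L^2}\|\ha w|_{\tilde{\mathcal C}_{k_0}^A}\|_{L^2}
\lesssim \frac{AN_0}{N_1}\,\|\ha u|_{\overline{\mathcal S}_{j_1}^A}\|_{L^2}\,\|\ha v|_{\overline{\mathcal S}_{j_2}^A}\|_{L^2}\,\|\ha w_{N_0,L_0}\|_{L^2}.
\]

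Combining with Proposition~\ref{prop3.7} and invoking $A \gtrsim N_1/N_0$ produces a bound of order $A^{(5-d)/2}N_0 N_1^{(d-8)/2}(L_0L_1L_2)^{1/2}$. At $A \sim N_1/N_0$ this evaluates to $N_0^{(d-3)/2}N_1^{-3/2}$, sharper than the target $N_0^{(d-4)/2}N_1^{-1}$ by a factor $(N_0/N_1)^{1/2}\le 1$. For $d\ge 5$ the bound is non-increasing in $A$ and nothing more is required; for $d=3,4$ it grows in $A$, so the range $A\lesssim (N_1/N_0)^{(6-d)/(5-d)}$ is handled as above, and I would dispatch the complementary large-$A$ range via an analog of Proposition~\ref{bilinear-transversal}, using that the $\boldsymbol\eta$-angular separation $\bar\alpha\sim A^{-1}$ with $|\boldsymbol\eta_j|\sim N_1$ supplies the transversality $|\nabla\varphi(\xi_1,\boldsymbol\eta_1)-\nabla\varphi(\xi_2,\boldsymbol\eta_2)|\gtrsim N_1/A$, giving a competing estimate decreasing in $A$. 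The logarithmic loss from the dyadic sum in $A$ is absorbed in the $N_0^{2\epsilon}N_1^{-3\epsilon/2}$ slack. The main obstacle is the sharp geometric count $\sim (AN_0/N_1)^2$; missing this reduction from $d$ down to $2$ effective long dimensions---which relies on the near-antipodal configuration $\overline{\omega}_{j_2}^A\approx -\overline{\omega}_{j_1}^A$ forced by $|\boldsymbol\eta|\lesssim N_0\ll N_1$---would cost a fatal factor $(AN_0/N_1)^{d-2}$.
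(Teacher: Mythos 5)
Your proposal takes a genuinely different route from the paper, and it has a gap that I think is fatal. The paper does not prove Proposition \ref{prop3.8} by decomposing into $A^{-1}N_1$-cubes and summing Proposition \ref{prop3.7}; instead, after the rotation that places the $\overline{\omega}_{j_1}^A$--$\overline{\omega}_{j_2}^A$ separation in the $(\eta,\eta')$-plane, it performs the linear change of variables $(\xi_j,\eta_j)\mapsto(\xi_j+\eta_j,\sqrt3(\xi_j-\eta_j))$ to turn the phase into the symmetrized 2D Zakharov--Kuznetsov form $\xi^3+\eta^3$ in the two active variables. This converts Proposition \ref{prop3.8} into the essentially two-dimensional Proposition \ref{prop3.9}, and the rest of the section (Propositions \ref{prop3.10}--\ref{prop3.23} and Lemmas \ref{lemma3.11}, \ref{lemma3.13}, \ref{lemma3.22}) is devoted to proving that by importing the Whitney-type decompositions, the curved tiles $\mathcal R_{M,m,i}$, and the $\mathfrak K_M^K$-dichotomy from the 2D paper \cite{Ki2019}. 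Proposition \ref{prop3.7} appears only as one ingredient for the terminal pieces of those decompositions, not as the main engine.

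The concrete gap is in your treatment of the large-$A$ range for $d=3,4$. The transversality you invoke, $|\nabla\varphi(\xi_1,\boldsymbol\eta_1)-\nabla\varphi(\xi_2,\boldsymbol\eta_2)|\gtrsim N_1/A$, is simply false on the support in question. Writing $\xi_2=\xi-\xi_1$, $\boldsymbol\eta_2=\boldsymbol\eta-\boldsymbol\eta_1$, one has $\nabla_{\boldsymbol\eta}\varphi(\xi_1,\boldsymbol\eta_1)-\nabla_{\boldsymbol\eta}\varphi(\xi_2,\boldsymbol\eta_2)=2(\xi\boldsymbol\eta_1+\xi_1\boldsymbol\eta-\xi\boldsymbol\eta)$ and the $\xi$-component is $3\xi(\xi_1-\xi_2)+\boldsymbol\eta\cdot(\boldsymbol\eta_1-\boldsymbol\eta_2)$; both expressions vanish as $|\xi|,|\boldsymbol\eta|\to 0$. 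Since here $|\xi|,|\boldsymbol\eta|\lesssim N_0$ with $N_0$ possibly much smaller than $N_1$, the gradient difference can degenerate arbitrarily --- this is precisely the resonant configuration. Moreover, even when the gradient difference is of size $\sim N_1 N_0$, the hypothesis $K\gtrsim rN_1$ of Proposition \ref{bilinear-transversal} would demand $N_1 N_0\gtrsim (A^{-1}N_1)N_1$, i.e.\ $A\gtrsim N_1/N_0$, which is exactly the small-$A$ threshold and not the complementary large-$A$ range you need to cover. So the ``competing estimate decreasing in $A$'' is not available; the near-resonant region is what forces the paper's fine Whitney decomposition near the curves $\overline\Phi=0$ and $\overline F=0$ (Definitions \ref{definition3}--\ref{definition6}) and the use of the nonlinear Loomis--Whitney inequality on the curved hypersurfaces $\overline S_i$, culminating in Proposition \ref{prop3.17} which alone delivers the $A$-independent bound $N_0^{(d-4)/2+2\e}N_1^{-1-3\e/2}$.

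I'd also flag that the constraint $A^{-1}N_1\lesssim N_0$ that you use to evaluate your bound at the endpoint $A\sim N_1/N_0$ is not free: it comes out of the normalization $|\eta_1\eta_2'-\eta_2\eta_1'|\sim A^{-1}N_1^2$ combined with $\eta_1\approx-\eta_2$, which forces $A^{-1}N_1^2\lesssim N_1|\eta_1'+\eta_2'|\lesssim N_1 N_0$; one has to actually carry out the rotation in Proposition \ref{prop3.7} and check that the lower bound on the cross term survives before this is available. Your cube-counting bound $\sim(AN_0/N_1)^2$ is plausible under that constraint, and for $d\geq 5$ (modulo the above caveat) your first half is in the right spirit, but without a correct replacement for the degenerate-transversality regime the argument does not close for $d=3,4$, which is the main case of interest in Theorem \ref{thm:main-zk}.
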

Before we state a proof, let us see that Proposition \ref{prop3.8} establishes \eqref{goal01-prop3.2} in the case \textnormal{(Ic)}.
\begin{proof}[\underline{Proof of \eqref{goal01-prop3.2} in the case \textnormal{(Ic)}}]
\label{Proofof(4.1)}
For convenience, we use
\begin{equation*}
\overline{I}_{j_1,j_2}^A:=
\left|\int_{*}{  \ha{w}_{{N_0, L_0}}(\tau, \xi, \boldsymbol{\eta} ) 
\ha{u}_{N_1, L_1}|_{ {\overline{\mathcal{S}}}_{j_1}^A } (\tau_1, \xi_1, \boldsymbol{\eta_1} )  
\ha{v}_{N_2, L_2}|_{ {\overline{\mathcal{S}}}_{j_2}^A } (\tau_2, \xi_2, \boldsymbol{\eta_2} ) 
}
d\sigma_1 d\sigma_2 \right|.
\end{equation*}
We observe that
\begin{align*}
& \left|\int_{*}{  \ha{w}_{{N_0, L_0}}(\tau, \xi, \boldsymbol{\eta}) 
\ha{u}_{N_1, L_1}(\tau_1, \xi_1, \boldsymbol{\eta_1})  \ha{v}_{N_2, L_2}(\tau_2, \xi_2, \boldsymbol{\eta_2}) 
}
d\sigma_1 d\sigma_2 \right| \\
&\lesssim \sum_{2 \leq A \leq N_1^6} \sum_{\bar{\alpha}(j_1,j_2) \sim A^{-1}}\overline{I}_{j_1,j_2}^A 
+ \sum_{\bar{\alpha}(j_1,j_2) \lesssim N_1^{-6}}\overline{I}_{j_1,j_2}^{N_1^6}.
\end{align*}
For the former term, by using Proposition \ref{prop3.8} and the almost orthogonality of $j_1$, $j_2$ which satisfy $\bar{\alpha}(j_1,j_2) \sim A^{-1}$, we get
\begin{align*}
& \sum_{2 \leq A \leq N_1^6} \sum_{\bar{\alpha}(j_1,j_2) \sim A^{-1}}\overline{I}_{j_1,j_2}^A\\
& \lesssim \sum_{2 \leq A \leq N_1^6} N_0^{\frac{d-4}{2}+2 \e} N_{1}^{-1-\frac{3}{2}\e}   (L_0 L_1 L_2)^{\frac{1}{2}} \|\ha{u}_{N_1, L_1} \|_{L^2} \| \ha{v}_{N_2, L_2} \|_{L^2} 
\|\ha{w}_{{N_0, L_0}} \|_{L^2}\\
& \lesssim N_0^{\frac{d-4}{2}+2\e} N_{1}^{-1-\e}   (L_0 L_1 L_2)^{\frac{1}{2}} \|\ha{u}_{N_1, L_1} \|_{L^2} \| \ha{v}_{N_2, L_2} \|_{L^2} 
\|\ha{w}_{{N_0, L_0}} \|_{L^2}.
\end{align*}
For the latter term, since the size of the set 
$\{(\xi_1, \boldsymbol{\eta_1}) | (\tau_1, \xi_1, \boldsymbol{\eta_1}) \in {\overline{\mathcal{S}}}_{j_1}^{N_1^6}\}$ is less than $\sim N_1^{-5(d-2)+2} \leq N_1^{-3}$, we easily obtain
\begin{equation*}
\overline{I}_{j_1,j_2}^{N_1^6} \lesssim N_1^{-\frac{3}{2}} L_1^{\frac{1}{2}} \|\ha{u}_{N_1, L_1}|_{ {\overline{\mathcal{S}}}_{j_1}^A } \|_{L^2} \| \ha{v}_{N_2, L_2}|_{ {\overline{\mathcal{S}}}_{j_2}^A } \|_{L^2} 
\|\ha{w}_{{N_0, L_0}} \|_{L^2},
\end{equation*}
which completes the proof of \eqref{goal01-prop3.2} in the case \textnormal{(Ic)}.
\end{proof}
The next subsection is devoted to the proof of Proposition \ref{prop3.8}. 
Note that, as in the proof of Proposition \ref{prop3.4}, by rotating $\boldsymbol{\eta_1}$, $\boldsymbol{\eta_2}$, we can assume 
$|\eta_1 \eta_2' - \eta_2 \eta_1'| \sim A^{-1} N_1^2$ and $|\boldsymbol{\eta_j'}| \lesssim A^{-1} N_1$. 
Further, by performing the invertible linear transformation 
$(\xi_j, \eta_j) \to (\xi_j+\eta_j, \sqrt{3} (\xi_j-\eta_j))$, 
it is easily observed that Proposition \ref{prop3.8} is equivalent to
Proposition \ref{prop3.9} below. 

\subsection{Proof of Proposition \ref{prop3.8} }\label{subsec:proof-prop}
As justified by the above discussion, in this subsection we assume the following:
\begin{manualass}{1'}\label{assumption1prime}$ \, $\\
(1) $L_{012}^{\max} \ll (N_{012}^{\max})^3$, $ \ $ 
(2) $1 \ll N_0 \lesssim N_1 \sim N_2$,  $ \ $ 
(3) $\textnormal{max}(|\xi_1+\eta_1|, |\xi_2+\eta_2|) \geq 2^{-5} N_1$.
\end{manualass}
\begin{prop}\label{prop3.9}
 In addition to Assumption \ref{assumption1prime}, suppose that
$|\xi_j-\eta_j| \sim N_1$, $|\boldsymbol{\eta_j'}| \lesssim A^{-1} N_1$ where $j=1,2$ and 
$|(\xi_1-\eta_1) \eta_2' - (\xi_2-\eta_2) \eta_1'| \sim A^{-1} N_1^2$. Then we get
\begin{equation}
\begin{split}
& \left|\int_{*}{  h_{{N_0, L_0}}(\tau, \xi, \boldsymbol{\eta} ) 
f_{N_1, L_1} (\tau_1, \xi_1, \boldsymbol{\eta_1} )  
g_{N_2, L_2} (\tau_2, \xi_2, \boldsymbol{\eta_2} ) 
}
d\sigma_1 d\sigma_2 \right| \\
& \qquad \qquad \qquad \lesssim  N_0^{\frac{d-4}{2}+2\e} N_{1}^{-1-\frac{3}{2}\e}   (L_0 L_1 L_2)^{\frac{1}{2}} 
\|f_{N_1, L_1} \|_{L^2} \| g_{N_2, L_2} \|_{L^2} 
\|h_{{N_0, L_0}}\|_{L^2},\label{est01-prop3.9}
\end{split}
\end{equation}
where functions $f_{N_1, L_1}$, $g_{N_2, L_2}$, $h_{{N_0, L_0}}$ satisfy
\begin{align}
& \supp f_{N_1, L_1}  \subset G_{N_1, L_1}, \quad 
\supp g_{N_2, L_2}  \subset G_{N_2, L_2}, \quad
\supp h_{N_0, L_0}  \subset G_{N_0, L_0},\label{assumption-fgh}\\
& G_{N,L} := \{ (\tau, \xi, \boldsymbol{\eta}) \in  \R^{d+1} \, | \, 
\LR{|(\xi, \boldsymbol{\eta})|} \sim N,  \LR{\tau- (\xi^3+ \eta^3) - (\xi+\eta) |\boldsymbol{\eta'}|^2} \sim L\}.\notag
\end{align}
\end{prop}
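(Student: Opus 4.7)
I follow the strategy of Propositions \ref{prop3.4} and \ref{prop3.7}, adapted to the transformed phase
\[
\varphi(\xi,\eta,\boldsymbol{\eta}') \; = \; \xi^3+\eta^3+(\xi+\eta)|\boldsymbol{\eta}'|^2.
\]
The key change of variables is $s_j := \xi_j+\eta_j$, $t_j := \xi_j-\eta_j$, under which the phase factorizes cleanly as $\varphi = s\bigl((s^2+3t^2)/4 + |\boldsymbol{\eta}'|^2\bigr)$: a KdV-type cubic in $s$ plus a $\boldsymbol{\eta}'$-paraboloid. Splitting $\boldsymbol{\eta}'_j = (\eta'_j, \boldsymbol{\check{\eta}_j})$ with $\boldsymbol{\check{\eta}_j} \in \R^{d-3}$, I fix the variables $(s_1, s_2, \boldsymbol{\check{\eta}_1}, \boldsymbol{\check{\eta}_2})$ and, by Fubini, reduce the statement to a three-dimensional trilinear estimate in the remaining variables $(\tau_j, t_j, \eta'_j)$, mirroring the reduction performed in the proof of Proposition \ref{prop3.7}.

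\textbf{The 3D Loomis--Whitney estimate.} For each fixed slice, I rescale $(\tau, t, \eta') \mapsto (N_1^3\tau, N_1 t, N_1 \eta')$ and, if necessary, further decompose the $t$-support into intervals of length $\sim A^{-1}$ in rescaled units so that each of the three resulting surfaces has diameter $\ll A^{-1}$. These rescaled surfaces are graphs of quadratic polynomials; their unit normals take the form $\mathfrak{n}_j \propto (-1,\; 3 s_j t_j / 2,\; 2 s_j \eta'_j)$. A direct column-expansion computation, making use of $s_0 = s_1 + s_2$, $t_0 = t_1 + t_2$, $\eta'_0 = \eta'_1 + \eta'_2$, gives the clean factorization
\[
\det(\mathfrak{n}_1, \mathfrak{n}_2, \mathfrak{n}_3) \; \propto \; (t_1 \eta'_2 - t_2 \eta'_1) \cdot (s_1^2 + s_1 s_2 + s_2^2).
\]
The transversality hypothesis of Proposition \ref{prop3.9} gives $|t_1 \eta'_2 - t_2 \eta'_1| \sim A^{-1} N_1^2$, while Assumption \ref{assumption1prime}(3) combined with the elementary bound $s_1^2 + s_1 s_2 + s_2^2 \geq \tfrac{3}{4} \max(s_1^2, s_2^2)$ yields $|s_1^2 + s_1 s_2 + s_2^2| \gtrsim N_1^2$. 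Hence, after rescaling, $|\det N| \gtrsim A^{-1}$, so Proposition \ref{prop2.3} applies and produces the desired slice bound of order $A^{1/2} N_1^{-2}(L_0 L_1 L_2)^{1/2}$.

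\textbf{Reassembling.} Integrating the slice estimate over the fixed variables --- the $s_j$-supports have size $\lesssim N_1$ and the $\boldsymbol{\check{\eta}_j}$-supports have size $\lesssim (A^{-1} N_1)^{d-3}$ --- via a Cauchy--Schwarz argument in these integrations, exactly parallel to the concluding step of the proof of Proposition \ref{prop3.4}, converts the $A$-dependent factors into the claimed bound $N_0^{(d-4)/2+2\e} N_1^{-1-\tfrac{3}{2}\e} (L_0 L_1 L_2)^{1/2}$.

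\textbf{Main obstacle.} The critical step is the transversality computation. The fortuitous factorization of $\det(\mathfrak{n}_1, \mathfrak{n}_2, \mathfrak{n}_3)$ as a product of precisely the two quantities controlled by the hypotheses --- the ``angular separation'' factor $t_1 \eta'_2 - t_2 \eta'_1$ and the ``KdV resonance'' factor $s_1^2 + s_1 s_2 + s_2^2$ --- is what makes the argument close. Identifying the coordinates $(s_j, t_j)$ in which the phase decouples into these two pieces is the essential insight, and it explains why the invertible linear transformation $(\xi_j, \eta_j) \mapsto (\xi_j + \eta_j, \sqrt{3}(\xi_j - \eta_j))$ was performed before stating Proposition \ref{prop3.9}.
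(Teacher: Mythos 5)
Your transversality computation is correct: in the coordinates $s_j=\xi_j+\eta_j$, $t_j=\xi_j-\eta_j$ the phase becomes $s(\tfrac{s^2+3t^2}{4}+|\boldsymbol{\eta}'|^2)$, and for the fixed-$(s,\boldsymbol{\check\eta})$ slices in $(\tau,t,\eta')$ the determinant does factor as $(t_1\eta'_2-t_2\eta'_1)(s_1^2+s_1s_2+s_2^2)$, which under the stated hypotheses is $\gtrsim A^{-1}N_1^4$ pointwise. The problem is that this factorization is not where the difficulty lies, and your sketch passes over the two steps that actually carry the proof.

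First, the slice surfaces are not small. In rescaled units, $|\tilde\eta'_j|\lesssim A^{-1}$ but $|\tilde t_j|\sim 1$, so you must cut the $t$-support into $\sim A$ intervals of length $A^{-1}$ before Proposition~\ref{prop2.3} applies. You then have to resum the $\sim A\times A$ pairs. For a fixed $t_1$-piece, the output constraint $|t_1+t_2|\lesssim N_0$ and the modulation constraint $|\Phi|\lesssim L^{\max}_{012}$ restrict $t_2$ only when $|\partial_{t_2}\Phi|=\tfrac32|s_3t_3-s_2t_2|$ is large; near the resonance curves where this degenerates there is no almost-orthogonality among the $t$-pieces, and the naive triangle inequality loses another factor of $A$. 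This is not a minor bookkeeping issue: it is precisely the phenomenon the paper spends Definitions~\ref{definition3}--\ref{definition5} and Propositions~\ref{prop3.10}--\ref{prop3.23} on, via the Whitney decomposition in $(\xi,\eta)$ and the exceptional sets $\mathcal{K}_0,\mathcal{K}_1,\mathcal{K}_2,\mathcal{K}'_i$ from \cite{Ki2019} where almost-orthogonality fails and a separate argument (e.g.\ Proposition~\ref{prop3.7}, or the $\mathcal{R}_{M,m,i}$-tiles) is needed. Your ``Reassembling'' step cannot be ``exactly parallel to the concluding step of the proof of Proposition~\ref{prop3.4},'' because in Proposition~\ref{prop3.4} the slice variables $\boldsymbol{\eta}'_j$ are already confined to balls of radius $\sim A^{-1}N_1$ and no further decomposition (hence no resummation) is required.

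Second, even granting free almost-orthogonality, your choice of which variables to fix is sub-optimal by a factor $A^{1/2}$. Fixing $(s_j,\boldsymbol{\check\eta}_j)\in\R\times\R^{d-3}$, the $s$-support has size $\sim N_1$ (not $A^{-1}N_1$), so the Cauchy--Schwarz integration over the fixed variables contributes $(N_1(A^{-1}N_1)^{d-3})^{1/2}$, and with the slice bound $A^{1/2}N_1^{-2}$ the best you can obtain is $A^{-(d-4)/2}N_1^{(d-6)/2}$. The paper instead fixes $\boldsymbol{\eta}'_j\in\R^{d-2}$ (both components small) and gets the integration factor $(A^{-1}N_1)^{(d-2)/2}$, giving $A^{-(d-3)/2}N_1^{(d-6)/2}$ after the Whitney sum $\sum_{M\leq A}M^{1/2}$. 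Your bound is $A^{1/2}$ worse; for $d=3$ it becomes $A^{1/2}N_1^{-3/2}$, which is unbounded in $A$ and cannot be dominated by $N_0^{-1/2+2\e}N_1^{-1-\tfrac32\e}$ under only $A^{-1}N_1\lesssim N_0$. So the plan fails quantitatively in the lowest dimension even before the orthogonality issue is addressed. Finally, your stated reason for the linear change of variables is not the paper's: the paper performs it so that the two-dimensional Whitney machinery of \cite{Ki2019} (which is written for the symmetrized phase $\xi^3+\eta^3$) can be quoted verbatim, not because the determinant factors nicely in $(s,t)$-coordinates --- the analogous factorization was already exploited in the unsymmetrized form in Proposition~\ref{prop3.7}.
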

We consider Proposition \ref{prop3.9} instead of Proposition \ref{prop3.8}. 
The advantage in this way is that we can reuse the propositions and lemmas that were established in the paper by the second author \cite{Ki2019} which was concerned with the 
$2$D Zakharov-Kuznetsov equation. In \cite{Ki2019}, the following symmetrized $2$D Zakharov-Kuznetsov equation was considered.
\begin{equation*}
\partial_t u + (\partial_x^3 + \partial_y^3) u = 4^{-\frac{1}{3}} 
(\partial_x+ \partial_y) (u^2), \quad (t,x,y) \in \R \times \R^2.
\end{equation*}
This equation is equivalent to the original $2$D Zakharov-Kuznetsov equation, which can be seen by applying the above linear transformation $(\xi_j, \eta_j) \to (\xi_j+\eta_j, \sqrt{3} (\xi_j-\eta_j))$ to the original $2$D Zakharov-Kuznetsov equation. See \cite{GH14}. 

Now we turn to Proposition \ref{prop3.9}. Note that the assumptions in Proposition \ref{prop3.9} suggest that we can assume $A^{-1} N_1 \lesssim N_0$. 
We divide the proof into the two cases 
\begin{equation*}
| \sin \angle \left( (\xi_1, \eta_1), (\xi_2, \eta_2) \right)| \gtrsim 1 \quad 
\textnormal{and} \quad| \sin \angle \left( (\xi_1, \eta_1), (\xi_2, \eta_2) \right)| \ll 1.
\end{equation*} 
First, we consider the case $| \sin \angle \left( (\xi_1, \eta_1), (\xi_2, \eta_2) \right)| \gtrsim 1$.
\begin{defn}
Let $M \gg 1$ be a dyadic number and $\ell = ( \ell_{(1)}, \ell_{(2)} ) \in \Z^2$. We define square-tiles 
$\{ \mathcal{T}_\ell^M\}_{\ell \in \Z^2}$ whose side length is $M^{-1} N_1 $ and $\{ \tilde{\mathcal{T}}_\ell^M\}_{k \in \Z^2}$ as follows:
\begin{align*}
& \mathcal{T}_\ell^M : = \{ (\xi,\eta) \in \R^2 \ | \ (\xi ,\eta) \in M^{-1} N_1 \bigl(
[ \ell_{(1)}, \ell_{(1)} + 1) \times [ \ell_{(2)}, \ell_{(2)} + 1) \bigr) \}\\
& \tilde{\mathcal{T}}_\ell^M  : = \R \times \mathcal{T}_\ell^M \times \R^{d-2}.
\end{align*}
\end{defn}
\begin{defn}[Whitney type decomposition]\label{definition3}
 Let $A$, $M$, $\widehat{M}$ be dyadic such that $1 \ll \widehat{M} \leq M \leq A$ and 
\begin{align*}
\overline{\Phi} (\xi_1, \eta_1, \xi_2 ,\eta_2) & = \xi_1 \xi_2(\xi_1 + \xi_2) + \eta_1 \eta_2 (\eta_1 + \eta_2), \\
\overline{F} (\xi_1, \eta_1, \xi_2 ,\eta_2) & = \xi_1 \eta_2 +  \xi_2 \eta_1 + 2 (\xi_1 \eta_1 + \xi_2 \eta_2).
\end{align*}
We define
\begin{align*}
Z_M^1 & = \{ (\ell_1, \ell_2) \in \Z^2 \times \Z^2 \, | \, 
|\overline{\Phi}(\xi_1, \eta_1, \xi_2, \eta_2)| \geq M^{-1} N_1^3  \ \ \textnormal{for all} \ (\xi_j, \eta_j) \in 
\mathcal{T}_{\ell_j}^M \},\\
Z_M^2  & = \{ (\ell_1, \ell_2) \in \Z^2 \times \Z^2 \, | \, 
| \overline{F} (\xi_1, \eta_1, \xi_2 ,\eta_2)| \geq M^{-1} N_1^2  \ \ \textnormal{for all} \ (\xi_j, \eta_j) \in 
\mathcal{T}_{\ell_j}^M \},\\
Z_M & = Z_M^1 \cup Z_M^2 \subset \Z^2 \times \Z^2, 
\qquad R_M = \bigcup_{(\ell_1, \ell_2) \in Z_M} \mathcal{T}_{\ell_1}^M \times 
\mathcal{T}_{\ell_2}^M \subset \R^2 \times \R^2.
\end{align*}
It is clear that $M_1 \leq M_2 \Longrightarrow R_{M_1} \subset R_{M_2}$. 
Further, we define
\begin{equation*}
Q_M = 
\begin{cases}
R_M \setminus R_{\frac{M}{2}} \quad \textnormal{for} \  M > \widehat{M},\\
 \ R_{\widehat{M}}  \ \qquad \quad \textnormal{for} \  M = \widehat{M}.
\end{cases}
\end{equation*}
and a set of pairs of integer pair $Z_M' \subset Z_M$ as
\begin{equation*}
\bigcup_{(\ell_1, \ell_2) \in Z_M'} \mathcal{T}_{\ell_1}^M \times 
\mathcal{T}_{\ell_2}^M = Q_M.
\end{equation*}
We easily see that $Z_M'$ is uniquely defined and 
\begin{equation*}
M_1 \not= M_2 \Longrightarrow Q_{M_1} \cap Q_{M_2} = \emptyset, \quad 
\bigcup_{\widehat{M} \leq M \leq M_0} Q_{M} = R_{M_0}
\end{equation*}
where $M_0 \geq \widehat{M}$ is dyadic. Thus, we can decompose $\R^2 \times \R^2$ as
\begin{equation*}
\R^2 \times \R^2 = \left( \bigcup_{\widehat{M} \leq M \leq M_0} Q_{M}\right) \cup (R_{M_0})^c.
\end{equation*}
Lastly, we define
\begin{align*}
\mathcal{A}  & = \{ (\tau_1, \xi_1, \boldsymbol{\eta_1}) 
\times (\tau_2, \xi_2, \boldsymbol{\eta_2}) \in \R^{d+1} \times \R^{d+1} \, | \, 
| \sin \angle \left( (\xi_1, \eta_1), (\xi_2, \eta_2) \right)| \gtrsim 1  \},\\
\tilde{Z}_{M} & = \{ (\ell_1, \ell_2) \in Z_M' \, | \, 
\left( \tilde{\mathcal{T}}_{\ell_1}^M \times \tilde{\mathcal{T}}_{\ell_2}^M\right) \cap \left( 
G_{N_1, L_1} \times G_{N_2, L_2} \right) \cap \mathcal{A} \not= 
\emptyset \}.
\end{align*}
\end{defn}
By the same argument as for the $2$D case in \cite{Ki2019}, we can obtain the following estimate.
\begin{prop}\label{prop3.10}
Assume $|\boldsymbol{\eta_j'}| \lesssim A^{-1} N_1$ and \eqref{assumption-fgh}. 
Let $A$, $M$ be dyadic which satisfy $1 \ll M \leq A$ and $(\ell_1,\ell_2) \in \tilde{Z}_{M}$. 
Then we get
\begin{equation}
\begin{split}
& \left|\int_{*}{  h_{{N_0, L_0}}(\tau, \xi, \boldsymbol{\eta} ) 
f_{N_1, L_1}|_{\tilde{\mathcal{T}}_{\ell_1}^M} (\tau_1, \xi_1, \boldsymbol{\eta_1} )
g_{N_2, L_2}|_{\tilde{\mathcal{T}}_{\ell_2}^M} (\tau_2, \xi_2, \boldsymbol{\eta_2} )
}
d\sigma_1 d\sigma_2 \right| \\
& \qquad \lesssim  A^{-\frac{d-2}{2}} M^{\frac{1}{2}}
N_1^{\frac{d-6}{2}}    (L_0 L_1 L_2)^{\frac{1}{2}} 
\|f_{N_1, L_1}|_{\tilde{\mathcal{T}}_{\ell_1}^M} \|_{L^2} \| g_{N_2, L_2}|_{\tilde{\mathcal{T}}_{\ell_2}^M} \|_{L^2} 
\|h_{{N_0, L_0}}\|_{L^2},\label{est01-prop3.10}
\end{split}
\end{equation}
where $d \sigma_j = d\tau_j d \xi_j d \boldsymbol{\eta}_j$ and $*$ denotes $(\tau, \xi, \boldsymbol{\eta}) = (\tau_1 + \tau_2, \xi_1+ \xi_2, \boldsymbol{\eta_1} + \boldsymbol{\eta_2}).$
\end{prop}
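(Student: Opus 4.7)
The plan is to mirror the case (Ia) treatment of Proposition \ref{prop3.4}, replacing the angular parameter $A$ by the Whitney scale $M$ in the nonlinear Loomis--Whitney step while keeping the $\boldsymbol{\eta'}$-integration scheme intact, and to supplement this with a Cauchy--Schwarz plus bilinear-transversal branch in the Whitney region where direct Loomis--Whitney transversality is not available. After freezing $(\boldsymbol{\eta_1'}, \boldsymbol{\eta_2'})$ and applying the parabolic scaling $(\tau, \xi, \eta) \to (N_1^3 \tau, N_1 \xi, N_1 \eta)$ together with $\boldsymbol{\eta_j'} \to N_1 \boldsymbol{\eta_j'}$, the problem reduces to a convolution estimate on three two-dimensional surfaces in $\R^3$ of diameter $\lesssim M^{-1}$ parametrized by $(\xi_j, \eta_j)$. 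Once a fiber estimate of the form $|\int h f g\, d\hat\sigma_1 d\hat\sigma_2| \lesssim M^{1/2} N_1^{-2} (L_0 L_1 L_2)^{1/2} \|f\| \|g\| \|h\|$ is in hand, integrating over $(\boldsymbol{\eta_1'}, \boldsymbol{\eta_2'})$ by the Cauchy--Schwarz plus Young-on-the-convolution argument already used in the proof of Proposition \ref{prop3.4} delivers the $(A^{-1} N_1)^{(d-2)/2}$ factor and assembles into the claimed bound $A^{-(d-2)/2} M^{1/2} N_1^{(d-6)/2} (L_0 L_1 L_2)^{1/2}$.

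The fiber estimate splits along the Whitney bifurcation $Z_M' \subseteq Z_M^1 \cup Z_M^2$. In the $Z_M^2$ branch, where $|\overline{F}| \gtrsim M^{-1} N_1^2$ on the tile pair, I would apply the nonlinear Loomis--Whitney inequality (Proposition \ref{prop2.3}). The unit normal to the $i$-th rescaled surface is proportional to $(-1, 3\tilde\xi_i^2 + |\tilde{\boldsymbol{\eta_i'}}|^2, 3\tilde\eta_i^2 + |\tilde{\boldsymbol{\eta_i'}}|^2)$; since $|\tilde{\boldsymbol{\eta_i'}}| \lesssim A^{-1}$, the $|\tilde{\boldsymbol{\eta_i'}}|^2$-perturbations are $O(A^{-2})$ and dominated by the main term. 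A direct row-expansion of the $3\times 3$ determinant combined with the column subtractions $\xi = \xi_1 + \xi_2$, $\eta = \eta_1 + \eta_2$ gives that the main part of the unscaled determinant factors as $9(\xi_1 \eta_2 - \xi_2 \eta_1)\cdot \overline{F}$; together with $|\xi_1 \eta_2 - \xi_2 \eta_1| \sim N_1^2$ (from the angular hypothesis) and the Whitney bound $|\overline{F}| \gtrsim M^{-1} N_1^2$, this furnishes the transversality $\delta \sim M^{-1}$ after rescaling, and Proposition \ref{prop2.3} delivers the required $M^{1/2}$ factor. In the complementary $Z_M^1 \setminus Z_M^2$ branch only $|\overline{\Phi}| \gtrsim M^{-1} N_1^3$ is guaranteed, and the standard identity that $3\overline{\Phi}$ plus $O(A^{-2} N_1^3)$ corrections equals the signed modulation sum forces $L_{012}^{\max} \gtrsim M^{-1} N_1^3$; I would close via $|\int h f g| \leq \|h\|_{L^2}\|fg\|_{L^2}$ (or a symmetric variant) using a box-support refinement of Proposition \ref{bilinear-transversal} that exploits the fact that $\nabla\varphi_1 - \nabla\varphi_2 \sim N_1^2$ lies essentially in the $(\xi,\eta)$-plane, producing $\|fg\|_{L^2} \lesssim M^{-1/2} A^{-(d-2)/2} N_1^{(d-3)/2} (L_1 L_2)^{1/2}\|f\|\|g\|$; the modulation lower bound then converts this into a bound strictly stronger than the target.

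The main obstacles are (a) the algebraic determinant identity above and the verification that the $|\tilde{\boldsymbol{\eta_j'}}|^2$-contributions to the normals are negligible after rescaling (which uses $M \leq A$ so that $A^{-2} \ll M^{-1}$), so that Proposition \ref{prop2.3} fires with $\delta \sim M^{-1}$ rather than a degraded transversality, and (b) the box-support refinement of Proposition \ref{bilinear-transversal} in the modulation branch, where the $(A^{-1} N_1)^{d-2}$ directions of the box must appear in the resonant-set measure with their full volume (i.e.\ the transversal slab cuts only the $(\xi,\eta)$-plane). Both are routine-but-delicate computations of the same flavor as the proofs of Propositions \ref{bilinear-transversal} and \ref{prop3.3}, and once they are in hand the $\boldsymbol{\eta'}$-assembly over fibers is entirely standard.
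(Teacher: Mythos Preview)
Your proposal is correct and follows the paper's approach: fiberwise reduction in $\boldsymbol{\eta'}$ to a three-dimensional estimate, then the Whitney split into the $Z_M^2$ branch (nonlinear Loomis--Whitney with $\det N \sim (\xi_1\eta_2 - \xi_2\eta_1)\,\overline{F} \gtrsim M^{-1}$ after rescaling) and the $Z_M^1$ branch (bilinear-transversal estimate combined with the modulation gain $L_{012}^{\max} \gtrsim M^{-1}N_1^3$). The only point worth flagging is that the transversality $\max(|\xi_1^2-\xi_2^2|,|\eta_1^2-\eta_2^2|) \gtrsim N_1^2$ underlying your claim ``$\nabla\varphi_1 - \nabla\varphi_2 \sim N_1^2$'' is not a consequence of the angular hypothesis alone: the paper runs a short case analysis showing that the degenerate configurations $(\xi_1,\eta_1)\approx(\pm\xi_2,\mp\eta_2)$ force $L_{012}^{\max} \gtrsim N_1^3$, contradicting Assumption~\ref{assumption1prime}---this is exactly the ``routine-but-delicate'' step you anticipate in obstacle (b).
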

\begin{proof}
Since $|\boldsymbol{\eta_j'}| \lesssim A^{-1} N_1$, for fixed $\boldsymbol{\eta_1'}$, $\boldsymbol{\eta_2'}$, it suffices to show
\begin{equation}
\begin{split}
& \left|\int_{\hat{*}}{  h_{{N_0, L_0}}(\tau, \xi, \boldsymbol{\eta} ) 
f_{N_1, L_1}|_{\tilde{\mathcal{T}}_{\ell_1}^M} (\tau_1, \xi_1, \boldsymbol{\eta_1} )
g_{N_2, L_2}|_{\tilde{\mathcal{T}}_{\ell_2}^M} (\tau_2, \xi_2, \boldsymbol{\eta_2} )
}
d\hat{\sigma}_1 d\hat{\sigma}_2 \right| \\
&  \lesssim M^{\frac{1}{2}} N_{1}^{-2}   (L_0 L_1 L_2)^{\frac{1}{2}} 
\|f_{N_1, L_1}|_{\tilde{\mathcal{T}}_{\ell_1}^M} (\boldsymbol{\eta_1'}) \|_{L^2_{\tau \xi \eta}} 
\|g_{N_2, L_2}|_{\tilde{\mathcal{T}}_{\ell_2}^M} (\boldsymbol{\eta_2'})\|_{L^2_{\tau \xi \eta}} 
\|h_{{N_0, L_0}}(\boldsymbol{\eta'}) \|_{L^2_{\tau \xi \eta}} ,\label{est02-prop3.10}
\end{split}
\end{equation}
where $d \hat{\sigma}_j = d\tau_j d \xi_j d \eta_j$ and 
$\hat{*}$ denotes $(\tau,\xi, \eta) = (\tau_1 + \tau_2,\xi_1+\xi_2, \eta_1 + \eta_2).$ 
\eqref{est02-prop3.10} is established by the same argument as for Propositions 3.3-3.5 in \cite{Ki2019} 
which considered the Cauchy problem of the $2$D Zakharov-Kuznetsov equation. 
The only difference is that, in \cite{Ki2019} it was assumed that $f_{N_1, L_1}$, $g_{N_2, L_2}$, $h_{{N_0, L_0}}$ satisfy
\begin{align*}
& \supp f_{N_1, L_1}  \subset \overline{G}_{N_1, L_1}, \quad 
\supp g_{N_2, L_2}  \subset \overline{G}_{N_2, L_2}, \quad
\supp h_{N_0, L_0}  \subset \overline{G}_{N_0, L_0},\\
& \overline{G}_{N,L} := \{ (\tau, \xi, \boldsymbol{\eta}) \in  \R^{d+1} \, | \, 
\LR{|(\xi, \boldsymbol{\eta})|} \sim N,  \LR{\tau- (\xi^3+ \eta^3)} \sim L\},\notag
\end{align*}
instead of \eqref{assumption-fgh}. 
We will see that, because of the assumptions $ M \leq A$ and $|\boldsymbol{\eta_j'}| \lesssim A^{-1} N_1$,  the proofs of Propositions 3.3-3.5 in \cite{Ki2019} can be transferred. 
Firstly, either $|\overline{\Phi} (\xi_1, \eta_1, \xi_2 ,\eta_2)| \geq M^{-1} N_1^3$ or 
$|\overline{F} (\xi_1, \eta_1, \xi_2 ,\eta_2)| \geq M^{-1} N_1^2$ holds 
under the assumption $(\ell_1,\ell_2) \in \tilde{Z}_{M}$. 
We first assume $|\overline{\Phi} (\xi_1, \eta_1, \xi_2 ,\eta_2)| \geq M^{-1} N_1^3$ and show \eqref{est02-prop3.10}. 
For simplicity, we use
\begin{align*}
f_{\boldsymbol{\eta_1'}}(\tau_1, \xi_1, \eta_1) & :=f_{N_1, L_1}|_{\tilde{\mathcal{T}}_{\ell_1}^M} (\tau_1, \xi_1, \boldsymbol{\eta_1} ), \\
g_{\boldsymbol{\eta_2'}}(\tau_2, \xi_2, \eta_2) & := g_{N_2, L_2}|_{\tilde{\mathcal{T}}_{\ell_2}^M} (\tau_2, \xi_2, \boldsymbol{\eta_2} ),\\
h_{\boldsymbol{\eta'}}(\tau, \xi, \eta) &:= h_{{N_0, L_0}}(\tau, \xi, \boldsymbol{\eta} ).
\end{align*}
Since
\begin{align*}
3 L_{012}^{\max} & \geq \Bigl| 
\tau_1+ \tau_2 - ((\xi_1+\xi_2)^3 + (\eta_1+\eta_2)^3) -(\xi_1+\xi_2 +\eta_1+\eta_2) 
|\boldsymbol{\eta_1'}+\boldsymbol{\eta_2'}|^2\\
& - (\tau_1 - (\xi_1^3 +\eta_1^3) -(\xi_1+\eta_1)|\boldsymbol{\eta_1'}|^2) 
- (\tau_2 - (\xi_2^3 +\eta_2^3) -(\xi_2+\eta_2)|\boldsymbol{\eta_2'}|^2) \Bigr|\\
& \gtrsim |\overline{\Phi} (\xi_1, \eta_1, \xi_2 ,\eta_2)| 
+ \mathcal{O}(A^{-2}N_1)  \gtrsim M^{-1} N_1^3,
\end{align*} 
the following estimates which correspond to Proposition 3.3 in \cite{Ki2019} immediately yields \eqref{est02-prop3.10}.
\begin{align}
& \left\| {\mathbf{1}}_{G_{N_0, L_0}} \int f_{\boldsymbol{\eta_1'}}(\tau_1, \xi_1, \eta_1) 
g_{\boldsymbol{\eta_2'}} (\tau- \tau_1, \xi-\xi_1, \eta- \eta_1) 
d\hat{\sigma}_1 \right\|_{L_{\tau \xi \eta}^2} \notag \\ 
& \qquad \qquad \qquad \qquad \qquad \qquad 
\lesssim (MN_1 )^{-\frac{1}{2}} (L_1 L_2)^{\frac{1}{2}} 
\|f_{\boldsymbol{\eta_1'}} \|_{L_{\tau \xi \eta}^2}
\|g_{\boldsymbol{\eta_2'}} \|_{L_{\tau \xi \eta}^2},\label{bilinear01-prop3.10}\\
& \left\| {\mathbf{1}}_{G_{N_1, L_1} \cap \tilde{\mathcal{T}}_{k_1}^A} \int 
g_{\boldsymbol{\eta_2'}} (\tau_1, \xi_2, \eta_2)  
h_{\boldsymbol{\eta'}} (\tau_1+ \tau_2, \xi_1+\xi_2, \eta_1+ \eta_2) d\hat{\sigma}_2 
\right\|_{L_{\tau \xi \eta}^2}
\notag \\ 
& \qquad \qquad \qquad \qquad \qquad \qquad  
\lesssim (M N_1)^{-\frac{1}{2}} (L_0 L_2)^{\frac{1}{2}} 
\|g_{\boldsymbol{\eta_2'}} \|_{L_{\tau \xi \eta}^2} 
\|h_{\boldsymbol{\eta'}}\|_{L^2}, \label{bilinear02-prop3.10}\\
& \left\| {\mathbf{1}}_{G_{N_2,L_2} \cap \tilde{\mathcal{T}}_{k_2}^A} \int h_{\boldsymbol{\eta'}} (\tau_1+ \tau_2, \xi_1+\xi_2, \eta_1+ \eta_2)  f_{\boldsymbol{\eta_1'}}(\tau_1, \xi_1, \eta_1)  d \hat{\sigma}_1 
\right\|_{L_{\tau \xi \eta}^2} \notag \\ 
& \qquad \qquad \qquad \qquad \qquad  \qquad 
\lesssim (MN_1)^{-\frac{1}{2}} (L_0 L_1)^{\frac{1}{2}} \|h_{\boldsymbol{\eta'}}\|_{L_{\tau \xi \eta}^2} 
\|f_{\boldsymbol{\eta_1'}} \|_{L_{\tau \xi \eta}^2}.\label{bilinear03-prop3.10}
\end{align}
Here we sketch the proof of \eqref{bilinear01-prop3.10} only. 
The other estimates \eqref{bilinear02-prop3.10} 
and \eqref{bilinear03-prop3.10} can be obtained in the same way as for \eqref{bilinear01-prop3.10}. 
We first observe that the assumptions imply
\begin{equation}
\max \left( |(\xi_1^2 - (\xi - \xi_1)^2|, \, |(\eta_1^2 - (\eta - \eta_1)^2| \right)  \gtrsim N_1^2.
\label{est-space-modu-prop3.10}
\end{equation}
If \eqref{est-space-modu-prop3.10} does not hold, we can assume one of the following.
\begin{align*}
(1) \quad |\xi_1 - (\xi - \xi_1) | \ll N_1  \quad \textnormal{and} \quad 
|\eta_1 - (\eta - \eta_1) | \ll N_1,\\
(2) \quad |\xi_1 - (\xi - \xi_1) | \ll N_1  \quad \textnormal{and} \quad 
|\eta_1 + (\eta - \eta_1) | \ll N_1,\\
(3) \quad |\xi_1 + (\xi - \xi_1) | \ll N_1  \quad \textnormal{and} \quad  
|\eta_1 - (\eta - \eta_1) | \ll N_1,\\
(4) \quad |\xi_1 + (\xi - \xi_1) | \ll N_1  \quad \textnormal{and} \quad 
|\eta_1 + (\eta - \eta_1) | \ll N_1.
\end{align*}
(1) and (4) contradict the angular assumption $| \sin \angle \left( (\xi_1, \eta_1), (\xi-\xi_1, \eta-\eta_1) \right)| \gtrsim 1$. We show (2) contradicts one of the assumptions. 
Clearly, $\max( |\xi_1|, |\xi- \xi_1|) \gtrsim N_1$ holds because of the angular condition 
$| \sin \angle \left( (\xi_1, \eta_1), (\xi-\xi_1, \eta-\eta_1) \right)| \gtrsim 1$. 
Without loss of generality, we can assume $|\xi_1| \gtrsim N_1$. 
This and the inequality $ |\xi_1 - (\xi - \xi_1) | \ll  N_1$ in (2) yield $\min (|\xi|, |\xi- \xi_1|) \gtrsim N_1$ which, combined with 
$|\eta| =  |\eta_1 + (\eta - \eta_1) | \ll N_1$ in (2), gives
\begin{align*}
3 L_{012}^{\max}
& \geq 3 \max  \left(|\tau - \xi^3 - \eta^3|, |\tau_1 - \xi_1^3 - \eta_1^3|, 
|\tau-\tau_1 - (\xi- \xi_1)^3 - (\eta-\eta_1)^3 |\right) + \mathcal{O}(A^{-2} N_1^3)\\ 
& \geq
|\xi \xi_1 (\xi-\xi_1) + \eta \eta_1 (\eta- \eta_1)|+ \mathcal{O}(A^{-2} N_1^3)\\
& \geq
|\xi \xi_1 (\xi-\xi_1)| - | \eta \eta_1 (\eta- \eta_1)|+ \mathcal{O}(A^{-2} N_1^3)\\
& \gtrsim N_1^3 + \mathcal{O}(A^{-2} N_1^3) \gtrsim N_1^3
\end{align*}
which contradicts $L_{012}^{\max} \ll N_1^3$. Similarly, we can show that (3) contradicts at least one of the assumptions. Without loss of generality, we assume $|\xi_1^2 - (\xi - \xi_1)^2| \gtrsim N_1^2.$ 

We turn to show \eqref{bilinear01-prop3.10}. By the Cauchy-Schwarz inequality, we get
\begin{align*}
& \left\| {\mathbf{1}}_{G_{N_0, L_0}} \int f_{\boldsymbol{\eta_1'}}(\tau_1, \xi_1, \eta_1) 
g_{\boldsymbol{\eta_2'}} (\tau- \tau_1, \xi-\xi_1, \eta- \eta_1) 
d\hat{\sigma}_1 \right\|_{L_{\tau \xi \eta}^2} \\
\leq & \left\| {\mathbf{1}}_{G_{N_0, L_0}} \left( \bigl| f_{\boldsymbol{\eta_1'}} \bigr|^2 * 
\bigl| g_{\boldsymbol{\eta_2'}} \bigr|^2
 \right)^{1/2} |E(\tau, \xi, \eta)|^{1/2} \right\|_{L_{\tau \xi \eta}^2}  \\
\leq & \sup_{(\tau, \xi, \eta) \in G_{N_0, L_0}} |E(\tau, \xi, \eta)|^{1/2} 
 \left\|\bigl| f_{\boldsymbol{\eta_1'}} \bigr|^2 * 
\bigl| g_{\boldsymbol{\eta_2'}} \bigr|^2 
\right\|_{L_{\tau \xi \eta}^1}^{1/2}\\
\leq & \sup_{(\tau, \xi, \eta) \in G_{N_0, L_0}} |E(\tau, \xi, \eta)|^{1/2} 
\|f_{\boldsymbol{\eta_1'}} \|_{L_{\tau \xi \eta}^2}
\|g_{\boldsymbol{\eta_2'}} \|_{L_{\tau \xi \eta}^2},
\end{align*}
where $E(\tau, \xi, \eta) \subset \R^3$ is defined by
\begin{equation*}
E(\tau, \xi, \eta) := \{ (\tau_1, \xi_1, \eta_1) \in \supp (f_{\boldsymbol{\eta_1'}})
\, | \, (\tau-\tau_1, \xi- \xi_1, \eta-\eta_1) \in \supp (g_{\boldsymbol{\eta_2'}}) \}.
\end{equation*}
Thus, it suffices to show
\begin{equation}
\sup_{(\tau, \xi, \eta) \in G_{N_0, L_0}} |E(\tau, \xi, \eta)| \lesssim (M N_1)^{-1} L_1 L_2.\label{est03-prop3.10}
\end{equation}
For fixed $(\xi_1, \eta_1)$, we easily have
\begin{equation}
\sup_{(\tau, \xi, \eta) \in G_{N_0, L_0}} | \{ \tau_1 \, | \, (\tau_1, \xi_1, \eta_1) \in E(\tau, \xi, \eta) \}| 
\lesssim \min(L_1, L_2).\label{est04-prop3.10}
\end{equation}
Let $C(\xi,\boldsymbol{\eta'}, \xi_1, \boldsymbol{\eta_1'})= (\xi_1+\eta_1) |\boldsymbol{\eta_1'}|^2 
+ (\xi-\xi_1+\eta-\eta_1)|\boldsymbol{\eta'}-\boldsymbol{\eta_1'}|^2$. We observe
\begin{align*}
 \max (L_1, L_2)  
\gtrsim & |(\tau_1 - \xi_1^3 - \eta_1^3) 
+ (\tau- \tau_1) - (\xi- \xi_1)^3 - (\eta- \eta_1)^3-C(\xi,\boldsymbol{\eta}, \xi_1, \boldsymbol{\eta_1}) |\\
 = & |(\tau- \xi^3 -\eta^3) + 3(\xi \xi_1 (\xi-\xi_1) + \eta \eta_1 (\eta- \eta_1)) 
-C(\xi,\boldsymbol{\eta}, \xi_1, \boldsymbol{\eta_1})|.
\end{align*} 
Thus, we deduce from 
$ |\partial_{\xi_1} \left( \xi \xi_1 (\xi-\xi_1) \right)| = |\xi_1^2 - (\xi - \xi_1)^2| \gtrsim N_1^{2}$, 
$|\boldsymbol{\eta_1'}| \lesssim A^{-1} N_1$ and $|\boldsymbol{\eta'} - \boldsymbol{\eta_1'}| \lesssim A^{-1} N_1$ 
that, for fixed $\eta_1$, it holds that
\begin{equation}
\sup_{(\tau, \xi, \eta) \in G_{N_0, L_0}} | \{ \xi_1 \, | \, (\tau_1, \xi_1, \eta_1) \in E(\tau, \xi, \eta) \}| 
\lesssim N_1^{-2} \max(L_1, L_2).\label{est05-prop3.10}
\end{equation}
Lastly, since $(\tau_1, \xi_1, \eta_1) \in \supp (f_{\boldsymbol{\eta_1'}})$ implies 
$(\xi_1,\eta_1) \in {\mathcal{T}}_{\ell_1}^M$, we have
\begin{equation}
\sup_{(\tau, \xi, \eta) \in G_{N_0, L_0}} | \{ \eta_1 \, | \, (\tau_1, \xi_1, \eta_1) \in E(\tau, \xi, \eta) \}| 
\lesssim M^{-1} N_1.\label{est06-prop3.10}
\end{equation}
The estimates \eqref{est04-prop3.10}-\eqref{est06-prop3.10} complete the proof of \eqref{est03-prop3.10}.

Next we show \eqref{est02-prop3.10} under the assumption 
$|\overline{F} (\xi_1, \eta_1, \xi_2 ,\eta_2)| \geq M^{-1} N_1^2$ by following the proof for Proposition 3.5 in \cite{Ki2019}. 
By Fubini's theorem, \eqref{est02-prop3.10} reduces to
\begin{align}
& \left| \int h_{\boldsymbol{\eta'}} (\varphi_{\boldsymbol{\eta_1'},c_1} (\xi_1, \eta_1) + \varphi_{\boldsymbol{\eta_2'},c_2} (\xi_2, \eta_2))  
f_{\boldsymbol{\eta_1'}} (\varphi_{\boldsymbol{\eta_1'},c_1} (\xi_1, \eta_1) ) g (\varphi_{\boldsymbol{\eta_2'},c_2}(\xi_2, \eta_2)) 
d \xi_1d \eta_1 d\xi_2 d\eta_2 \right| \notag\\
& \qquad \qquad \qquad \qquad   \lesssim  M^{\frac{1}{2}}N_1^{-2} 
\| f_{\boldsymbol{\eta_1'}} \circ \varphi_{\boldsymbol{\eta_1'},c_1}\|_{L_{\xi \eta}^2} \|g_{\boldsymbol{\eta_2'}} 
\circ \varphi_{\boldsymbol{\eta_2'},c_2} \|_{L_{\xi \eta}^2} 
\|h_{\boldsymbol{\eta'}} \|_{L_{\tau \xi \eta }^2}, \label{est04-prop1.8a}
\end{align}
where $h_{\boldsymbol{\eta'}} (\tau, \xi, \eta)$ 
is supported in $c_0 \leq \tau - \xi^3 - \eta^3 - (\xi+\eta) |\boldsymbol{\eta'}|^2 \leq c_0 +1$ and 
\begin{equation*}
\varphi_{\boldsymbol{\eta_j'}, c_j} (\xi,\eta) = ( \xi^3 + \eta^3 + (\xi+\eta) |\boldsymbol{\eta_j'}|^2 + c_j, \, \xi, \, \eta) \quad \textnormal{for} \ j=1,2.
\end{equation*}
Note that if $\boldsymbol{\eta_1'}=\boldsymbol{\eta_2'}=\boldsymbol{\eta'}=0$, \eqref{est04-prop1.8a} corresponds exactly to the inequality (3.17) in \cite{Ki2019}. 
Similarly to the proof of Proposition \ref{prop3.4}, we define
\begin{align*}
 \tilde{f}_{\boldsymbol{\eta_1'}} (\tau_1, \xi_1 , \eta_1) & = f_{\boldsymbol{\eta_1'}} (N_1^3 \tau_1 , N_1 \xi_1, N_1 \eta_1), \\
 \tilde{g}_{\boldsymbol{\eta_2'}} (\tau_2, \xi_2, \eta_2) & = g_{\boldsymbol{\eta_2'}} (N_1^3 \tau_2, N_1 \xi_2, N_1 \eta_2), \\
 \tilde{h}_{\boldsymbol{\eta'}} (\tau, \xi, \eta) & = h_{\boldsymbol{\eta'}} (N_1^3 \tau, N_1 \xi, N_1 \eta), 
\end{align*}
and prove
\begin{equation}
\| \tilde{f}_{\boldsymbol{\eta_1'}} |_{\overline{S}_1} * \tilde{g}_{\boldsymbol{\eta_2'}} |_{\overline{S}_2} 
\|_{L^2(\overline{S}_3)} \lesssim M^{\frac{1}{2}}  
\| \tilde{f}_{\boldsymbol{\eta_1'}} \|_{L^2(\overline{S}_1)} 
\| \tilde{g}_{\boldsymbol{\eta_2'}} \|_{L^2(\overline{S}_2)},\label{est06-prop1.8a}
\end{equation}
where $\boldsymbol{\tilde{\eta}_j} = N_1^{-1} \boldsymbol{\eta}_j'$, 
$\boldsymbol{\tilde{\eta}} = N_1^{-1}\boldsymbol{\eta'}$, 
$\tilde{c_j} = N_1^{-3} c_j$ and 
\begin{align*}
\overline{S}_1 =&  \{ \varphi_{\boldsymbol{\tilde{\eta}_1},\tilde{c}_1} (\xi_1, \eta_1) \in \R^3 \ | \ 
(\xi_1, \eta_1) \in \supp (\tilde{f}_{\boldsymbol{\eta_1'}}\circ \varphi_{\boldsymbol{\tilde{\eta}_1}, \tilde{c}_1})\}, \\
\overline{S}_2 =&  \{ \varphi_{\boldsymbol{\tilde{\eta}_2}, \tilde{c}_2}(\xi_2,\eta_2) \in \R^3 \ | \ (\xi_2, \eta_2) \in \supp (\tilde{g}_{\boldsymbol{\eta_2'}}\circ \varphi_{\boldsymbol{\tilde{\eta}_2}, \tilde{c}_2}) \},\\
\overline{S}_3 = & \left\{ (\varphi_{\tilde{\boldsymbol{\eta}}}  (\xi,\eta), \xi, \eta) \in \R^3  \ | \ 
\  \varphi_{\tilde{\boldsymbol{\eta}}} (\xi,\eta) = \xi^3 + \eta^3 + (\xi+\eta) |\boldsymbol{\tilde{\eta}}|^2 + \frac{c_0'}{N_1^{3}} \right\}.
\end{align*}
Clearly, $\overline{S}_1$, $\overline{S}_2$, $\overline{S}_3$ satisfy necessary regularity and diameter conditions to apply the nonlinear Loomis-Whitney inequality. 
Define $\lambda_i \in \overline{S}_i$ as
\begin{equation*}
\lambda_1=\varphi_{\boldsymbol{\tilde{\eta}_1},\tilde{c}_1} (\xi_1, \eta_1), 
\quad \lambda_2 =   \varphi_{\boldsymbol{\tilde{\eta}_2}, \tilde{c}_2}(\xi_2,\eta_2), \quad
\lambda_3 = (\varphi_{\tilde{\boldsymbol{\eta}}}  (\xi,\eta), \xi, \eta),
\end{equation*}
then the unit normals ${\mathfrak{n}}_i$ on $\lambda_i$ can be described explicitly as
\begin{equation*}
{\mathfrak{n}}_i(\lambda_i) = 
\frac{1}{\sqrt{1+ (3  \xi_i^2 + |\boldsymbol{\tilde{\eta}_i}|^2)^2 + 
(3 \eta_i^2 +|\boldsymbol{\tilde{\eta}_i}|^2)^2}} 
\left(-1, \ 3  \xi_i^2 + |\boldsymbol{\tilde{\eta}_i}|^2, \ 3 \eta_i^2 +|\boldsymbol{\tilde{\eta}_i}|^2\right), 
\end{equation*}
for $i=1$, $2$, and the same for ${\mathfrak{n}}_3(\lambda_3)$. 
We define
\begin{equation*}
{\mathfrak{n}}_i^0(\lambda_i) = \frac{1}{\sqrt{1+ 9 \xi_i^4 + 9 \eta_i^4}} 
\left(-1, \ 3  \xi_i^2, \ 3 \eta_i^2 \right),
\end{equation*}
for $i=1$, $2$, and the same for ${\mathfrak{n}}_3^0(\lambda_3)$. 
Since $|\boldsymbol{\tilde{\eta}_j}| \lesssim A^{-1}$, $|\boldsymbol{\tilde{\eta}}| \lesssim A^{-1}$, we easily get $|{\mathfrak{n}}_j (\lambda_j) - {\mathfrak{n}}_j^0(\lambda_j)| \ll A^{-1} \leq M^{-1}$. 
Therefore we only need to show
\begin{equation*}
 |\textnormal{det} ({\mathfrak{n}}_1^0(\lambda_1), {\mathfrak{n}}_2^0(\lambda_2) , {\mathfrak{n}}_3^0(\lambda_3) )| \gtrsim M^{-1}
\end{equation*}
with $(\xi_1, \eta_1) + (\xi_2, \eta_2) = (\xi, \eta)$. We calculate
\begin{align*}
 |\textnormal{det} ({\mathfrak{n}}_1^0(\lambda_1), {\mathfrak{n}}_2^0(\lambda_2) , {\mathfrak{n}}_3^0(\lambda_3) )| \gtrsim & 
\left|\textnormal{det}
\begin{pmatrix}
-1 & -1 & - 1 \\
 3 \xi_1^2  &  3 \xi_2^2  & 3 \xi^2 \\
 3 \eta_1^2   & 3 \eta_2^2  & 3 \eta^2
\end{pmatrix} \right| \notag \\
\gtrsim & \, |\xi_1 \eta_2 - \xi_2 \eta_1 |
| \xi_1 \eta_2 +  \xi_2 \eta_1 + 2 (\xi_1 \eta_1 + \xi_2 \eta_2)| \\
\gtrsim & \, M^{-1}.
\end{align*}
Here we used $|\sin \angle \left( (\xi_1, \eta_1), (\xi_2, \eta_2) \right)| \gtrsim 1$, and 
$(\xi_1, \eta_1) \in 
\supp (\tilde{f}_{\boldsymbol{\eta_1'}}\circ \varphi_{\boldsymbol{\tilde{\eta}_1}, \tilde{c}_1})$, 
$ (\xi_2, \eta_2) \in \supp (\tilde{g}_{\boldsymbol{\eta_2'}}\circ \varphi_{\boldsymbol{\tilde{\eta}_2}, \tilde{c}_2})$ which implies $|\overline{F} (\xi_1, \eta_1, \xi_2 ,\eta_2)| \geq M^{-1}$.
\end{proof}
The key ingredient to show \eqref{est01-prop3.9} is the almost orthogonality of $\ell_1$ and $\ell_2$ which satisfy $(\ell_1, \ell_2) \in \tilde{Z}_M$. 
However, in \cite{Ki2019} it was found that there exist pairs of tiles which do not satisfy the almost orthogonality. 
Thus we perform the decompositions which was introduced in
\cite{Ki2019}, see \cite[Remark 3.3]{Ki2019} for the details.
\begin{defn}[Def.~3 in \cite {Ki2019}]\label{definition5}
Let $\mathcal{K}_0$, $\mathcal{K}_1$, $\mathcal{K}_2$, $\mathcal{K}_0'$, $\mathcal{K}_1'$, 
$\mathcal{K}_2' \subset \R^2$ and  $\tilde{\mathcal{K}}_0$, $\tilde{\mathcal{K}}_1$, 
$\tilde{\mathcal{K}}_2$, 
$\tilde{\mathcal{K}}_0'$, 
$\tilde{\mathcal{K}}_1'$, $\tilde{\mathcal{K}}_2' \subset \R^{d+1}$ be defined as follows:
\begin{align*}
\mathcal{K}_0 & = \left\{ (\xi, \eta) \in \R^2 \, | \, \left| 
\eta -(\sqrt{2} - 1)^{\frac{4}{3}} \xi \right| 
\leq 2^{-20} N_1 \right\},\\
\mathcal{K}_1 & = \left\{ (\xi, \eta) \in \R^2 \, | \, \left| 
\eta - ( \sqrt{2}+ 1 )^{\frac{2}{3}} (\sqrt{2} + \sqrt{3} ) \xi \right| 
\leq 2^{-20} N_1 \right\},\\
\mathcal{K}_2 & = \left\{ (\xi, \eta) \in \R^2 \, | \, \left| 
\eta + ( \sqrt{2}+ 1 )^{\frac{2}{3}} (\sqrt{3} - \sqrt{2} ) \xi \right| 
\leq 2^{-20} N_1 \right\},\\
\mathcal{K}_0' & =  \left\{ (\xi, \eta) \in \R^2 \ | \ 
 (\eta, \xi) \in \mathcal{K}_0  \right\},\\
\mathcal{K}_1' & = \left\{ (\xi, \eta) \in \R^2 \ | \ (\eta, \xi) \in \mathcal{K}^1 \right\},\\
\mathcal{K}_2' & = \left\{ (\xi, \eta) \in \R^2 \ | \ (\eta, \xi) \in \mathcal{K}^2 \right\},\\
\tilde{\mathcal{K}}_i & = \R \times \mathcal{K}_i \times \R^{d-2}, \quad 
\tilde{\mathcal{K}}_i' = \R \times \mathcal{K}_i' \times \R^{d-2}
 \ \ \textnormal{for} \ i = 0, 1,2.
\end{align*}
We define the subsets of $\R^2 \times \R^2$ and $\R^{d+1} \times \R^{d+1}$ as
\begin{align*}
\mathcal{K} \, = & ( \mathcal{K}_0 \times ( \mathcal{K}_1\cup \mathcal{K}_2 ) ) \cup 
( ( \mathcal{K}_1\cup \mathcal{K}_2 ) \times  \mathcal{K}_0 ) \subset \R^2 \times \R^2,\\
\tilde{\mathcal{K}} \, = & ( \tilde{\mathcal{K}}_0 \times ( \tilde{\mathcal{K}}_1 
\cup \tilde{\mathcal{K}}_2 ) ) \cup 
( 
( \tilde{\mathcal{K}}_1\cup \tilde{\mathcal{K}}_2 ) \times  \tilde{\mathcal{K}}_0 ) \subset \R^{d+1} 
\times \R^{d+1},\\
\mathcal{K}' = & \left( \mathcal{K}_0' \times \left( \mathcal{K}_1' \cup \mathcal{K}_2' \right) \right) \cup 
\left( 
\left( \mathcal{K}_1' \cup \mathcal{K}_2' \right) \times  \mathcal{K}_0' \right) \subset \R^2 \times \R^2,\\
\tilde{\mathcal{K}}'  = & ( \tilde{\mathcal{K}}_0' \times ( \tilde{\mathcal{K}}_1' 
\cup \tilde{\mathcal{K}}_2' ) ) \cup 
( 
( \tilde{\mathcal{K}}_1'\cup \tilde{\mathcal{K}}_2' ) \times  \tilde{\mathcal{K}}_0' ) \subset \R^{d+1} 
\times \R^{d+1},
\end{align*}
and their complements as 
\begin{align*}
(\mathcal{K})^c & =  (\R^2 \times \R^2) \setminus \mathcal{K} , \quad \ 
(\tilde{\mathcal{K}})^c =  (\R^{d+1} \times \R^{d+1}) \setminus \tilde{\mathcal{K}}\\
(\mathcal{K}')^c & = (\R^2 \times \R^2) \setminus \mathcal{K}', \quad 
(\tilde{\mathcal{K}}')^c = (\R^{d+1} \times \R^{d+1}) \setminus \tilde{\mathcal{K}}'.
\end{align*}
Lastly, we define
\begin{equation*}
\widehat{Z}_{M} = \{ (\ell_1, \ell_2) \in \tilde{Z}_{M} \, | \, 
\left( \mathcal{T}_{\ell_1}^M \times \mathcal{T}_{\ell_2}^M\right) \cap \left( 
(\mathcal{K})^c \cap (\mathcal{K}')^c \right) \not= 
\emptyset \},
\end{equation*}
and $\overline{Z}_{M}$ as the collection of $(\ell_1, \ell_2) \in \Z^2 \times \Z^2$ which satisfies 
\begin{align*}
\mathcal{T}_{\ell_1}^{M} & \times \mathcal{T}_{\ell_2}^{M} \not\subset \bigcup_{\widehat{M} \leq M' \leq M} 
\bigcup_{(\ell_1', \ell_2') \in \widehat{Z}_{M}} \left( \mathcal{T}_{\ell_1'}^{M'} \times \mathcal{T}_{\ell_2'}^{M'} \right),
\\ 
 \left( \tilde{\mathcal{T}}_{\ell_1}^{M} \times \tilde{\mathcal{T}}_{\ell_2}^{M} \right) & \cap \left( 
G_{N_1, L_1} \times G_{N_2, L_2} \right) \cap \mathcal{A}  \cap \left( 
(\tilde{\mathcal{K}})^c \cap (\tilde{\mathcal{K}}')^c \right) \not= 
\emptyset. 
\end{align*}
\end{defn}
\begin{lem}[Lemma 3.7 in \cite{Ki2019}]\label{lemma3.11}
For fixed $\ell_1 \in \Z^2$, the number of $\ell_2 \in \Z^2$ such that 
$(\ell_1, \ell_2) \in \widehat{Z}_{M}$ is finite (uniformly bounded). Furthermore, the same claim holds true if we replace $\widehat{Z}_{M}$ by $\overline{Z}_{M}$.
\end{lem}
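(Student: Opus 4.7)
The plan is to reduce the claim to the two-dimensional argument of Lemma 3.7 in \cite{Ki2019}, leveraging the fact that the tiles $\mathcal{T}_\ell^M$, the resonance polynomials $\overline{\Phi}$ and $\overline{F}$, and the exceptional regions $\mathcal{K}, \mathcal{K}'$ are all defined purely in the $(\xi,\eta)$-plane. The transversality set $\mathcal{A}$ is likewise a 2D condition, while the intersection with $G_{N_1,L_1}\times G_{N_2,L_2}$ in the definition of $\widehat{Z}_M$ only constrains $\tau$ and $\boldsymbol{\eta'}$, neither of which influences the count of indices $\ell_2 \in \Z^2$. Hence it suffices to prove the uniform bound in the planar setting.

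The strategy is a pairwise comparison in the spirit of a Whitney-type bounded-overlap argument. Fix $\ell_1 \in \Z^2$ and suppose $(\ell_1,\ell_2), (\ell_1,\ell_2') \in \widehat{Z}_M$. By the annular structure $Q_M = R_M \setminus R_{M/2}$ together with the gradient bounds $|\nabla_{(\xi_2,\eta_2)}\overline{\Phi}| \lesssim N_1^2$ and $|\nabla_{(\xi_2,\eta_2)}\overline{F}| \lesssim N_1$ across a single tile, I would select representatives $(\xi_1,\eta_1) \in \mathcal{T}_{\ell_1}^M$, $(\xi_2,\eta_2) \in \mathcal{T}_{\ell_2}^M$, $(\xi_2',\eta_2') \in \mathcal{T}_{\ell_2'}^M$ for which $|\overline{\Phi}|$ at both argument pairs is $\lesssim M^{-1}N_1^3$, or $|\overline{F}|$ at both is $\lesssim M^{-1}N_1^2$. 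The explicit algebraic identities
\begin{align*}
\overline{\Phi}(\xi_1,\eta_1,\xi_2,\eta_2)-\overline{\Phi}(\xi_1,\eta_1,\xi_2',\eta_2') &= \xi_1(\xi_1+\xi_2+\xi_2')(\xi_2-\xi_2')+\eta_1(\eta_1+\eta_2+\eta_2')(\eta_2-\eta_2'),\\
\overline{F}(\xi_1,\eta_1,\xi_2,\eta_2)-\overline{F}(\xi_1,\eta_1,\xi_2',\eta_2') &= (\eta_1+\eta_2+\eta_2')(\xi_2-\xi_2')+(\xi_1+\xi_2+\xi_2')(\eta_2-\eta_2')
\end{align*}
then exhibit each difference as a linear form in $(\xi_2-\xi_2',\eta_2-\eta_2')$ whose coefficients have size $\sim N_1^2$ (respectively $\sim N_1$).

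The hardest step is the quantitative non-degeneracy of the resulting linear system off $\mathcal{K} \cup \mathcal{K}'$. The coefficient vectors fail to give good control only in very specific angular configurations of $(\xi_1,\eta_1)$ and $(\xi_2,\eta_2)$ --- namely those in which either a single coefficient vanishes or, when only one form is active, the linear constraint degenerates along a long resonant strip. The exceptional regions $\mathcal{K}_0, \mathcal{K}_1, \mathcal{K}_2$ and their swaps $\mathcal{K}_0', \mathcal{K}_1', \mathcal{K}_2'$ in Definition \ref{definition5} are engineered precisely to exclude these configurations: the characteristic slopes $(\sqrt{2}\pm 1)^{\pm 2/3}$ are the real roots of the polynomials describing the degeneracies. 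Matching the combinatorics of the various cases to the exceptional regions is the genuine crux of the proof and is essentially the content of the argument in \cite{Ki2019}, which I would import and adapt here.

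Once non-degeneracy is in place, inversion of the linear form yields $|\xi_2-\xi_2'|+|\eta_2-\eta_2'| \lesssim M^{-1}N_1$, the side length of a single tile, which forces $\ell_2$ and $\ell_2'$ to differ by at most an absolute constant in each coordinate; this gives the uniform bound for $\widehat{Z}_M$. For $\overline{Z}_M$ the same comparison argument applies, with the defining condition --- non-inclusion in any finer-scale resonant layer --- playing the role of the annular property $Q_M$, producing the same quantitative bounds on $|\overline{\Phi}|$ and $|\overline{F}|$ and leading to the identical conclusion.
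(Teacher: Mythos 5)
Since the paper gives no proof of this lemma at all (it is simply cited as Lemma 3.7 in \cite{Ki2019}), your proposal cannot be compared against an in-paper argument; I assess it as a reconstruction of the underlying mechanism. Your reduction to the planar setting, the algebraic identities for the differences of $\overline{\Phi}$ and $\overline{F}$, and the bounded-overlap strategy are all sound, and you correctly locate the crux in the non-degeneracy of the resulting $2\times 2$ system away from the exceptional regions $\mathcal{K}\cup\mathcal{K}'$. However, there is one substantive imprecision that, if taken literally, breaks the argument: you say you would pick representatives for which ``$|\overline{\Phi}|$ at both argument pairs is $\lesssim M^{-1}N_1^3$, \emph{or} $|\overline{F}|$ at both is $\lesssim M^{-1}N_1^2$.'' With only one of the two forms controlled you have a single linear constraint in the two unknowns $(\xi_2-\xi_2',\eta_2-\eta_2')$, which confines them to a strip, not to a bounded set, and the claimed bound $|\xi_2-\xi_2'|+|\eta_2-\eta_2'|\lesssim M^{-1}N_1$ does not follow. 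What the annular structure actually gives is stronger and is what you need: for $M>\widehat M$, the pair $(\ell_1,\ell_2)\in Z_M'$ means the $M$-tile product lies in $Q_M=R_M\setminus R_{M/2}$, and ``not in $R_{M/2}$'' means the containing $(M/2)$-tile product is outside both $Z_{M/2}^1$ and $Z_{M/2}^2$; hence there are points where $|\overline{\Phi}|<2M^{-1}N_1^3$ \emph{and} points where $|\overline{F}|<2M^{-1}N_1^2$, and the gradient bounds $|\nabla\overline{\Phi}|\lesssim N_1^2$, $|\nabla\overline{F}|\lesssim N_1$ over the $(M/2)$-tile of diameter $\lesssim M^{-1}N_1$ propagate both smallness conditions to the entire containing $(M/2)$-tile product. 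So at both tile products you control \emph{both} $\overline{\Phi}$ \emph{and} $\overline{F}$, giving the full two-equation system whose matrix has determinant $\xi_1(\xi_1+\xi_2+\xi_2')^2-\eta_1(\eta_1+\eta_2+\eta_2')^2$; the exceptional sets $\mathcal{K}_i,\mathcal{K}_i'$ are then there to keep this quantity of size $\gtrsim N_1^3$.

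Beyond that, you should make explicit that the argument covers $M>\widehat M$, while the base case $M=\widehat M$ (where $Q_{\widehat M}=R_{\widehat M}$ and there is no upper bound on the forms) is trivial because $\widehat M$ is an absolute constant and there are only boundedly many $\widehat M$-tiles per annulus. The remaining verification that the determinant is nonvanishing off $\mathcal{K}\cup\mathcal{K}'$ — which is the genuine content — you explicitly defer to \cite{Ki2019}, as does the paper; given that, the structural outline is acceptable, but the proposal as written is not a self-contained proof.
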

Now we show \eqref{est01-prop3.9} under the assumption 
$(\xi_1, \eta_1) \times (\xi_2, \eta_2) \in (\mathcal{K})^c \cap (\mathcal{K}')^c$.
\begin{prop}\label{prop3.12}
Assume the same conditions as in Proposition \ref{prop3.9}. Suppose further that 
$| \sin \angle \left( (\xi_1, \eta_1), (\xi_2, \eta_2) \right)| \gtrsim 1$ and 
$(\xi_1, \eta_1) \times (\xi_2, \eta_2) \in (\mathcal{K})^c \cap (\mathcal{K}')^c$. Then we have
\begin{equation}
\begin{split}
& \left|\int_{*}{  h_{{N_0, L_0}}(\tau, \xi, \boldsymbol{\eta} ) 
f_{N_1, L_1} (\tau_1, \xi_1, \boldsymbol{\eta_1} )  
g_{N_2, L_2} (\tau_2, \xi_2, \boldsymbol{\eta_2} ) 
}
d\sigma_1 d\sigma_2 \right| \\
& \qquad  \lesssim  A^{-\frac{d-3}{2}} 
N_1^{\frac{d-6}{2}}    (L_0 L_1 L_2)^{\frac{1}{2}} 
\|f_{N_1, L_1} \|_{L^2} \| g_{N_2, L_2} \|_{L^2} 
\|h_{{N_0, L_0}}\|_{L^2},\label{est01-prop3.12}
\end{split}
\end{equation}
where functions $f_{N_1, L_1}$, $g_{N_2, L_2}$, $h_{{N_0, L_0}}$ satisfy 
\eqref{assumption-fgh}.
\end{prop}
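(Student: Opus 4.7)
\bigskip

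\noindent\textbf{Proof proposal for Proposition \ref{prop3.12}.} The plan is to apply the Whitney-type decomposition of Definition \ref{definition3} to the spatial frequencies $(\xi_1,\eta_1)$, $(\xi_2,\eta_2)$, combine the per-tile bound of Proposition \ref{prop3.10} with the almost-orthogonality of Lemma \ref{lemma3.11}, and sum over the dyadic resonance scale $M$. The hypothesis $(\xi_1,\eta_1)\times(\xi_2,\eta_2)\in (\mathcal{K})^c\cap(\mathcal{K}')^c$ together with the transversality $|\sin\angle((\xi_1,\eta_1),(\xi_2,\eta_2))|\gtrsim 1$ is precisely what guarantees that the decomposition terminates at a finite scale $M_0\lesssim A$: on the supports avoiding $\mathcal{K}\cup\mathcal{K}'$, at least one of $|\overline{\Phi}|$, $|\overline{F}|$ stays above a threshold proportional to $A^{-1}N_1^3$ (resp.\ $A^{-1}N_1^2$), because $|\boldsymbol{\eta_j'}|\lesssim A^{-1}N_1$ and the resonance functions vanish only along the lines cut out by $\mathcal{K}$ and $\mathcal{K}'$.

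First I would fix dyadic parameters $\widehat{M}\sim 1$ and $M_0\sim A$, and split the integrand on the spatial-frequency side as
\[
\mathbf{1}_{(\mathcal{K})^c\cap(\mathcal{K}')^c}\ =\ \sum_{\widehat{M}\leq M\leq M_0}\sum_{(\ell_1,\ell_2)\in \widehat{Z}_M}\mathbf{1}_{\mathcal{T}_{\ell_1}^M\times\mathcal{T}_{\ell_2}^M}\ +\ \sum_{(\ell_1,\ell_2)\in\overline{Z}_{M_0}}\mathbf{1}_{\mathcal{T}_{\ell_1}^{M_0}\times\mathcal{T}_{\ell_2}^{M_0}},
\]
where the second piece handles any residue outside $R_{M_0}$. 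Writing $f_{N_1,L_1}=\sum f_{N_1,L_1}|_{\tilde{\mathcal{T}}_{\ell_1}^M}$ and $g_{N_2,L_2}=\sum g_{N_2,L_2}|_{\tilde{\mathcal{T}}_{\ell_2}^M}$ accordingly, for each fixed scale $M$ I apply Proposition \ref{prop3.10} tile by tile, which produces the factor $A^{-(d-2)/2}M^{1/2}N_1^{(d-6)/2}(L_0L_1L_2)^{1/2}$ together with the pieces $\|f_{N_1,L_1}|_{\tilde{\mathcal{T}}_{\ell_1}^M}\|_{L^2}$ and $\|g_{N_2,L_2}|_{\tilde{\mathcal{T}}_{\ell_2}^M}\|_{L^2}$.

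Next I sum over $(\ell_1,\ell_2)\in\widehat{Z}_M$. By Lemma \ref{lemma3.11}, for fixed $\ell_1$ there are only $O(1)$ admissible $\ell_2$, so two applications of Cauchy--Schwarz consolidate the tile norms into the full norms $\|f_{N_1,L_1}\|_{L^2}$ and $\|g_{N_2,L_2}\|_{L^2}$ with a bounded constant. This yields, for each dyadic $M\in[\widehat{M},M_0]$,
\begin{equation*}
\sum_{(\ell_1,\ell_2)\in\widehat{Z}_M}|\cdots|\ \lesssim\ A^{-\frac{d-2}{2}}M^{\frac{1}{2}}N_1^{\frac{d-6}{2}}(L_0L_1L_2)^{\frac{1}{2}}\|f_{N_1,L_1}\|_{L^2}\|g_{N_2,L_2}\|_{L^2}\|h_{N_0,L_0}\|_{L^2}.
\end{equation*}
Summing over dyadic $M\leq M_0\sim A$ gives $\sum_M M^{1/2}\sim A^{1/2}$, producing exactly the prefactor $A^{-(d-3)/2}N_1^{(d-6)/2}$ required by \eqref{est01-prop3.12}. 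The residual piece indexed by $\overline{Z}_{M_0}$ is handled by the same argument with $M=M_0\sim A$, losing no powers.

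The main obstacle I anticipate is justifying the termination $M_0\lesssim A$ and the corresponding geometric exhaustion: one must verify that any $(\xi_1,\eta_1)\times(\xi_2,\eta_2)$ lying in the joint spatial support and in $(\mathcal{K})^c\cap(\mathcal{K}')^c$ in fact belongs to some $Q_M$ with $M\lesssim A$. This amounts to a lower bound $\max(|\overline{\Phi}|/N_1^3,|\overline{F}|/N_1^2)\gtrsim A^{-1}$ away from the degenerate loci $\mathcal{K},\mathcal{K}'$, which follows from a quantitative analysis of the zero sets of $\overline{\Phi}$ and $\overline{F}$ combined with the transversality $|\sin\angle|\gtrsim 1$ and the smallness $|\boldsymbol{\eta_j'}|\lesssim A^{-1}N_1$; this is exactly the two-dimensional geometric picture underlying the analogous step in \cite{Ki2019}, and the $\boldsymbol{\eta_j'}$ contributions only perturb it by $O(A^{-2}N_1^3)$ and $O(A^{-2}N_1^2)$ respectively. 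All other steps are standard consequences of Proposition \ref{prop3.10}, Lemma \ref{lemma3.11}, and dyadic summation.
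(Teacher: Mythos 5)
Your overall scheme for the main sum is the paper's: decompose via Definition \ref{definition3}, apply Proposition \ref{prop3.10} tile-by-tile, consolidate with Lemma \ref{lemma3.11} and Cauchy--Schwarz, and sum $M^{1/2}$ dyadically from $\widehat{M}$ to $A$ to convert the per-scale prefactor $A^{-(d-2)/2}M^{1/2}$ into $A^{-(d-3)/2}$. That part of your proposal matches the paper and is correct.

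However, there is a genuine gap in your treatment of the residual term $\overline{Z}_A$ (your $\overline{Z}_{M_0}$). You write that it is ``handled by the same argument with $M=M_0\sim A$,'' but Proposition \ref{prop3.10} requires $(\ell_1,\ell_2)\in\tilde{Z}_M$, i.e.\ a lower bound $|\overline{\Phi}|\geq M^{-1}N_1^3$ or $|\overline{F}|\geq M^{-1}N_1^2$ on the whole tile pair. By construction, $\overline{Z}_A$ consists precisely of tile pairs that do \emph{not} satisfy this bound at any scale $M\leq A$, so Proposition \ref{prop3.10} cannot be applied there. The paper instead observes that, since $|\boldsymbol{\eta_j'}|\lesssim A^{-1}N_1$, the support of $f_{N_1,L_1}|_{\tilde{\mathcal{T}}_{\ell_1}^A}$ and $g_{N_2,L_2}|_{\tilde{\mathcal{T}}_{\ell_2}^A}$ are confined to $A^{-1}N_1$-cubes in all spatial variables, so after undoing the linear change $(\xi_j,\eta_j)\mapsto(\xi_j+\eta_j,\sqrt{3}(\xi_j-\eta_j))$ one can invoke the Loomis--Whitney estimate of Proposition \ref{prop3.7} directly, which produces the factor $A^{-(d-3)/2}N_1^{(d-6)/2}(L_0L_1L_2)^{1/2}$ without any resonance lower bound.

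Relatedly, your ``main obstacle'' paragraph misattributes the role of the $\mathcal{K},\mathcal{K}'$ exclusion: you suggest it forces a lower bound $\max(|\overline{\Phi}|/N_1^3,|\overline{F}|/N_1^2)\gtrsim A^{-1}$ on the complement, making $\overline{Z}_A$ empty. That is not what is happening, and the residual is genuinely nonempty. The exclusion of $\mathcal{K}\cup\mathcal{K}'$ is what makes the almost-orthogonality of Lemma \ref{lemma3.11} true (both for $\widehat{Z}_M$ and for $\overline{Z}_A$); as the paper notes before Definition \ref{definition5}, there exist tile pairs violating orthogonality, and the $\mathcal{K}$-strips are carved out precisely to remove those. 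Without realizing this, your argument both mishandles the residual and leaves no mechanism for bounding the number of admissible $\ell_2$'s per $\ell_1$ in the residual sum; the latter is supplied by the second part of Lemma \ref{lemma3.11}.
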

\begin{proof}
By the definitions of $\widehat{Z}_M$ and $ \overline{Z}_{A}$, 
$\left( 
G_{N_1, L_1} \times G_{N_2, L_2} \right) \cap \mathcal{A}  \cap 
(\tilde{\mathcal{K}})^c \cap (\tilde{\mathcal{K}}')^c $ are contained in
\begin{equation*}
\bigcup_{\widehat{M} \leq M \leq A} 
\bigcup_{(\ell_1, \ell_2) \in \widehat{Z}_M} 
 \left( \tilde{\mathcal{T}}_{\ell_1}^{M} \times \tilde{\mathcal{T}}_{\ell_2}^{M} \right)
 \cup \bigcup_{(\ell_1, \ell_2) \in \overline{Z}_{A}} 
 \left( \tilde{\mathcal{T}}_{\ell_1}^{A} \times \tilde{\mathcal{T}}_{\ell_2}^{A} \right).
\end{equation*}
Therefore, we get
\begin{align*}
& \textnormal{(LHS) of \eqref{est01-prop3.12}} \\ 
\leq & \sum_{\widehat{M} \leq M \leq A} \sum_{(\ell_1, \ell_2) \in \widehat{Z}_M} 
\left|\int_{*}{  h_{N_0,L_0}(\tau, \xi, \boldsymbol{\eta}) 
f_{N_1,L_1}|_{\tilde{\mathcal{T}}_{\ell_1}^M}(\tau_1, \xi_1, \boldsymbol{\eta_1})  g_{N_2, L_2}|_{\tilde{\mathcal{T}}_{\ell_2}^M}(\tau_2, \xi_2, \boldsymbol{\eta_2}) 
}
d\sigma_1 d\sigma_2 \right|\\
& \quad + \sum_{(\ell_1, \ell_2) \in \overline{Z}_{A}} 
\left|\int_{*}{  h_{N_0,L_0}(\tau, \xi, \boldsymbol{\eta}) 
f_{N_1,L_1}|_{\tilde{\mathcal{T}}_{\ell_1}^{A}}(\tau_1, \xi_1, \boldsymbol{\eta_1})
  g_{N_2, L_2}|_{\tilde{\mathcal{T}}_{\ell_2}^{A}}(\tau_2, \xi_2, \boldsymbol{\eta_2}) 
}
d\sigma_1 d\sigma_2 \right|\\
& =: \sum_{\widehat{M} \leq M \leq A} \sum_{(\ell_1, \ell_2) \in \widehat{Z}_M}  I_1 + \sum_{(\ell_1, \ell_2) \in \overline{Z}_{A}}  I_2.
\end{align*}
For the former term, we deduce from Proposition \ref{prop3.10} and Lemma \ref{lemma3.11} that
\begin{align*}
 \sum_{(\ell_1, \ell_2) \in \widehat{Z}_M}  I_1  \lesssim & 
\sum_{(\ell_1, \ell_2) \in \widehat{Z}_M} 
A^{-\frac{d-2}{2}} M^{\frac{1}{2}}
N_1^{\frac{d-6}{2}}    (L_0 L_1 L_2)^{\frac{1}{2}}  \|f_{N_1,L_1}|_{\tilde{\mathcal{T}}_{\ell_1}^M} \|_{L^2} 
\| g_{N_2, L_2}|_{\tilde{\mathcal{T}}_{\ell_2}^M} \|_{L^2} 
\|h_{N_0,L_0} \|_{L^2}\\
\lesssim & \ 
A^{-\frac{d-2}{2}} M^{\frac{1}{2}}
N_1^{\frac{d-6}{2}}    (L_0 L_1 L_2)^{\frac{1}{2}} \|f_{N_1,L_1} \|_{L^2} 
\| g_{N_2, L_2} \|_{L^2} 
\|h_{N_0,L_0} \|_{L^2}.
\end{align*}
Consequently, we obtain
\begin{equation*}
\sum_{\widehat{M} \leq M \leq A} \sum_{(\ell_1, \ell_2) \in \widehat{Z}_M}   I_1 
 \lesssim  
 A^{-\frac{d-3}{2}} 
N_1^{\frac{d-6}{2}}  (L_0 L_1 L_2)^{\frac{1}{2}} \|f_{N_1,L_1} \|_{L^2} 
\| g_{N_2, L_2} \|_{L^2} 
\|h_{N_0,L_0} \|_{L^2}.
\end{equation*}
Next we consider the latter term. 
The assumption $|\boldsymbol{\eta_j'}| \lesssim A^{-1} N_1$ implies that space variables of 
$\supp (f_{N_1,L_1}|_{\tilde{\mathcal{T}}_{\ell_1}^{A}})$ and 
$\supp(g_{N_2, L_2}|_{\tilde{\mathcal{T}}_{\ell_2}^{A}})$ are confined to regular cubes which side lengths are 
comparable to $A^{-1} N_1$, respectively. Since the linear transformation 
$(\xi_j, \eta_j) \to (\xi_j+\eta_j, \sqrt{3} (\xi_j-\eta_j))$ is invertible, Proposition \ref{prop3.7} yields
\begin{equation*}
I_2 \lesssim A^{-\frac{d-3}{2}} 
N_1^{\frac{d-6}{2}}  (L_0 L_1 L_2)^{\frac{1}{2}}  \|f_{N_1,L_1}|_{\tilde{\mathcal{T}}_{\ell_1}^{A}} \|_{L^2} 
\| g_{N_2, L_2}|_{\tilde{\mathcal{T}}_{\ell_2}^{A}} \|_{L^2} \|h_{N_0,L_0} \|_{L^2}.
\end{equation*}
Hence, by Lemma 
\ref{lemma3.11}, we get
\begin{align*}
\sum_{(\ell_1, \ell_2) \in \overline{Z}_{A}}  I_2 & \lesssim 
A^{-\frac{d-3}{2}} 
N_1^{\frac{d-6}{2}}  (L_0 L_1 L_2)^{\frac{1}{2}} \sum_{(\ell_1, \ell_2) \in \overline{Z}_{A}} \|f_{N_1,L_1}|_{\tilde{\mathcal{T}}_{\ell_1}^{A}} \|_{L^2} 
\| g_{N_2, L_2}|_{\tilde{\mathcal{T}}_{\ell_2}^{A}} \|_{L^2} \|h_{N_0,L_0} \|_{L^2}\\
& \lesssim A^{-\frac{d-3}{2}} 
N_1^{\frac{d-6}{2}}  (L_0 L_1 L_2)^{\frac{1}{2}} \|f_{N_1,L_1} \|_{L^2} 
\| g_{N_2, L_2} \|_{L^2} 
\|h_{N_0,L_0} \|_{L^2}.
\end{align*}
This completes the proof.
\end{proof}
Next we deal with the case 
$(\xi_1 , \eta_1) \times (\xi_2, \eta_2) \in \left( \mathcal{K} \cup \mathcal{K}' \right)$. 
The strategy of proof is the same as for the case $(\xi_1, \eta_1) \times (\xi_2, \eta_2) \in (\mathcal{K})^c \cap (\mathcal{K}')^c$. 
By symmetry, it suffices to show the estimate \eqref{est01-prop3.9} for the case $(\xi_1 , \eta_1) \times (\xi_2, \eta_2) \in (\mathcal{K}_1 \cup \mathcal{K}_2 ) \times \mathcal{K}_0$.
\begin{defn}[Def.~4 \cite{Ki2019}]\label{definition6}
Let $m=(n, z) \in \N  \times \Z$. We define the increasing sequence $\{ a_{M,n} \}_{n \in \N} $ as 
\begin{equation*}
a_{M,1} = 0, \qquad a_{M,n+1} = a_{M,n} + \frac{N_1}{\sqrt{(n+1)M}}.
\end{equation*}
and sets $\mathcal{R}_{M,m,1}$, 
$\mathcal{R}_{M,m,2}$ as follows:
\begin{align*}
\mathcal{R}_{M,m,1} = &
\left\{ (\xi, \eta)  \in \R^2  \, \left| \, 
\begin{aligned} & a_{M,n} \leq | \eta-   ( \sqrt{2}+ 1 )^{\frac{2}{3}} (\sqrt{2} + \sqrt{3} ) \xi | 
< a_{M,n+1}, \\ 
& z M^{-1}  N_1 \leq \eta-( \sqrt{2}+ 1 )^{\frac{2}{3}}\xi < (z+1) M^{-1}N_1
 \end{aligned} \right.
\right\},\\
\mathcal{R}_{M,m,2} = &
\left\{ (\xi, \eta)  \in \R^2  \, \left| \, 
\begin{aligned} & a_{M,n} \leq | \eta +  ( \sqrt{2}+ 1 )^{\frac{2}{3}} (\sqrt{3} - \sqrt{2} ) \xi | 
< a_{M,n+1}, \\ 
& z M^{-1}  N_1 \leq \eta-( \sqrt{2}+ 1 )^{\frac{2}{3}}\xi < (z+1) M^{-1}N_1
 \end{aligned} \right.
\right\}\\
\tilde{\mathcal{R}}_{M,m,1} = & \R \times \mathcal{R}_{M,m,1} \times \R^{d-2}, \quad 
\tilde{\mathcal{R}}_{M,m,2} =  \R \times \mathcal{R}_{M,m,2} \times \R^{d-2}.
\end{align*}
We will perform the Whitney type decomposition by using the above sets 
instead of simple square tiles. 
We define for $i=1,2$ that
\begin{align*}
M_{M,i}^1 & = \left\{ (m, \ell) \in (\N \times \Z) \times \Z^2 \ \left| \ 
\begin{aligned} & |\overline{\Phi}(\xi_1, \eta_1, \xi_2, \eta_2)| \geq M^{-1} N_1^3 \ \\ 
&\textnormal{for any} \ (\xi_1, \eta_1) \in \mathcal{R}_{M,m,i} \  \textnormal{and} \
(\xi_2, \eta_2) \in \mathcal{T}_{\ell}^M 
 \end{aligned} \right.
\right\},\\
M_{M,i}^2 & = \left\{ (m, \ell) \in (\N \times \Z) \times \Z^2 \ \left| \ 
\begin{aligned} & |\overline{F}(\xi_1, \eta_1, \xi_2, \eta_2)| \geq M^{-1} N_1^3 \ \\ 
&\textnormal{for any} \ (\xi_1, \eta_1) \in \mathcal{R}_{M,m,i} \  \textnormal{and} \
(\xi_2, \eta_2) \in \mathcal{T}_{\ell}^M 
 \end{aligned} \right.
\right\},\\
M_{M,i} & = M_{M,i}^1 \cup M_{M,i}^2 \subset  (\N \times \Z) \times \Z^2, \\
R_{M,i} & = \bigcup_{(m, \ell) \in M_{M,i}} \mathcal{R}_{M,m,i} \times 
\mathcal{T}_{\ell}^M \subset \R^2 \times \R^2.
\end{align*}
Furthermore, we define ${M}_{M,i}' \subset M_{M,i}$ as the collection of 
$(m,\ell) \in (\N \times \Z) \times \Z^2$ such that 
\begin{equation*}
\mathcal{R}_{M,m,i} \times 
\mathcal{T}_{\ell}^M \subset \bigcup_{\widehat{M} \leq M' <M} R_{M', i}.
\end{equation*}
By using ${M}_{M,i}'$, we define
\begin{align*}
Q_{M,i} = 
\begin{cases}
R_{M,i} \setminus {\displaystyle \bigcup_{(m,\ell) \in {M}_{M,i}'}} (\mathcal{R}_{M,m,i} \times 
\mathcal{T}_{\ell}^M) \ \textnormal{for} \   M > \widehat{M},\\
 \quad \  R_{\widehat{M},i}  \, \quad \qquad \qquad \qquad \qquad \textnormal{for} \  M = \widehat{M},
\end{cases}
\end{align*}
and $\tilde{M}_{M,i} = M_{M,i} \setminus {M}_{M,i}'$. Clearly, the followings hold.
\begin{equation*}
\bigcup_{(m,\ell) \in \tilde{M}_{M,i}} \mathcal{R}_{M,m,i} \times 
\mathcal{T}_{\ell}^M = Q_{M,i}, \quad 
\bigcup_{\widehat{M} \leq M \leq M_0} Q_{M,i} = R_{M_0,i},
\end{equation*}
where $M_0 \geq \widehat{M}$ is dyadic. 
Lastly, we define
\begin{align*}
\widehat{Z}_{M,i} & = \{ (m, \ell) \in \tilde{M}_{M,i} \, | \, 
(  \tilde{\mathcal{R}}_{M,m,i} \times 
\tilde{\mathcal{T}}_{\ell}^M ) \cap \left( 
G_{N_1, L_1} \times G_{N_2, L_2} \right) \cap ( \tilde{\mathcal{K}}_i \times 
\tilde{\mathcal{K}}_0 ) \not= 
\emptyset \},\\
\overline{Z}_{M,i} & = \{ (m, \ell) \in  M_{M,i}^c \, | \, 
(  \tilde{\mathcal{R}}_{M,m,i} \times 
\tilde{\mathcal{T}}_{\ell}^M ) \cap \left( 
G_{N_1, L_1} \times G_{N_2, L_2} \right) \cap ( \tilde{\mathcal{K}}_i \times 
\tilde{\mathcal{K}}_0 )  \not= 
\emptyset \},
\end{align*}
where $M_{M,i}^c = (\N \times \Z) \times \Z^2 \setminus M_{M,i}$. We easily see that
\begin{equation*}
 \left( 
G_{N_1, L_1} \times G_{N_2, L_2} \right) \cup ( \tilde{\mathcal{K}}_i \times 
\tilde{\mathcal{K}}_0  ) \subset \bigcup_{(m,\ell) \in \widehat{Z}_{M,i}} (  \tilde{\mathcal{R}}_{M,m,i} \times 
\tilde{\mathcal{T}}_{\ell}^M ) \cup \bigcup_{(m,\ell) \in \overline{Z}_{M,i}} (  \tilde{\mathcal{R}}_{M,m,i} \times 
\tilde{\mathcal{T}}_{\ell}^M ).
\end{equation*}
\end{defn}
\begin{lem}[Lemma 3.9 in \cite{Ki2019}]\label{lemma3.13}
Let $i=1,2$. For fixed $m \in \N \times \Z$, the number of $k \in \Z^2$ such that 
$(m, k) \in \widehat{Z}_{M,i}$ is finitely many. On the other hand, for 
fixed $k \in \Z^2$, the number of $m \in \N \times \Z$ such that 
$(m, k) \in \widehat{Z}_{M,i}$ is finitely many. 
Furthermore, the claim holds true whether we replace $\widehat{Z}_{M,i}$ by $\overline{Z}_{M,i}$ in 
the above statements.
\end{lem}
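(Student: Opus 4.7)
The strategy is to follow the proof of Lemma 3.9 in \cite{Ki2019} essentially verbatim. Crucially, the resonance functions $\overline{\Phi}$ and $\overline{F}$ from Definition \ref{definition3} depend only on the two-dimensional slice $(\xi,\eta)$, and the sets $\mathcal{K}_0, \mathcal{K}_1, \mathcal{K}_2$, $\mathcal{R}_{M,m,i}$, $\mathcal{T}_\ell^M$ are defined purely in those variables. Hence the extra transverse variables $\boldsymbol{\eta'} \in \R^{d-2}$ play no role in the combinatorial counting, and the problem reduces to the two-dimensional geometric argument of \cite{Ki2019}.

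I would first unpack the geometry. The sets $\mathcal{K}_1$ and $\mathcal{K}_2$ are thin strips of width $\sim N_1$ around two lines through the origin whose slopes are precisely the directions in which, for $(\xi_2,\eta_2) \in \mathcal{K}_0$, the gradient $\nabla_{(\xi_1,\eta_1)}\overline{\Phi}$ degenerates and $\overline{\Phi}$ develops a higher-order vanishing. The parallelograms $\mathcal{R}_{M,m,i}$, with non-uniform radial spacing $a_{M,n+1}-a_{M,n} = N_1/\sqrt{(n+1)M}$ along the critical direction and uniform spacing $N_1/M$ transverse to it, are tuned to this degeneracy so that $|\overline{\Phi}|$ varies by $\sim M^{-1}N_1^3$ across one parallelogram.

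Next I would fix $m = (n,z)$, so that the region $\mathcal{R}_{M,m,i} \subset \mathcal{K}_i$ is fixed, and count admissible $\ell$. By the Whitney construction, $(m,\ell) \in \widehat{Z}_{M,i}$ forces $\mathcal{R}_{M,m,i} \times \mathcal{T}_\ell^M$ to meet the shell where either $|\overline{\Phi}| \sim M^{-1}N_1^3$ or $|\overline{F}| \sim M^{-1}N_1^2$. Since $(\xi_2,\eta_2) \in \mathcal{K}_0$ lies away from the critical directions for the gradients in the second argument, one has $|\nabla_{(\xi_2,\eta_2)}\overline{\Phi}| \sim N_1^2$ and $|\nabla_{(\xi_2,\eta_2)}\overline{F}| \sim N_1$ transverse to the respective level sets. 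Consequently the resonance constraint localises $(\xi_2,\eta_2)$ to a tube of width $\sim N_1/M$, which intersects only $O(1)$ cubes $\mathcal{T}_\ell^M$ of side $N_1/M$. The reverse direction (fix $\ell$, count $m$) is symmetric and relies on the matched-scale property of the parallelograms $\mathcal{R}_{M,m,i}$. The assertion for $\overline{Z}_{M,i}$ follows from the same analysis, since requiring both $|\overline{\Phi}| < M^{-1}N_1^3$ and $|\overline{F}| < M^{-1}N_1^2$ only tightens the localisation.

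The main technical obstacle is that the count must be uniform in $n$ as $n \to \infty$, since $\mathcal{R}_{M,m,i}$ degenerates into an arbitrarily thin and long parallelogram. The verification rests on checking that, along the critical tangential direction of $\mathcal{K}_i$, the natural step at which $|\overline{\Phi}|$ grows by $M^{-1}N_1^3$ is exactly $a_{M,n+1}-a_{M,n} = N_1/\sqrt{(n+1)M}$; this is the quantitative input of the Whitney construction in \cite{Ki2019} and is the only place where the specific values $(\sqrt{2}+1)^{2/3}(\sqrt{2}\pm\sqrt{3})$ enter. Granted this scale-matching, both directions of the finiteness claim for $\widehat{Z}_{M,i}$ and $\overline{Z}_{M,i}$ follow as in \cite[Lemma 3.9]{Ki2019} without modification.
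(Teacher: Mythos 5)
Your proposal is correct and consistent with the paper, which gives no independent proof but simply cites Lemma 3.9 of \cite{Ki2019}. You make explicit the observation that the paper relies on implicitly: the resonance functions $\overline{\Phi}$, $\overline{F}$ and the tile families $\mathcal{R}_{M,m,i}$, $\mathcal{T}_\ell^M$ live entirely in the two-dimensional slice $(\xi,\eta)$, so the combinatorial counting is literally the one from the two-dimensional paper and the transverse variables $\boldsymbol{\eta}'\in\R^{d-2}$ contribute nothing.
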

\begin{prop}\label{prop3.14}
In addition to the hypothesis of Proposition \ref{prop3.9}, assume that 
$| \sin \angle \left( (\xi_1, \eta_1), (\xi_2, \eta_2) \right)| \gtrsim 1$ and 
$(\xi_1, \eta_1) \times (\xi_2, \eta_2) \in (\mathcal{K}_1 \cup \mathcal{K}_2 ) \times \mathcal{K}_0$. Then we have
\begin{equation}
\begin{split}
& \left|\int_{*}{  h_{{N_0, L_0}}(\tau, \xi, \boldsymbol{\eta} ) 
f_{N_1, L_1} (\tau_1, \xi_1, \boldsymbol{\eta_1} )  
g_{N_2, L_2} (\tau_2, \xi_2, \boldsymbol{\eta_2} ) 
}
d\sigma_1 d\sigma_2 \right| \\
& \qquad  \lesssim  A^{-\frac{d-3}{2}} 
N_1^{\frac{d-6}{2}}    (L_0 L_1 L_2)^{\frac{1}{2}} 
\|f_{N_1, L_1} \|_{L^2} \| g_{N_2, L_2} \|_{L^2} 
\|h_{{N_0, L_0}}\|_{L^2},\label{est01-prop3.14}
\end{split}
\end{equation}
where functions $f_{N_1, L_1}$, $g_{N_2, L_2}$, $h_{{N_0, L_0}}$ satisfy 
\eqref{assumption-fgh}.
\end{prop}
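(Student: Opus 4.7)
The plan is to mirror the proof of Proposition \ref{prop3.12} but with the square-tile Whitney decomposition replaced by the anisotropic one from Definition \ref{definition6}. The reason the square-tile decomposition fails on $(\mathcal{K}_1\cup\mathcal{K}_2)\times\mathcal{K}_0$ is exactly the loss of almost orthogonality observed in \cite[Remark 3.3]{Ki2019}: along the special directions defining $\mathcal{K}_0,\mathcal{K}_1,\mathcal{K}_2$ one tile $\mathcal{T}_{\ell_2}^M\subset\mathcal{K}_0$ becomes compatible with too many tiles $\mathcal{T}_{\ell_1}^M$. The rectangles $\mathcal{R}_{M,m,i}$, whose widths shrink like $N_1/\sqrt{(n+1)M}$ as one moves away from the critical line, are designed precisely so that Lemma \ref{lemma3.13} restores the necessary finite overlap.

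First I would decompose
\[
(G_{N_1,L_1}\times G_{N_2,L_2})\cap(\tilde{\mathcal{K}}_i\times\tilde{\mathcal{K}}_0)\subset \bigcup_{\widehat{M}\le M\le A}\bigcup_{(m,\ell)\in\widehat{Z}_{M,i}}(\tilde{\mathcal{R}}_{M,m,i}\times\tilde{\mathcal{T}}_\ell^M)\ \cup\ \bigcup_{(m,\ell)\in\overline{Z}_{A,i}}(\tilde{\mathcal{R}}_{A,m,i}\times\tilde{\mathcal{T}}_\ell^M),
\]
split the trilinear form into two pieces $I_1,I_2$ accordingly, and treat each as in the proof of Proposition \ref{prop3.12}. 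For the $\overline{Z}_{A,i}$ piece, the restriction to $\tilde{\mathcal{R}}_{A,m,i}\times\tilde{\mathcal{T}}_\ell^A$ together with $|\boldsymbol{\eta_j'}|\lesssim A^{-1}N_1$ confines the spatial supports to cubes of sidelength $\sim A^{-1}N_1$ after a harmless rescaling (since $\mathcal{R}_{A,m,i}$ still fits in such a cube when $n\sim 1$, the relevant regime at scale $M=A$), so Proposition \ref{prop3.7} applies directly and gives the $A^{-(d-3)/2}N_1^{(d-6)/2}(L_0L_1L_2)^{1/2}$ bound; summing via Lemma \ref{lemma3.13} yields the desired estimate.

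The main step, which I expect to be the principal obstacle, is the analogue of Proposition \ref{prop3.10} adapted to the anisotropic tiles $\mathcal{R}_{M,m,i}$: namely, for $(m,\ell)\in\widehat{Z}_{M,i}$ one needs
\[
\left|\int_{*}h_{N_0,L_0}\,f_{N_1,L_1}|_{\tilde{\mathcal{R}}_{M,m,i}}\,g_{N_2,L_2}|_{\tilde{\mathcal{T}}_\ell^M}\,d\sigma_1 d\sigma_2\right|\lesssim A^{-\frac{d-2}{2}}M^{\frac{1}{2}}N_1^{\frac{d-6}{2}}(L_0L_1L_2)^{\frac{1}{2}}\|f_{N_1,L_1}|_{\tilde{\mathcal{R}}_{M,m,i}}\|_{L^2}\|g_{N_2,L_2}|_{\tilde{\mathcal{T}}_\ell^M}\|_{L^2}\|h_{N_0,L_0}\|_{L^2}.
\]
After freezing $\boldsymbol{\eta_1'},\boldsymbol{\eta_2'}$ (costing $A^{-(d-2)}N_1^{d-2}$ from their support size) this reduces, just as in Proposition \ref{prop3.10}, to a $3$D estimate: either resonance gives $|\overline{\Phi}|\gtrsim M^{-1}N_1^3$, and the argument proceeds via Cauchy--Schwarz and \eqref{est-space-modu-prop3.10}, refined by the fact that the $\eta_1$-slice of the support has measure $\sim N_1/\sqrt{(n+1)M}$ (the anisotropic length of $\mathcal{R}_{M,m,i}$) rather than $M^{-1}N_1$, but summing over $n$ produces only a logarithmic loss that is absorbed; or $|\overline{F}|\gtrsim M^{-1}N_1^2$, and the nonlinear Loomis--Whitney argument of the second part of the proof of Proposition \ref{prop3.10} applies, since the perturbation from $\boldsymbol{\tilde{\eta}_j}$ is $O(A^{-1})\ll M^{-1}$ and the transversality $M^{-1}\lesssim|\det N|$ is verified by the same computation. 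These estimates are precisely the ones proven in \cite[Prop.~3.8, Prop.~3.10]{Ki2019} for the $2$D equation; the point is that the $\boldsymbol{\eta'}$-directions contribute only a controllable $O(A^{-1})$ error in the transversality determinant.

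Finally, applying Lemma \ref{lemma3.13}, almost orthogonality in both the $m$- and $\ell$-indices gives
\[
\sum_{(m,\ell)\in\widehat{Z}_{M,i}}I_1\lesssim A^{-\frac{d-2}{2}}M^{\frac{1}{2}}N_1^{\frac{d-6}{2}}(L_0L_1L_2)^{\frac{1}{2}}\|f_{N_1,L_1}\|_{L^2}\|g_{N_2,L_2}\|_{L^2}\|h_{N_0,L_0}\|_{L^2},
\]
and summing the geometric series in $M\le A$ costs only a factor $A^{1/2}$, producing the required $A^{-(d-3)/2}$. Combined with the $\overline{Z}_{A,i}$ contribution, \eqref{est01-prop3.14} follows, completing the proof in the case $(\xi_1,\eta_1)\times(\xi_2,\eta_2)\in(\mathcal{K}_1\cup\mathcal{K}_2)\times\mathcal{K}_0$; the symmetric case $\mathcal{K}_0\times(\mathcal{K}_1\cup\mathcal{K}_2)$ is identical, and the primed sets are handled by interchanging the roles of $\xi$ and $\eta$.
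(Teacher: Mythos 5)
Your high-level strategy matches the paper's: decompose $(\tilde{\mathcal K}_i\times\tilde{\mathcal K}_0)$ via the anisotropic Whitney tiling of Definition~\ref{definition6}, split into the $\widehat{Z}_{M,i}$ and $\overline{Z}_{A,i}$ pieces, estimate the first by the trilinear bound of Proposition~\ref{prop3.10}-type combined with the finite-overlap Lemma~\ref{lemma3.13}, estimate the second via Proposition~\ref{prop3.7}, and sum the geometric series in $M$ to convert $A^{-(d-2)/2}M^{1/2}$ into $A^{-(d-3)/2}$. This is exactly what the paper does.

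There are, however, two points where your justification goes astray. First, the parenthetical claim that ``$\mathcal R_{A,m,i}$ still fits in such a cube when $n\sim 1$'' is false: for $n=1$ the $|\eta-c\xi|$-width of $\mathcal R_{A,m,i}$ is $a_{A,2}-a_{A,1}=N_1/\sqrt{2A}$, which is $\gg A^{-1}N_1$ when $A\gg 1$, so the rectangle is much longer than the cube in that direction. The correct route (and what the paper's ``by the almost orthogonality and Proposition~\ref{prop3.7}'' is shorthand for) is to chop $\mathcal R_{A,m,i}$ into $\sim\sqrt{A}$ cubes of side $A^{-1}N_1$, apply Proposition~\ref{prop3.7} with $f$ localized to each cube (note only $g$ is automatically localized to a cube, since $\mathcal T_\ell^A$ already has side $A^{-1}N_1$), and recombine via $\ell^2$ orthogonality in the cube index. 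Second, the ``logarithmic loss from summing over $n$'' in the anisotropic analogue of Proposition~\ref{prop3.10} is not how the argument works: the bound per tile is uniform in $(m,\ell)$ (in particular in $n$), with the $n$-dependent width of $\mathcal R_{M,m,i}$ compensated exactly by the design $a_{M,n+1}^2-a_{M,n}^2\sim M^{-1}N_1^2$, and the summation over all tiles in $\widehat Z_{M,i}$ at fixed $M$ is handled cleanly by Cauchy--Schwarz plus Lemma~\ref{lemma3.13} with no logarithmic correction. Neither issue is fatal to the strategy, but as written the proof of the $\overline{Z}_{A,i}$ piece is incorrect and would need to be repaired along the lines just described.
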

\begin{proof}
To avoid redundancy, we only treat the case $(\xi_1 , \eta_1) \times (\xi_2, \eta_2) \in \mathcal{K}_1 \times \mathcal{K}_0$. 
The case $(\xi_1 , \eta_1) \times (\xi_2, \eta_2) \in \mathcal{K}_2 \times \mathcal{K}_0$ can be dealt with in the similar way. Similarly to the proof of Lemma \ref{prop3.12}, 
by the following inclusion
\begin{equation*}
 \left( 
G_{N_1, L_1} \times G_{N_2, L_2} \right) \cup ( \tilde{\mathcal{K}}_1 \times 
\tilde{\mathcal{K}}_0  ) \subset \bigcup_{(m,\ell) \in \widehat{Z}_{M,1}} (  \tilde{\mathcal{R}}_{M,m,1} \times 
\tilde{\mathcal{T}}_{\ell}^M ) \cup \bigcup_{(m,\ell) \in \overline{Z}_{M,1}} (  \tilde{\mathcal{R}}_{M,m,1} \times 
\tilde{\mathcal{T}}_{\ell}^M ),
\end{equation*}
we get
\begin{align*}
& \textnormal{(LHS) of \eqref{est01-prop3.14}} \\ 
\leq & \sum_{\widehat{M} \leq M \leq A} \sum_{(m, \ell) \in \widehat{Z}_{M,1}} 
\left|\int_{*}{  h_{N_0,L_0}(\tau, \xi, \boldsymbol{\eta}) 
f_{N_1,L_1}|_{\tilde{\mathcal{R}}_{M,m,1}}(\tau_1, \xi_1, \boldsymbol{\eta_1})  g_{N_2, L_2}|_{\tilde{\mathcal{T}}_{\ell}^M}(\tau_2, \xi_2, \boldsymbol{\eta_2}) 
}
d\sigma_1 d\sigma_2 \right|\\
& \quad + \sum_{(m,\ell) \in \overline{Z}_{A,1}} 
\left|\int_{*}{  h_{N_0,L_0}(\tau, \xi, \boldsymbol{\eta}) 
f_{N_1,L_1}|_{\tilde{\mathcal{R}}_{A,m,1}}(\tau_1, \xi_1, \boldsymbol{\eta_1})
  g_{N_2, L_2}|_{\tilde{\mathcal{T}}_{\ell}^{A}}(\tau_2, \xi_2, \boldsymbol{\eta_2}) 
}
d\sigma_1 d\sigma_2 \right|\\
& =: \sum_{\widehat{M} \leq M \leq A} \sum_{(m, \ell) \in \widehat{Z}_{M,1}}  I_1 + 
\sum_{(m,\ell) \in \overline{Z}_{A,1}}   I_2.
\end{align*}
The former term is estimated by Proposition \ref{prop3.10} and Lemma \ref{lemma3.13} as
\begin{align*}
& \sum_{(m, \ell) \in \widehat{Z}_{M,1}} I_1 \\ \lesssim & 
\sum_{(m, \ell) \in \widehat{Z}_{M,1}}
A^{-\frac{d-2}{2}} M^{\frac{1}{2}}
N_1^{\frac{d-6}{2}}    (L_0 L_1 L_2)^{\frac{1}{2}}  \|f_{N_1,L_1}|_{\tilde{\mathcal{R}}_{A,m,1}} \|_{L^2} 
\| g_{N_2, L_2}|_{\tilde{\mathcal{T}}_{\ell}^M} \|_{L^2} 
\|h_{N_0,L_0} \|_{L^2}\\
\lesssim & \ 
A^{-\frac{d-2}{2}} M^{\frac{1}{2}}
N_1^{\frac{d-6}{2}}    (L_0 L_1 L_2)^{\frac{1}{2}} \|f_{N_1,L_1} \|_{L^2} 
\| g_{N_2, L_2} \|_{L^2} 
\|h_{N_0,L_0} \|_{L^2},
\end{align*}
which yields
\begin{equation*}
\sum_{\widehat{M} \leq M \leq A} \sum_{(m, \ell) \in \widehat{Z}_{M,1}}   I_1 
 \lesssim  
 A^{-\frac{d-3}{2}} 
N_1^{\frac{d-6}{2}}  (L_0 L_1 L_2)^{\frac{1}{2}} \|f_{N_1,L_1} \|_{L^2} 
\| g_{N_2, L_2} \|_{L^2} 
\|h_{N_0,L_0} \|_{L^2}.
\end{equation*}
We deal with the latter term in the same manner as that for the proof of Proposition \ref{prop3.12}. 
The assumption $|\boldsymbol{\eta_2'}| \lesssim A^{-1} N_1$ means that support of 
$g_{N_2, L_2}|_{\tilde{\mathcal{T}}_{\ell_2}^{A}}$ is contained in a regular cube which side length is  
comparable to $A^{-1} N_1$. Thus, by the almost orthogonality and Proposition \ref{prop3.7}, we obtain
\begin{equation*}
I_2 \lesssim A^{-\frac{d-3}{2}} 
N_1^{\frac{d-6}{2}}  (L_0 L_1 L_2)^{\frac{1}{2}}  \|f_{N_1,L_1}|_{\tilde{\mathcal{R}}_{A,m,1}} \|_{L^2} 
\| g_{N_2, L_2}|_{\tilde{\mathcal{T}}_{\ell}^{A}} \|_{L^2} \|h_{N_0,L_0} \|_{L^2}.
\end{equation*}
Hence, it follows from Lemma 
\ref{lemma3.13} that
\begin{align*}
\sum_{(m,\ell) \in \overline{Z}_{A,1}}  I_2 & \lesssim 
A^{-\frac{d-3}{2}} 
N_1^{\frac{d-6}{2}}  (L_0 L_1 L_2)^{\frac{1}{2}} \sum_{(m,\ell) \in \overline{Z}_{A,1}} 
\|f_{N_1,L_1}|_{\tilde{\mathcal{R}}_{A,m,1}} \|_{L^2} 
\| g_{N_2, L_2}|_{\tilde{\mathcal{T}}_{\ell}^{A}} \|_{L^2} \|h_{N_0,L_0} \|_{L^2}\\
& \lesssim A^{-\frac{d-3}{2}} 
N_1^{\frac{d-6}{2}}  (L_0 L_1 L_2)^{\frac{1}{2}} \|f_{N_1,L_1} \|_{L^2} 
\| g_{N_2, L_2} \|_{L^2} 
\|h_{N_0,L_0} \|_{L^2}.
\end{align*}
This completes the proof.
\end{proof}
Next, we consider the case $| \sin \angle \left( (\xi_1, \eta_1), (\xi_2, \eta_2) \right)| \ll 1$. 
Similarly to the case $| \sin \angle \left( (\xi_1, \eta_1), (\xi_2, \eta_2) \right)| \gtrsim 1$, 
we follow the proof for the 2D Zakharov-Kuznetsov equation. 
\begin{defn}\label{definition7}
Let $M$ be dyadic. Define
\begin{align*}
& \Theta_k^M = \left[\frac{\pi}{M} \, (k-2), \ \frac{\pi}{M} \, (k+2) \right] 
\cup \left[-\pi + \frac{\pi}{M} \, (k-2), \ - \pi +\frac{\pi}{M} \, (k+2) \right],\\
& {\mathfrak{D}}_k^M  = \{ ( r \cos \theta, r \sin \theta) \in  \R^2 
\, | \, r\geq 0 , \ \theta \in \Theta_k^M  \},\\
& \tilde{\mathfrak{D}}_k^M = \R \times  {\mathfrak{D}}_k^M \times \R^{d-2}.
\end{align*}
Let $\mathcal{I}$, $(\mathcal{I})^c \subset \R^2 \times \R^2$ be defined as follows:
\begin{align*}
& \mathcal{I}  = \left( {\mathfrak{D}}_{0}^{2^{11}} \times {\mathfrak{D}}_{0}^{2^{11}} \right) 
\cup \left( {\mathfrak{D}}_{2^{10}}^{2^{11}} \times {\mathfrak{D}}_{2^{10}}^{2^{11}} \right) , & 
& \tilde{\mathcal{I}}  = \left( \tilde{{\mathfrak{D}}}_{0}^{2^{11}} \times 
\tilde{{\mathfrak{D}}}_{0}^{2^{11}} \right) 
\cup \left( \tilde{{\mathfrak{D}}}_{2^{10}}^{2^{11}} \times \tilde{{\mathfrak{D}}}_{2^{10}}^{2^{11}} \right), \\
& (\mathcal{I})^c  = \left( \R^2 \times \R^2 \right) \setminus  \mathcal{I}, & 
& (\tilde{\mathcal{I}})^c  = \left( \R^{d+1} \times \R^{d+1} \right) \setminus  \tilde{\mathcal{I}}.
\end{align*}
\end{defn}
Note that
\begin{align*}
 {\mathfrak{D}}_{0}^{2^{11}} & =\left\{ ( |(\xi,\eta)| \cos \theta, |(\xi, \eta)| \sin \theta) \in  \R^2 
\, | \, \textnormal{min} \left( |\theta|, |\theta - \pi| \right)  \leq 2^{-10}\pi \right\},\\
 {\mathfrak{D}}_{2^{10}}^{2^{11}} & =\left\{ ( |(\xi,\eta)| \cos \theta, |(\xi, \eta)| \sin \theta) \in  \R^2 
\, | \, \textnormal{min} \left( \left| \theta-\frac{\pi}{2}\right|, 
\left| \theta + \frac{\pi}{2} \right| \right) \leq 2^{-10}  \pi
 \right\}.
\end{align*}
We begin with the case $(\xi_1 , \eta_1) \times (\xi_2, \eta_2) \in  (\mathcal{I})^c$. 
Note that 
$\max (|\xi_1+\eta_1|, |\xi_2+\eta_2|) \geq 2^{-5}N_1$ in \textit{Assumption} \ref{assumption1prime} allows us to assume
\begin{equation}
(\xi_1,\eta_1) \times (\xi_2,\eta_2) \notin \left( {\mathfrak{D}}_{2^9 \times 3}^{2^{11}} \times {\mathfrak{D}}_{2^9 \times 3}^{2^{11}} \right).\label{ang-ass-def7}
\end{equation}
Remark that
\begin{equation*}
 {\mathfrak{D}}_{2^{9} \times 3}^{2^{11}} =\left\{ ( |(\xi,\eta)| \cos \theta, |(\xi, \eta)| \sin \theta) \in  \R^2 
\, | \, \textnormal{min} \left( \left| \theta-\frac{3 \pi}{4}\right|, 
\left| \theta + \frac{\pi}{4} \right| \right) \leq 2^{-10} \pi 
 \right\}.
\end{equation*}
\begin{prop}\label{prop3.15}
Assume $|\boldsymbol{\eta_j'}| \lesssim A^{-1} N_1$, \eqref{ang-ass-def7} and \eqref{assumption-fgh}. 
Let $A$, $M$ be dyadic which satisfy $1 \ll M \leq A$ and $(k_1,k_2)$ satisfies 
${\mathfrak{D}}_{k_1}^M \times {\mathfrak{D}}_{k_2}^M \subset (\mathcal{I})^c$, 
$16 \leq |k_1 - k_2| \leq 32$. 
Then we get
\begin{equation}
\begin{split}
& \left|\int_{*}{  h_{{N_0, L_0}}(\tau, \xi, \boldsymbol{\eta} ) 
f_{N_1, L_1}|_{\tilde{\mathfrak{D}}_{k_1}^M} (\tau_1, \xi_1, \boldsymbol{\eta_1} )
g_{N_2, L_2}|_{\tilde{\mathfrak{D}}_{k_2}^M} (\tau_2, \xi_2, \boldsymbol{\eta_2} )
}
d\sigma_1 d\sigma_2 \right| \\
& \qquad \lesssim  A^{-\frac{d-2}{2}} M^{\frac{1}{2}}
N_1^{\frac{d-6}{2}}    (L_0 L_1 L_2)^{\frac{1}{2}} 
\|f_{N_1, L_1}|_{\tilde{\mathfrak{D}}_{k_1}^M} \|_{L^2} \| g_{N_2, L_2}|_{\tilde{\mathfrak{D}}_{k_2}^M} \|_{L^2} 
\|h_{{N_0, L_0}}\|_{L^2},\label{est01-prop3.15}
\end{split}
\end{equation}
where $d \sigma_j = d\tau_j d \xi_j d \boldsymbol{\eta}_j$ and $*$ denotes $(\tau, \xi, \boldsymbol{\eta}) = (\tau_1 + \tau_2, \xi_1+ \xi_2, \boldsymbol{\eta_1} + \boldsymbol{\eta_2}).$
\end{prop}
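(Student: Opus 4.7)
The plan is to mirror the structure of the proof of Proposition \ref{prop3.10}, replacing the square-tile decomposition $\mathcal{T}_\ell^M$ by the angular sector decomposition $\mathfrak{D}_k^M$. As before, I first fix $\boldsymbol{\eta_1'}$, $\boldsymbol{\eta_2'}$ and reduce via Fubini's theorem to a three-variable estimate in $(\tau,\xi,\eta)$; the assumption $|\boldsymbol{\eta_j'}| \lesssim A^{-1}N_1$ then delivers the factor $A^{-(d-2)/2}$ after integrating out these $d-2$ auxiliary variables, so that \eqref{est01-prop3.15} follows from the three-dimensional analogue of \eqref{est02-prop3.10}.

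The key preliminary step is to verify that under the sectoral hypothesis $16 \leq |k_1-k_2| \leq 32$ (which forces the angular separation $|\sin\angle((\xi_1,\eta_1),(\xi_2,\eta_2))|\sim M^{-1}$), together with the exclusion conditions $\mathfrak{D}_{k_1}^M \times \mathfrak{D}_{k_2}^M \subset (\mathcal{I})^c$ and \eqref{ang-ass-def7}, at least one of
\[
|\overline{\Phi}(\xi_1,\eta_1,\xi_2,\eta_2)| \gtrsim M^{-1}N_1^3, \qquad |\overline{F}(\xi_1,\eta_1,\xi_2,\eta_2)| \gtrsim M^{-1}N_1^2
\]
must hold on the supports. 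Writing $(\xi_j,\eta_j) = r_j(\cos\theta_j,\sin\theta_j)$ with $r_j\sim N_1$, a direct polar calculation shows that the sets $\mathcal{I}$ (near $\theta_j\in\{0,\pi/2,\pi,3\pi/2\}$) and $\mathfrak{D}_{2^9\cdot 3}^{2^{11}}\times \mathfrak{D}_{2^9\cdot 3}^{2^{11}}$ (near $\theta_j\in\{3\pi/4,-\pi/4\}$) are precisely the directions where both $\overline{F}$ and $\overline{\Phi}$ degenerate simultaneously, so outside these regions one of the two alternatives is quantitatively available.

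With this dichotomy in hand, the argument then splits exactly as in Proposition \ref{prop3.10}. In the regime $|\overline{\Phi}|\gtrsim M^{-1}N_1^3$, the resonance identity forces $L_{012}^{\max}\gtrsim M^{-1}N_1^3$ up to an $\mathcal{O}(A^{-2}N_1^3)$ error from the $(\xi+\eta)|\boldsymbol{\eta'}|^2$ contribution, which is harmless since $M\leq A$. One then runs the bilinear Cauchy--Schwarz argument \eqref{bilinear01-prop3.10}--\eqref{bilinear03-prop3.10}, after verifying the analog of \eqref{est-space-modu-prop3.10} (namely $\max(|\xi_1^2-(\xi-\xi_1)^2|, |\eta_1^2-(\eta-\eta_1)^2|)\gtrsim N_1^2$), using the assumption that $(\xi_j,\eta_j)$ avoid the sectors in $\mathcal{I}$. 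In the regime $|\overline{F}|\gtrsim M^{-1}N_1^2$, I rescale $(\tau,\xi,\eta)\to (N_1^3\tau, N_1\xi, N_1\eta)$ and apply the nonlinear Loomis--Whitney inequality (Proposition \ref{prop2.3}) to the three surfaces $\varphi_{\boldsymbol{\tilde{\eta}_1},\tilde{c}_1}, \varphi_{\boldsymbol{\tilde{\eta}_2},\tilde{c}_2}, \varphi_{\tilde{\boldsymbol{\eta}}}$.

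The main obstacle will be the transversality verification in the Loomis--Whitney step: one must show
\[
|\det N(\lambda_1,\lambda_2,\lambda_3)| \gtrsim M^{-1}
\]
for the corresponding unit normals. Expanding the determinant as in the end of the proof of Proposition \ref{prop3.10} (multilinearly in the columns to separate the $|\boldsymbol{\tilde{\eta}_j}|^2$ contributions), the dominant term factors as $|\xi_1\eta_2-\xi_2\eta_1|\cdot |\overline{F}(\xi_1,\eta_1,\xi_2,\eta_2)|$, which is $\gtrsim M^{-1}\cdot 1 = M^{-1}$ by the angular separation and the case hypothesis (after rescaling). The remaining lower-order terms involving $|\boldsymbol{\tilde{\eta}_j}|^2\lesssim A^{-2}\leq M^{-2}$ are strictly subdominant and can be absorbed. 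Once this transversality bound is established, Proposition \ref{prop2.3} delivers the desired $M^{1/2}$ gain and concludes the proof.
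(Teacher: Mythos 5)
The paper's own proof of Proposition~\ref{prop3.15} consists of the same reduction you propose (fix $\boldsymbol{\eta_1'},\boldsymbol{\eta_2'}$, reduce via Fubini to the three-variable estimate \eqref{est02-prop3.15}, extract the factor $A^{-(d-2)/2}$ from integrating the transverse variables), followed by a citation to Proposition~3.14 of \cite{Ki2019}; the paper does not spell out the 2D details. Your proposal attempts to supply them, and the overall architecture (dichotomy, Cauchy--Schwarz in one regime, nonlinear Loomis--Whitney in the other) matches the template of Proposition~\ref{prop3.10}, so the approach is sound in spirit.

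However, there is a genuine quantitative gap in your dichotomy statement that then silently contradicts your transversality computation. You claim the alternative $|\overline{\Phi}|\gtrsim M^{-1}N_1^3$ or $|\overline{F}|\gtrsim M^{-1}N_1^2$. But in this proposition the angular separation is $\sim M^{-1}$ (since $16\leq|k_1-k_2|\leq 32$ and the sectors have aperture $M^{-1}$), so $|\xi_1\eta_2-\xi_2\eta_1|\sim M^{-1}N_1^2$, in contrast to Proposition~\ref{prop3.10} where it was $\sim N_1^2$. Consequently, to make the expanded determinant $|\xi_1\eta_2-\xi_2\eta_1|\cdot|\overline{F}|$ be $\gtrsim M^{-1}N_1^4$ (which rescales to the required $\delta\sim M^{-1}$), you need $|\overline{F}|\gtrsim N_1^2$, \emph{not} merely the $M^{-1}N_1^2$ you assert. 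Your stated bound would only give $\delta\sim M^{-2}$ and a gain of $M$, not $M^{1/2}$. In your final paragraph you implicitly invoke the stronger bound by writing ``$\gtrsim M^{-1}\cdot 1=M^{-1}$'', which reads $|\overline{F}|\gtrsim 1$ after rescaling, contradicting the dichotomy you set up. The correct statement is that on $(\mathcal{I})^c$ one has $|\overline{F}|\gtrsim N_1^2$ unconditionally: in polar coordinates, with both angles bounded away from $\{0,\pi/2,\pi,3\pi/2\}$, the main term is $\sim N_1^2|\sin 2\theta|\gtrsim N_1^2$ for both the near-parallel and near-antiparallel sub-cases. So the needed transversality does hold, but you should state the bound correctly, because as written the chain of inequalities does not close. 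The role of $\overline{\Phi}$ and of \eqref{ang-ass-def7} in this regime is then to secure the radial nondegeneracy analogous to \eqref{est-space-modu-prop3.10} (the axis exclusion from $(\mathcal{I})^c$ alone is not what's at stake there; it is the exclusion of $\mathfrak{D}_{2^9\cdot 3}^{2^{11}}$ via \eqref{ang-ass-def7} that prevents $\xi_1+\eta_1$, $\xi_2+\eta_2$ from vanishing simultaneously), and making that precise is exactly the content of the cited Proposition~3.14 of \cite{Ki2019}.
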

\begin{proof}
It suffices to show
\begin{equation}
\begin{split}
& \left|\int_{\hat{*}}{  h_{{N_0, L_0}}(\tau, \xi, \boldsymbol{\eta} ) 
f_{N_1, L_1}|_{\tilde{\mathfrak{D}}_{k_1}^M} (\tau_1, \xi_1, \boldsymbol{\eta_1} )
g_{N_2, L_2}|_{\tilde{\mathfrak{D}}_{k_2}^M} (\tau_2, \xi_2, \boldsymbol{\eta_2} )
}
d\hat{\sigma}_1 d\hat{\sigma}_2 \right| \\
&  \lesssim M^{\frac{1}{2}} N_{1}^{-2}   (L_0 L_1 L_2)^{\frac{1}{2}} 
\|f_{N_1, L_1}|_{\tilde{\mathfrak{D}}_{k_1}^M} (\boldsymbol{\eta_1'}) \|_{L^2_{\tau \xi \eta}} 
\|g_{N_2, L_2}|_{\tilde{\mathfrak{D}}_{k_2}^M} (\boldsymbol{\eta_2'})\|_{L^2_{\tau \xi \eta}} 
\|h_{{N_0, L_0}}(\boldsymbol{\eta'}) \|_{L^2_{\tau \xi \eta}} ,\label{est02-prop3.15}
\end{split}
\end{equation}
where $d \hat{\sigma}_j = d\tau_j d \xi_j d \eta_j$ and 
$\hat{*}$ denotes $(\tau,\xi, \eta) = (\tau_1 + \tau_2,\xi_1+\xi_2, \eta_1 + \eta_2).$ 
As we saw in the proof of Proposition \ref{prop3.10}, since $M \leq A$ and $|\boldsymbol{\eta_j'}| \lesssim A^{-1} N_1$, we can show \eqref{est02-prop3.15} by following the proof of Proposition 3.14 in 
\cite{Ki2019}. We omit the proof.
\end{proof}
\begin{prop}\label{prop3.16}
In addition to the hypothesis of Proposition \ref{prop3.9}, assume that 
$| \sin \angle \left( (\xi_1, \eta_1), (\xi_2, \eta_2) \right)| \ll 1$, \eqref{ang-ass-def7} and 
$(\xi_1, \eta_1) \times (\xi_2, \eta_2) \in (\mathcal{I})^c$. Then we have
\begin{equation}
\begin{split}
& \left|\int_{*}{  h_{{N_0, L_0}}(\tau, \xi, \boldsymbol{\eta} ) 
f_{N_1, L_1} (\tau_1, \xi_1, \boldsymbol{\eta_1} )  
g_{N_2, L_2} (\tau_2, \xi_2, \boldsymbol{\eta_2} ) 
}
d\sigma_1 d\sigma_2 \right| \\
& \qquad  \lesssim  A^{-\frac{d-3}{2}} 
N_1^{\frac{d-6}{2}}    (L_0 L_1 L_2)^{\frac{1}{2}} 
\|f_{N_1, L_1} \|_{L^2} \| g_{N_2, L_2} \|_{L^2} 
\|h_{{N_0, L_0}}\|_{L^2},\label{est01-prop3.16}
\end{split}
\end{equation}
where functions $f_{N_1, L_1}$, $g_{N_2, L_2}$, $h_{{N_0, L_0}}$ satisfy 
\eqref{assumption-fgh}.
\end{prop}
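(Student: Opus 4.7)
The plan is to mirror the Whitney-type decomposition argument from the proof of Proposition~\ref{prop3.12}, but with the square tiles $\mathcal{T}_\ell^M$ replaced by the angular bi-sectors $\mathfrak{D}_k^M$ of Definition~\ref{definition7}. The near-parallel assumption $|\sin\angle((\xi_1,\eta_1),(\xi_2,\eta_2))|\ll 1$ forces the pair of angular labels $(k_1,k_2)$ to be close in the normalization used for $\Theta_k^M$; the Whitney decomposition will split the region into dyadic angular separations at scales $M^{-1}$ for $\widehat M \leq M \leq A$, plus a residual at the finest scale $M=A$ where the two directions remain within $A^{-1}$ of each other.

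Concretely, for each dyadic $M$ with $1 \ll M \leq A$ I would introduce a collection $\widehat W_M$ of pairs $(k_1,k_2)$ with $\mathfrak{D}_{k_1}^M \times \mathfrak{D}_{k_2}^M \subset (\mathcal{I})^c$ and $16 \leq |k_1-k_2| \leq 32$ (so that the pair is first transversal exactly at scale $M^{-1}$), intersected with $G_{N_1,L_1}\times G_{N_2,L_2}$; and a residual collection $\overline W_A$ at $M=A$ consisting of pairs with $|k_1-k_2|<16$. The region under consideration is then covered by
\[
\bigcup_{\widehat M \leq M \leq A}\bigcup_{(k_1,k_2)\in \widehat W_M}(\tilde{\mathfrak D}_{k_1}^M \times \tilde{\mathfrak D}_{k_2}^M) \ \cup\ \bigcup_{(k_1,k_2)\in \overline W_A}(\tilde{\mathfrak D}_{k_1}^A \times \tilde{\mathfrak D}_{k_2}^A).
\]

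For the separated pairs, Proposition~\ref{prop3.15} furnishes, for each $(k_1,k_2) \in \widehat W_M$, a bound of size $A^{-(d-2)/2} M^{1/2} N_1^{(d-6)/2} (L_0L_1L_2)^{1/2}$ times the $L^2$-norms of the localized pieces $f_{N_1,L_1}|_{\tilde{\mathfrak D}_{k_1}^M}$ and $g_{N_2,L_2}|_{\tilde{\mathfrak D}_{k_2}^M}$. The angular almost-orthogonality (the analogue of Lemma~\ref{lemma3.11}) allows the summation over $(k_1,k_2)\in \widehat W_M$ to be replaced by the global $L^2$-norms, and the dyadic sum $\sum_{M \leq A} M^{1/2} \lesssim A^{1/2}$ yields the target factor $A^{-(d-3)/2}$. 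For the residual part $\overline W_A$, each sector together with $|\boldsymbol{\eta}_j'|\lesssim A^{-1}N_1$ confines the spatial-frequency support of the localized function to a thin tube of transverse scale $\sim A^{-1}N_1$; since the constraints $(\mathcal{I})^c$ and \eqref{ang-ass-def7} prevent the pair of directions from lying along any of the three degenerate axes of $\overline\Phi$, an appropriate rotation in $\boldsymbol\eta$ recovers the transversality $|(\xi_1-\eta_1)\eta_2'-(\xi_2-\eta_2)\eta_1'|\sim A^{-1}N_1^2$ required by Proposition~\ref{prop3.7}, which then yields the bound $A^{-(d-3)/2}N_1^{(d-6)/2}(L_0L_1L_2)^{1/2}$ per block, summed via the analogue of Lemma~\ref{lemma3.11}.

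The step I expect to be the main obstacle is the geometric verification that the angular Whitney decomposition has the required almost-orthogonality, i.e.\ that for fixed $k_1$ only $O(1)$ values of $k_2$ satisfy $(k_1,k_2) \in \widehat W_M$, and analogously for $\overline W_A$. This reduces to checking that the zero set of the transversality symbols $\overline\Phi$ and $\overline F$ of Definition~\ref{definition3}, restricted to the near-parallel angular regime, consists precisely of the three degenerate axes excluded by $\mathcal{I}$ together with \eqref{ang-ass-def7}; away from those axes, the scale at which a pair of directions becomes transversal is uniquely determined up to an $O(1)$ factor, exactly as in the two-dimensional argument of \cite{Ki2019}. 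Once this geometric input is in place, the proof is a direct transplantation of the argument for Proposition~\ref{prop3.12}.
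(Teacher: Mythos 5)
Your plan for the Whitney decomposition over angular sectors $\mathfrak{D}_{k}^{M}$ with $16\le|k_1-k_2|\le 32$, the application of Proposition~\ref{prop3.15} to the separated pairs, and the dyadic summation $\sum_{M\le A}A^{-(d-2)/2}M^{1/2}\lesssim A^{-(d-3)/2}$ all match the paper's proof. (The almost-orthogonality you flag as the main obstacle is in fact automatic here: for a fixed $k_1$ only $O(1)$ values of $k_2$ satisfy $16\le|k_1-k_2|\le 32$ by construction, so no analogue of Lemma~\ref{lemma3.11} is needed for the angular Whitney decomposition; the hard orthogonality lemmas are about the square-tile decomposition relative to the zero sets of $\overline{\Phi}$, $\overline{F}$.)

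The genuine gap is in the residual piece where $|k_1-k_2|\le 16$ at scale $M=A$. You propose that $|\boldsymbol{\eta}_j'|\lesssim A^{-1}N_1$ together with the sector localization confines the support to a ``thin tube of transverse scale $\sim A^{-1}N_1$'' and that a rotation then puts you in the setting of Proposition~\ref{prop3.7}. But Proposition~\ref{prop3.7} requires the spatial support to lie in the cube $\mathcal{C}_k^A$ of \emph{side} $A^{-1}N_1$; the sector $\mathfrak{D}_{k_j}^A$ has angular width $\sim A^{-1}$ but radial extent $\sim N_1$, so the localized pieces live on tubes of length $N_1$, not on cubes, and the nonlinear Loomis--Whitney argument does not apply directly. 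To close this you must split according to the size of the output frequency $|(\xi,\eta)|$. When $|(\xi,\eta)|\lesssim A^{-1}N_1$ the convolution constraint lets you further localize each sector into $\sim A$ blocks of diameter $A^{-1}N_1$ in an almost-orthogonal way, after which Proposition~\ref{prop3.7} is applicable; but when $|(\xi,\eta)|\gg A^{-1}N_1$ no such localization is available, and instead one must exploit that the resonance function satisfies $|\Phi(\xi_1,\boldsymbol{\eta}_1,\xi_2,\boldsymbol{\eta}_2)|\gtrsim A^{-1}N_1^3$ (via Lemma~3.12 in \cite{Ki2019} together with $|\boldsymbol{\eta}_j'|\lesssim A^{-1}N_1$), which forces a large modulation and allows one to conclude via Cauchy--Schwarz bilinear estimates of the type in Proposition~\ref{prop3.3} rather than via Loomis--Whitney. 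Your proposal skips this case distinction, and as written the residual term is not covered.
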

\begin{proof}
We define that
\begin{equation*}
J_{M}^{(\mathcal{I})^c} = \{ (k_1, k_2) \, | \, 0 \leq k_1,k_2 \leq M -1, \ \left( {\mathfrak{D}}_{k_1}^M \times {\mathfrak{D}}_{k_2}^M \right) \subset (\mathcal{I})^c \cap \left( {\mathfrak{D}}_{2^9 \times 3}^{2^{11}} \times {\mathfrak{D}}_{2^9 \times 3}^{2^{11}} \right)^c .\}
\end{equation*}
We perform the Whitney type decomposition as 
\begin{equation*}
(\mathcal{I})^c \cap \left( {\mathfrak{D}}_{2^9 \times 3}^{2^{11}} \times {\mathfrak{D}}_{2^9 \times 3}^{2^{11}} \right)^c = \bigcup_{64 \leq M \leq A} \ 
\bigcup_{\tiny{\substack{(k_1,k_2) \in J_{M}^{(\mathcal{I})^c}\\ 16 \leq |k_1 - k_2|\leq 32}}} 
{\mathfrak{D}}_{k_1}^M \cross {\mathfrak{D}}_{k_2}^M 
\cup \bigcup_{\tiny{\substack{(k_1,k_2) \in J_{A}^{(\mathcal{I})^c}\\|k_1 - k_2|\leq 16}}} 
{\mathfrak{D}}_{k_1}^{A} \cross {\mathfrak{D}}_{k_2}^{A}.
\end{equation*}
Note that $| \sin \angle \left( (\xi_1, \eta_1), (\xi_2, \eta_2) \right)| \ll 1$ implies 
$M \gg 1$. We observe
\begin{align*}
& \textnormal{(LHS) of \eqref{est01-prop3.16}} \\ 
\leq & \sum_{1 \ll M \leq A} \sum_{\tiny{\substack{(k_1,k_2) \in J_{M}^{(\mathcal{I})^c}\\ 16 \leq |k_1 - k_2|\leq 32}}} 
\left|\int_{*}{  h_{N_0,L_0}(\tau, \xi, \boldsymbol{\eta}) 
f_{N_1, L_1}|_{\tilde{\mathfrak{D}}_{k_1}^M} (\tau_1, \xi_1, \boldsymbol{\eta_1} )
g_{N_2, L_2}|_{\tilde{\mathfrak{D}}_{k_2}^M} (\tau_2, \xi_2, \boldsymbol{\eta_2} )
}
d\sigma_1 d\sigma_2 \right|\\
& \quad + \sum_{\tiny{\substack{(k_1,k_2) \in J_{A}^{(\mathcal{I})^c}\\|k_1 - k_2|\leq 16}}} 
\left|\int_{*}{  h_{N_0,L_0}(\tau, \xi, \boldsymbol{\eta}) 
f_{N_1, L_1}|_{\tilde{\mathfrak{D}}_{k_1}^A} (\tau_1, \xi_1, \boldsymbol{\eta_1} )
g_{N_2, L_2}|_{\tilde{\mathfrak{D}}_{k_2}^A} (\tau_2, \xi_2, \boldsymbol{\eta_2} )
}
d\sigma_1 d\sigma_2 \right|\\
& =: \sum_{1 \ll M \leq A} \sum_{\tiny{\substack{(k_1,k_2) \in J_{M}^{(\mathcal{I})^c}\\ 16 \leq |k_1 - k_2|\leq 32}}}   I_1 + \sum_{\tiny{\substack{(k_1,k_2) \in J_{A}^{(\mathcal{I})^c}\\|k_1 - k_2|\leq 16}}}   I_2.
\end{align*}
The former term is dealt with by Proposition \ref{prop3.15} as follows.
\begin{align*}
& \sum_{1 \ll M \leq A} \sum_{\tiny{\substack{(k_1,k_2) \in J_{M}^{(\mathcal{I})^c}\\ 16 \leq |k_1 - k_2|\leq 32}}}   I_1\\ & \lesssim \sum_{1 \ll M \leq A}  A^{-\frac{d-2}{2}} M^{\frac{1}{2}}
N_1^{\frac{d-6}{2}}    (L_0 L_1 L_2)^{\frac{1}{2}} \|f_{N_1,L_1} \|_{L^2} 
\| g_{N_2, L_2} \|_{L^2} 
\|h_{N_0,L_0} \|_{L^2}\\
& \lesssim A^{-\frac{d-3}{2}} 
N_1^{\frac{d-6}{2}}    (L_0 L_1 L_2)^{\frac{1}{2}} \|f_{N_1,L_1} \|_{L^2} 
\| g_{N_2, L_2} \|_{L^2} 
\|h_{N_0,L_0} \|_{L^2}.
\end{align*}
For the latter term, we only consider the case $|(\xi,\eta)| \gg A^{-1} N_1$. 
The case $|(\xi,\eta)| \lesssim A^{-1} N_1$ can be treated by Proposition \ref{prop3.7}. 
By Lemma 3.12 in \cite{Ki2019} and $|\boldsymbol{\eta_j'}| \lesssim A^{-1} N_1$, we easily observe that $|(\xi,\eta)| \gg A^{-1} N_1$ gives 
$|\Phi(\xi_1,\boldsymbol{\eta_1} , \xi_2,\boldsymbol{\eta_2})| \gtrsim A^{-1} N_1^3$. 
Thus, it suffices to show the following bilinear estimates.
\begin{align*}
& \left\| {\mathbf{1}}_{G_{N_0, L_0}} \int  f_{N_1, L_1}|_{\tilde{\mathfrak{D}}_{k_1}^A}(\tau_1, \xi_1, \boldsymbol{\eta_1} ) 
g_{N_2, L_2}|_{\tilde{\mathfrak{D}}_{k_2}^A} (\tau- \tau_1, \xi-\xi_1, \boldsymbol{\eta}- \boldsymbol{\eta_1} ) d\sigma_1 \right\|_{L^2} \notag \\ 
& \qquad \qquad \qquad \qquad \qquad 
\lesssim A^{-\frac{d-2}{2}} N_1^{\frac{d-3}{2}} (L_1 L_2)^{\frac{1}{2}} \| f_{N_1, L_1}|_{\tilde{\mathfrak{D}}_{k_1}^A}\|_{L^2} 
\|g_{N_2, L_2}|_{\tilde{\mathfrak{D}}_{k_2}^A} \|_{L^2},\\
& \left\| {\mathbf{1}}_{G_{N_1, L_1} \cap \tilde{\mathfrak{D}}_{k_1}^A} \int g_{N_2, L_2}|_{\tilde{\mathfrak{D}}_{k_2}^A}
(\tau_2,\xi_2,\boldsymbol{\eta_2}) 
h_{N_0,L_0} (\tau_1+ \tau_2, \xi_1+\xi_2, \boldsymbol{\eta_1}+ \boldsymbol{\eta_2}) d\sigma_2 
\right\|_{L^2}
\notag \\ 
& \qquad \qquad \qquad \qquad \qquad 
\lesssim A^{-\frac{d-1}{2}} N_1^{\frac{d-3}{2}} (L_0 L_2)^{\frac{1}{2}} 
\|g_{N_2, L_2}|_{\tilde{\mathfrak{D}}_{k_2}^A} \|_{L^2} 
\|h_{N_0,L_0}\|_{L^2},\\
& \left\| {\mathbf{1}}_{G_{N_2,L_2} \cap \tilde{\mathfrak{D}}_{k_2}^A} \int 
h_{N_0,L_0} (\tau_1+ \tau_2, \xi_1+\xi_2, \boldsymbol{\eta_1}+ \boldsymbol{\eta_2} )   
f_{N_1, L_1}|_{\tilde{\mathfrak{D}}_{k_1}^A}(\tau_1, \xi_1, \boldsymbol{\eta_1} )  d \sigma_1 
\right\|_{L^2} \notag \\ 
& \qquad \qquad \qquad \qquad \qquad 
\lesssim A^{-\frac{d-1}{2}} N_1^{\frac{d-3}{2}} (L_0 L_1)^{\frac{1}{2}} \|h_{N_0,L_0} \|_{L^2}
\|f_{N_1, L_1}|_{\tilde{\mathfrak{D}}_{k_1}^A} \|_{L^2},
\end{align*}
that are verified by showing
\begin{align*}
& \left\| {\mathbf{1}}_{G_{N_0, L_0}} \int  f_{N_1, L_1}|_{\tilde{\mathfrak{D}}_{k_1}^A}(\tau_1, \xi_1, \boldsymbol{\eta_1} ) 
g_{N_2, L_2}|_{\tilde{\mathfrak{D}}_{k_2}^A} (\tau- \tau_1, \xi-\xi_1, \boldsymbol{\eta}- \boldsymbol{\eta_1} ) d\hat{\sigma}_1 \right\|_{L^2} \notag \\ 
& \qquad \qquad \qquad \qquad \qquad 
\lesssim N_1^{-\frac{1}{2}} (L_1 L_2)^{\frac{1}{2}} \| f_{N_1, L_1}|_{\tilde{\mathfrak{D}}_{k_1}^A} 
(\boldsymbol{\eta_1'})\|_{L^2} 
\|g_{N_2, L_2}|_{\tilde{\mathfrak{D}}_{k_2}^A} (\boldsymbol{\eta_2'})\|_{L^2},\\
& \left\| {\mathbf{1}}_{G_{N_1, L_1} \cap \tilde{\mathfrak{D}}_{k_1}^A} \int g_{N_2, L_2}|_{\tilde{\mathfrak{D}}_{k_2}^A}
(\tau_2,\xi_2,\boldsymbol{\eta_2}) 
h_{N_0,L_0} (\tau_1+ \tau_2, \xi_1+\xi_2, \boldsymbol{\eta_1}+ \boldsymbol{\eta_2}) d\hat{\sigma}_2 
\right\|_{L^2}
\notag \\ 
& \qquad \qquad \qquad \qquad \qquad 
\lesssim ( A N_1)^{-\frac{1}{2}} (L_0 L_2)^{\frac{1}{2}} 
\|g_{N_2, L_2}|_{\tilde{\mathfrak{D}}_{k_2}^A} (\boldsymbol{\eta_2'})\|_{L^2} 
\|h_{N_0,L_0} (\boldsymbol{\eta'})\|_{L^2},\\
& \left\| {\mathbf{1}}_{G_{N_2,L_2} \cap \tilde{\mathfrak{D}}_{k_2}^A} \int 
h_{N_0,L_0} (\tau_1+ \tau_2, \xi_1+\xi_2, \boldsymbol{\eta_1}+ \boldsymbol{\eta_2} )   
f_{N_1, L_1}|_{\tilde{\mathfrak{D}}_{k_1}^A}(\tau_1, \xi_1, \boldsymbol{\eta_1} )  d \hat{\sigma}_1 
\right\|_{L^2} \notag \\ 
& \qquad \qquad \qquad \qquad \qquad 
\lesssim ( A N_1)^{-\frac{1}{2}} (L_0 L_1)^{\frac{1}{2}} \|h_{N_0,L_0} (\boldsymbol{\eta'})\|_{L^2}
\|f_{N_1, L_1}|_{\tilde{\mathfrak{D}}_{k_1}^A}(\boldsymbol{\eta_1'}) \|_{L^2},
\end{align*}
respectively. These estimates are established in the same manner as for Proposition 3.13 in \cite{Ki2019}. 
We omit the details.
\end{proof}
Next we treat the case $(\xi_1 , \eta_1) \times (\xi_2, \eta_2) \in \mathcal{I}$. 
By symmetry, we may assume $(\xi_1 , \eta_1) \times (\xi_2, \eta_2) \in  {\mathfrak{D}}_{0}^{2^{11}} \times {\mathfrak{D}}_{0}^{2^{11}}$ and show the following.
\begin{prop}\label{prop3.17}
Under the hypothesis of Proposition \ref{prop3.9}, we have
\begin{equation}
\begin{split}
& \left|\int_{*}{  h_{{N_0, L_0}}(\tau, \xi, \boldsymbol{\eta} ) 
f_{N_1, L_1}|_{\tilde{\mathfrak{D}}_{0}^{2^{11}}} (\tau_1, \xi_1, \boldsymbol{\eta_1} )  
g_{N_2, L_2}|_{\tilde{\mathfrak{D}}_{0}^{2^{11}}} (\tau_2, \xi_2, \boldsymbol{\eta_2} ) 
}
d\sigma_1 d\sigma_2 \right| \\
& \qquad  \lesssim  N_0^{\frac{d-4}{2}+2 \e} N_{1}^{-1-\frac{3}{2}\e}    (L_0 L_1 L_2)^{\frac{1}{2}} 
\|f_{N_1, L_1}|_{\tilde{\mathfrak{D}}_{0}^{2^{11}}} \|_{L^2} 
\| g_{N_2, L_2}|_{\tilde{\mathfrak{D}}_{0}^{2^{11}}} \|_{L^2} 
\|h_{{N_0, L_0}}\|_{L^2},\label{est01-prop3.17}
\end{split}
\end{equation}
where functions $f_{N_1, L_1}$, $g_{N_2, L_2}$, $h_{{N_0, L_0}}$ satisfy 
\eqref{assumption-fgh}.
\end{prop}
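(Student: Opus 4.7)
The strategy is to reduce to a parametrised family of $2$D bilinear estimates by freezing the transverse frequencies $\boldsymbol{\eta_1'},\boldsymbol{\eta_2'}\in\R^{d-2}$, apply the corresponding $2$D estimate of \cite{Ki2019} on each slice, and then integrate over the transverse variables. The region $\mathfrak{D}_0^{2^{11}}\times\mathfrak{D}_0^{2^{11}}$ is the genuinely resonant one: here $|\eta_j|\ll|\xi_j|\sim N_1$, both vectors are nearly aligned with the $\xi$-axis, and neither the modulation gain through $\overline{\Phi}$ (exploited in Proposition \ref{prop3.12}) nor the angular transversality through $\overline{F}$ (exploited in Proposition \ref{prop3.14}) can produce a positive power of $N_1$; only the cubic curvature of the dispersion relation in the $(\xi,\eta)$-plane remains available.

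After fixing $\boldsymbol{\eta_1'},\boldsymbol{\eta_2'}$, I would follow the Fubini reduction used in Proposition \ref{prop3.10} to reduce to a $2$D bilinear integral in $(\xi_j,\eta_j)$. The phase $\tau-(\xi^3+\eta^3)-(\xi+\eta)|\boldsymbol{\eta'}|^2$ is, for fixed transverse data, a cubic in $(\xi,\eta)$ perturbed only by a linear term; since $|\boldsymbol{\eta_j'}|\lesssim A^{-1}N_1$, this perturbation is of size $O(A^{-2}N_1^3)$, which is controlled by $L_{012}^{\max}$ and preserves the relevant $2$D resonance geometry. Proposition $3.17$ of \cite{Ki2019}, which treats exactly the $\mathfrak{D}_0^{2^{11}}\times\mathfrak{D}_0^{2^{11}}$ region for the symmetrised $2$D Zakharov--Kuznetsov equation, therefore applies to each slice and yields the $2$D scaling-critical bound with loss $N_1^{-1-(3/2)\e}(L_0L_1L_2)^{1/2}$ times the slicewise $L^2$ norms.

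Next, I would integrate the slice estimate in $(\boldsymbol{\eta_1'},\boldsymbol{\eta_2'})$ subject to $\boldsymbol{\eta_1'}+\boldsymbol{\eta_2'}=\boldsymbol{\eta'}$. The output localisation $|\boldsymbol{\eta'}|\lesssim N_0$, combined with the support bound $|\boldsymbol{\eta_j'}|\lesssim A^{-1}N_1$ and the determinantal transversality $|(\xi_1-\eta_1)\eta_2'-(\xi_2-\eta_2)\eta_1'|\sim A^{-1}N_1^2$, is then used in a Cauchy--Schwarz argument analogous to the one appearing in the proof of Proposition \ref{prop3.4}, in order to extract the factor $N_0^{(d-4)/2+2\e}$; the dyadic sum over $A\gg 1$ contributes at most a logarithm which is absorbed by the $\e$-loss.

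The principal obstacle is this last step: a naive Young-type bound in the transverse variables produces the volume factor $N_0^{(d-2)/2}$ rather than the required $N_0^{(d-4)/2}$ (which for $d=3$ is a negative power of $N_0$, and for $d=4$ is merely a constant). Recovering the sharper exponent requires carefully combining the output constraint $|\boldsymbol{\eta'}|\lesssim N_0$ with the determinantal transversality and with the $2$D estimate on each slice. A secondary concern is that the two-dimensional analogue in \cite{Ki2019} relies on a delicate Whitney decomposition along the $\xi$-axis; one must verify that this decomposition is compatible with the lower-order perturbation by $|\boldsymbol{\eta'}|^2$ in the phase and survives the final $(\boldsymbol{\eta_1'},\boldsymbol{\eta_2'})$-integration without further logarithmic losses beyond those already allocated to $\e$.
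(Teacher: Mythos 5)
Your overall framing — slice at frozen $\boldsymbol{\eta_1'},\boldsymbol{\eta_2'}$, reduce to a $2$D bilinear estimate on each slice, then integrate transversally — is the right starting point and is indeed how the paper proceeds in this region. However, you yourself identify the crucial obstacle without resolving it, and the hint you offer for resolving it points in the wrong direction.

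The concrete problem is the numerology. The $2$D black-box estimate on the resonant sector $\mathfrak{D}_0^{2^{11}}\times\mathfrak{D}_0^{2^{11}}$ from \cite{Ki2019} is scaling-critical in two dimensions and yields $N_1^{-3/2}$ (up to $\e$-losses) on each slice; the transverse Cauchy--Schwarz then produces the volume factor $(A^{-1}N_1)^{(d-2)/2}=A^{-(d-2)/2}N_1^{(d-2)/2}$. Using $A^{-1}N_1\lesssim N_0$, the combination gives $N_0^{(d-2)/2}N_1^{-3/2}$, which exceeds the target $N_0^{(d-4)/2}N_1^{-1}$ by the factor $N_0\,N_1^{-1/2}$. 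This factor is not bounded by a constant: it is large whenever $N_1^{1/2}\ll N_0\lesssim N_1$. So a direct black-box application of the $2$D result followed by transverse integration fails in precisely the regime where $N_0$ is close to $N_1$, and the scheme as you describe it does not produce the stated inequality.

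The paper closes this gap not by exploiting the determinantal transversality $|(\xi_1-\eta_1)\eta_2'-(\xi_2-\eta_2)\eta_1'|\sim A^{-1}N_1^2$ in the transverse integration (that hypothesis of Proposition \ref{prop3.9} plays no essential role inside the $\mathfrak{D}_0^{2^{11}}$-region for this proposition), but by carrying the $(\boldsymbol{\eta_1'},\boldsymbol{\eta_2'})$-dependence through an angular Whitney decomposition of $\mathfrak{D}_0^{2^{11}}\times\mathfrak{D}_0^{2^{11}}$ into sectors of aperture $M^{-1}$, $1\ll M\leq A^{2/3}$, and running a case analysis on the relation between $N_0$ and $M^{-1}N_1$. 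When the two input sectors are separated ($16\leq|k_1-k_2|\leq 32$) one necessarily has $N_0\gtrsim M^{-1}N_1$, and the resonance function satisfies $|\overline{\Phi}|\gtrsim N_0 N_1^2$, which feeds into the bilinear convolution estimates of Proposition \ref{prop3.18} and produces the extra factor $N_0^{-1/2}$ (Propositions \ref{prop3.19}, \ref{prop3.20}). When the sectors are nearly identical ($|k_1-k_2|\leq 16$), the iteration is pushed further using the rectangle-tile decomposition from Proposition \ref{prop3.21} and the nonlinear Loomis--Whitney inequality (Proposition \ref{prop3.23}). The $\e$-loss in the final exponents comes from summing over $M$ (and the nested Whitney parameters), not from a dyadic sum over $A$: in Proposition \ref{prop3.17} the parameter $A$ is fixed. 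Without these case distinctions, and in particular without the $N_0$-dependent gain in the resonance function, the argument cannot recover the exponent $N_0^{(d-4)/2}$, and your suggestion to extract it by ``carefully combining the output constraint with the determinantal transversality'' does not correspond to an actual mechanism in the proof.
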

We note that the proof is almost the same as that for Proposition 3.18 in \cite{Ki2019}. 
Therefore, we only give a sketch of the proof here. 
\begin{defn}
Let $M \gg 1$ and $K$ be dyadic which satisfy $2^{10} \leq K \leq 2^{-10}M$. We define that
\begin{align*}
\mathfrak{K}_M^K & = \left\{ k \in \N \ | \ \frac{M}{K} \leq k \leq 2 \frac{M}{K}, \quad M- 2 \frac{M}{K} \leq k \leq M-  \frac{M}{K} \right\},\\
\mathfrak{K}_M & = \left\{ k \in \N \ | \ 0 \leq k \leq 2^{10}, \quad M- 2^{10}  \leq k \leq M- 1 \right\}.
\end{align*}
\end{defn}
The following proposition corresponds to Proposition 3.19 in \cite{Ki2019}. 
\begin{prop}\label{prop3.18}
Suppose that $|\boldsymbol{\eta_j'}| \lesssim A^{-1} N_1$ and functions 
$f_{N_1, L_1}$, $g_{N_2, L_2}$, $h_{{N_0, L_0}}$ satisfy \eqref{assumption-fgh}. 
Let $M$ be dyadic such that $1 \ll M \leq A$, $|k_1-k_2| \leq 32$ and 
\begin{equation*}
\left( {\mathfrak{D}}_{k_1}^M \times {\mathfrak{D}}_{k_2}^M \right) \subset \mathcal{I}.
\end{equation*}
Then we have
\begin{align}
& \left\| {\mathbf{1}}_{G_{N_1, L_1} \cap \tilde{\mathfrak{D}}_{k_1}^M} \int g_{N_2, L_2}|_{\tilde{\mathfrak{D}}_{k_2}^M}
(\tau_2,\xi_2,\boldsymbol{\eta_2}) 
h_{N_0,L_0} (\tau_1+ \tau_2, \xi_1+\xi_2, \boldsymbol{\eta_1}+ \boldsymbol{\eta_2}) d\sigma_2 
\right\|_{L^2}
\notag \\ 
& \qquad \qquad \qquad \qquad 
\lesssim A^{-\frac{d-2}{2}}M^{-\frac{1}{2}} N_1^{\frac{d-3}{2}} (L_0 L_2)^{\frac{1}{2}} 
\|g_{N_2, L_2}|_{\tilde{\mathfrak{D}}_{k_2}^M} \|_{L^2} 
\|h_{N_0,L_0}\|_{L^2},\label{bilinear-est01-prop3.18}\\
& \left\| {\mathbf{1}}_{G_{N_2,L_2} \cap \tilde{\mathfrak{D}}_{k_2}^M} \int 
h_{N_0,L_0} (\tau_1+ \tau_2, \xi_1+\xi_2, \boldsymbol{\eta_1}+ \boldsymbol{\eta_2} )   
f_{N_1, L_1}|_{\tilde{\mathfrak{D}}_{k_1}^M}(\tau_1, \xi_1, \boldsymbol{\eta_1} )  d \sigma_1 
\right\|_{L^2} \notag \\ 
& \qquad \qquad \qquad \qquad 
\lesssim A^{-\frac{d-2}{2}} M^{-\frac{1}{2}}N_1^{\frac{d-3}{2}} (L_0 L_1)^{\frac{1}{2}} \|h_{N_0,L_0} \|_{L^2}
\|f_{N_1, L_1}|_{\tilde{\mathfrak{D}}_{k_1}^M} \|_{L^2}.\label{bilinear-est02-prop3.18}
\end{align}
In addition to the above assumptions,\\
\textnormal{(1)} assume $N_0 \gg M^{-1} N_1$, then we have
\begin{align}
& \left\| {\mathbf{1}}_{G_{N_0, L_0}} \int  f_{N_1, L_1}|_{\tilde{\mathfrak{D}}_{k_1}^M}(\tau_1, \xi_1, \boldsymbol{\eta_1} ) 
g_{N_2, L_2}|_{\tilde{\mathfrak{D}}_{k_2}^M} (\tau- \tau_1, \xi-\xi_1, \boldsymbol{\eta}- \boldsymbol{\eta_1} ) d\sigma_1 \right\|_{L^2} \notag \\ 
& \qquad \qquad \qquad  
\lesssim A^{-\frac{d-2}{2}} (MN_0)^{-\frac{1}{2}}N_1^{\frac{d-2}{2}} (L_1 L_2)^{\frac{1}{2}} \| f_{N_1, L_1}|_{\tilde{\mathfrak{D}}_{k_1}^M}\|_{L^2} 
\|g_{N_2, L_2}|_{\tilde{\mathfrak{D}}_{k_2}^M} \|_{L^2}.\label{bilinear-est02a-prop3.18}
\end{align}
\textnormal{(2)} assume $k_1 \in \mathfrak{K}_M^K$, then we have
\begin{align}
& \left\| {\mathbf{1}}_{G_{N_0, L_0}} \int  f_{N_1, L_1}|_{\tilde{\mathfrak{D}}_{k_1}^M}(\tau_1, \xi_1, \boldsymbol{\eta_1} ) 
g_{N_2, L_2}|_{\tilde{\mathfrak{D}}_{k_2}^M} (\tau- \tau_1, \xi-\xi_1, \boldsymbol{\eta}- \boldsymbol{\eta_1} ) d\sigma_1 \right\|_{L^2} \notag \\ 
& \qquad \qquad \qquad  
\lesssim A^{-\frac{d-2}{2}} K^{\frac{1}{4}}N_1^{\frac{d-3}{2}} (L_1 L_2)^{\frac{1}{2}} \| f_{N_1, L_1}|_{\tilde{\mathfrak{D}}_{k_1}^M}\|_{L^2} 
\|g_{N_2, L_2}|_{\tilde{\mathfrak{D}}_{k_2}^M} \|_{L^2}.\label{bilinear-est03-prop3.18}
\end{align}
\textnormal{(3)} assume $M \ll A$, $k_1 \in \mathfrak{K}_M$ and either $16 \leq |k_1-k_2| \leq 32$ or 
$|\xi| \geq M^{-3/2}N_1$, then we have
\begin{align}
& \left\| {\mathbf{1}}_{G_{N_0, L_0}} \int  f_{N_1, L_1}|_{\tilde{\mathfrak{D}}_{k_1}^M}(\tau_1, \xi_1, \boldsymbol{\eta_1} ) 
g_{N_2, L_2}|_{\tilde{\mathfrak{D}}_{k_2}^M} (\tau- \tau_1, \xi-\xi_1, \boldsymbol{\eta}- \boldsymbol{\eta_1} ) d\sigma_1 \right\|_{L^2} \notag \\ 
& \qquad \qquad \qquad  
\lesssim A^{-\frac{d-2}{2}} M^{\frac{1}{4}}N_1^{\frac{d-3}{2}} (L_1 L_2)^{\frac{1}{2}} \| f_{N_1, L_1}|_{\tilde{\mathfrak{D}}_{k_1}^M}\|_{L^2} 
\|g_{N_2, L_2}|_{\tilde{\mathfrak{D}}_{k_2}^M} \|_{L^2}.\label{bilinear-est04-prop3.18}
\end{align}
\end{prop}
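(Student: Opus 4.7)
The plan is to deduce each of the bilinear estimates in Proposition \ref{prop3.18} from the corresponding two-dimensional statements proved as Proposition 3.19 in \cite{Ki2019}, by freezing the transversal frequencies $\boldsymbol{\eta_j'}\in\R^{d-2}$ and integrating them out at the end.

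First, I would fix $\boldsymbol{\eta_1'}$ and $\boldsymbol{\eta_2'}$ (or, depending on the convolution structure of the particular bilinear expression, $\boldsymbol{\eta_1'}$ and $\boldsymbol{\eta'}=\boldsymbol{\eta_1'}+\boldsymbol{\eta_2'}$) and view each side of the claimed estimate as a three-dimensional bilinear expression in the variables $(\tau_j,\xi_j,\eta_j)$ whose input functions belong to modulation-localized sets governed by the parameter-dependent phase
\[
\varphi_c(\xi,\eta) \;=\; \xi^3+\eta^3+(\xi+\eta)\,c, \qquad c=|\boldsymbol{\eta'}|^2 \in [0,\,A^{-2}N_1^2].
\]
Because of the hypothesis $M\leq A$, the parameter term $(\xi+\eta)c=\mathcal{O}(A^{-2}N_1^3)$ is much smaller than the modulation thresholds $M^{-1}N_1^3$ (respectively $(MN_0)^{-1}N_1^3$, $K^{-1}N_1^3$, $M^{-1}N_1^3$) that control the 2D resonance-set volumes entering Proposition 3.19 of \cite{Ki2019}. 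Thus, on every 3D slice the Cauchy--Schwarz/volume-of-resonance-set arguments of \cite{Ki2019} go through verbatim and yield the desired bilinear $L^2_{\tau\xi\eta}$-bound with a constant uniform in the frozen $\boldsymbol{\eta'}$-parameters; this is exactly the pattern already used in the proof of Proposition \ref{prop3.10} above.

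Next, I would undo the freezing by integrating back in the transversal variables. By Minkowski's inequality followed by Cauchy--Schwarz in $\boldsymbol{\eta_1'}$ (and, where appropriate, $\boldsymbol{\eta_2'}$), which range over a region of measure $\sim (A^{-1}N_1)^{d-2}$, one picks up precisely a factor $(A^{-1}N_1)^{(d-2)/2}$. Combined with the 2D constant this exactly matches the claimed powers $A^{-(d-2)/2}N_1^{(d-3)/2}$ in \eqref{bilinear-est01-prop3.18}, \eqref{bilinear-est02-prop3.18}, \eqref{bilinear-est03-prop3.18}, \eqref{bilinear-est04-prop3.18}, and $A^{-(d-2)/2}N_1^{(d-2)/2}$ in \eqref{bilinear-est02a-prop3.18}.

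The main obstacle I anticipate is verifying that the perturbation of the 2D-ZK phase by $(\xi+\eta)|\boldsymbol{\eta'}|^2$ does not destroy the delicate transversality and nonlinear Loomis--Whitney arguments responsible for the $K^{1/4}$ and $M^{1/4}$ gains in \eqref{bilinear-est03-prop3.18} and \eqref{bilinear-est04-prop3.18}. This amounts to re-running the determinantal computations that produce the transversality of the normals to the characteristic surfaces, observing that each unit normal is perturbed by at most $\mathcal{O}(A^{-2})$, and hence negligibly compared with $M^{-1}$ and $K^{-1}$ since $M,K\leq A$. The check is mechanical and entirely parallel to the one already carried out in the proof of Proposition \ref{prop3.10}, so I expect no genuinely new geometric input to be required.
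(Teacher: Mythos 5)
Your overall strategy — freezing the transversal frequencies $\boldsymbol{\eta_j'}$, establishing $2$D slice estimates, and integrating back via Minkowski and Cauchy--Schwarz to recover the factor $(A^{-1}N_1)^{(d-2)/2}$ — is exactly the paper's, and it correctly accounts for the powers of $A$ and $N_1$ in all five inequalities. The observation that $(\xi+\eta)|\boldsymbol{\eta'}|^2 = \mathcal{O}(A^{-2}N_1^3)$ is negligible against the relevant modulation thresholds does, as you say, settle \eqref{bilinear-est01-prop3.18}, \eqref{bilinear-est02-prop3.18} and \eqref{bilinear-est02a-prop3.18} by the Cauchy--Schwarz resonance-volume argument (the last of these is not taken from \cite{Ki2019} but proved directly in the paper by a polar-coordinates count using $|\eta|\lesssim|\xi|\sim N_0$, which your general remark still covers).

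Where your proposal goes astray is in the diagnosis of \eqref{bilinear-est03-prop3.18} and \eqref{bilinear-est04-prop3.18}. The angular index sets $\mathfrak{K}_M^K$ and $\mathfrak{K}_M$ single out wave packets whose $(\xi,\eta)$-frequencies are nearly parallel to a coordinate axis; this is precisely the degenerate regime in which the nonlinear Loomis--Whitney determinant collapses, and re-running the determinantal computations of Proposition \ref{prop3.10} would \emph{not} produce the $K^{1/4}$ and $M^{1/4}$ gains. In both \cite{Ki2019} (their (3.67)--(3.68)) and in the present paper, these two slice estimates, \eqref{bilinear-est05-prop3.18}--\eqref{bilinear-est06-prop3.18}, are obtained by an $L^4_{t,x,y}$-Strichartz argument, not by Loomis--Whitney. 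The genuinely new verification the perturbation forces is therefore not a normal-vector estimate but a Strichartz inequality for the family of perturbed propagators $e^{-t(\partial_x^3+\partial_y^3-(\partial_x+\partial_y)c)}$, uniform in the parameter $c=|\boldsymbol{\eta'}|^2$, which the paper records as \eqref{Strichartz01-prop3.18}--\eqref{Strichartz02-prop3.18} and deduces from [KPV91, Theorem~3.1]. As written, your proposal omits this input, so the deduction of \eqref{bilinear-est03-prop3.18}--\eqref{bilinear-est04-prop3.18} from the $2$D case is not actually justified. The fix is minor — replace the ``determinant perturbed by $\mathcal{O}(A^{-2})$'' check with the statement and proof of the uniform Strichartz bound — but it is a different mechanism, and the obstacle you anticipated is not the one that occurs.
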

\begin{proof}
\eqref{bilinear-est01-prop3.18} and \eqref{bilinear-est02-prop3.18} are given by 
\begin{align*}
& \left\| {\mathbf{1}}_{G_{N_1, L_1} \cap \tilde{\mathfrak{D}}_{k_1}^M} \int g_{N_2, L_2}|_{\tilde{\mathfrak{D}}_{k_2}^M}
(\tau_2,\xi_2,\boldsymbol{\eta_2}) 
h_{N_0,L_0} (\tau_1+ \tau_2, \xi_1+\xi_2, \boldsymbol{\eta_1}+ \boldsymbol{\eta_2}) d\hat{\sigma}_2 
\right\|_{L^2}
\notag \\ 
& \qquad \qquad \qquad \qquad \qquad 
\lesssim ( M N_1)^{-\frac{1}{2}} (L_0 L_2)^{\frac{1}{2}} 
\|g_{N_2, L_2}|_{\tilde{\mathfrak{D}}_{k_2}^M} (\boldsymbol{\eta_2'})\|_{L^2} 
\|h_{N_0,L_0} (\boldsymbol{\eta'})\|_{L^2},\\
& \left\| {\mathbf{1}}_{G_{N_2,L_2} \cap \tilde{\mathfrak{D}}_{k_2}^M} \int 
h_{N_0,L_0} (\tau_1+ \tau_2, \xi_1+\xi_2, \boldsymbol{\eta_1}+ \boldsymbol{\eta_2} )   
f_{N_1, L_1}|_{\tilde{\mathfrak{D}}_{k_1}^M}(\tau_1, \xi_1, \boldsymbol{\eta_1} )  d \hat{\sigma}_1 
\right\|_{L^2} \notag \\ 
& \qquad \qquad \qquad \qquad \qquad 
\lesssim ( M N_1)^{-\frac{1}{2}} (L_0 L_1)^{\frac{1}{2}} \|h_{N_0,L_0} (\boldsymbol{\eta'})\|_{L^2}
\|f_{N_1, L_1}|_{\tilde{\mathfrak{D}}_{k_1}^M}(\boldsymbol{\eta_1'}) \|_{L^2},
\end{align*}
respectively. 
These estimates are obtained in the same manner as for \eqref{bilinear02-prop3.10} and \eqref{bilinear03-prop3.10} in Proposition \ref{prop3.10}, respectively. We omit the proof. 

Next we consider \eqref{bilinear-est02a-prop3.18}. We will show
\begin{align}
& \left\| {\mathbf{1}}_{G_{N_0, L_0}} \int  f_{N_1, L_1}|_{\tilde{\mathfrak{D}}_{k_1}^M}(\tau_1, \xi_1, \boldsymbol{\eta_1} ) 
g_{N_2, L_2}|_{\tilde{\mathfrak{D}}_{k_2}^M} (\tau- \tau_1, \xi-\xi_1, \boldsymbol{\eta}- \boldsymbol{\eta_1} ) d\hat{\sigma}_1 \right\|_{L^2} \notag \\ 
& \qquad \qquad  
\lesssim  (MN_0)^{-\frac{1}{2}} (L_1 L_2)^{\frac{1}{2}} \| f_{N_1, L_1}|_{\tilde{\mathfrak{D}}_{k_1}^M} 
(\boldsymbol{\eta_1'})\|_{L^2} 
\|g_{N_2, L_2}|_{\tilde{\mathfrak{D}}_{k_2}^M} (\boldsymbol{\eta_2'})\|_{L^2}.\label{bilinear-est02b-prop3.18}
\end{align}
We write $(\xi_1, \eta_1)=r_1(\cos \theta_1, \sin \theta_1)$, 
$(\xi-\xi_1, \eta-\eta_1)= r_2(\cos \theta_2, \sin \theta_2)$. 
Similarly to the proof of \eqref{bilinear01-prop3.10}, it suffices to show
\begin{equation}
|\partial_{r_1}(\xi \xi_1 (\xi-\xi_1) + \eta \eta_1 (\eta- \eta_1))| \gtrsim N_0 N_1.\label{est02c-prop3.18}
\end{equation}
We may assume $|(\xi,\eta)| \geq N_0/2 \gg M^{-1} N_1$. 
By the assumption $|k_1-k_2| \leq 32$, we easily confirm that $|\eta| \leq 2|\xi| \sim N_0$ which implies 
\eqref{est02c-prop3.18}.

Lastly, we consider \eqref{bilinear-est03-prop3.18} and \eqref{bilinear-est04-prop3.18}. It suffices to show
\begin{align}
& \left\| {\mathbf{1}}_{G_{N_0, L_0}} \int  f_{N_1, L_1}|_{\tilde{\mathfrak{D}}_{k_1}^M}(\tau_1, \xi_1, \boldsymbol{\eta_1} ) 
g_{N_2, L_2}|_{\tilde{\mathfrak{D}}_{k_2}^M} (\tau- \tau_1, \xi-\xi_1, \boldsymbol{\eta}- \boldsymbol{\eta_1} ) d\hat{\sigma}_1 \right\|_{L^2} \notag \\ 
& \qquad \qquad \qquad \qquad 
\lesssim N_1^{-\frac{1}{2}} (L_1 L_2)^{\frac{1}{2}} \| f_{N_1, L_1}|_{\tilde{\mathfrak{D}}_{k_1}^M} 
(\boldsymbol{\eta_1'})\|_{L^2} 
\|g_{N_2, L_2}|_{\tilde{\mathfrak{D}}_{k_2}^M} (\boldsymbol{\eta_2'})\|_{L^2},\label{bilinear-est05-prop3.18}
\end{align}
for $k_1 \in \mathfrak{K}_M^K$, $|k_1-k_2| \leq 32$ and 
\begin{align}
& \left\| {\mathbf{1}}_{G_{N_0, L_0}} \int  f_{N_1, L_1}|_{\tilde{\mathfrak{D}}_{k_1}^M}(\tau_1, \xi_1, \boldsymbol{\eta_1} ) 
g_{N_2, L_2}|_{\tilde{\mathfrak{D}}_{k_2}^M} (\tau- \tau_1, \xi-\xi_1, \boldsymbol{\eta}- \boldsymbol{\eta_1} ) d\hat{\sigma}_1 \right\|_{L^2} \notag \\ 
& \qquad \qquad \qquad  
\lesssim M^{\frac{1}{4}}N_1^{-\frac12} (L_1 L_2)^{\frac{1}{2}} \| f_{N_1, L_1}|_{\tilde{\mathfrak{D}}_{k_1}^M} 
(\boldsymbol{\eta_1'})\|_{L^2} 
\|g_{N_2, L_2}|_{\tilde{\mathfrak{D}}_{k_2}^M} (\boldsymbol{\eta_2'})\|_{L^2},\label{bilinear-est06-prop3.18}
\end{align}
for $k_1 \in \mathfrak{K}_M$ and either $16 \leq |k_1-k_2| \leq 32$ or $|\xi| \geq M^{-3/2}N_1$. 
\eqref{bilinear-est05-prop3.18} and \eqref{bilinear-est06-prop3.18} are established in the same way as that for (3.67) and (3.68) in Proposition 3.19 in \cite{Ki2019}. 
Thus, here we only confirm that it holds
\begin{equation}
\| |\nabla_x|^{\frac{1}{2p}} |\nabla_y|^{\frac{1}{2p}} u_{N,L} \|_{L_t^p L_{xy}^q} \lesssim L^{\frac{1}{2}} 
\| u_{N,L}\|_{L^2_{xyt}}, \quad 
\textnormal{if} \ \ \frac{2}{p} + \frac{2}{q} = 1, \ p >2,\label{Strichartz01-prop3.18}
\end{equation}
where $\supp\widehat{u}_{N,L} \subset G_{N,L}$. 
Let $c \in \R$. \eqref{Strichartz01-prop3.18} is given by the Strichartz estimates.
\begin{equation}
\| |\nabla_x|^{\frac{1}{2p}} |\nabla_y|^{\frac{1}{2p}} 
e^{-t(\partial_x^3+\partial_y^3 -(\partial_x+\partial_y)c)} \varphi \|_{L_t^p L_{xy}^q} \lesssim 
\| \varphi\|_{L^2_{xy}},\label{Strichartz02-prop3.18}
\end{equation} 
if $2/p+2/q = 1$, $p>2$. We can establish \eqref{Strichartz02-prop3.18} by applying Theorem 3.1 in \cite{KPV91}. Employing \eqref{Strichartz01-prop3.18} with $p=q=4$, 
we can show \eqref{bilinear-est05-prop3.18} and \eqref{bilinear-est06-prop3.18} by the same argument as that for  (3.67) and (3.68) in Proposition 3.19 in \cite{Ki2019}, respectively.
\end{proof}
First we consider the case $k_1 \in \mathfrak{K}_M^K$.
\begin{prop}\label{prop3.19}
Let $M$ be dyadic such that $1 \ll M \leq A$. 
Assume that $|\boldsymbol{\eta_j'}| \lesssim A^{-1} N_1$, 
$N_0 \gg M^{-1} N_1$, $k_1 \in \mathfrak{K}_M^K$, $k_2$ satisfy
 $ |k_1-k_2| \leq 32$. Then we have
\begin{equation}
\begin{split}
& \left|\int_{*}{  h_{{N_0, L_0}}(\tau, \xi, \boldsymbol{\eta} ) 
f_{N_1, L_1}|_{\tilde{\mathfrak{D}}_{k_1}^M} (\tau_1, \xi_1, \boldsymbol{\eta_1} )  
g_{N_2, L_2}|_{\tilde{\mathfrak{D}}_{k_2}^M}  (\tau_2, \xi_2, \boldsymbol{\eta_2} ) 
}
d\sigma_1 d\sigma_2 \right| \\
&   \lesssim  A^{-\frac{d-2}{2}} N_0^{-\frac{1}{2}}
N_1^{\frac{d-5}{2}}    (L_0 L_1 L_2)^{\frac{1}{2}} 
\|f_{N_1, L_1}|_{\tilde{\mathfrak{D}}_{k_1}^M} \|_{L^2} \| g_{N_2, L_2}|_{\tilde{\mathfrak{D}}_{k_2}^M} \|_{L^2} 
\|h_{{N_0, L_0}}\|_{L^2},\label{est01-prop3.19}
\end{split}
\end{equation}
where functions $f_{N_1, L_1}$, $g_{N_2, L_2}$, $h_{{N_0, L_0}}$ satisfy 
\eqref{assumption-fgh}.
\end{prop}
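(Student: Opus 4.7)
The plan is to reduce Proposition \ref{prop3.19} to the bilinear building blocks of Proposition \ref{prop3.18} via a resonance lower bound. The decisive computation concerns the cubic piece of the phase difference,
\[
\overline{\Phi}(\xi_1, \eta_1, \xi_2, \eta_2) = \xi_1\xi_2(\xi_1+\xi_2) + \eta_1\eta_2(\eta_1+\eta_2).
\]
Under the hypotheses $k_1 \in \mathfrak{K}_M^K$ and $|k_1-k_2|\leq 32$, writing $(\xi_j,\eta_j) = r_j(\cos\phi_j, \sin\phi_j)$ one has $r_j \sim N_1$ and $\phi_j$ confined, modulo $\pi$, to an $O(1/M)$ neighbourhood of $\pm\pi/K$. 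In the \emph{parallel} subcase $\phi_1 \equiv \phi_2 \pmod{2\pi}$ we find $|\xi_1+\xi_2|\sim N_1 \sim N_0$ and $|\overline{\Phi}| \sim N_1^3 \sim N_1^2 N_0$. In the \emph{antiparallel} subcase $\phi_1 \equiv \phi_2+\pi$, the quantity $\xi_1+\xi_2 = \xi$ may be as small as $N_0$, but $\xi_1\xi_2 \sim -N_1^2$ gives $|\xi_1\xi_2(\xi_1+\xi_2)| \sim N_1^2 N_0$, while the $\eta$-cubic term is suppressed by a factor $K^{-3}$ and cannot cancel the $\xi$-cubic contribution. Thus $|\overline{\Phi}|\gtrsim N_1^2 N_0$ in both subcases. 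Since $|\boldsymbol{\eta}_j'|, |\boldsymbol{\eta}'| \lesssim A^{-1}N_1$ and $N_0 \gg M^{-1}N_1 \geq A^{-2}N_1$, the transverse corrections to the full phase difference are of lower order, and the usual identity expressing $L_{012}^{\max}$ as a lower bound for the phase difference yields
\[
L_{012}^{\max} \gtrsim N_1^2 N_0 \gtrsim M^{-1}N_1^3,
\]
the last inequality by $N_0 \gg M^{-1}N_1$.

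A trichotomy on the index realising $L_{012}^{\max}$ then closes the argument. If $L_0 = L_{012}^{\max}$, so $L_0 \gtrsim M^{-1}N_1^3$, Cauchy--Schwarz together with \eqref{bilinear-est02a-prop3.18} bounds the left side of \eqref{est01-prop3.19} by
\[
A^{-(d-2)/2}(MN_0)^{-1/2}N_1^{(d-2)/2}(L_1L_2)^{1/2}\|f_{N_1,L_1}\|_{L^2}\|g_{N_2,L_2}\|_{L^2}\|h_{N_0,L_0}\|_{L^2},
\]
and $M^{-1/2}N_1^{3/2}\lesssim L_0^{1/2}$ reproduces \eqref{est01-prop3.19}. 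If instead $L_1 = L_{012}^{\max}$ (the case $L_2 = L_{012}^{\max}$ is symmetric), then $L_1 \gtrsim M^{-1}N_0 N_1^2$, and \eqref{bilinear-est01-prop3.18} combined with Cauchy--Schwarz in $f$ bounds the same quantity by
\[
A^{-(d-2)/2}M^{-1/2}N_1^{(d-3)/2}(L_0L_2)^{1/2}\|f_{N_1,L_1}\|_{L^2}\|g_{N_2,L_2}\|_{L^2}\|h_{N_0,L_0}\|_{L^2},
\]
which the inequality $M^{-1/2}N_1 N_0^{1/2} \lesssim L_1^{1/2}$ converts into the desired bound.

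The main obstacle is the resonance lower bound in the antiparallel subcase, where two large cubic terms could a priori cancel. The angular confinement $k_j \in \mathfrak{K}_M^K$ forces the slope $\theta_j \sim 1/K$, rendering the $\eta$-cubic piece a factor $K^{-3}$ smaller than the $\xi$-cubic piece and excluding destructive interference; meanwhile $|\boldsymbol{\eta}_j'| \lesssim A^{-1}N_1$, inherited from the outer reductions, keeps the transverse corrections negligible. With this lower bound in hand, the proof follows the two-dimensional analysis in \cite{Ki2019}.
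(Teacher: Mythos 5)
Your proof is correct and takes essentially the same route as the paper, which simply states that ``we easily confirm $|\overline{\Phi}|\gtrsim N_0 N_1^2$'' and that this combined with Proposition \ref{prop3.18} yields the result. You make explicit both of the steps the paper leaves implicit: the geometry behind the resonance lower bound (in the antiparallel subcase, $|\xi|\sim N_0$ once $N_0\gg M^{-1}N_1$ forces $|\eta|\ll |\xi|$, and the $\eta$-cubic contribution is an order of magnitude smaller in $K$), and the trichotomy on the maximal modulation, with \eqref{bilinear-est02a-prop3.18} handling $L_0$ dominant and \eqref{bilinear-est01-prop3.18}/\eqref{bilinear-est02-prop3.18} handling $L_1$/$L_2$ dominant. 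Your arithmetic in all three branches reproduces the stated bound; the only cosmetic imprecision is that the $\eta$-cubic suppression factor is $O(K^{-2})$ in general rather than $K^{-3}$, but this does not affect the argument.
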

\begin{proof}
We easily confirm that 
$|\overline{\Phi} (\xi_1, \eta_1, \xi_2 ,\eta_2)| \gtrsim N_0 N_1^2$ holds. 
This and Proposition \ref{prop3.18} immediately yield \eqref{est01-prop3.19}. 
\end{proof}
\begin{prop}\label{prop3.20}
Let $M$ be dyadic such that $1 \ll M \leq A$. 
Assume that $|\boldsymbol{\eta_j'}| \lesssim A^{-1} N_1$, $N_0 \sim M^{-1} N_1$, $k_1 \in \mathfrak{K}_M^K$, $k_2$ satisfy
 $16 \leq |k_1-k_2| \leq 32$. Then we have
\begin{align*}
& \left|\int_{*}{  h_{{N_0, L_0}}(\tau, \xi, \boldsymbol{\eta} ) 
f_{N_1, L_1}|_{\tilde{\mathfrak{D}}_{k_1}^M} (\tau_1, \xi_1, \boldsymbol{\eta_1} )  
g_{N_2, L_2}|_{\tilde{\mathfrak{D}}_{k_2}^M}  (\tau_2, \xi_2, \boldsymbol{\eta_2} ) 
}
d\sigma_1 d\sigma_2 \right| \\
&   \lesssim  A^{-\frac{d-2}{2}}K^{\frac{1}{2}}N_0^{-\frac{1}{2}}
N_1^{\frac{d-5}{2}}    (L_0 L_1 L_2)^{\frac{1}{2}} 
\|f_{N_1, L_1}|_{\tilde{\mathfrak{D}}_{k_1}^M} \|_{L^2} \| g_{N_2, L_2}|_{\tilde{\mathfrak{D}}_{k_2}^M} \|_{L^2} 
\|h_{{N_0, L_0}}\|_{L^2},
\end{align*}
where functions $f_{N_1, L_1}$, $g_{N_2, L_2}$, $h_{{N_0, L_0}}$ satisfy 
\eqref{assumption-fgh}.
\end{prop}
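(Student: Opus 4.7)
The plan is to combine the bilinear estimates of Proposition \ref{prop3.18} with a case analysis on which modulation $L_j$ is maximal, following the template of Proposition \ref{prop3.19} but exploiting the additional restriction $k_1 \in \mathfrak{K}_M^K$ to produce the extra factor $K^{1/2}$ when needed. The starting point is the observation that, in the present regime $N_0 \sim M^{-1}N_1$ with $k_1 \in \mathfrak{K}_M^K$ and $16 \leq |k_1-k_2| \leq 32$, the two vectors $(\xi_j, \eta_j)$ are forced onto opposite branches of the sector $\mathfrak{D}_0^{2^{11}}$ (since $N_0 \ll N_1$), and one still generically has $|\overline{\Phi}(\xi_1,\eta_1,\xi_2,\eta_2)| \gtrsim N_0 N_1^2 = M^{-1} N_1^3$. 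Combined with the control of the $\boldsymbol{\eta_j'}$-perturbation (of size $\lesssim A^{-2} N_1^3$ and hence negligible since $A \geq M$), this yields $L_{012}^{\max} \gtrsim M^{-1} N_1^3$.

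Given this modulation lower bound, I split into three cases according to which $L_j$ realizes it. If $L_0$ is maximal, I apply Cauchy-Schwarz in $h_{N_0,L_0}$ and then \eqref{bilinear-est03-prop3.18} to the convolution of $f_{N_1,L_1}$ and $g_{N_2,L_2}$; rearranging via $L_0^{-1/2} \lesssim M^{1/2} N_1^{-3/2}$ converts the $K^{1/4} N_1^{(d-3)/2}$ appearing in the bilinear into $K^{1/4} M^{1/2} N_1^{(d-6)/2}$ times $(L_0 L_1 L_2)^{1/2}$, which is even stronger than the claimed $K^{1/2}$ bound. If $L_1$ is maximal, I apply Cauchy-Schwarz in $f_{N_1,L_1}$ and \eqref{bilinear-est01-prop3.18}; the $M^{-1/2}$ prefactor from that bilinear combined with $L_1^{-1/2} \lesssim M^{1/2} N_1^{-3/2}$ yields $N_1^{(d-6)/2} (L_0L_1L_2)^{1/2}$, which also dominates the target. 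The case $L_2$ maximal is treated symmetrically using \eqref{bilinear-est02-prop3.18}.

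The main technical obstacle is verifying that the generic lower bound $|\overline{\Phi}| \gtrsim M^{-1}N_1^3$ indeed holds, or else handling the degenerate subregion in which cancellation between $\xi_1\xi_2(\xi_1+\xi_2)$ and $\eta_1\eta_2(\eta_1+\eta_2)$ drives $|\overline{\Phi}|$ below this threshold. Parametrizing $(\xi_j, \eta_j) = r_j(\cos\alpha_j, \sin\alpha_j)$ on opposite branches with $|\theta_j| \sim 1/K$, a direct computation shows $\overline{\Phi} \sim -r_1 r_2(\xi + \eta/K^2)$ up to corrections, so that cancellation occurs only when $(\xi,\eta)$ is nearly perpendicular to the angular axis. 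In that degenerate subregion the required $K^{1/2}$ factor must come directly from the nonlinear Loomis-Whitney inequality (Proposition \ref{prop2.3}), applied with transversality determinant $\gtrsim |\xi_1\eta_2 - \xi_2\eta_1|\cdot|\overline{F}| \sim (N_1^2/M)(N_1^2/K) = N_1^4/(KM)$, exactly as in the proof of Proposition \ref{prop3.10} and in the 2D analog from \cite{Ki2019}; the $\boldsymbol{\eta_j'}$-integration is then handled via Fubini as in the proof of Proposition \ref{prop3.4}.
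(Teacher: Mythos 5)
Your proposal correctly identifies the toolbox: Proposition~\ref{prop3.18}'s bilinear estimates, a case analysis on the maximal modulation, and the nonlinear Loomis--Whitney inequality from Proposition~\ref{prop2.3} for the near-resonant piece, with the Fubini/tiling reduction in the extra variables $\boldsymbol{\eta_j'}$. This is exactly what the paper's terse proof invokes when it defers to Proposition~\ref{prop3.18} and the 2D analog in \cite{Ki2019}, and your transversality bookkeeping is sound: $|\xi_1\eta_2-\xi_2\eta_1|\sim N_1^2/M$ from $16\leq|k_1-k_2|\leq 32$, and $|\overline F|\sim N_1^2/K$ holds throughout since $\theta_1$ and $\theta_2$ have the same sign and magnitude $\sim K^{-1}$ with $|\theta_1-\theta_2|\sim M^{-1}\ll K^{-1}$, so $\theta_1+\theta_2\sim K^{-1}$ with no cancellation.

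The weak point is the claim that $|\overline\Phi|\gtrsim M^{-1}N_1^3$ holds ``generically,'' with cancellation confined to a single degenerate surface $\xi\approx-\eta/K^2$. In fact $N_0\sim M^{-1}N_1$ only pins $\max(|\xi|,|\eta|)\sim M^{-1}N_1$; here (unlike Proposition~\ref{prop3.19}, where $N_0\gg M^{-1}N_1$ forces $|\xi|\sim N_0$) the output $\xi$ may be anywhere from $\lesssim N_1/(MK^2)$ up to $\sim N_1/M$, and $|\overline\Phi|\approx N_1^2|\xi+\eta/K^2|$ correspondingly ranges continuously over $\bigl[O(N_1^3/(MK^2)),\,O(N_1^3/M)\bigr]$. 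A short computation shows that your $L_0$-maximal case via \eqref{bilinear-est03-prop3.18} only closes when $L_{012}^{\max}\gtrsim K^{-1/2}M^{-1}N_1^3$, so there is a genuine intermediate window $N_1^3/(MK^2)\lesssim L_{012}^{\max}\lesssim K^{-1/2}M^{-1}N_1^3$ where neither branch of your dichotomy applies as stated. The fix, consistent with what the paper and \cite{Ki2019} actually do, is either to run the Loomis--Whitney argument uniformly --- the transversality $\sim N_1^4/(MK)$ is available throughout and directly produces the $(MK)^{1/2}$ prefactor in the reduced estimate \eqref{est01-prop3.20}, provided one first tiles the sectors $\tilde{\mathfrak D}_{k_j}^M$ at scale $N_1/(MK)$ to meet the diameter hypothesis of Proposition~\ref{prop2.3} and checks almost orthogonality as in Proposition~\ref{prop3.10} --- or to set up a genuine Whitney-type dyadic decomposition in $|\overline\Phi|$ versus $|\overline F|$. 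So: right ingredients, right final mechanism, but the binary ``generic vs.\ degenerate'' split is not exhaustive and needs to be replaced by one of these two more careful decompositions.
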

\begin{proof}
It suffices to show
\begin{equation}
\begin{split}
& \left|\int_{\hat{*}}{  h_{{N_0, L_0}}(\tau, \xi, \boldsymbol{\eta} ) 
f_{N_1, L_1}|_{\tilde{\mathfrak{D}}_{k_1}^M} (\tau_1, \xi_1, \boldsymbol{\eta_1} )
g_{N_2, L_2}|_{\tilde{\mathfrak{D}}_{k_2}^M} (\tau_2, \xi_2, \boldsymbol{\eta_2} )
}
d\hat{\sigma}_1 d\hat{\sigma}_2 \right| \\
&  \lesssim (MK)^{\frac{1}{2}} N_{1}^{-2}   (L_0 L_1 L_2)^{\frac{1}{2}} 
\|f_{N_1, L_1}|_{\tilde{\mathfrak{D}}_{k_1}^M} (\boldsymbol{\eta_1'}) \|_{L^2_{\tau \xi \eta}} 
\|g_{N_2, L_2}|_{\tilde{\mathfrak{D}}_{k_2}^M} (\boldsymbol{\eta_2'})\|_{L^2_{\tau \xi \eta}} 
\|h_{{N_0, L_0}}(\boldsymbol{\eta'}) \|_{L^2_{\tau \xi \eta}},\label{est01-prop3.20}
\end{split}
\end{equation}
for fixed $\boldsymbol{\eta_1'}$, $\boldsymbol{\eta_2'}$. 
By using Proposition \ref{prop3.18} and smallness of $|\boldsymbol{\eta_1'}|$ and $|\boldsymbol{\eta_2'}|$, \eqref{est01-prop3.20} can be obtained in the same way as for Proposition 3.20 in \cite{Ki2019}. We omit the details.
\end{proof}
Next we deal with the case $(k_1,k_2) \in \mathfrak{K}_M \times \mathfrak{K}_M$.
\begin{defn}
Let $M$ and $\nu$ be dyadic such that $1 \ll M \leq A^{2/3}$, $2 \leq \nu \leq A M^{-3/2}$ and $m = ( m_{(1)}, m_{(2)} ) \in \Z^2$. We define rectangle-tiles 
$\{ \mathcal{T}_m^{M,\nu}\}_{m \in \Z^2}$ whose short side is parallel to $\xi$-axis and its length is $M^{-3/2} \nu^{-1} N_1$, long side length is $M^{-1} \nu^{-1} N_1 $ and prisms $\{ \tilde{\mathcal{T}}_m^{M,\nu}\}_{m \in \Z^2}$ as follows.
\begin{align*}
& \mathcal{T}_m^{M,\nu} : = \{ (\xi,\eta) \in \R^2 \, | \, \xi \in M^{-\frac{3}{2}} \nu^{-1} N_1 
[ m_{(1)}, m_{(1)} + 1), \ \eta \in M^{-1} \nu^{-1} N_1 [ m_{(2)}, m_{(2)} + 1)  \}\\
& \tilde{\mathcal{T}}_m^{M,\nu}  : = \R \times \mathcal{T}_m^{M,\nu} \times \R^{d-2}.
\end{align*}
Recall that
\begin{align*}
\overline{\Phi} (\xi_1, \eta_1, \xi_2 ,\eta_2) & = \xi_1 \xi_2(\xi_1 + \xi_2) + \eta_1 \eta_2 (\eta_1 + \eta_2), \\
\overline{F} (\xi_1, \eta_1, \xi_2 ,\eta_2) & = \xi_1 \eta_2 +  \xi_2 \eta_1 + 2 (\xi_1 \eta_1 + \xi_2 \eta_2).
\end{align*}
Let $\mathbf{k} :=(k_1,k_2) \in \mathfrak{K}_M \times \mathfrak{K}_M$. 
We define $Z_{M,\nu,\mathbf{k}}^1 $ as the set of $(m_1, m_2) \in \Z^2 \times \Z^2$ such that
\begin{equation*}
\begin{cases}
 |\overline{\Phi}(\xi_1, \eta_1, \xi_2, \eta_2)| \geq M^{-\frac{3}{2}} \nu^{-1} N_1^3  \ \ \textnormal{for any} \ 
(\xi_1, \eta_1) \times (\xi_2,\eta_2) \in 
\mathcal{T}_{m_1}^{M,\nu} \times \mathcal{T}_{m_2}^{M,\nu}, \\
 \left( \mathcal{T}_{m_1}^{M,\nu} \times \mathcal{T}_{m_2}^{M,\nu} \right) 
\cap \left( \mathfrak{D}_{k_1}^M \times \mathfrak{D}_{k_2}^M \right) 
\not= \emptyset,\\
 |\xi_1+ \xi_2| \lesssim M^{-3/2} N_1  \ \ \textnormal{for any} \ 
(\xi_1, \eta_1) \times (\xi_2,\eta_2) \in 
\mathcal{T}_{m_1}^{M,\nu} \times \mathcal{T}_{m_2}^{M,\nu}.
\end{cases}
\end{equation*}
Similarly, we define $Z_{M,\nu,\mathbf{k}}^2$ 
as the set of $(m_1, m_2) \in \Z^2 \times \Z^2$ such that
\begin{equation*}
\begin{cases}
 |\overline{F}(\xi_1, \eta_1, \xi_2, \eta_2)| \geq M^{-1} \nu^{-1} N_1^2  \ \ \textnormal{for any} \ 
(\xi_1, \eta_1) \times (\xi_2,\eta_2) \in 
\mathcal{T}_{m_1}^{M,\nu} \times \mathcal{T}_{m_2}^{M,\nu}, \\
 \left( \mathcal{T}_{m_1}^{M,\nu} \times \mathcal{T}_{m_2}^{M,\nu} \right) 
\cap \left( \mathfrak{D}_{k_1}^M \times \mathfrak{D}_{k_2}^M \right) 
\not= \emptyset,\\
 |\xi_1+ \xi_2| \lesssim M^{-3/2} N_1  \ \ \textnormal{for any} \ 
(\xi_1, \eta_1) \times (\xi_2,\eta_2) \in 
\mathcal{T}_{m_1}^{M,\nu} \times \mathcal{T}_{m_2}^{M,\nu},
\end{cases}
\end{equation*}
and
\begin{equation*}
Z_{M,\nu}^{\mathbf{k}} = Z_{M,\nu,\mathbf{k}}^1 \, \cup \, Z_{M,\nu,\mathbf{k}}^2 , 
\quad R_{M,\nu}^{\mathbf{k}} = \bigcup_{(m_1, m_2) \in Z_{M,\nu}^{\mathbf{k}}} \mathcal{T}_{m_1}^{M,\nu} \times 
\mathcal{T}_{m_2}^{M,\nu} \subset \R^2 \times \R^2.
\end{equation*}
It is clear that 
$\nu_1 \leq \nu_2 \Longrightarrow  R_{M,\nu_1}^{\mathbf{k}} \subset  R_{M,\nu_2}^{\mathbf{k}}$. 
Further, we define
\begin{equation*}
Q_{M,\nu}^{\mathbf{k}} = 
\begin{cases}
R_{M,\nu}^{\mathbf{k}} \setminus R_{M,\nu/2}^{\mathbf{k}} & \textnormal{for}  \  \nu > 2,\\
 \ R_{M,2}^{\mathbf{k}}  & \textnormal{for}  \  \nu = 2.
\end{cases}
\end{equation*}
and a set of pairs of integer pair $\widehat{Z}_{M,\nu}^{\mathbf{k}} \subset Z_{M,\nu}^{\mathbf{k}}$ as
\begin{equation*}
\bigcup_{(m_1, m_2) \in \widehat{Z}_{M,\nu}^{\mathbf{k}}} \mathcal{T}_{m_1}^{M,\nu} \times 
\mathcal{T}_{m_2}^{M,\nu} = Q_{M,\nu}^{\mathbf{k}}.
\end{equation*}
Clearly, $\widehat{Z}_{M,\nu}^{\mathbf{k}}$ is uniquely defined and 
\begin{equation*}
\nu_1 \not= \nu_2 \Longrightarrow Q_{M,\nu_1}^{\mathbf{k}} \cap Q_{M,\nu_2}^{\mathbf{k}} = \emptyset, \quad 
\bigcup_{2 \leq \nu \leq \nu_0} Q_{M,\nu}^{\mathbf{k}} = R_{M,\nu_0}^{\mathbf{k}}
\end{equation*}
where $\nu_0 \geq 2$ is dyadic. 
Lastly, we define $\overline{Z}_{M,\nu}^{\mathbf{k}}$ as the collection of $(m_1, m_2) \in \Z^2 \times \Z^2$ which satisfies
\begin{equation*}
\begin{cases}
\mathcal{T}_{m_1}^{M,\nu} \times \mathcal{T}_{m_2}^{M,\nu} \not\subset 
\displaystyle{\bigcup_{2 \leq \nu' \leq \nu} 
\bigcup_{(m_1', m_2') \in \widehat{Z}_{M,\nu'}^{\mathbf{k}}}} 
\left( \mathcal{T}_{m_1'}^{M,d'} \times \mathcal{T}_{m_2'}^{M,d'} \right),\\
 \left( \mathcal{T}_{m_1}^{M,\nu} \times \mathcal{T}_{m_2}^{M,\nu} \right) 
\cap \left( \mathfrak{D}_{k_1}^M \times \mathfrak{D}_{k_2}^M \right) 
\not= \emptyset,\\
 |\xi_1+ \xi_2| \lesssim M^{-3/2} N_1  \ \ \textnormal{for any} \ 
(\xi_1, \eta_1) \times (\xi_2,\eta_2) \in 
\mathcal{T}_{m_1}^{M,\nu} \times \mathcal{T}_{m_2}^{M,\nu}.
\end{cases}
\end{equation*}
\end{defn}
\begin{prop}\label{prop3.21}
Assume $|\boldsymbol{\eta_j'}| \lesssim A^{-1} N_1$ and \eqref{assumption-fgh}. 
Let $M$ and $\nu$ be dyadic such that $1 \ll M \leq A^{2/3}$, $2 \leq \nu \leq A M^{-3/2}$, 
$16 \leq |k_1-k_2| \leq 32$ and 
$( m_1, m_2 ) \in \widehat{Z}_{M,\nu}^{\mathbf{k}}$. Then we get
\begin{equation}
\begin{split}
& \left|\int_{*}{  h_{{N_0, L_0}}(\tau, \xi, \boldsymbol{\eta} ) 
f_{N_1, L_1}|_{\tilde{\mathcal{T}}_{m_1}^{M,\nu}} (\tau_1, \xi_1, \boldsymbol{\eta_1} )
g_{N_2, L_2}|_{\tilde{\mathcal{T}}_{m_2}^{M,\nu}} (\tau_2, \xi_2, \boldsymbol{\eta_2} )
}
d\sigma_1 d\sigma_2 \right| \\
&  \lesssim  A^{-\frac{d-2}{2}} M {\nu}^{\frac{1}{2}}
N_1^{\frac{d-6}{2}}    (L_0 L_1 L_2)^{\frac{1}{2}} 
\|f_{N_1, L_1}|_{\tilde{\mathcal{T}}_{m_1}^{M,\nu}} \|_{L^2} 
\|g_{N_2, L_2}|_{\tilde{\mathcal{T}}_{m_2}^{M,\nu}} \|_{L^2} 
\|h_{{N_0, L_0}}\|_{L^2},\label{est01-prop3.21}
\end{split}
\end{equation}
where $d \sigma_j = d\tau_j d \xi_j d \boldsymbol{\eta}_j$ and $*$ denotes $(\tau, \xi, \boldsymbol{\eta}) = (\tau_1 + \tau_2, \xi_1+ \xi_2, \boldsymbol{\eta_1} + \boldsymbol{\eta_2}).$
\end{prop}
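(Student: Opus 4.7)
The plan is to mirror the freezing-plus-reduction scheme of Propositions \ref{prop3.10} and \ref{prop3.12} and reuse, for the three-dimensional core, the argument for Proposition~3.21 of \cite{Ki2019}. First I would fix $\boldsymbol{\eta_1'},\boldsymbol{\eta_2'}\in\R^{d-2}$ with $|\boldsymbol{\eta_j'}|\lesssim A^{-1}N_1$ and set $\boldsymbol{\eta'}=\boldsymbol{\eta_1'}+\boldsymbol{\eta_2'}$, and reduce \eqref{est01-prop3.21} to the frozen 3D estimate
\begin{equation*}
\Bigl|\int_{\hat{\ast}} h_{\boldsymbol{\eta'}}\,f_{\boldsymbol{\eta_1'}}|_{\tilde{\mathcal T}^{M,\nu}_{m_1}}\,g_{\boldsymbol{\eta_2'}}|_{\tilde{\mathcal T}^{M,\nu}_{m_2}} \,d\hat\sigma_1 d\hat\sigma_2\Bigr|
\lesssim M\nu^{\frac12}N_1^{-2}(L_0L_1L_2)^{\frac12}\,\|f(\boldsymbol{\eta_1'})\|_{L^2_{\tau\xi\eta}}\|g(\boldsymbol{\eta_2'})\|_{L^2_{\tau\xi\eta}}\|h(\boldsymbol{\eta'})\|_{L^2_{\tau\xi\eta}}.
\end{equation*}
Exactly as in the passage after \eqref{est02-prop3.4}/\eqref{est02-prop3.10}, integrating in $\boldsymbol{\eta_1'},\boldsymbol{\eta_2'}$ and using the support restriction on a ball of radius $\sim A^{-1}N_1$ in $\R^{d-2}$ yields the factor $(A^{-1}N_1)^{(d-2)/2}$, and combining with $N_1^{-2}$ produces $A^{-(d-2)/2}N_1^{(d-6)/2}$ as required.

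For the frozen 3D estimate I would split according to which of the two defining conditions of $\widehat Z_{M,\nu}^{\mathbf{k}}$ is active on $\mathcal T_{m_1}^{M,\nu}\times \mathcal T_{m_2}^{M,\nu}$. In the case $|\overline{\Phi}(\xi_1,\eta_1,\xi_2,\eta_2)|\geq M^{-3/2}\nu^{-1}N_1^3$, the modulation identity
\begin{equation*}
L_{012}^{\max}\gtrsim |\overline{\Phi}(\xi_1,\eta_1,\xi_2,\eta_2)|+\mathcal{O}(A^{-2}N_1^3)\gtrsim M^{-3/2}\nu^{-1}N_1^3,
\end{equation*}
holds (using $\nu\leq AM^{-3/2}$ so that $A^{-2}\ll (M^{3/2}\nu)^{-1}$), and I would run three bilinear Cauchy--Schwarz estimates in the spirit of \eqref{bilinear01-prop3.10}--\eqref{bilinear03-prop3.10}: the volume bound $|E(\tau,\xi,\eta)|\lesssim M^{-3/2}\nu^{-1}N_1\cdot N_1^{-2}\max(L_1,L_2)\cdot\min(L_1,L_2)$ will follow from the tile dimensions (short side $M^{-3/2}\nu^{-1}N_1$ along $\xi_1$, long side $M^{-1}\nu^{-1}N_1$ along $\eta_1$) together with the derivative bound $|\partial_{\eta_1}\overline\Phi|\gtrsim |\eta||\eta_1-\eta_2|$, which on $\mathfrak K_M\times\mathfrak K_M$ and under $|k_1-k_2|\sim 1/M$ provides the needed $\eta_1$-localization. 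In the case $|\overline{F}(\xi_1,\eta_1,\xi_2,\eta_2)|\geq M^{-1}\nu^{-1}N_1^2$ I would instead apply the nonlinear Loomis--Whitney inequality (Proposition \ref{prop2.3}) on the rescaled surfaces, parallel to the proof of Proposition \ref{prop3.10}: after rescaling $(\tau,\xi,\eta)\mapsto(N_1^3\tau,N_1\xi,N_1\eta)$, the transversality computation specializes to
\begin{equation*}
|\det N(\widehat\lambda_1,\widehat\lambda_2,\widehat\lambda_3)|\gtrsim |\widehat\xi_1\widehat\eta_2-\widehat\xi_2\widehat\eta_1|\cdot|\overline F|\cdot N_1^{-4}\gtrsim M^{-1}\cdot M^{-1}\nu^{-1}=M^{-2}\nu^{-1},
\end{equation*}
since the angular separation $\sim M^{-1}$ between $\mathfrak{D}_{k_1}^M$ and $\mathfrak{D}_{k_2}^M$ and $|\xi_j|\sim N_1$ force $|\xi_1\eta_2-\xi_2\eta_1|\sim M^{-1}N_1^2$. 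The resulting Loomis--Whitney constant $\delta^{-1/2}\lesssim M\nu^{1/2}$ is exactly the prefactor in the target.

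The main obstacle I anticipate is not the transversality arithmetic (which is essentially contained in the 2D analogue in \cite{Ki2019}) but rather the verification that the rectangular tiles $\mathcal T^{M,\nu}_m$ still produce admissible surfaces for Proposition \ref{prop2.3} with the right diameter, curvature and H\"older properties after rescaling, so that the small perturbations $|\boldsymbol{\tilde{\eta}_j}|\lesssim A^{-1}$ from the transversal frequencies are absorbed by the lower bound $M^{-2}\nu^{-1}$ on $|\det N|$; this requires the hypothesis $\nu\leq A M^{-3/2}$ to guarantee $A^{-1}\ll M^{-2}\nu^{-1}$. Once this is in place, the bookkeeping identical to the proof of Proposition 3.21 in \cite{Ki2019} transfers verbatim, and the contribution from the frozen $\boldsymbol{\eta'}$-direction is controlled exactly as in Proposition \ref{prop3.10}.
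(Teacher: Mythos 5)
Your overall architecture matches the paper's: freeze $\boldsymbol{\eta}_1'$, $\boldsymbol{\eta}_2'$, reduce to a 3D estimate, split according to which defining condition of $\widehat Z_{M,\nu}^{\mathbf{k}}$ ($\overline\Phi$-large or $\overline F$-large) is active, and transplant the 2D machinery from \cite{Ki2019}. The paper's own proof of Proposition \ref{prop3.21} is exactly this, stated in one sentence by citing Propositions~3.22 and 3.23 of \cite{Ki2019} and noting that the hypothesis $|\boldsymbol{\eta}_j'|\lesssim A^{-1}N_1$ makes the $(d-2)$-dimensional transversal contribution harmless. So you have the right skeleton.

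However, two of your concrete claims are problematic. First, the perturbation of the unit normals coming from the transversal variables enters through $|\boldsymbol{\tilde\eta}_j|^2$, not $|\boldsymbol{\tilde\eta}_j|$ (compare the explicit formula for $\mathfrak n_i(\lambda_i)$ in the proof of Proposition~\ref{prop3.10}), so the quantity to absorb into the lower bound on $|\det N|$ is $\sim A^{-2}$, not $A^{-1}$. Your assertion that $\nu\leq AM^{-3/2}$ \emph{guarantees} $A^{-1}\ll M^{-2}\nu^{-1}$ is false as written: it only gives $A^{-1}\leq M^{-3/2}\nu^{-1}$, which is \emph{larger} than $M^{-2}\nu^{-1}$ by a factor $M^{1/2}$. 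With the correct power, $A^{-2}\leq M^{-3}\nu^{-2}\ll M^{-2}\nu^{-1}$ does follow since $M\nu\gg 1$, so the conclusion is salvageable, but the reasoning you wrote does not establish it. Second, and more importantly, you correctly identify the diameter issue as ``the main obstacle'' but then wave it away as bookkeeping that ``transfers verbatim.'' It does not transfer verbatim: after rescaling, the rectangular tiles $\mathcal T_m^{M,\nu}$ have diameter $\sim M^{-1}\nu^{-1}$, while your own computation gives transversality $|\det N|\sim M^{-2}\nu^{-1}$, so the diameter exceeds the transversality by a factor $M$ and Proposition~\ref{prop2.3} is not directly applicable. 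This is precisely the mismatch that, in the structurally parallel Proposition~\ref{prop3.25} of this paper (diameter $A^{-1}\alpha^{-1/2}$ versus transversality $A^{-1}\alpha^{-2}$), is resolved by an explicit anisotropic rescaling of the frequency variables before invoking the nonlinear Loomis--Whitney inequality. Some such device (from \cite{Ki2019}, Props.~3.22--3.23) is needed here too, and your proposal does not supply it.

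Finally, a smaller point on the $\overline\Phi$-branch: in this regime both $(\xi_1,\eta_1)$ and $(\xi_2,\eta_2)$ lie near the $\xi$-axis with $|\xi_1+\xi_2|\lesssim M^{-3/2}N_1$, so $|\partial_{\xi_1}(\xi\xi_1(\xi-\xi_1))|=|\xi(\xi-2\xi_1)|\lesssim M^{-3/2}N_1^2$ and the cheap $N_1^2$ lower bound used in Proposition~\ref{prop3.10} fails; the $\partial_{\eta_1}$-direction you propose gives $|\eta(\eta_2-\eta_1)|\lesssim M^{-2}N_1^2$, also small. So the $\xi_1$/$\eta_1$-localization in the Cauchy--Schwarz argument has to use the tile geometry more carefully than you indicate, which is again where the anisotropic tile dimensions $M^{-3/2}\nu^{-1}N_1\times M^{-1}\nu^{-1}N_1$ earn their keep and where the specifics of \cite{Ki2019} are really needed.
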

\begin{proof}
It suffices to show the following inequality.
\begin{equation}
\begin{split}
& \left|\int_{\hat{*}}{  h_{{N_0, L_0}}(\tau, \xi, \boldsymbol{\eta} ) 
f_{N_1, L_1}|_{\tilde{\mathcal{T}}_{m_1}^{M,\nu}} (\tau_1, \xi_1, \boldsymbol{\eta_1} )
g_{N_2, L_2}|_{\tilde{\mathcal{T}}_{m_2}^{M,\nu}} (\tau_2, \xi_2, \boldsymbol{\eta_2} )
}
d\hat{\sigma}_1 d\hat{\sigma}_2 \right| \\
&  \lesssim M {\nu}^{\frac{1}{2}}N_{1}^{-2}   (L_0 L_1 L_2)^{\frac{1}{2}} 
\|f_{N_1, L_1}|_{\tilde{\mathcal{T}}_{m_1}^{M,\nu}} (\boldsymbol{\eta_1'}) \|_{L^2_{\tau \xi \eta}} 
\|g_{N_2, L_2}|_{\tilde{\mathcal{T}}_{m_2}^{M,\nu}} (\boldsymbol{\eta_2'})\|_{L^2_{\tau \xi \eta}} 
\|h_{{N_0, L_0}}(\boldsymbol{\eta'}) \|_{L^2_{\tau \xi \eta}}.\label{est02-prop3.21}
\end{split}
\end{equation}
Similarly to the proof of Proposition \ref{prop3.10},  since $|\boldsymbol{\eta_j'}| \lesssim A^{-1} N_1$, we can reuse the proofs of Propositions 3.22 and 3.23 in \cite{Ki2019} 
with slight modifications to get the estimate \eqref{est02-prop3.21}. We omit the proof.
\end{proof}
\begin{lem}\label{lemma3.22}
Let $M$ and $d$ be dyadic such that $1 \ll M \leq A^{2/3}$, $2 \leq \nu \leq A M^{-3/2}$ 
and $k_1$, $k_2 \in \mathfrak{K}_M$. 
For fixed $m_1 \in \Z^2$, the number of $m_2 \in \Z^2$ such that 
$(m_1, m_2) \in \widehat{Z}_{M,\nu}^{\mathbf{k}}$ is finitely many. Furthermore, the same claim holds true if we replace $\widehat{Z}_{M,\nu}^{\mathbf{k}}$ by $\overline{Z}_{M,\nu}^{\mathbf{k}}$.
\end{lem}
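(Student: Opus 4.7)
The plan is to exploit three constraints simultaneously: the bound $|\xi_1+\xi_2| \lesssim M^{-3/2}N_1$ built into the definitions of both $\widehat{Z}^{\mathbf{k}}_{M,\nu}$ and $\overline{Z}^{\mathbf{k}}_{M,\nu}$; the sector condition $k_1, k_2 \in \mathfrak{K}_M$, which forces $|\eta_j| \lesssim M^{-1}N_1$ for points in $\mathfrak{D}_{k_j}^M$ with $|(\xi_j,\eta_j)|\sim N_1$; and the size bounds on the resonance functions $\overline{\Phi}$, $\overline{F}$ dictated by the Whitney structure. The proof mirrors the two-dimensional analog (Lemma~3.9 and related statements in \cite{Ki2019}), now carrying the extra variables $\boldsymbol{\eta_j'}$ harmlessly since they play no role in the constraints that determine $m_2$.

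First I would observe that at points of $(\mathcal{T}_{m_1}^{M,\nu}\times \mathcal{T}_{m_2}^{M,\nu}) \cap (\mathfrak{D}_{k_1}^{M}\times \mathfrak{D}_{k_2}^{M})$ satisfying $|\xi_1+\xi_2|\lesssim M^{-3/2}N_1$, one has $\xi_1\sim -\xi_2$ with $|\xi_j|\sim N_1$, and therefore the algebraic identities
\[
\overline{\Phi}(\xi_1,\eta_1,\xi_2,\eta_2) = -\xi_1^2(\xi_1+\xi_2) + \mathcal{O}\bigl((\xi_1+\xi_2)^2 N_1\bigr) + \eta_1\eta_2(\eta_1+\eta_2),
\]
\[
\overline{F}(\xi_1,\eta_1,\xi_2,\eta_2) = \xi_2(\eta_2-\eta_1) + \mathcal{O}\bigl(|\xi_1+\xi_2|\,M^{-1}N_1\bigr)
\]
identify $\overline{\Phi}$ as essentially $-\xi_1^2(\xi_1+\xi_2)$ up to an $\mathcal{O}(M^{-3}N_1^3)$ remainder coming from the $\eta$-term, and $\overline{F}$ as essentially $\xi_2(\eta_2-\eta_1)$ up to controlled perturbations. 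Since $M\le A^{2/3}$ and $\nu\le A M^{-3/2}$, the $\eta$-remainder is indeed dominated by the threshold $M^{-3/2}\nu^{-1}N_1^3$.

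Next, for $(m_1,m_2)\in \widehat{Z}^{\mathbf{k}}_{M,\nu}$ with $\nu>2$, the fact that $\mathcal{T}_{m_1}^{M,\nu}\times\mathcal{T}_{m_2}^{M,\nu}$ lies in $Q_{M,\nu}^{\mathbf{k}}=R_{M,\nu}^{\mathbf{k}}\setminus R_{M,\nu/2}^{\mathbf{k}}$ forces the enclosing $\nu/2$-level tile pair to fail both thresholds, so that at some point there one has simultaneously $|\overline{\Phi}|<2M^{-3/2}\nu^{-1}N_1^3$ and $|\overline{F}|<2M^{-1}\nu^{-1}N_1^2$. By the smoothness of $\overline{\Phi}$ and $\overline{F}$ and the tile diameters, these bounds propagate, up to universal constants, throughout the original $\nu$-tile product. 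Substituting into the two expansions above yields
\[
|\xi_1+\xi_2|\lesssim M^{-3/2}\nu^{-1}N_1,\qquad |\eta_2-\eta_1|\lesssim M^{-1}\nu^{-1}N_1,
\]
each of which coincides, up to a fixed constant, with the tile side-length in the corresponding coordinate. Hence, with $m_1$ fixed, $m_{2,(1)}$ is constrained to finitely many values by the first inequality and $m_{2,(2)}$ to finitely many values by the second, the bound being uniform in $M$, $\nu$, $\mathbf{k}$ and $N_1$. The base case $\nu=2$ is handled directly from the constraint $|\xi_1+\xi_2|\lesssim M^{-3/2}N_1$ together with the sector-constraint on $\eta_j$.

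For $\overline{Z}^{\mathbf{k}}_{M,\nu}$ the reasoning is strictly simpler: the definition forces $(m_1,m_2)\notin Z_{M,\nu}^{\mathbf{k}}$, so both $|\overline{\Phi}|\lesssim M^{-3/2}\nu^{-1}N_1^3$ and $|\overline{F}|\lesssim M^{-1}\nu^{-1}N_1^2$ throughout the $\nu$-tile product, and the same two constraints on $\xi_1+\xi_2$ and $\eta_2-\eta_1$ follow at once, giving uniform finiteness. The main obstacle I anticipate is the bookkeeping of error terms showing that the $\eta$-contribution $\eta_1\eta_2(\eta_1+\eta_2)$ to $\overline{\Phi}$ never overwhelms the scale $M^{-3/2}\nu^{-1}N_1^3$ throughout the full admissible range $2\le \nu\le A M^{-3/2}$; this boils down to checking that $M^{-3} \lesssim M^{-3/2}\nu^{-1}$ whenever $\nu\le AM^{-3/2}$, which reduces to $M\le A^{2/3}$ and is exactly the hypothesis of the lemma.
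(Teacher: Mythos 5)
The paper states Lemma~\ref{lemma3.22} without proof (it is the analogue of Lemma~3.9 in \cite{Ki2019}), so there is no in-text argument to compare against; what follows is an evaluation of your argument on its own merits.

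There is a genuine gap. You claim that the $\eta$-contribution $\eta_1\eta_2(\eta_1+\eta_2) = \mathcal{O}(M^{-3}N_1^3)$ to $\overline{\Phi}$ is dominated by the threshold $M^{-3/2}\nu^{-1}N_1^3$ for the full admissible range $2\le\nu\le AM^{-3/2}$, and that this ``reduces to $M\le A^{2/3}$.'' That reduction is wrong: $M^{-3}\lesssim M^{-3/2}\nu^{-1}$ is equivalent to $\nu\lesssim M^{3/2}$, and the bound $\nu\le AM^{-3/2}$ together with $M\le A^{2/3}$ (i.e.\ $M^{3/2}\le A$) does \emph{not} force $\nu\lesssim M^{3/2}$ --- take $M$ a small dyadic (just above $1$) and $\nu$ near $AM^{-3/2}$, which can be vastly larger than $M^{3/2}$. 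Consequently your intermediate conclusions $|\xi_1+\xi_2|\lesssim M^{-3/2}\nu^{-1}N_1$ and $|\eta_2-\eta_1|\lesssim M^{-1}\nu^{-1}N_1$ do \emph{not} follow from the $\overline{\Phi}$- and $\overline{F}$-thresholds treated one at a time: the $\eta$-term can shift the level curve $\{\overline{\Phi}=\textnormal{const}\}$ by as much as $\mathcal{O}(M^{-3}N_1^3)$, spreading the admissible $\xi_2$-range over $\sim M^{-3/2}\nu$ tile widths when $\nu\gg M^{3/2}$; and the $(\xi_1+\xi_2)$-remainder of $\overline{F}$, controlled a priori only by $M^{-3/2}N_1$, contributes $\mathcal{O}(M^{-5/2}N_1^2)$ to $\overline{F}$, again exceeding the threshold $M^{-1}\nu^{-1}N_1^2$ when $\nu\gg M^{3/2}$.

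The argument is salvageable, but requires treating the two constraints as a coupled system rather than independently. Fix $(\xi_1,\eta_1)\in\mathcal{T}_{m_1}^{M,\nu}$, let $(\xi_2^0,\eta_2^0)$ and $(\xi_2,\eta_2)$ both satisfy the threshold bounds, and set $a=\xi_2-\xi_2^0$, $b=\eta_2-\eta_2^0$. The exact increments are
\begin{align*}
\Delta\overline{\Phi} &= \xi_1\bigl(\xi_1+\xi_2+\xi_2^0\bigr)\,a + \eta_1\bigl(\eta_1+\eta_2+\eta_2^0\bigr)\,b,\\
\Delta\overline{F} &= \bigl(\xi_1+2\xi_2\bigr)\,b + \bigl(\eta_1+2\eta_2^0\bigr)\,a.
\end{align*}
Since $|\xi_1(\xi_1+\xi_2+\xi_2^0)|\sim N_1^2$, $|\eta_1(\eta_1+\eta_2+\eta_2^0)|\lesssim M^{-2}N_1^2$, $|\xi_1+2\xi_2|\sim N_1$ and $|\eta_1+2\eta_2^0|\lesssim M^{-1}N_1$, the bounds $|\Delta\overline{\Phi}|\lesssim M^{-3/2}\nu^{-1}N_1^3$ and $|\Delta\overline{F}|\lesssim M^{-1}\nu^{-1}N_1^2$ give $|a|\lesssim M^{-3/2}\nu^{-1}N_1 + M^{-2}|b|$ and $|b|\lesssim M^{-1}\nu^{-1}N_1 + M^{-1}|a|$. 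Substituting the second into the first and absorbing the resulting $M^{-3}|a|$ term (using $M\gg 1$) yields $|a|\lesssim M^{-3/2}\nu^{-1}N_1$, and then $|b|\lesssim M^{-1}\nu^{-1}N_1$. This confines $(\xi_2,\eta_2)$ to $\mathcal{O}(1)$ tiles, which is what you want; it is precisely this bootstrap between the two constraints that your independent treatment is missing. Your handling of the Whitney/boundary structure and of the $\overline{Z}_{M,\nu}^{\mathbf{k}}$ case, and your correct observation that the auxiliary $\boldsymbol{\eta_j'}$ variables play no role, are fine.
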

\begin{prop}\label{prop3.23}
Let $1 \ll M \leq A^{2/3}$. 
Assume that $|\boldsymbol{\eta_j'}| \lesssim A^{-1} N_1$ and $k_1$, $k_2 \in \mathfrak{K}_M$ satisfy
 $16 \leq |k_1-k_2| \leq 32$. Then we have
\begin{equation}
\begin{split}
& \left|\int_{*}{  h_{{N_0, L_0}}(\tau, \xi, \boldsymbol{\eta} ) 
f_{N_1, L_1}|_{\tilde{\mathfrak{D}}_{k_1}^M} (\tau_1, \xi_1, \boldsymbol{\eta_1} )  
g_{N_2, L_2}|_{\tilde{\mathfrak{D}}_{k_2}^M}  (\tau_2, \xi_2, \boldsymbol{\eta_2} ) 
}
d\sigma_1 d\sigma_2 \right| \\
& \qquad  \lesssim  A^{-\frac{d-3}{2}} N_0^{-\frac{1}{4}}
N_1^{\frac{2d-11}{4}}    (L_0 L_1 L_2)^{\frac{1}{2}} 
\|f_{N_1, L_1}|_{\tilde{\mathfrak{D}}_{k_1}^M} \|_{L^2} \| g_{N_2, L_2}|_{\tilde{\mathfrak{D}}_{k_2}^M} \|_{L^2} 
\|h_{{N_0, L_0}}\|_{L^2},\label{est01-prop3.23}
\end{split}
\end{equation}
where functions $f_{N_1, L_1}$, $g_{N_2, L_2}$, $h_{{N_0, L_0}}$ satisfy 
\eqref{assumption-fgh}.
\end{prop}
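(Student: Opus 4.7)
The plan is to follow the 2D Whitney-type argument used in the proof of Proposition 3.21 in \cite{Ki2019}. All ingredients are already in place: Proposition \ref{prop3.21} above serves as the trilinear Loomis--Whitney bound on each Whitney cell (with the extra $A^{-(d-2)/2}$ factor from the $\boldsymbol{\eta_j'}$-localization already absorbed), Proposition \ref{prop3.18} provides bilinear/Strichartz fallback estimates, and Lemma \ref{lemma3.22} gives the almost orthogonality of the rectangular tile pairs $(m_1,m_2)\in\widehat{Z}_{M,\nu}^{\mathbf{k}}$ and $\overline{Z}_{M,\nu}^{\mathbf{k}}$.

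\medskip

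First I would split off the region $|\xi_1+\xi_2|\gg M^{-3/2}N_1$ and treat it by a direct application of \eqref{bilinear-est04-prop3.18} of Proposition \ref{prop3.18} (combined with Cauchy--Schwarz to promote the bilinear bound to a trilinear one). On the complementary region $|\xi_1+\xi_2|\lesssim M^{-3/2}N_1$, I would apply the Whitney-type decomposition
\begin{equation*}
\mathfrak{D}_{k_1}^M\times\mathfrak{D}_{k_2}^M\;\subset\;\bigcup_{2\leq\nu\leq\nu_0}\bigcup_{(m_1,m_2)\in\widehat{Z}_{M,\nu}^{\mathbf{k}}}\mathcal{T}_{m_1}^{M,\nu}\times\mathcal{T}_{m_2}^{M,\nu}\;\cup\;\bigcup_{(m_1,m_2)\in\overline{Z}_{M,\nu_0}^{\mathbf{k}}}\mathcal{T}_{m_1}^{M,\nu_0}\times\mathcal{T}_{m_2}^{M,\nu_0},
\end{equation*}
with $\nu_0\sim AM^{-3/2}$. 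For each $(m_1,m_2)\in\widehat{Z}_{M,\nu}^{\mathbf{k}}$ I would invoke Proposition \ref{prop3.21} to get a bound of the form $A^{-(d-2)/2}M\nu^{1/2}N_1^{(d-6)/2}(L_0L_1L_2)^{1/2}$ times the $L^2$-norms localized to the tile pair; Lemma \ref{lemma3.22} then allows summing these local norms into global ones, and summing the resulting factors of $\nu^{1/2}$ over dyadic $\nu\leq\nu_0$ produces $\nu_0^{1/2}$. The residual $\overline{Z}_{M,\nu_0}^{\mathbf{k}}$-piece, on which both $\overline{\Phi}$ and $\overline{F}$ are below threshold, is controlled by the angular-Strichartz bilinear bounds \eqref{bilinear-est02a-prop3.18}--\eqref{bilinear-est04-prop3.18} of Proposition \ref{prop3.18}.

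\medskip

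Collecting these contributions yields an aggregate bound of the shape $A^{-(d-3)/2}M^{1/4}N_1^{(d-6)/2}(L_0L_1L_2)^{1/2}\|f\|\|g\|\|h\|$, which combined with the constraint $N_0\lesssim M^{-3/2}N_1$ enforced by the tile decomposition (and with the bound on the $|\xi_1+\xi_2|\gg M^{-3/2}N_1$ piece) converts into the claimed $N_0^{-1/4}N_1^{(2d-11)/4}$ after the $M$-optimization. The main obstacle, exactly as in the 2D case, is the bookkeeping for the rectangular (as opposed to square) tiles needed to simultaneously capture the different degeneracy scales of $\overline{\Phi}$ and $\overline{F}$ in the ``near-axis'' sectors $\mathfrak{K}_M$, together with the verification that Lemma \ref{lemma3.22}'s almost orthogonality holds uniformly in $\nu$ along these degenerate directions. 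The estimates \eqref{bilinear-est03-prop3.18} and \eqref{bilinear-est04-prop3.18}, which rely on the Strichartz-type bound \eqref{Strichartz01-prop3.18}, are precisely what enables the $\nu_0$-level residual estimate without losing more than a $\nu_0^{1/2}$-factor overall.
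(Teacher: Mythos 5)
Your overall architecture matches the paper: split on $|\xi_1+\xi_2|$ versus $M^{-3/2}N_1$; on the large-$|\xi_1+\xi_2|$ region exploit the modulation lower bound $|\overline{\Phi}|\gtrsim M^{-3/2}N_1^3$ together with \eqref{bilinear-est04-prop3.18}; on the small-$|\xi_1+\xi_2|$ region perform the rectangular Whitney decomposition, apply Proposition~\ref{prop3.21} on $\widehat{Z}_{M,\nu}^{\mathbf{k}}$, and invoke Lemma~\ref{lemma3.22} to sum in the tile index and then in $\nu$. All of this is correct and is what the paper does.

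The gap is in your treatment of the residual piece indexed by $\overline{Z}_{M,\nu_0}^{\mathbf{k}}$. You propose controlling it by the bilinear estimates \eqref{bilinear-est02a-prop3.18}--\eqref{bilinear-est04-prop3.18}. Those bounds only produce $(L_1L_2)^{1/2}$, and on the residual piece, by definition, \emph{both} $\overline{\Phi}$ and $\overline{F}$ fall below their thresholds, so there is no modulation lower bound available to supply the missing $L_0^{1/2}$-gain. Using the trivial monotonicity $(L_1L_2)^{1/2}\leq(L_0L_1L_2)^{1/2}$ one ends up, after Cauchy--Schwarz against $h$, with a constant of size $A^{-(d-2)/2}M^{1/4}N_1^{(d-3)/2}$; comparing with the target $A^{-(d-3)/2}N_0^{-1/4}N_1^{(2d-11)/4}$ (and using $N_0\sim M^{-1}N_1$ on this region) leaves an excess factor $A^{-1/2}N_1^{3/2}$, which is not $\lesssim 1$ unless $A\gtrsim N_1^{3}$. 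Since $A$ ranges over all dyadic scales $\gg 1$ in the subsequent angular summation, this fails. What the paper actually does for the residual is decompose each rectangular tile $\mathcal{T}_m^{M,\nu_0}$, of dimensions $\sim A^{-1}\times A^{-1}M^{1/2}$, into $\sim M^{1/2}$ square tiles of side $A^{-1}$, apply the genuinely \emph{trilinear} angular Loomis--Whitney estimate Proposition~\ref{prop3.7} on each square to obtain $A^{-(d-3)/2}N_1^{(d-6)/2}(L_0L_1L_2)^{1/2}$, and pay only $M^{1/4}$ for the almost-orthogonal sum over the $M^{1/2}$ squares. This trilinear input, rather than a bilinear Strichartz bound, is the essential missing ingredient; without it the residual term cannot be closed.
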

\begin{proof}
First we consider $|\xi_1+\xi_2| \gg M^{-3/2} N_1$. 
It is easily observed that $|\xi_1+\xi_2| \gg M^{-3/2} N_1$ means 
$|\overline{\Phi} (\xi_1, \eta_1, \xi_2 ,\eta_2)| \gg M^{-3/2}N_1^2$. Then, by using \eqref{bilinear-est04-prop3.18} in Proposition \ref{prop3.18}, we obtain
\begin{align*}
& \left|\int_{*}{  h_{{N_0, L_0}}(\tau, \xi, \boldsymbol{\eta} ) 
f_{N_1, L_1}|_{\tilde{\mathfrak{D}}_{k_1}^M} (\tau_1, \xi_1, \boldsymbol{\eta_1} )  
g_{N_2, L_2}|_{\tilde{\mathfrak{D}}_{k_2}^M}  (\tau_2, \xi_2, \boldsymbol{\eta_2} ) 
}
d\sigma_1 d\sigma_2 \right| \\
& \qquad  \lesssim  A^{-\frac{d-2}{2}} M^{\frac{1}{4}}
N_1^{\frac{d-6}{2}}    (L_0 L_1 L_2)^{\frac{1}{2}} 
\|f_{N_1, L_1}|_{\tilde{\mathfrak{D}}_{k_1}^M} \|_{L^2} \| g_{N_2, L_2}|_{\tilde{\mathfrak{D}}_{k_2}^M} \|_{L^2} 
\|h_{{N_0, L_0}}\|_{L^2},
\end{align*}
which completes the proof for the case $|\xi_1+\xi_2| \gg M^{-3/2} N_1$.
 
We consider the case $|\xi_1+\xi_2| \lesssim M^{-3/2} N_1$. 
For simplicity, we assume $\supp f_{N_1, L_1} \subset \tilde{\mathfrak{D}}_{k_1}^M$ and 
$\supp g_{N_2, L_2} \subset \tilde{\mathfrak{D}}_{k_2}^M$, and use
\begin{equation*}
I_{M, \nu}^{m_1, m_2}:=\left|\int_{*}{  h_{{N_0, L_0}}(\tau, \xi, \boldsymbol{\eta} ) 
f_{N_1, L_1}|_{\tilde{\mathcal{T}}_{m_1}^{M,\nu}} (\tau_1, \xi_1, \boldsymbol{\eta_1} )
g_{N_2, L_2}|_{\tilde{\mathcal{T}}_{m_2}^{M,\nu}} (\tau_2, \xi_2, \boldsymbol{\eta_2} )
}
d\sigma_1 d\sigma_2 \right| 
\end{equation*}
Let $\nu_0$ denote the maximal dyadic number which satisfies $\nu_0 \leq A M^{-3/2}$. 
By the definition of 
$ \widehat{Z}_{M,\nu}^{\mathbf{k}}$ and $ \overline{Z}_{M,\nu}^{\mathbf{k}}$, we observe that
\begin{align*}
 \textnormal{(LHS) of \eqref{est01-prop3.23}}
& \leq \sum_{2 \leq \nu \leq \nu_0} \sum_{(m_1, m_2) \in\widehat{Z}_{M,\nu}^{\mathbf{k}}} 
I_{M, \nu}^{m_1, m_2}  + \sum_{(m_1, m_2) \in \overline{Z}_{M,\nu_0}^{\mathbf{k}}} 
I_{M, \nu_0}^{m_1, m_2}.
\end{align*}
It follows from Proposition \ref{prop3.21} and Lemma \ref{lemma3.22} that
\begin{align*}
& \sum_{(m_1, m_2) \in\widehat{Z}_{M,\nu}^{\mathbf{k}}} 
I_{M, \nu}^{m_1, m_2} \\
& \lesssim A^{-\frac{d-2}{2}} M {\nu}^{\frac{1}{2}}
N_1^{\frac{d-6}{2}}    (L_0 L_1 L_2)^{\frac{1}{2}} \sum_{(m_1, m_2) \in\widehat{Z}_{M,\nu}^{\mathbf{k}}}
\|f_{N_1, L_1}|_{\tilde{\mathcal{T}}_{m_1}^{M,\nu}} \|_{L^2} 
\|g_{N_2, L_2}|_{\tilde{\mathcal{T}}_{m_2}^{M,\nu}} \|_{L^2} 
\|h_{{N_0, L_0}}\|_{L^2}\\
& \lesssim A^{-\frac{d-2}{2}} M {\nu}^{\frac{1}{2}}
N_1^{\frac{d-6}{2}}    (L_0 L_1 L_2)^{\frac{1}{2}} 
\|f_{N_1, L_1} \|_{L^2} 
\|g_{N_2, L_2}\|_{L^2} 
\|h_{{N_0, L_0}}\|_{L^2},
\end{align*}
which gives
\begin{align*}
 \sum_{2 \leq \nu \leq \nu_0} & \sum_{(m_1, m_2) \in\widehat{Z}_{M,\nu}^{\mathbf{k}}} 
I_{M, \nu}^{m_1, m_2}\\
\lesssim A^{-\frac{d-3}{2}} & M^{\frac{1}{4}}
N_1^{\frac{d-6}{2}}    (L_0 L_1 L_2)^{\frac{1}{2}} 
\|f_{N_1, L_1}|_{\tilde{\mathfrak{D}}_{k_1}^M} \|_{L^2} \| g_{N_2, L_2}|_{\tilde{\mathfrak{D}}_{k_2}^M} \|_{L^2} 
\|h_{{N_0, L_0}}\|_{L^2}.
\end{align*}
Since $|\xi_1+\xi_2| \lesssim M^{-3/2} N_1$, we can assume $N_0 \sim M^{-1} N_1$. 
Then this completes the desired estimate for the first term. 
For the second term, we first note that $\mathcal{T}_{m}^{M,\nu_0} \subset \R^2$ 
is a rectangle set whose short-side length is $\sim A^{-1}$ and long-side length is $\sim A^{-1} M^{1/2}$. 
Then we can decompose $\mathcal{T}_{m}^{M,\nu_0}$ into $\sim M^{1/2}$ number of square tiles whose side length is $A^{-1}$. 
Thus, by Proposition \ref{prop3.7} and the almost orthogonality, we observe
\begin{equation*}
I_{M, \nu_0}^{m_1, m_2}  \lesssim A^{-\frac{d-3}{2}} M^{\frac{1}{4}}
N_1^{\frac{d-6}{2}}   (L_0 L_1 L_2)^{\frac{1}{2}} 
\|f_{N_1, L_1}|_{\tilde{\mathcal{T}}_{m_1}^{M, \nu_0}} \|_{L^2} 
\| g_{N_2, L_2}|_{\tilde{\mathcal{T}}_{m_2}^{M,\nu_0}} \|_{L^2} 
\|h_{{N_0, L_0}} \|_{L^2}.
\end{equation*}
Consequently, by Lemma \ref{lemma3.22}, we obtain
\begin{align*}
 \sum_{(m_1, m_2) \in \overline{Z}_{M,\nu_0}^{\mathbf{k}}}  I_{M, \nu_0}^{m_1, m_2}  
 \lesssim  A^{-\frac{d-3}{2}} & M^{\frac{1}{4}}
N_1^{\frac{d-6}{2}}   (L_0 L_1 L_2)^{\frac{1}{2}} 
\|f_{N_1, L_1}|_{\tilde{\mathfrak{D}}_{k_1}^M} \|_{L^2} \| g_{N_2, L_2}|_{\tilde{\mathfrak{D}}_{k_2}^M} \|_{L^2} 
\|h_{{N_0, L_0}}\|_{L^2},
\end{align*}
which completes the proof.
\end{proof}
\begin{proof}[\underline{Proof of Proposition \ref{prop3.17}}]
We should recall that we can assume $A^{-1} N_1 \lesssim N_0$. 
Let $M$ be dyadic such that $1 \ll M \leq A^{2/3}$ and $M_0$ be the maximal dyadic number which satisfies $M_0 \leq A^{2/3}$. We define
\begin{equation*}
K_{M}^{\mathcal{I}} = \{ (k_1, k_2) \, | \, 0 \leq k_1, k_2 \leq M -1, \ ( {\mathfrak{D}}_{k_1}^M \times {\mathfrak{D}}_{k_2}^M ) \subset ( \mathfrak{D}_{0}^{2^{11}} \times 
\mathfrak{D}_{0}^{2^{11}} ).\}
\end{equation*}
It is observed that
\begin{align*}
\mathfrak{D}_{0}^{2^{11}} \times 
\mathfrak{D}_{0}^{2^{11}} =   \bigcup_{1 \ll M \leq M_0} \ \bigcup_{\tiny{\substack{(k_1,k_2) \in K_{M}^{\mathcal{I}}\\ 16 \leq |k_1 - k_2|\leq 32}}} 
{\mathfrak{D}}_{k_1}^M \cross {\mathfrak{D}}_{k_2}^M \cup 
\bigcup_{\tiny{\substack{(k_1,k_2) \in K_{M_0}^{\mathcal{I}}\\|k_1 - k_2|\leq 16}}} 
{\mathfrak{D}}_{k_1}^{M_0} \cross {\mathfrak{D}}_{k_2}^{M_0}.
\end{align*}
Let us write
\begin{equation*}
I_{M}^{k_1, k_2} := \left|\int_{*}{  h_{{N_0, L_0}}(\tau, \xi, \boldsymbol{\eta} ) 
f_{N_1, L_1}|_{\tilde{\mathfrak{D}}_{k_1}^M} (\tau_1, \xi_1, \boldsymbol{\eta_1} )  
g_{N_2, L_2}|_{\tilde{\mathfrak{D}}_{k_2}^M}  (\tau_2, \xi_2, \boldsymbol{\eta_2} ) 
}
d\sigma_1 d\sigma_2 \right|. 
\end{equation*}
We calculate that
\begin{align*}
& \left|\int_{*}{  h_{{N_0, L_0}}(\tau, \xi, \boldsymbol{\eta} ) 
f_{N_1, L_1}|_{\tilde{\mathfrak{D}}_{0}^{2^{11}}} (\tau_1, \xi_1, \boldsymbol{\eta_1} )  
g_{N_2, L_2}|_{\tilde{\mathfrak{D}}_{0}^{2^{11}}} (\tau_2, \xi_2, \boldsymbol{\eta_2} ) 
}
d\sigma_1 d\sigma_2 \right| \\
&   \lesssim   \sum_{1 \ll M \leq M_0} \ \sum_{\tiny{\substack{(k_1,k_2) \in K_{M}^{\mathcal{I}}\\ 16 \leq |k_1 - k_2|\leq 32}}} I_{M}^{k_1, k_2} + 
\sum_{\tiny{\substack{(k_1,k_2) \in K_{M_0}^{\mathcal{I}}\\|k_1 - k_2|\leq 16}}} 
I_{M_0}^{k_1, k_2}.
\end{align*}
We consider the former term. 
Since $M \leq A^{2/3}$ and $16 \leq |k_1-k_2|$ we may assume $A^{-2/3} N_1 \lesssim N_0$. 
This and $2^{10}K \leq M$ mean 
$A^{-(d-2)/2} K^{1/2}N_0^{-1/2} N_1^{(d-5)/2} \lesssim A^{-(d-3)/2} N_0^{-1/4}N_1^{(2 d -11)/4}$. 
Then by using Propositions \ref{prop3.19}, \ref{prop3.20} and \ref{prop3.23}, we obtain
\begin{align*}
& \sum_{\tiny{\substack{(k_1,k_2) \in K_{M}^{\mathcal{I}}\\ 16 \leq |k_1 - k_2|\leq 32}}} I_{M}^{k_1, k_2} \\
& \lesssim 
A^{-\frac{d-3}{2}} N_0^{-\frac{1}{4}}
N_1^{\frac{2d-11}{4}}    (L_0 L_1 L_2)^{\frac{1}{2}} \sum_{\tiny{\substack{(k_1,k_2) \in K_{M}^{\mathcal{I}}\\ 16 \leq |k_1 - k_2|\leq 32}}}
\|f_{N_1, L_1}|_{\tilde{\mathfrak{D}}_{k_1}^M} \|_{L^2} \| g_{N_2, L_2}|_{\tilde{\mathfrak{D}}_{k_2}^M} \|_{L^2} 
\|h_{{N_0, L_0}}\|_{L^2}\\
& \lesssim 
A^{-\frac{d-3}{2}} N_0^{-\frac{1}{4}}
N_1^{\frac{2d-11}{4}}    (L_0 L_1 L_2)^{\frac{1}{2}} 
\|f_{N_1, L_1} \|_{L^2} \| g_{N_2, L_2} \|_{L^2} 
\|h_{{N_0, L_0}}\|_{L^2}.
\end{align*}
Therefore, we obtain
\begin{align*}
& \sum_{1 \ll M \leq M_0}  \ \sum_{\tiny{\substack{(k_1,k_2) \in K_{M}^{\mathcal{I}}\\ 16 \leq |k_1 - k_2|\leq 32}}} I_{M}^{k_1, k_2}\\
& \quad \lesssim  (\log A) A^{-\frac{d-3}{2}} N_0^{-\frac{1}{4}}
N_1^{\frac{2d-11}{4}}    (L_0 L_1 L_2)^{\frac{1}{2}} 
\|f_{N_1, L_1} \|_{L^2} \| g_{N_2, L_2} \|_{L^2} 
\|h_{{N_0, L_0}}\|_{L^2}.
\end{align*}
This gives the desired estimate since $A^{-1} N_1 \lesssim N_0$. 

For the latter term, letting $2^{10} \leq K \leq 2^{-10}M_0$, we first assume $k_1 \in \mathfrak{K}_{M_0}^K$. 
Define 
\begin{equation*}
K_{M, M_0}^{k_1,k_2} = \{ (k_1', k_2') \, | \, ( {\mathfrak{D}}_{k_1'}^M \times {\mathfrak{D}}_{k_2'}^M ) \subset ( {\mathfrak{D}}_{k_1}^{M_0} \cross {\mathfrak{D}}_{k_2}^{M_0} ).\}
\end{equation*}
Let $M'$ be the maximal dyadic number which satisfies $M' \leq A K^{-1/2}$. 
If $|k_1-k_2| \leq 16$, we have
\begin{equation*}
{\mathfrak{D}}_{k_1}^{M_0} \cross {\mathfrak{D}}_{k_2}^{M_0} = 
 \bigcup_{2 M_0 \leq M \leq M'} \ 
\bigcup_{\tiny{\substack{(k_1',k_2') \in K_{M, M_0}^{k_1,k_2}\\ 16 \leq |k_1' - k_2'|\leq 32}}} 
{\mathfrak{D}}_{k_1'}^M \cross {\mathfrak{D}}_{k_2'}^M \cup 
\bigcup_{\tiny{\substack{(k_1',k_2') \in K_{M', M_0}^{k_1,k_2}\\|k_1' - k_2'|\leq 16}}} 
{\mathfrak{D}}_{k_1'}^{M'} \cross {\mathfrak{D}}_{k_2'}^{M'}.
\end{equation*}
This implies
\begin{equation}
I_{M_0}^{k_1, k_2} \lesssim \sum_{2 M_0 \leq M \leq M'} \ \sum_{\tiny{\substack{(k_1',k_2') \in K_{M, M_0}^{k_1,k_2}\\ 16 \leq |k_1' - k_2'|\leq 32}}}  I_{M}^{k_1', k_2'} + 
\sum_{\tiny{\substack{(k_1',k_2') \in K_{M', M_0}^{k_1,k_2}\\|k_1' - k_2'|\leq 16}}} 
I_{M'}^{k_1', k_2'}.\label{est02-prop3.17}
\end{equation}
For the former term, we may assume $N_0 \gtrsim A^{-1} K^{1/2}N_1$. 
It follows from Propositions \ref{prop3.19} and \ref{prop3.20} that
\begin{align*}
& \sum_{2 M_0 \leq M \leq M'} \ \sum_{\tiny{\substack{(k_1',k_2') \in K_{M, M_0}^{k_1,k_2}\\ 16 \leq |k_1 - k_2|\leq 32}}}  I_{M}^{k_1', k_2'}\\
& \lesssim (\log A) A^{-\frac{d-2}{2}}K^{\frac{1}{2}}N_0^{-\frac{1}{2}}
N_1^{\frac{d-5}{2}}    (L_0 L_1 L_2)^{\frac{1}{2}} 
\|f_{N_1, L_1}|_{\tilde{\mathfrak{D}}_{k_1}^{M_0}} \|_{L^2} 
\| g_{N_2, L_2}|_{\tilde{\mathfrak{D}}_{k_2}^{M_0}} \|_{L^2} 
\|h_{{N_0, L_0}}\|_{L^2}\\
& \lesssim (\log A)  A^{-\frac{2d-5}{4}} K^{\frac{3}{8}}N_0^{-\frac{1}{4}}
N_1^{\frac{2d-11}{4}}    (L_0 L_1 L_2)^{\frac{1}{2}} 
\|f_{N_1, L_1}|_{\tilde{\mathfrak{D}}_{k_1}^{M_0}} \|_{L^2} 
\| g_{N_2, L_2}|_{\tilde{\mathfrak{D}}_{k_2}^{M_0}} \|_{L^2} 
\|h_{{N_0, L_0}}\|_{L^2}.
\end{align*}
We next consider the latter term. 
If $N_0 \gg N_1/M'$ Proposition \ref{prop3.19} yields
\begin{align*}
I_{M'}^{k_1', k_2'} & \lesssim A^{-\frac{d-2}{2}} N_0^{-\frac{1}{2}}
N_1^{\frac{d-5}{2}}    (L_0 L_1 L_2)^{\frac{1}{2}} \|f_{N_1, L_1}|_{\tilde{\mathfrak{D}}_{k_1'}^{M'}} \|_{L^2} 
\| g_{N_2, L_2}|_{\tilde{\mathfrak{D}}_{k_2'}^{M'}} \|_{L^2} 
\|h_{{N_0, L_0}}\|_{L^2}\\
&  \lesssim A^{-\frac{2d-5}{4}} N_0^{-\frac{1}{4}}
N_1^{\frac{2d-11}{4}}    (L_0 L_1 L_2)^{\frac{1}{2}} 
\|f_{N_1, L_1}|_{\tilde{\mathfrak{D}}_{k_1'}^{M'}} \|_{L^2} 
\| g_{N_2, L_2}|_{\tilde{\mathfrak{D}}_{k_2'}^{M'}} \|_{L^2} 
\|h_{{N_0, L_0}}\|_{L^2}.
\end{align*}
Next we assume $N_0 \lesssim N_1/M'$. 
We divide the proof into the two cases. First we assume $|\xi| \gg {M'}^{-1} K^{-1/2} N_1 \sim A^{-1} N_1$ 
which provides $|\overline{\Phi} (\xi_1, \eta_1, \xi_2 ,\eta_2)| \gtrsim A^{-1}N_1^3$. 
Thus, by \eqref{bilinear-est03-prop3.18} in Proposition \ref{prop3.18} and $N_0 \lesssim N_1/M'$, we obtain
\begin{align*}
I_{M'}^{k_1', k_2'} & \lesssim A^{-\frac{d-3}{2}} K^{\frac{1}{4}}N_1^{\frac{d-6}{2}} (L_0 L_1 L_2)^{\frac{1}{2}}
\|f_{N_1, L_1}|_{\tilde{\mathfrak{D}}_{k_1'}^{M'}} \|_{L^2} 
\| g_{N_2, L_2}|_{\tilde{\mathfrak{D}}_{k_2'}^{M'}} \|_{L^2} 
\|h_{{N_0, L_0}}\|_{L^2}\\
& \lesssim A^{-\frac{2d-5}{4}} K^{\frac{3}{8}}N_0^{-\frac{1}{4}}
N_1^{\frac{2d-11}{4}}    (L_0 L_1 L_2)^{\frac{1}{2}} 
\|f_{N_1, L_1}|_{\tilde{\mathfrak{D}}_{k_1'}^{M'}} \|_{L^2} 
\| g_{N_2, L_2}|_{\tilde{\mathfrak{D}}_{k_2'}^{M'}} \|_{L^2} 
\|h_{{N_0, L_0}}\|_{L^2}.
\end{align*}
Next we treat the case $|\xi| \lesssim A^{-1} N_1$. 
Since $N_0 \lesssim N_1/M' \sim A^{-1}K^{1/2}N_1$, $|(\xi,\eta)|$ is confined to a rectangle set whose long-side length is $\sim A^{-1}K^{1/2}N_1$ and 
short-side length is $\sim A^{-1}N_1$. Therefore, after decomposing $|(\xi,\eta)|$ into $\sim K^{1/2}$ square tiles whose side length is $A^{-1}$, we utilize Proposition \ref{prop3.7} and get
\begin{align*}
I_{M'}^{k_1', k_2'} & \lesssim A^{-\frac{d-3}{2}} K^{\frac{1}{4}}N_1^{\frac{d-6}{2}} (L_0 L_1 L_2)^{\frac{1}{2}}
\|f_{N_1, L_1}|_{\tilde{\mathfrak{D}}_{k_1'}^{M'}} \|_{L^2} 
\| g_{N_2, L_2}|_{\tilde{\mathfrak{D}}_{k_2'}^{M'}} \|_{L^2} 
\|h_{{N_0, L_0}}\|_{L^2}\\
& \lesssim A^{-\frac{2d-5}{4}} K^{\frac{3}{8}}N_0^{-\frac{1}{4}}
N_1^{\frac{2d-11}{4}}    (L_0 L_1 L_2)^{\frac{1}{2}} 
\|f_{N_1, L_1}|_{\tilde{\mathfrak{D}}_{k_1'}^{M'}} \|_{L^2} 
\| g_{N_2, L_2}|_{\tilde{\mathfrak{D}}_{k_2'}^{M'}} \|_{L^2} 
\|h_{{N_0, L_0}}\|_{L^2}.
\end{align*}
Collecting the above estimates, we obtain
\begin{equation}
\begin{split}
 I_{M_0}^{k_1, k_2} 
 \lesssim (\log A)  A^{-\frac{2d-5}{4}} K^{\frac{3}{8}} & N_0^{-\frac{1}{4}} 
N_1^{\frac{2d-11}{4}}    (L_0 L_1 L_2)^{\frac{1}{2}} \\
& \times \|f_{N_1, L_1}|_{\tilde{\mathfrak{D}}_{k_1}^{M_0}} \|_{L^2} 
\| g_{N_2, L_2}|_{\tilde{\mathfrak{D}}_{k_2}^{M_0}} \|_{L^2} 
\|h_{{N_0, L_0}}\|_{L^2}.\label{est03-prop3.17}
\end{split}
\end{equation}
Lastly, we assume $k_1 \in \mathfrak{K}_{M_0}$. 
In the same way as the proof for the latter term of \eqref{est02-prop3.17}, we can obtain
\begin{equation}
 I_{M_0}^{k_1, k_2} 
 \lesssim   A^{-\frac{d-3}{2}}  N_0^{-\frac{1}{4}} 
N_1^{\frac{2d-11}{4}}    (L_0 L_1 L_2)^{\frac{1}{2}} 
 \|f_{N_1, L_1}|_{\tilde{\mathfrak{D}}_{k_1}^{M_0}} \|_{L^2} 
\| g_{N_2, L_2}|_{\tilde{\mathfrak{D}}_{k_2}^{M_0}} \|_{L^2} 
\|h_{{N_0, L_0}}\|_{L^2}.\label{est04-prop3.17}
\end{equation}
Consequently, since $K \leq A^{2/3}$, \eqref{est03-prop3.17} and \eqref{est04-prop3.17} complete the proof as follows.
\begin{align*}
& \sum_{\tiny{\substack{(k_1,k_2) \in K_{M_0}^{\mathcal{I}}\\|k_1 - k_2|\leq 16}}} 
I_{M_0}^{k_1, k_2} 
\lesssim \sum_{2^{10} \leq K \leq 2^{-10}M_0} \sum_{\tiny{\substack{k_1 \in \mathfrak{K}_{M_0}^K\\|k_1 - k_2|\leq 16}}} I_{M_0}^{k_1, k_2} + \sum_{\tiny{\substack{k_1 \in \mathfrak{K}_{M_0}\\|k_1 - k_2|\leq 16}}}
I_{M_0}^{k_1, k_2}\\
& \lesssim (\log A) A^{-\frac{d-3}{2}}  N_0^{-\frac{1}{4}} 
N_1^{\frac{2d-11}{4}}    (L_0 L_1 L_2)^{\frac{1}{2}} 
 \|f_{N_1, L_1} \|_{L^2} 
\| g_{N_2, L_2} \|_{L^2} 
\|h_{{N_0, L_0}}\|_{L^2}.
\end{align*}
\end{proof}
\begin{proof}[\underline{Proof of Proposition \ref{prop3.9}}]
Collecting Propositions \ref{prop3.12}, \ref{prop3.14}, \ref{prop3.16}, \ref{prop3.17}, since 
$A^{-1}N_1 \lesssim N_0$, we completed the proof of Proposition \ref{prop3.9}.
\end{proof}
\section{Proof of the key bilinear estimate: Case 2}\label{sec:proof-bil-2}
It remains to show \eqref{goal02-mthm} when the supports of $\ha{u}_{N_1, L_1}$ and $\ha{v}_{N_2, L_2}$ are both contained in 
$\{(\tau,\xi,\boldsymbol{\eta} ) \in \R \times \R \times \R^{d-1} \, | \, |\xi| \leq 2^{-5} N_{012}^{\max}\}.$ 
Throughout this section, $L_{012}^{\max} \ll (N_{012}^{\max})^3$ and $N_{012}^{\min} \gg 1$ are assumed. 
Let us start with the case $1 \ll N_0 \lesssim N_1 \sim N_2$.
\begin{ass}\label{assumption2}
Let $\alpha$ be dyadic such that $2^{5} \leq \alpha \leq N_1^3$ and we assume that\\
(1) $1 \ll N_0 \lesssim N_1 \sim N_2$,\\
(2) $\alpha^{-1}N_1 \leq \max(|\xi_1|, |\xi_2|) \leq  2 \alpha^{-1}N_1$.
\end{ass}
\begin{prop}\label{prop3.24}
Assume \textit{Assumption} \textnormal{\ref{assumption2}}. Then we obtain
\begin{equation}
\begin{split}
& \left|\int_{*}{  |\xi| \, \ha{w}_{{N_0, L_0}}(\tau, \xi, \boldsymbol{\eta} ) 
\ha{u}_{N_1, L_1}(\tau_1, \xi_1, \boldsymbol{\eta_1} )  \ha{v}_{N_2, L_2}(\tau_2, \xi_2, \boldsymbol{\eta_2} ) 
}
d\sigma_1 d\sigma_2 \right| \\
& \qquad \qquad  \lesssim N_0^{\frac{d-2}{2}}  N_{1}^{-1+\e}   
(L_0 L_1 L_2)^{\frac{1}{2}} \|\ha{u}_{N_1, L_1} \|_{L^2} \| \ha{v}_{N_2, L_2} \|_{L^2} 
\|\ha{w}_{{N_0, L_0}} \|_{L^2},\label{est01-prop3.24}
\end{split}
\end{equation}
where $d \sigma_j = d\tau_j d \xi_j d \boldsymbol{\eta}_j$ and $*$ denotes $(\tau, \xi, \boldsymbol{\eta}) = (\tau_1 + \tau_2, \xi_1+ \xi_2, \boldsymbol{\eta_1} + \boldsymbol{\eta_2}).$
\end{prop}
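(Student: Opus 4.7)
The overall strategy is to adapt the proof of Proposition~\ref{prop3.8} (case~(Ic) of Section~\ref{sec:proof-bil-1}) to the regime of Assumption~\ref{assumption2}. Since $|\xi_j|\lesssim\alpha^{-1}N_1\ll N_1$ and $|(\xi_j,\boldsymbol{\eta_j})|\sim N_1$, we automatically have $|\boldsymbol{\eta_j}|\sim N_1$ for $j=1,2$, so the angular decomposition of case~(Ic) is applicable. The new features are the factor $|\xi|$ in the integrand, which we simply bound pointwise by $|\xi|\leq|\xi_1|+|\xi_2|\lesssim N_0$, and the restricted $\xi_j$-scale, which both weakens the Loomis--Whitney transversality by a factor~$\alpha^{-2}$ and narrows the $\xi_j$-support by a factor~$\alpha^{-2}$; the plan is to exhibit an exact cancellation of these two effects, so that the resulting per-slice bound coincides with the one of Proposition~\ref{prop3.7}.

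Concretely, I would first reduce to proving the $|\xi|$-free trilinear bound $\lesssim N_0^{(d-4)/2+\varepsilon} N_1^{-1+\varepsilon}(L_0L_1L_2)^{1/2}\|\hat u\|\|\hat v\|\|\hat w\|$, from which Proposition~\ref{prop3.24} follows by multiplying by $|\xi|\leq N_0$ and using $N_0^{\varepsilon}\leq N_1^{\varepsilon}$. For the reduced estimate I would carry out the angular decomposition of $\boldsymbol{\eta}_j/|\boldsymbol{\eta}_j|\in\mathbb{S}^{d-2}$ at scale~$A^{-1}$ using the caps $\bar{\omega}_A^j$ from Section~\ref{sec:proof-bil-1}. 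For the dominant pairs $\bar\alpha(j_1,j_2)\sim A^{-1}$, rotating so that $|\boldsymbol{\eta_j'}|\lesssim A^{-1}N_1$ and $|\eta_1\eta_2'-\eta_2\eta_1'|\sim A^{-1}N_1^2$, I would freeze $(\xi_1,\xi_2,\boldsymbol{\check\eta_1},\boldsymbol{\check\eta_2})$ and apply the nonlinear Loomis--Whitney inequality (Proposition~\ref{prop2.3}) on the resulting 3D slice. The determinant computation of Proposition~\ref{prop3.7} goes through verbatim, yielding after the $N_1$-rescaling $\tilde\xi_j=\xi_j/N_1$
\[
|\det N(\widehat\lambda_1,\widehat\lambda_2,\widehat\lambda_3)|\gtrsim |(\widehat\eta_1\widehat\eta_2'-\widehat\eta_2\widehat\eta_1')(\tilde\xi_1^2+\tilde\xi_1\tilde\xi_2+\tilde\xi_2^2)|\gtrsim A^{-1}\alpha^{-2},
\]
since now $|\tilde\xi_j|\sim\alpha^{-1}$. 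Loomis--Whitney therefore produces the factor $(A^{-1}\alpha^{-2})^{-1/2}=A^{1/2}\alpha$ in place of the $A^{1/2}$ of Proposition~\ref{prop3.7}.

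Next, integrating the 3D estimate over the frozen variables by Cauchy--Schwarz, each $\xi_j$-integration gains only $(\alpha^{-1}N_1)^{1/2}$ (instead of $N_1^{1/2}$) because the $\xi_j$-support has length $\sim\alpha^{-1}N_1$; the combined factor $\alpha^{-1}$ cancels the $\alpha$ from the weakened transversality, so that the per-pair bound is identical to the one in Proposition~\ref{prop3.7}, namely $A^{-(d-3)/2}N_1^{(d-6)/2}(L_0L_1L_2)^{1/2}\|\hat u\|\|\hat v\|\|\hat w\|$. Since $\bar\alpha(j_1,j_2)\sim A^{-1}$ together with $|\boldsymbol{\eta}|\sim N_0$ forces $A\gtrsim N_1/N_0$, summation over dyadic $A\in[N_1/N_0,N_1^6]$ introduces at most a $\log N_1$ factor, absorbed into $N_1^\varepsilon$, completing the reduction.

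The main obstacle is that the proof of Proposition~\ref{prop3.8} is not a single Loomis--Whitney application but involves the extensive subsidiary decompositions of Propositions~\ref{prop3.10}--\ref{prop3.23} (Whitney-type tiles, fine angular cuts in the near-parallel zones $\mathcal{I},\mathcal{K},\mathcal{K}'$, etc.) inherited from \cite{Ki2019}. Each of these must be revisited in the present regime to verify that the gained factor of~$\alpha^{-2}$ from the smaller $\xi_j$-support always matches the lost factor of~$\alpha^2$ from the transversality determinants; in particular, in the nearly-resonant and small-angle subcases, where in the original argument the determinants degenerate to lower-order expressions, the analogous degeneration must still allow the $\alpha$-bookkeeping to close. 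Verifying this uniformly across all the subcases is the principal technical burden.
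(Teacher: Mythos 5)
Your approach contains a genuine gap in the $\alpha$-bookkeeping, and the way you propose to use the factor $|\xi|$ discards precisely the information that is needed to close the argument.

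The first and fatal problem is that you bound $|\xi|\leq N_0$ and try to prove the $|\xi|$-free bound $N_0^{(d-4)/2+\e}N_1^{-1+\e}$ uniformly in $\alpha$. Under Assumption \ref{assumption2} the constraint $\max(|\xi_1|,|\xi_2|)\sim\alpha^{-1}N_1$ gives the much stronger bound $|\xi|\lesssim\alpha^{-1}N_1$, and the paper's proof (Propositions \ref{prop3.25}--\ref{prop3.26}) crucially exploits this: the $|\xi|$-free trilinear estimate that can actually be proved, (\ref{est01-prop3.25}), carries a factor $\alpha\,A^{-(d-3)/2}N_1^{(d-6)/2}$ — i.e.\ an \emph{extra} factor $\alpha$ compared to Proposition \ref{prop3.7} — and this extra $\alpha$ is only removed by multiplying with $|\xi|\lesssim\alpha^{-1}N_1$ (see the $A\sim 1$ case in the proof of Proposition \ref{prop3.26}, and the computation $\alpha^{-1}N_1\cdot\alpha N_1^{(d-6)/2}=N_1^{(d-4)/2}\sim N_0^{(d-2)/2}N_1^{-1}$). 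With your replacement $|\xi|\lesssim N_0$ the residual $\alpha$ survives and the sum over dyadic $\alpha$ up to $N_1^3$ diverges.

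The reason you believe the $\alpha$'s cancel is a double miscounting. First, after the $N_1$-rescaling the surfaces $S_i$ have diameter $\sim A^{-1}$ in the $(\eta,\eta')$-plane while the determinant is only $\sim A^{-1}\alpha^{-2}$, so the diameter condition (iii) of Proposition \ref{prop2.3} fails by a factor $\alpha^2$: you cannot simply quote Loomis--Whitney with the naive transversality $\delta\sim A^{-1}\alpha^{-2}$ and claim a gain $\delta^{-1/2}=A^{1/2}\alpha$. The paper's workaround is the subdivision into cells $\mathcal{G}_\ell^{\alpha,A}$ (losing $\alpha^{1/2}$ by Cauchy--Schwarz over $\sim\alpha$ cells) \emph{together with} the anisotropic rescaling $\eta\to\alpha^{1/2}\eta$, which restores the balance $\mathrm{diam}\sim\delta\sim(A\alpha)^{-1}$ and is essential to avoid an even larger loss; your proposal contains neither ingredient. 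Second, when you integrate back over the frozen $(\xi_1,\xi_2,\boldsymbol{\check{\eta}_1},\boldsymbol{\check{\eta}_2})$, the convolution structure forces one of the two $\xi$-variables into $L^1$ (Young), so the gain from the narrow support $A^{-1}\alpha^{-1}N_1$ is only $\alpha^{-1/2}$, not the $\alpha^{-1}$ you claim from ``each $\xi_j$-integration''. Putting the paper's careful accounting together, the per-slice loss is $\alpha^{3/2}A^{1/2}$ and the frozen-variable gain is $\alpha^{-1/2}(A^{-1}N_1)^{(d-2)/2}$, for a net residual $\alpha$ relative to Proposition \ref{prop3.7} — not the exact cancellation you assert.

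Finally, the paper's proof of Proposition \ref{prop3.24} (Section \ref{sec:proof-bil-2}) does not go through the full machinery of Proposition \ref{prop3.8}: there is no Whitney decomposition, no $\mathcal{K},\mathcal{K}'$ splitting, and no analogue of Propositions \ref{prop3.10}--\ref{prop3.23}. Instead, Proposition \ref{prop3.26} performs a direct decomposition in $|\eta_1+\eta_2|$ against a threshold $M^{-1}N_1$ (with $M\leq A$), treating the near-parallel case by Proposition \ref{prop3.25} (with the anisotropic rescaling) and the transverse case by elementary bilinear transversality estimates in the radial variable, and the $|\xi|$-factor is carried along throughout. Any attempt to reproduce the argument without keeping $|\xi|\lesssim\alpha^{-1}N_1$ inside every sub-estimate will run into the non-summable $\alpha$ noted above.
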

We first note that $\max(|\xi_1|, |\xi_2|) \leq  2 \alpha^{-1}N_1$ means 
$|\xi| \leq 4 \alpha^{-1} N_1$. 
Then, if $L_{012}^{\max} \gtrsim \alpha^{-1} N_1^3$, we easily get \eqref{est01-prop3.24} by utilizing the $L^4$ Strichartz estimate. Hereafter, we assume $L_{012}^{\max} \ll \alpha^{-1} N_1^3$.
\begin{defn}
Let $k = ( k_{(1)}, \ldots, k_{(d)} ) \in \Z^d$. We define cubes 
$\{ \mathcal{C}_{k}^{\alpha,A}\}_{k \in \Z^d}$ and $\{ \tilde{\mathcal{C}}_k^{\alpha,A}\}_{k \in \Z^d}$ as
\[\mathcal{C}_{k}^{\alpha,A} = \left\{ x=(x_1, \ldots, x_d) \in \R^d \left| \, 
\begin{aligned} & x_1 \in A^{-1} \alpha^{-1} N_1[k_{(1)}, k_{(1)}+1), \\ 
&x_i \in A^{-1} N_1 [k_{(i)}, k_{(i)}+1) \ 
\textnormal{for} \ i=2, \ldots, d,
 \end{aligned} \right.
                                                                            \right\},
\]
and $\tilde{\mathcal{C}}_{k}^{\alpha,A}  : = \R \times \mathcal{C}_{k}^{\alpha,A}$.
Lastly we define $\mathcal{E}_{j,k}^{\alpha,A} = {\overline{\mathcal{S}}}_{j}^A 
\cap \tilde{\mathcal{C}}_k^{\alpha,A}$.
\end{defn}
\begin{prop}\label{prop3.25}
Assume \textit{Assumption} \textnormal{\ref{assumption2}}. 
Let $16 \leq |j_1-j_2| \leq 32$ and $k_1$, $k_2 \in \Z^d$. Then we get
\begin{equation}
\begin{split}
& \left|\int_{*}{  \ha{w}_{{N_0, L_0}}(\tau, \xi, \boldsymbol{\eta} ) 
\ha{u}_{N_1, L_1}|_{\mathcal{E}_{j_1,k_1}^{\alpha,A} } (\tau_1, \xi_1, \boldsymbol{\eta_1} )  
\ha{v}_{N_2, L_2}|_{\mathcal{E}_{j_2,k_2}^{\alpha,A} } (\tau_2, \xi_2, \boldsymbol{\eta_2} ) 
}
d\sigma_1 d\sigma_2 \right| \\
& \qquad \qquad \qquad \lesssim  \alpha A^{-\frac{d-3}{2}} 
N_1^{\frac{d-6}{2}}  (L_0 L_1 L_2)^{\frac{1}{2}} \|\ha{u}_{N_1, L_1} \|_{L^2} \| \ha{v}_{N_2, L_2} \|_{L^2} 
\|\ha{w}_{{N_0, L_0}} \|_{L^2},
\end{split}\label{est01-prop3.25}
\end{equation}
where $d \sigma_j = d\tau_j d \xi_j d \boldsymbol{\eta}_j$ and $*$ denotes $(\tau, \xi, \boldsymbol{\eta}) = (\tau_1 + \tau_2, \xi_1+ \xi_2, \boldsymbol{\eta_1} + \boldsymbol{\eta_2}).$
\end{prop}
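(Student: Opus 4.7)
The proof of Proposition \ref{prop3.25} will follow the template of Proposition \ref{prop3.7}, adapted to the smaller range $|\xi_j|\lesssim \alpha^{-1}N_1$ encoded in Assumption \ref{assumption2}. First, using the rotational invariance of the phase $\xi(\xi^2+|\boldsymbol{\eta}|^2)$ in the $\boldsymbol{\eta}$-subspace, I would rotate so that after writing $\boldsymbol{\eta}_j=(\eta_j,\eta_j',\boldsymbol{\check{\eta}_j})\in\R\times\R\times\R^{d-3}$, the angular separation hypothesis $16\leq|j_1-j_2|\leq 32$ (aperture $A^{-1}$) guarantees
\begin{equation*}
|\eta_1\eta_2'-\eta_2\eta_1'|\sim A^{-1}N_1^2,\qquad |\boldsymbol{\eta}_j'|\lesssim A^{-1}N_1.
\end{equation*}

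Next I would freeze the parameters $(\xi_1,\xi_2,\boldsymbol{\check{\eta}_1},\boldsymbol{\check{\eta}_2})$ and, by Fubini, reduce the claim to the three-dimensional estimate
\begin{equation*}
\left|\int_{\tilde{*}} h_{\boldsymbol{\check{\eta}}}\,f_{\xi_1,\boldsymbol{\check{\eta}_1}}\,g_{\xi_2,\boldsymbol{\check{\eta}_2}}\,d\tilde{\sigma}_1\,d\tilde{\sigma}_2\right|\lesssim A^{1/2}N_1^{-2}(L_0L_1L_2)^{1/2}\|f_{\xi_1,\boldsymbol{\check{\eta}_1}}\|_{L^2_{\tau\eta\eta'}}\|g_{\xi_2,\boldsymbol{\check{\eta}_2}}\|_{L^2_{\tau\eta\eta'}}\|h_{\boldsymbol{\check{\eta}}}\|_{L^2_{\tau\eta\eta'}},
\end{equation*}
where $\tilde{*}$ is the constraint $(\tau,\eta,\eta')=(\tau_1+\tau_2,\eta_1+\eta_2,\eta_1'+\eta_2')$. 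Integrating the outer parameters, whose supports inside $\mathcal{C}_k^{\alpha,A}$ have total measures $A^{-1}\alpha^{-1}N_1$ in each $\xi_j$-direction and $(A^{-1}N_1)^{d-3}$ in each $\boldsymbol{\check{\eta}_j}$-direction, then applying Cauchy-Schwarz while exploiting the convolution structure $\xi=\xi_1+\xi_2$ and $\boldsymbol{\check{\eta}}=\boldsymbol{\check{\eta}_1}+\boldsymbol{\check{\eta}_2}$, will produce the extra factors $\alpha\cdot A^{-(d-3)/2}\cdot N_1^{(d-6)/2}/N_1^{1/2}$ (compared with the Proposition \ref{prop3.7} bookkeeping, one gains $\alpha^{1/2}$ per $\xi_j$-integration because each $\xi_j$ runs over a set of length $A^{-1}\alpha^{-1}N_1$ rather than $A^{-1}N_1$, doubled to $\alpha$).

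The reduced 3D estimate itself will be obtained through the nonlinear Loomis-Whitney inequality (Proposition \ref{prop2.3}). After the anisotropic rescaling $(\tau,\xi,\eta,\eta')\mapsto(\alpha^{-1}N_1^3\tau,\alpha^{-1}N_1\xi,N_1\eta,N_1\eta')$, the cube-localized supports become cubes of side $\sim A^{-1}$, and the phase becomes $\xi(\alpha^{-2}\xi^2+\eta^2+\eta'^2+|\tilde{\boldsymbol{\check{\eta}}}|^2)$. For fixed $\xi$ and $\tilde{\boldsymbol{\check{\eta}}}$, the resulting 2D surfaces $\tilde{S}_i$ (graphs over the $(\eta,\eta')$-plane) have unit normals of the form $\mathfrak{n}_i(\lambda_i)\propto(-1,2\xi_i\hat{\eta}_i,2\xi_i\hat{\eta}_i')$, and the crucial transversality computation gives
\begin{equation*}
|\det N(\hat{\lambda}_1,\hat{\lambda}_2,\hat{\lambda}_3)|\gtrsim|\hat{\eta}_1\hat{\eta}_2'-\hat{\eta}_2\hat{\eta}_1'|\cdot|\xi_1^2+\xi_1\xi_2+\xi_2^2|\gtrsim A^{-1},
\end{equation*}
since the rescaled $|\xi_j|\sim 1$ (the $\alpha$-factor is absorbed by the anisotropic scaling) and $|\hat{\eta}_1\hat{\eta}_2'-\hat{\eta}_2\hat{\eta}_1'|\sim A^{-1}$. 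The H\"older condition on the normals and the diameter bound $\sim A^{-1}$ are verified exactly as in the proof of Proposition \ref{prop3.7}.

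The main obstacle will be the careful bookkeeping of the scaling factors: the anisotropy between $\xi$-support ($\sim\alpha^{-1}N_1$) and $\boldsymbol{\eta}$-support ($\sim N_1$) forces the time-rescaling to be $\alpha^{-1}N_1^3$ rather than $N_1^3$, and this modifies the Jacobian in the passage between the rescaled Loomis-Whitney bound ($\sim A^{1/2}$) and the unrescaled 3D estimate. One must also verify that the $\alpha^{-2}\xi^2$ term in the rescaled phase remains a harmless perturbation of the dominant $\xi|\boldsymbol{\eta}|^2$-part when checking the H\"older regularity and diameter of the surfaces, which is immediate since $\alpha\geq 2^5$ so that $\alpha^{-2}\xi^2\ll\eta^2+\eta'^2+|\tilde{\boldsymbol{\check{\eta}}}|^2$ generically on the rescaled cube.
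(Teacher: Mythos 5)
Your anisotropic rescaling $(\tau,\eta,\eta')\mapsto(\alpha^{-1}N_1^3\tau,N_1\eta,N_1\eta')$ is a genuinely different route from the paper's. The paper does not rescale $\tau$ anisotropically; instead it keeps the isotropic $(N_1^3,N_1,N_1)$-scaling, under which (because $|\xi_j|\sim\alpha^{-1}N_1$) the normals become nearly parallel to $(-1,0,0)$, so the transversality is only $\sim A^{-1}\alpha^{-2}$ while the diameter of the surfaces is $\sim A^{-1}$. To reconcile these the paper introduces the subsquares $\mathcal{G}_\ell^{\alpha,A}$ of side $A^{-1}\alpha^{-1/2}N_1$ (there are $\sim\alpha$ of them in each cube $\mathcal{C}_k^{\alpha,A}$), restricts to one $\ell_1$ at a time, and then rescales $(\eta,\eta')$ by $\alpha^{1/2}$ so that diameter and transversality both become $\sim A^{-1}\alpha^{-1}$; the Loomis--Whitney bound $(A\alpha)^{1/2}$ is then translated back and summed over $\ell_1$ by Cauchy--Schwarz, producing the $\alpha^{3/2}A^{1/2}N_1^{-2}$ in \eqref{est01a-prop3.25}. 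Your rescaling matches the natural $\tau$-scale $\alpha^{-1}N_1^3$ of the phase (rather than $N_1^3$), so $|\tilde\xi_j|\sim1$, the normals $\propto(-1,2\tilde\xi_j\eta_j,2\tilde\xi_j\eta_j')$ are well-separated, and both diameter and transversality come out $\sim A^{-1}$. This removes the diameter/transversality mismatch and hence the $\mathcal{G}_\ell$-decomposition altogether. The transversality formula $|\det N|\gtrsim|\hat\eta_1\hat\eta_2'-\hat\eta_2\hat\eta_1'|\,|\tilde\xi_1^2+\tilde\xi_1\tilde\xi_2+\tilde\xi_2^2|\gtrsim A^{-1}$ you write down is correct (using $\tilde\xi_1^2+\tilde\xi_1\tilde\xi_2+\tilde\xi_2^2\geq\tfrac12\max(\tilde\xi_1^2,\tilde\xi_2^2)\sim1$), and the H\"older condition holds since the Hessian $2\tilde\xi_j I_2$ is now $O(1)$. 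This is a cleaner argument.

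However, the factor bookkeeping in your proposal does not close, and as written it does not imply the claimed estimate. Your reduced three-dimensional estimate is stated with constant $A^{1/2}N_1^{-2}$, but the $\tau$-rescaling by $\alpha^{-1}N_1^3$ (rather than $N_1^3$) introduces a genuine factor $\alpha$ that you have dropped: the modulation thickness of the rescaled $\tilde h$ is $\alpha L_0 N_1^{-3}$, contributing $\alpha^{1/2}$ after the Loomis--Whitney thickening step, and the conversion of $\|\tilde h\|_{L^2_{\tilde\tau\eta\eta'}}$ back to $\|h\|_{L^2_{\tau\eta\eta'}}$ costs another $\alpha^{1/2}$ from the $\tau$-Jacobian. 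Tracking these gives the three-dimensional estimate with constant $\alpha A^{1/2}N_1^{-2}(L_0L_1L_2)^{1/2}$ (the analogue of \eqref{est01a-prop3.25}, and note this is already $\alpha^{1/2}$ \emph{better} than the paper's $\alpha^{3/2}A^{1/2}N_1^{-2}$, since you avoid the $\ell_1$-summation loss). Your ``extra factors'' from the outer Cauchy--Schwarz over $(\xi_1,\boldsymbol{\check{\eta}}_1)$ are also not correct: the support has measure $(A^{-1}\alpha^{-1}N_1)(A^{-1}N_1)^{d-3}$, whose square root is $\alpha^{-1/2}A^{-(d-2)/2}N_1^{(d-2)/2}$, not ``$\alpha\cdot A^{-(d-3)/2}N_1^{(d-6)/2}/N_1^{1/2}$''; multiplying with the 3D constant and checking against the statement \eqref{est01-prop3.25}, your stated powers of $A$ and $N_1$ are off by $A^{1/2}N_1^{-5/2}$. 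You flagged the scaling bookkeeping as the main obstacle yourself, and indeed it is the part that needs to be redone; with the corrected factors the argument should yield the (slightly stronger) bound $\alpha^{1/2}A^{-(d-3)/2}N_1^{(d-6)/2}(L_0L_1L_2)^{1/2}$.
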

\begin{proof}
We use the same notations as in the proof of Proposition \ref{prop3.7}. 
Similarly to the proof of Proposition \ref{prop3.7}, by applying a suitable rotation to 
$\boldsymbol{\eta_1}$, $\boldsymbol{\eta_2}$, 
we can assume $|\eta_1 \eta_2' - \eta_2 \eta_1'| \sim A^{-1} N_1^2$, 
$|\boldsymbol{\eta_j'}| \lesssim A^{-1} N_1$ and, for fixed $\xi_1$, $\xi_2$, $\boldsymbol{\check{\eta}_1}$, 
$\boldsymbol{\check{\eta}_2}$, we will show
\begin{align}
& \left|\int_{\tilde{*}}{  \ha{w}_{{N_0, L_0}}(\tau, \xi, \boldsymbol{\eta} ) 
\ha{u}_{N_1, L_1}|_{\mathcal{E}_{j_1,k_1}^{\alpha,A} } (\tau_1, \xi_1, \boldsymbol{\eta_1} )  
\ha{v}_{N_2, L_2}|_{\mathcal{E}_{j_2,k_2}^{\alpha,A} } (\tau_2, \xi_2, \boldsymbol{\eta_2} ) 
}
d\tilde{\sigma}_1 d\tilde{\sigma}_2 \right| \notag \\
 \lesssim & \alpha^{\frac32} A^{\frac{1}{2}} N_{1}^{-2}   (L_0 L_1 L_2)^{\frac{1}{2}} 
\|\ha{u}_{N_1, L_1} (\xi_1,\boldsymbol{\check{\eta}_1}) \|_{L^2_{\tau \eta \eta' }} \| \ha{v}_{N_2, L_2} (\xi_2,\boldsymbol{\check{\eta}_2})\|_{L^2_{\tau \eta \eta'  }} 
\|\ha{w}_{{N_0, L_0}}(\xi, \boldsymbol{\check{\eta}}) \|_{L^2_{\tau \eta \eta' }}.\label{est01a-prop3.25}
\end{align}
Let $\ell=(\ell_{(1)},\ell_{(2)}) \in \Z^2$ and
\begin{align*}
& \mathcal{G}_{\ell}^{\alpha,A} : = \{ (\eta, \eta') \in A^{-1}\alpha^{-\frac{1}{2}} N_1 \bigl( 
[ \ell_{(1)}, \ell_{(1)} + 1) \times [ \ell_{(2)}, \ell_{(2)} + 1) \bigr) \},\\
& \tilde{\mathcal{G}}_{\ell}^{\alpha,A}  : = \R^2 \times \mathcal{G}_{\ell}^{\alpha,A} \times \R^{d-3}, \quad 
\mathcal{H}_{j,k,\ell}^{\alpha,A} := {\overline{\mathcal{S}}}_{j}^A \cap  \tilde{\mathcal{C}}_k^{\alpha,A} \cap  \tilde{\mathcal{G}}_\ell^{\alpha,A}.
\end{align*}
Define $\mathcal{L}_{k_1}^{\alpha,A} =
\{\ell \in \Z^2 \, | \, {\tilde{\mathcal{C}}_{k_1}^{\alpha,A}} \cap \tilde{\mathcal{G}}_{\ell}^{\alpha,A} \not= \emptyset \}$. 
We easily observe that the number of $\ell_1 \in \mathcal{L}_{k_1}^{\alpha,A}$ is comparable to $\alpha$. 
Therefore, for fixed $\ell_1 \in \mathcal{L}_{k_1}^{\alpha,A}$, it suffices to show
\begin{align}
& \left|\int_{\tilde{*}}{  \ha{w}_{{N_0, L_0}}(\tau, \xi, \boldsymbol{\eta} ) 
\ha{u}_{N_1, L_1}|_{\mathcal{H}_{j_1,k_1,\ell_1}^{\alpha,A} } (\tau_1, \xi_1, \boldsymbol{\eta_1} )  
\ha{v}_{N_2, L_2}|_{\mathcal{E}_{j_2,k_2}^{\alpha,A} } (\tau_2, \xi_2, \boldsymbol{\eta_2} ) 
}
d\tilde{\sigma}_1 d\tilde{\sigma}_2 \right|\notag \\
& \qquad \qquad \lesssim  \alpha A^{\frac{1}{2}}N_{1}^{-2}   (L_0 L_1 L_2)^{\frac{1}{2}} \|\ha{u}_{N_1, L_1} \|_{L^2_{\tau \eta \eta'}}   \| \ha{v}_{N_2, L_2} \|_{L^2_{\tau \eta \eta'}}  
\|\ha{w}_{{N_0, L_0}} \|_{L^2_{\tau \eta \eta'}} .\label{est02-prop3.25}
\end{align}
Indeed, by using this estimate, we have
\begin{align*}
 \textnormal{(LHS) of } & \eqref{est01a-prop3.25} 
\leq \sum_{\ell \in \mathcal{L}_{k_1}^{\alpha,A}} 
\textnormal{(LHS) of \eqref{est02-prop3.25}}\\
& \lesssim  \alpha  A^{\frac{1}{2}}N_{1}^{-2}  (L_0 L_1 L_2)^{\frac{1}{2}} 
\sum_{\ell_1 \in \mathcal{L}_{k_1}^{\alpha,A}} 
\|\ha{u}_{N_1, L_1}|_{\mathcal{H}_{j_1,k_1\ell_1}^{\alpha,A} } \|_{L^2_{\tau \eta \eta'}}   
\| \ha{v}_{N_2, L_2} \|_{L^2_{\tau \eta \eta'}}  
\|\ha{w}_{{N_0, L_0}} \|_{L^2_{\tau \eta \eta'}}  \\
& \lesssim  \alpha^{\frac32}  A^{\frac{1}{2}}N_{1}^{-2}  (L_0 L_1 L_2)^{\frac{1}{2}} 
\|\ha{u}_{N_1, L_1} \|_{L^2_{\tau \eta \eta'}}  \| \ha{v}_{N_2, L_2} \|_{L^2_{\tau \eta \eta'}}   
\|\ha{w}_{{N_0, L_0}} \|_{L^2_{\tau \eta \eta'}}  .
\end{align*}
We show \eqref{est02-prop3.25}. By the almost orthogonality, we can assume that 
$\ha{v}_{N_2, L_2}$ is restricted to $\mathcal{H}_{j_2,k_2,\ell_2}^{\alpha,A}$ with fixed 
$\ell_2 \in \mathcal{L}_{k_2}^{\alpha,A}$. 
By the same argument as in the proof of Proposition \ref{prop3.7}, we establish
\begin{equation}
\| \tilde{f}_{\xi_1, \boldsymbol{\check{\eta}_1}} |_{S_1} * 
\tilde{g}_{\xi_2, \boldsymbol{\check{\eta}_2}} |_{S_2} \|_{L^2(S_3)} \lesssim \alpha A^{\frac{1}{2}}  
\| \tilde{f}_{\xi_1, \boldsymbol{\check{\eta}_1}} \|_{L^2(S_1)} 
\| \tilde{g}_{\xi_2, \boldsymbol{\check{\eta}_2}} \|_{L^2(S_2)},\label{est03-prop3.25}
\end{equation}
where we used the similar notations that were defined in the proof of Proposition \ref{prop3.7}:
The functions are
\begin{align*}
& \tilde{f}_{\xi_1, \boldsymbol{\check{\eta}_1}}(\tau_1, \eta_1, \eta_1' )  = 
\ha{u}_{N_1, L_1}|_{\mathcal{H}_{j_1,k_1,\ell_1}^{\alpha,A} } (N_1^3\tau_1, \xi_1, N_1 \eta_1, N_1 \eta_1',\boldsymbol{\check{\eta}_1} ), \\
& \tilde{g}_{\xi_2, \boldsymbol{\check{\eta}_2}}(\tau_2, \eta_2, \eta_2')  = 
\ha{v}_{N_2, L_2}|_{\mathcal{H}_{j_2,k_2,\ell_2}^{\alpha,A} } 
                                                                                                                                               (N_1^3\tau_2, \xi_2, N_1\eta_2, N_1 \eta_2', \boldsymbol{\check{\eta}_2}),
\end{align*}
and with $\phi_{\xi_j, \boldsymbol{\check{\eta}_j}, c_j}  (\eta, \eta')  = (\xi_j(\xi_j^2 + |\boldsymbol{\eta}|^2) + c_j, 
\eta, \eta' )$, we define
\begin{align*}
& S_j =  \{  \phi_{\tilde{\xi}_j, \boldsymbol{\overline{\eta}_j}, c_j}  (\eta_j, \eta_j') \in \R^3 \ | \ 
(N_1 \eta_j, N_1 \eta_j')  \in  \mathcal{G}_{\ell_j}^{\alpha,A}\},\; (j=1,2),\\
& S_3 = \left\{ (\psi_{\tilde{\xi},  \boldsymbol{\bar{\eta}} } (\eta, \eta'), \eta, \eta' ) \in \R^3  \ | \  
\psi_{\xi, \boldsymbol{\check{\eta}}}  (\eta, \eta') = \xi(\xi^2 + |\boldsymbol{\eta}|^2 ) + c_0 \right\},
\end{align*}
where $c_0$, $c_1$, $c_2 \in \R$, $\tilde{\xi} = N_1^{-1} \xi$, $\tilde{\xi}_j = N_1^{-1} \xi_j$, 
$\boldsymbol{\overline{\eta}_j} = N_1^{-1}\boldsymbol{\check{\eta}_j}$, 
$\boldsymbol{\overline{\eta}} = N_1^{-1}\boldsymbol{\check{\eta}}$. 
Since $\textnormal{diam} (S_1) \lesssim A^{-1}\alpha^{-1/2}$, 
$\textnormal{diam} (S_2) \lesssim A^{-1}\alpha^{-1/2}$, we may assume 
$\textnormal{diam} (S_3) \lesssim A^{-1}\alpha^{-1/2}$. 
We establish \eqref{est03-prop3.25} by using the nonlinear Loomis-Whitney inequality. 
However, it is observed that the hypersurfaces $S_1$, $S_2$, $S_3$ do not satisfy the necessary diameter condition. 
To be specific, the diameters of the three hypersurfaces are all comparable to $A^{-1} \alpha^{-1/2}$ and the transversality is comparable to $A^{-1} \alpha^{-2}$. To overcome this difficulty, we employ new functions.
\begin{equation*}
\tilde{f}_{\xi_1, \boldsymbol{\check{\eta}_1}}^{\alpha}(\tau_1, \eta_1, \eta_1' ) = 
\tilde{f}_{\xi_1, \boldsymbol{\check{\eta}_1}}
(\tau_1,  \alpha^{\frac{1}{2}} \eta_1,  \alpha^{\frac{1}{2}} \eta_1' ), \quad 
\tilde{g}_{\xi_2, \boldsymbol{\check{\eta}_2}}^{\alpha}(\tau_2, \eta_2, \eta_2')  = 
\tilde{g}_{\xi_2, \boldsymbol{\check{\eta}_2}}(\tau_2, \alpha^{\frac{1}{2}} \eta_2, \alpha^{\frac{1}{2}}\eta_2').
\end{equation*}
Then, \eqref{est03-prop3.25} can be rewritten as
\begin{equation}
\| \tilde{f}_{\xi_1, \boldsymbol{\check{\eta}_1}}^{\alpha} |_{S_1^\alpha} * 
\tilde{g}_{\xi_2, \boldsymbol{\check{\eta}_2}}^{\alpha}|_{S_2^\alpha} \|_{L^2(S_3^\alpha)} 
\lesssim (A\alpha)^{\frac{1}{2}} 
\| \tilde{f}_{\xi_1, \boldsymbol{\check{\eta}_1}}^{\alpha} \|_{L^2(S_1^\alpha)} 
\| \tilde{g}_{\xi_2, \boldsymbol{\check{\eta}_2}}^{\alpha} \|_{L^2(S_2^\alpha)},\label{est04-prop3.25}
\end{equation}
where $\phi_{\xi_j, \boldsymbol{\check{\eta}_j}, c_j}^{\alpha}  (\eta, \eta')  
= (\xi_j(\xi_j^2 + \alpha (\eta^2 + {\eta'}^2)+|\boldsymbol{\check{\eta}}|^2) + c_j, 
\eta, \eta' )$ and
\begin{align*}
& S_j^\alpha 
=  \{  \phi_{\tilde{\xi}_j, \boldsymbol{\overline{\eta}_j}, c_j}^\alpha  (\eta_j, \eta_j') \in \R^3 \ | \ 
( \alpha^{\frac{1}{2}} N_1 \eta_j, \alpha^{\frac{1}{2}}  N_1 \eta_j')  \in  \mathcal{G}_{\ell_j}^{\alpha,A}\}, \; (j=1,2)\\
& S_3^\alpha 
= \left\{ (\psi_{\tilde{\xi},  \boldsymbol{\bar{\eta}} }^\alpha 
(\eta, \eta'), \eta, \eta' ) \in \R^3  \ | \  
\psi_{\xi, \boldsymbol{\check{\eta}}}^\alpha  (\eta, \eta') 
= \xi(\xi^2 +\alpha (\eta^2 + {\eta'}^2)+|\boldsymbol{\check{\eta}}|^2 ) + c_0 \right\},
\end{align*}
Now we verify that the hypersurfaces $S_1^\alpha$, $S_2^\alpha$, $S_3^\alpha$ satisfy the suitable conditions to utilize the nonlinear Loomis-Whitney inequality. 
Let 
\begin{equation*}
\lambda_1=\phi_{\tilde{\xi}_1, \boldsymbol{\overline{\eta}_1}, c_1}^\alpha  (\eta_1, \eta_1') 
\in S_1^\alpha, \ 
\lambda_2 = \phi_{\tilde{\xi}_2, \boldsymbol{\overline{\eta}_2}, c_2}^\alpha  (\eta_2, \eta_2') 
\in S_2^\alpha, \
\lambda_3 = (\psi_{\tilde{\xi}, \boldsymbol{\bar{\eta}} }^\alpha 
(\eta, \eta'), \eta, \eta' )  \in S_3^\alpha.
\end{equation*}
We can write the unit normals ${\mathfrak{n}}_1(\lambda_1)$, ${\mathfrak{n}}_2(\lambda_2)$, 
${\mathfrak{n}}_3(\lambda_3)$ on $\lambda_1$, $\lambda_2$, $\lambda_3$ as
\[ {\mathfrak{n}}_j(\lambda_j) = 
\frac{1}{\sqrt{1+ 4\alpha^2 {\tilde{\xi}_j}^2 |\boldsymbol{\eta_j}|^2}} 
\left(-1, \ 2 \alpha \tilde{\xi}_j \eta_j, \ 2 \alpha \tilde{\xi}_j \eta_j' \right),\; (j=1,2)
\]
and ${\mathfrak{n}}_3(\lambda_3)$ accordingly. 
Since $|\tilde{\xi}_1| \leq 2 \alpha^{-1}$, $|\tilde{\xi}_2| \leq 2 \alpha^{-1}$, 
we easily observe that the hypersurfaces satisfy the necessary regularity conditions, and the diameters of hypersurfaces are all comparable to $A^{-1} \alpha^{-1}$. 
Thus, we consider the transversality here. 
Let $(\widehat{\eta_1}, \widehat{\eta_1}')$, 
$(\widehat{\eta_2}, \widehat{\eta_2}')$, 
$(\widehat{\eta}, {\widehat{\eta}}')$ satisfy 
$(\widehat{\eta_1}, \widehat{\eta_1}') + (\widehat{\eta_2}, \widehat{\eta_2}')  = 
(\widehat{\eta}, {\widehat{\eta}}')$ and
\begin{equation*}
\widehat{\lambda}_1=\phi_{\tilde{\xi}_1, \boldsymbol{\overline{\eta}_1}, c_1}^\alpha 
(\widehat{\eta_1}, \widehat{\eta_1}')
\in S_1^\alpha, \ 
\widehat{\lambda}_2 = \phi_{\tilde{\xi}_2, \boldsymbol{\overline{\eta}_2}, c_2}^\alpha 
(\widehat{\eta_2}, \widehat{\eta_2}')
\in S_2^\alpha, \
\widehat{\lambda}_3 = (\psi_{\tilde{\xi}, \boldsymbol{\bar{\eta}} }^\alpha 
(\widehat{\eta}, {\widehat{\eta}}'), \widehat{\eta}, {\widehat{\eta}}' )  \in S_3^\alpha.
\end{equation*}
It suffices to show
\begin{equation*}
(A\alpha)^{-1} \lesssim |\textnormal{det} N(\widehat{\lambda}_1, \widehat{\lambda}_2, \widehat{\lambda}_3)|.
\end{equation*}
We have
\begin{align*}
 |\textnormal{det} N(\widehat{\lambda}_1, \widehat{\lambda}_2, \widehat{\lambda}_3)|  \gtrsim & 
 \left|\textnormal{det}
\begin{pmatrix}
-1 & -1 & - 1 \\
2 \alpha \tilde{\xi}_1 \widehat{\eta_1}  & 2 \alpha \tilde{\xi}_2 \widehat{\eta_2} & 2 \alpha \tilde{\xi} \widehat{\eta} \\
2 \alpha \tilde{\xi}_1 \widehat{\eta_1}'   & 2 \alpha \tilde{\xi}_2 \widehat{\eta_2}'  & 2 \alpha \tilde{\xi} \widehat{\eta}'
\end{pmatrix} \right| \notag \\
\gtrsim &  \alpha^2\bigl| (\widehat{\eta_1} \widehat{\eta_2}' - \widehat{\eta_2} \widehat{\eta_1}')(\tilde{\xi}_1^2 + \tilde{\xi}_1 
\tilde{\xi}_2 + \tilde{\xi}_2^2 ) \bigr|
\gtrsim  A^{-1} \alpha^{-1}.
\end{align*}
Here we used the assumptions $\alpha^{-1}N_1 \leq \max(|\xi_1|, |\xi_2|) \leq  2 \alpha^{-1}N_1$ which implies 
$|\tilde{\xi}_1^2 + \tilde{\xi}_1 
\tilde{\xi}_2 + \tilde{\xi}_2^2 | \sim \alpha^{-2}$, and $|(\widehat{\eta_1}, \widehat{\eta_1}')| 
\sim |(\widehat{\eta_2}, \widehat{\eta_2}')| \sim \alpha^{-1/2}$, 
$|\eta_1 \eta_2' - \eta_2 \eta_1'| \sim A^{-1} N_1^2$ which imply 
$|\widehat{\eta_1} \widehat{\eta_2}' - \widehat{\eta_2} \widehat{\eta_1}'|\sim (A\alpha)^{-1} $.
\end{proof}
\begin{prop}\label{prop3.26}
Assume \textit{Assumption} \textnormal{\ref{assumption2}}. Let $16 \leq |j_1-j_2| \leq 32$. Then we obtain
\begin{equation}
\begin{split}
& \left|\int_{*}{  |\xi| \, \ha{w}_{{N_0, L_0}}(\tau, \xi, \boldsymbol{\eta} ) 
\ha{u}_{N_1, L_1}|_{{\overline{\mathcal{S}}}_{j_1}^A }(\tau_1, \xi_1, \boldsymbol{\eta_1} ) 
\ha{v}_{N_2, L_2}|_{{\overline{\mathcal{S}}}_{j_2}^A }(\tau_2, \xi_2, \boldsymbol{\eta_2} ) 
}
d\sigma_1 d\sigma_2 \right| \\
& \qquad \quad  \lesssim N_0^{\frac{d-2}{2}}  N_{1}^{-1}   
(L_0 L_1 L_2)^{\frac{1}{2}} \|\ha{u}_{N_1, L_1}|_{{\overline{\mathcal{S}}}_{j_1}^A } \|_{L^2} 
\| \ha{v}_{N_2, L_2}|_{{\overline{\mathcal{S}}}_{j_2}^A }\|_{L^2} 
\|\ha{w}_{{N_0, L_0}} \|_{L^2},\label{est01-prop3.26}
\end{split}
\end{equation}
where $d \sigma_j = d\tau_j d \xi_j d \boldsymbol{\eta}_j$ and $*$ denotes $(\tau, \xi, \boldsymbol{\eta}) = (\tau_1 + \tau_2, \xi_1+ \xi_2, \boldsymbol{\eta_1} + \boldsymbol{\eta_2}).$
\end{prop}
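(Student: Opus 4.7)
The plan is to decompose each angular sector $\overline{\mathcal S}_{j_l}^A$ into the rectangular cubes $\tilde{\mathcal C}_{k_l}^{\alpha,A}$ introduced in the preceding definition and apply Proposition \ref{prop3.25} to each pair whose cube sum is compatible with the spatial Fourier support of $\hat w_{N_0, L_0}$. Concretely, I would write
\[
\hat u_{N_1, L_1}|_{\overline{\mathcal S}_{j_1}^A} = \sum_{k_1\in\Z^d} \hat u_{N_1, L_1}|_{\mathcal E_{j_1, k_1}^{\alpha, A}}, \qquad \hat v_{N_2, L_2}|_{\overline{\mathcal S}_{j_2}^A} = \sum_{k_2\in\Z^d} \hat v_{N_2, L_2}|_{\mathcal E_{j_2, k_2}^{\alpha, A}},
\]
and expand the trilinear integral. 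Frequency conservation together with $\supp\hat w_{N_0, L_0}\subset\{|(\xi,\boldsymbol\eta)|\sim N_0\}$ restricts contributing pairs to those with $\mathcal C_{k_1}^{\alpha, A}+\mathcal C_{k_2}^{\alpha, A}$ meeting the $N_0$-ball; this orthogonality is what keeps the sum tractable.

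Next I exploit the two gains available in this regime. The first gain is the pointwise bound $|\xi|=|\xi_1+\xi_2|\le 4\alpha^{-1}N_1$, which follows directly from Assumption \ref{assumption2} via $|\xi_j|\le \max(|\xi_1|,|\xi_2|)\le 2\alpha^{-1}N_1$. Pulled outside the integral, this factor cancels exactly the $\alpha$-loss present in Proposition \ref{prop3.25}, giving, for each compatible pair,
\[
|\xi|\cdot|I_{k_1, k_2}| \lesssim A^{-(d-3)/2}\,N_1^{(d-4)/2}\,(L_0 L_1 L_2)^{1/2}\,\|\hat u|_{\mathcal E_{j_1, k_1}^{\alpha, A}}\|_{L^2}\,\|\hat v|_{\mathcal E_{j_2, k_2}^{\alpha, A}}\|_{L^2}\,\|\hat w_{N_0, L_0}\|_{L^2},
\]
which is now independent of $\alpha$. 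The second gain is the sharper dimensional scaling provided by the factor $A^{-(d-3)/2}$: with the natural choice $A\sim N_1/N_0$, the cube sidelength $A^{-1}N_1$ in each $\boldsymbol\eta$-direction matches $N_0$, making almost orthogonality in the $\boldsymbol\eta$-variables essentially tight and producing the correct $N_0^{(d-2)/2}$ power.

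Finally I sum over compatible $(k_1, k_2)$ via a Schur / Cauchy--Schwarz argument: the $L^2$ orthogonality of the cube decompositions of $\hat u$ and $\hat v$ gives
\[
\sum_{k_1, k_2\text{ compat.}} \|\hat u|_{\mathcal E_{j_1, k_1}^{\alpha, A}}\|_{L^2}\,\|\hat v|_{\mathcal E_{j_2, k_2}^{\alpha, A}}\|_{L^2} \lesssim K^{1/2}\,\|\hat u_{N_1, L_1}\|_{L^2}\,\|\hat v_{N_2, L_2}\|_{L^2},
\]
where $K$ counts compatible cube-pairs, bounded using that $\hat w_{N_0, L_0}$ has $\xi$-support of size $\min(N_0, \alpha^{-1}N_1)$ while the cubes have $\xi$-side $A^{-1}\alpha^{-1}N_1$. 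Combining this with the per-pair bound and the choice $A\sim N_1/N_0$ produces the desired $N_0^{(d-2)/2}N_1^{-1}$.

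The main obstacle I foresee is the delicate bookkeeping of the compatibility factor $K$: because the cubes $\tilde{\mathcal C}^{\alpha,A}$ are anisotropic, being $\alpha$ times narrower in the $\xi$-direction, a naive Schur estimate would reintroduce an $O(\alpha)$ loss on top of Proposition \ref{prop3.25}'s already cancelled $\alpha$. Closing the estimate uniformly in $\alpha$ requires matching the narrow $\xi$-support of $\hat w_{N_0, L_0}$ to the narrow $\xi$-side of the cubes so that the $\xi$-direction contributes only $O(1)$ to the compatibility count, while exploiting the $A\sim N_1/N_0$ matching in the $\boldsymbol\eta$-directions to keep those contributions bounded as well.
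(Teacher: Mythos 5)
Your plan to decompose both sectors into the anisotropic cubes $\tilde{\mathcal C}^{\alpha,A}_k$, apply Proposition~\ref{prop3.25} to each pair, and then sum does not close, and the obstacle you flag at the end is in fact fatal. The $\xi$-support of $\hat w_{N_0,L_0}$ has width $\sim\alpha^{-1}N_1$ (this is all that Assumption~\ref{assumption2} gives), while the $\xi$-side of each cube is $A^{-1}\alpha^{-1}N_1$. These do \emph{not} match; they differ by a factor of $A$. Thus for fixed $k_1$, the number of compatible $k_2$ in the $\xi$-direction is $\sim A$, not $O(1)$. Even if you extract the full orthogonality gain (which, done carefully, gives the number-of-$w$-cubes to the power $1/2$, not the Schur bound), the resulting factor is at least $A^{1/2}$. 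Combining with $|\xi|\cdot\alpha A^{-(d-3)/2}N_1^{(d-6)/2}\lesssim A^{-(d-3)/2}N_1^{(d-4)/2}$, one finds (e.g. at the natural regime $N_0\sim A^{-1}N_1$) that the resulting bound exceeds the target $N_0^{(d-2)/2}N_1^{-1}$ by a factor of at least $A$. A second, structural point: $A$ is \emph{given} in Proposition~\ref{prop3.26} as the aperture of the sectors $\overline{\mathcal S}^A_{j_1},\overline{\mathcal S}^A_{j_2}$; it is not a free parameter you can choose, and the estimate must hold uniformly in $A$ because the sum over $A$ is carried out afterwards.

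The paper's proof does not simply sum Proposition~\ref{prop3.25}; it performs a further Whitney-type angular decomposition in $\boldsymbol\eta$ using the sectors $\tilde{\mathfrak D}^{\alpha M}_k$ with $M$ ranging from $2$ up to $A$. For the well-separated pairs $16\le|k_1-k_2|\le 32$ at scale $M$, it splits according to whether $|\eta_1+\eta_2|\lesssim M^{-1}N_1$ or $\gg M^{-1}N_1$: the former is handled by a nonlinear Loomis--Whitney argument, the latter by resonance lower bounds plus bilinear transversal ($L^2$-Strichartz-type) estimates. Proposition~\ref{prop3.25} is only invoked at the endpoint of the Whitney decomposition, i.e.\ when $|k_1-k_2|\le 16$ at scale $A$. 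There the crucial extra ingredient --- which your argument misses --- is the geometric observation (proved in the paper via polar coordinates) that $|\eta_1+\eta_2|\lesssim A^{-1}N_1$ together with Assumption~\ref{assumption2}(2) forces $|\xi_1+\xi_2|\lesssim (A\alpha)^{-1}N_1$. This is an \emph{extra} factor of $A^{-1}$ in the bound for $|\xi|$ compared with the global bound $\alpha^{-1}N_1$ you use, and it is exactly what makes the endpoint application of Proposition~\ref{prop3.25} close. Summing over $M$, the two intermediate estimates carry factors $M^{-1/2}$ and $A^{-1/2}M^{1/2}$, which sum geometrically to $O(1)$ and give \eqref{est01-prop3.26}. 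In short, you need the additional Whitney decomposition and the refined $\xi$-bound; the straight cube-sum with the crude $|\xi|\lesssim\alpha^{-1}N_1$ bound cannot give the stated estimate.
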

\begin{proof}
In the case $A \sim 1$, since $|\xi| \lesssim \alpha^{-1}N_1$ and $N_0 \sim N_1 \sim N_2$, Proposition \ref{prop3.25} immediately gives \eqref{est01-prop3.26}. 
Therefore, we assume $A \gg 1$. 
Furthermore, without loss of generality, 
we can assume $|\eta_1 \eta_2' - \eta_2 \eta_1'| \sim A^{-1} N_1^2$, 
$|\boldsymbol{\eta_j'}| \lesssim A^{-1} N_1$. 
Let $M$ be dyadic such that $2 \leq M \leq A$ 
and suppose that $k_1$, $k_2$ satisfy $16 \leq |k_1-k_2| \leq 32$. 
We first show the following inequality.
\begin{equation}
\begin{split}
& \left|\int_{*}{  |\xi| \, \ha{w}_{{N_0, L_0}}(\tau, \xi, \boldsymbol{\eta} ) 
\ha{u}_{N_1, L_1}|_{\tilde{\mathfrak{D}}_{k_1}^{\alpha M}} (\tau_1, \xi_1, \boldsymbol{\eta_1} )  
\ha{v}_{N_2, L_2}|_{\tilde{\mathfrak{D}}_{k_2}^{\alpha M}}  (\tau_2, \xi_2, \boldsymbol{\eta_2} ) 
}
d\sigma_1 d\sigma_2 \right| \\
&  \lesssim  (M^{-\frac{1}{2}}+A^{-\frac{1}{2}} M^{\frac{1}{2}}) N_0^{\frac{d-2}{2}}  N_{1}^{-1}   
(L_0 L_1 L_2)^{\frac{1}{2}} \|\ha{u}_{N_1, L_1} \|_{L^2} \| \ha{v}_{N_2, L_2} \|_{L^2} 
\|\ha{w}_{{N_0, L_0}} \|_{L^2},
\end{split}\label{est02-prop3.26}
\end{equation}
We divide the proof of \eqref{est02-prop3.26} into the two cases 
$|\eta_1+\eta_2| \lesssim M^{-1}N_1$ and $|\eta_1+\eta_2| \gg M^{-1}N_1$.\\
\underline{Case $|\eta_1+\eta_2| \lesssim M^{-1}N_1$}

Let us fix $\boldsymbol{\eta_1'}$, $\boldsymbol{\eta_2'}$. 
\eqref{est02-prop3.26} is verified by showing the following estimate. 
\begin{equation}
\begin{split}
& \left|\int_{\hat{*}}{  |\xi| \, \ha{w}_{{N_0, L_0}}(\tau, \xi, \boldsymbol{\eta} ) 
\ha{u}_{N_1, L_1}|_{\tilde{\mathfrak{D}}_{k_1}^{\alpha M}} (\tau_1, \xi_1, \boldsymbol{\eta_1} )  
\ha{v}_{N_2, L_2}|_{\tilde{\mathfrak{D}}_{k_2}^{\alpha M}}  (\tau_2, \xi_2, \boldsymbol{\eta_2} ) 
}
d \hat{\sigma}_1 d \hat{\sigma}_2 \right| \\
&  \lesssim M^{-\frac{1}{2}}  N_{1}^{-1}   
(L_0 L_1 L_2)^{\frac{1}{2}} \|\ha{u}_{N_1, L_1} (\boldsymbol{\eta_1'}) \|_{L^2_{\tau \xi \eta}} 
\| \ha{v}_{N_2, L_2} (\boldsymbol{\eta_2'})\|_{L^2_{\tau \xi \eta}} 
\|\ha{w}_{{N_0, L_0}}(\boldsymbol{\eta'}) \|_{L^2_{\tau \xi \eta}}.
\end{split}\label{est02a-prop3.26}
\end{equation}
We first observe that $|\eta_1+\eta_2| \lesssim M^{-1}N_1$ provides 
$|\xi_1+\xi_2| \lesssim \alpha^{-1} M^{-1} N_1$. 
To see this, we assume $M \gg 1$ since $M \sim 1$ is a trivial case. 
If we write $(\xi_1, \eta_1) = (r_1 \cos \theta_1, r_1 \sin \theta_1)$, 
$(\xi_2, \eta_2) = (r_2 \cos \theta_2, r_2 \sin \theta_2)$, by the assumptions, we easily check 
$|r_1-r_2| \lesssim M^{-1} N_1$, $|\cos \theta_1+ \cos \theta_2|\lesssim \alpha^{-1} M^{-1}$ and 
$|\cos \theta_1| \lesssim \alpha^{-1}$. Therefore, 
\begin{align*}
|\xi_1+\xi_2| & = |r_1 \cos \theta_1 + r_2 \cos \theta_2|
\leq |(r_1-r_2)\cos \theta_1| + r_2|\cos \theta_1 + \cos \theta_2|\\
& \lesssim \alpha^{-1} M^{-1} N_1.
\end{align*}
Thus, it suffices to show
\begin{equation}
\begin{split}
& \left|\int_{\hat{*}}{   \ha{w}_{{N_0, L_0}}(\tau, \xi, \boldsymbol{\eta} ) 
\ha{u}_{N_1, L_1}|_{\tilde{\mathfrak{D}}_{k_1}^{\alpha M}} (\tau_1, \xi_1, \boldsymbol{\eta_1} )  
\ha{v}_{N_2, L_2}|_{\tilde{\mathfrak{D}}_{k_2}^{\alpha M}}  (\tau_2, \xi_2, \boldsymbol{\eta_2} ) 
}
d \hat{\sigma}_1 d \hat{\sigma}_2 \right| \\
& \quad \lesssim  \alpha M^{\frac{1}{2}}  N_{1}^{-2}   
(L_0 L_1 L_2)^{\frac{1}{2}} \|\ha{u}_{N_1, L_1} (\boldsymbol{\eta_1'}) \|_{L^2_{\tau \xi \eta}} 
\| \ha{v}_{N_2, L_2} (\boldsymbol{\eta_2'})\|_{L^2_{\tau \xi \eta}} 
\|\ha{w}_{{N_0, L_0}}(\boldsymbol{\eta'}) \|_{L^2_{\tau \xi \eta}}.
\end{split}\label{est02b-prop3.26}
\end{equation}
To show \eqref{est02b-prop3.26}, 
we apply a dyadic decomposition to $|\eta_1+\eta_2|$. 
Let $m \in \N_0$ and define 
\begin{equation*}
\mathbb{S}_\delta^m = \{ \eta \in \R \, | \, m \delta^{-1} N_1 \leq |\eta| \leq (m+1) \delta^{-1} N_1\}. 
\end{equation*}
Since $|\eta_1+\eta_2| \lesssim M^{-1} N_1$, we can see 
$\displaystyle{ \{\eta_1+\eta_2\} \subset \bigcup_{m \lesssim \alpha } 
\mathbb{S}_{\alpha M}^m}$. 
Therefore, for fixed $m \in \Z$, we only need to show
\begin{equation}
\begin{split}
& \left|\int_{\hat{*}}{ {\mathbf{1}}_{\mathbb{S}_{\alpha M}^m}(\eta)   \ha{w}_{{N_0, L_0}}(\tau, \xi, \boldsymbol{\eta} ) 
\ha{u}_{N_1, L_1}|_{\tilde{\mathfrak{D}}_{k_1}^{\alpha M}} (\tau_1, \xi_1, \boldsymbol{\eta_1} )  
\ha{v}_{N_2, L_2}|_{\tilde{\mathfrak{D}}_{k_2}^{\alpha M}}  (\tau_2, \xi_2, \boldsymbol{\eta_2} ) 
}
d \hat{\sigma}_1 d \hat{\sigma}_2 \right| \\
& \ \lesssim  (\alpha M)^{\frac{1}{2}}  N_{1}^{-2}   
(L_0 L_1 L_2)^{\frac{1}{2}} \|\ha{u}_{N_1, L_1} (\boldsymbol{\eta_1'}) \|_{L^2_{\tau \xi \eta}} 
\| \ha{v}_{N_2, L_2} (\boldsymbol{\eta_2'})\|_{L^2_{\tau \xi \eta}} 
\|\ha{w}_{{N_0, L_0}}(\boldsymbol{\eta'}) \|_{L^2_{\tau \xi \eta}}.
\end{split}\label{est03-prop3.26}
\end{equation}
By employing the nonlinear Loomis-Whitney inequality, we can establish \eqref{est03-prop3.26} in the same manner as that for 
Proposition \ref{prop3.4}. We omit the details.\\
\underline{Case $|\eta_1+\eta_2| \gg M^{-1}N_1$}

Next we consider the case 
$|\eta_1+\eta_2| \gg  M^{-1}N_1$. It follows from 
$2 N_0 \geq |\eta_1+\eta_2| \gg M^{-1}N_1$ that 
\begin{equation*}
(A^{-1} N_1)^{\frac{d-2}{2}} = (A^{-1} M)^{\frac{d-2}{2}} (M^{-1} N_1)^{\frac{d-2}{2}} \lesssim 
(A^{-1} M)^{\frac{1}{2}} N_0^{\frac{d-2}{2}}.
\end{equation*}
Therefore, since $|\boldsymbol{\eta_j'}| \lesssim A^{-1} N_1$, for fixed $\boldsymbol{\eta_1'}$, 
$\boldsymbol{\eta_2'}$, it suffices to prove
\begin{equation}
\begin{split}
& \left|\int_{*}{  |\xi| \, \ha{w}_{{N_0, L_0}}(\tau, \xi, \boldsymbol{\eta} ) 
\ha{u}_{N_1, L_1}|_{\tilde{\mathfrak{D}}_{k_1}^{\alpha M}} (\tau_1, \xi_1, \boldsymbol{\eta_1} )  
\ha{v}_{N_2, L_2}|_{\tilde{\mathfrak{D}}_{k_2}^{\alpha M}}  (\tau_2, \xi_2, \boldsymbol{\eta_2} ) 
}
d {\sigma}_1 d {\sigma}_2 \right| \\
& \qquad \qquad \qquad  \lesssim  A^{-\frac{d-2}{2}} N_1^{\frac{d-4}{2}}
(L_0 L_1 L_2)^{\frac{1}{2}} \|\ha{u}_{N_1, L_1}  \|_{L^2}
\| \ha{v}_{N_2, L_2} \|_{L^2}
\|\ha{w}_{{N_0, L_0}} \|_{L^2}.
\end{split}\label{est03a-prop3.26}
\end{equation}
Let a dyadic number $\tilde{M}$ satisfies $1 \leq \tilde{M} \ll M$. 
We apply a dyadic decomposition to $|\eta_1+\eta_2|$. 
Suppose that $|\eta_1+\eta_2|$ satisfies $\tilde{M}^{-1} N_1 \leq |\eta_1+\eta_2| \leq 2 \tilde{M}^{-1} N_1$. 
Then, by the same observation as in the previous case, 
we get $|\xi_1+ \xi_2| \lesssim \alpha^{-1} \tilde{M}^{-1} N_1$. 
Therefore, it suffices to show that for $\tilde{M}^{-1} N_1 \leq |\eta_1+\eta_2| \leq 2 \tilde{M}^{-1} N_1$ the 
following holds true.
\begin{equation}
\begin{split}
& \left|\int_{*}{   \ha{w}_{{N_0, L_0}}(\tau, \xi, \boldsymbol{\eta} ) 
\ha{u}_{N_1, L_1}|_{\tilde{\mathfrak{D}}_{k_1}^{\alpha M}} (\tau_1, \xi_1, \boldsymbol{\eta_1} )  
\ha{v}_{N_2, L_2}|_{\tilde{\mathfrak{D}}_{k_2}^{\alpha M}}  (\tau_2, \xi_2, \boldsymbol{\eta_2} ) 
}
d {\sigma}_1 d {\sigma}_2 \right| \\
& \qquad \qquad \lesssim  \alpha \tilde{M}^{\frac{1}{2}} A^{-\frac{d-2}{2}} N_1^{\frac{d-6}{2}}  
(L_0 L_1 L_2)^{\frac{1}{2}} \|\ha{u}_{N_1, L_1}  \|_{L^2}
\| \ha{v}_{N_2, L_2} \|_{L^2}
\|\ha{w}_{{N_0, L_0}} \|_{L^2}.
\end{split}\label{est03b-prop3.26}
\end{equation}
We observe that the condition $\tilde{M}^{-1} N_1 \leq |\eta_1+\eta_2| \leq 2 \tilde{M}^{-1} N_1$ 
yields $|\Phi(\xi_1,\boldsymbol{\eta_1} , \xi_2,\boldsymbol{\eta_2})| \gtrsim (\alpha \tilde{M} )^{-1} N_1^3$. 
To see this, we first observe
\begin{align*}
|\xi_1 \eta_2 (2 \eta_1+\eta_2) + \xi_2 \eta_1 (\eta_1 + 2 \eta_2)| & = 
\bigl| \frac{3}{2} (\xi_1 \eta_2 + \xi_2 \eta_1)(\eta_1+\eta_2) +\frac{\xi_1 \eta_2-\xi_2 \eta_1}{2} 
( \eta_1 - \eta_2)\bigr|\\
& \geq |(\xi_1 \eta_2 + \xi_2 \eta_1)(\eta_1+\eta_2) |-|(\eta_1-\eta_2) (\xi_1 \eta_2-\xi_2 \eta_1)|\\
& \gtrsim (\alpha \tilde{M}^{-1}) N_1^3.
\end{align*}
Here we used $\tilde{M}^{-1} N_1 \leq |\eta_1+\eta_2|$ and 
$|\xi_1 \eta_2-\xi_2 \eta_1| \lesssim (\alpha M)^{-1} N_1^2$ which follows from 
$(\xi_1, \eta_1) \times (\xi_2 , \eta_2) \in {\mathfrak{D}}_{k_1}^{\alpha M} \times  {\mathfrak{D}}_{k_2}^{\alpha M}$ with $|k_1 - k_2| \leq 32$. 
Hence, since $|\boldsymbol{\eta_j'}| \lesssim A^{-1} N_1$, $|\xi_1 + \xi_2| \ll |\eta_1+\eta_2|$ and $\max(|\xi_1|, |\xi_2|) \sim  \alpha^{-1} N_1$, $|\eta_1| \sim |\eta_2| \sim N_1$, we calculate 
\begin{align*}
 |\Phi(\xi_1,\boldsymbol{\eta_1} , \xi_2,\boldsymbol{\eta_2})| \geq{}& |3 \xi_1 \xi_2 (\xi_1+\xi_2) + \xi_1 \eta_2 (2 \eta_1+\eta_2) + \xi_2 \eta_1 (\eta_1 + 2 \eta_2)| + 
\mathcal{O} (\alpha^{-1} A^{-2} N_1^3)\\
\geq{}&  |\xi_1 \eta_2 (2 \eta_1+\eta_2) + \xi_2 \eta_1 (\eta_1 + 2 \eta_2)| - 3|\xi_1 \xi_2 (\xi_1+\xi_2) | + \mathcal{O} (\alpha^{-1} A^{-2} N_1^3) \geq (\alpha \tilde{M})^{-1}  N_1^3.
\end{align*}
This observation also implies that we can assume $|\eta_1+\eta_2| \ll N_1$ since 
$L_{012}^{\max} \ll \alpha^{-1}N_1$. 
Thus we assume $\tilde{M} \gg 1$ hereafter. 
By using the assumptions $\tilde{M}^{-1} N_1 \leq |\eta_1+\eta_2|=|\eta| \leq 2 \tilde{M}^{-1} N_1$ and 
$|\boldsymbol{\eta_j'}| \lesssim A^{-1} N_1$, we show the following bilinear estimates. 
\begin{align}
& \left\| {\mathbf{1}}_{G_{N_0, L_0}} \int  \ha{u}_{N_1, L_1}|_{\tilde{\mathfrak{D}}_{k_1}^{\alpha M}} 
(\tau_1, \xi_1, \boldsymbol{\eta_1} ) 
\ha{v}_{N_2, L_2}|_{\tilde{\mathfrak{D}}_{k_2}^{\alpha M}} (\tau- \tau_1, \xi-\xi_1, \boldsymbol{\eta}- \boldsymbol{\eta_1} ) d\sigma_1 \right\|_{L^2} \notag \\ 
& \qquad \qquad \qquad \qquad 
\lesssim  A^{-\frac{d-2}{2}} N_1^{\frac{d-3}{2}}  (L_1 L_2)^{\frac{1}{2}} 
\| \ha{u}_{N_1, L_1}|_{\tilde{\mathfrak{D}}_{k_1}^{\alpha M}} \|_{L^2}
\|\ha{v}_{N_2, L_2}|_{\tilde{\mathfrak{D}}_{k_2}^{\alpha M}} \|_{L^2},\label{bilinearStrichartz-4}\\
& \left\| {\mathbf{1}}_{G_{N_1, L_1} \cap {\tilde{\mathfrak{D}}_{k_1}^{\alpha M}} } 
\int \ha{v}_{N_2, L_2}|_{\tilde{\mathfrak{D}}_{k_2}^{\alpha M}} 
(\tau_2,\xi_2,\boldsymbol{\eta_2}) 
\ha{w}_{N_0, L_0} (\tau_1+ \tau_2, \xi_1+\xi_2, \boldsymbol{\eta_1}+ \boldsymbol{\eta_2} ) d\sigma_2 
\right\|_{L^2}
\notag \\ 
& \qquad \qquad \qquad \qquad 
\lesssim A^{-\frac{d-2}{2}} \tilde{M}^{-\frac{1}{2}} N_1^{\frac{d-3}{2}} (L_0 L_2)^{\frac{1}{2}} 
\|\ha{v}_{N_2, L_2}|_{\tilde{\mathfrak{D}}_{k_2}^{\alpha M}}  \|_{L^2}
\|\ha{w}_{N_0, L_0} \|_{L^2}, \notag\\
& \left\| {\mathbf{1}}_{G_{N_2,L_2} \cap \tilde{\mathfrak{D}}_{k_2}^{\alpha M} } \int 
\ha{w}_{N_0, L_0} (\tau_1+ \tau_2, \xi_1+\xi_2, \boldsymbol{\eta_1}+ \boldsymbol{\eta_2} )  
\ha{u}_{N_1, L_1}|_{\tilde{\mathfrak{D}}_{k_1}^{\alpha M}} (\tau_1, \xi_1, \boldsymbol{\eta_1} )  
d \sigma_1 
\right\|_{L^2} \notag \\ 
& \qquad \qquad \qquad \qquad 
\lesssim  A^{-\frac{d-2}{2}} \tilde{M}^{-\frac{1}{2}} N_1^{\frac{d-3}{2}} (L_0 L_1)^{\frac{1}{2}} 
\|\ha{w}_{N_0, L_0}  \|_{L^2}
\|\ha{u}_{N_1, L_1}|_{\tilde{\mathfrak{D}}_{k_1}^{\alpha M}} \|_{L^2}.\notag
\end{align}
These estimates, combined with $L_{012}^{\max} \gtrsim |\Phi(\xi_1,\boldsymbol{\eta_1} , \xi_2,\boldsymbol{\eta_2})| \geq (\alpha \tilde{M})^{-1}  N_1^3$, imply \eqref{est03b-prop3.26}. 
We only consider first estimate \eqref{bilinearStrichartz-4} here. The other estimates can be handled in the similar way since $|\eta_1| \sim |\eta_2| \gg |\eta_1+\eta_2|$. 
By the same argument as for the proof of Proposition \ref{prop3.3}, the following estimates establish the claim \eqref{bilinearStrichartz-4}.
\begin{equation}
\sup_{\tilde{M}^{-1} N_1 \leq |\eta| \leq 2 \tilde{M}^{-1} N_1} |E(\tau, \xi, \boldsymbol{\eta})| 
\lesssim A^{-(d-2)} N_1^{d-3} L_1 L_2,\label{est06-prop3.26}
\end{equation}
where 
\begin{align*}
& E(\tau, \xi, \boldsymbol{\eta}) = \{ (\tau_1, \xi_1, \boldsymbol{\eta_1} ) \in G_{N_1, L_1} \cap 
{\tilde{\mathfrak{D}}_{k_1}^{\alpha M}}  
 |  (\tau-\tau_1, \xi- \xi_1, \boldsymbol{\eta}-\boldsymbol{\eta_1} ) \in G_{N_2,L_2} \cap  {\tilde{\mathfrak{D}}_{k_2}^{\alpha M}}  \}.
\end{align*}
We recall the function $\Phi_{\xi,\boldsymbol{\eta}}(\xi_1,\boldsymbol{\eta_1} )$ which was defined in the proof of Proposition \ref{prop3.3} as
\begin{align*}
\max (L_1, L_2) & \gtrsim \bigl| 
\bigl( \tau_1-\xi_1(\xi_1^2 + |\boldsymbol{\eta_1}|^2 )\bigr)  + \bigl( (\tau-\tau_1) - (\xi-\xi_1)((\xi-\xi_1)^2 + |\boldsymbol{\eta}-\boldsymbol{\eta_1}|^2
 )\bigr) \bigr|\\
& = |\bigl( \tau-\xi(\xi^2+|\boldsymbol{\eta}|^2 ) \bigr) + \Phi_{\xi,\boldsymbol{\eta}}(\xi_1,\boldsymbol{\eta_1} )|.
\end{align*}
Let $(\tau_1, \xi_1, \boldsymbol{\eta_1} ) \in E(\tau, \xi, \boldsymbol{\eta})$. 
Since $|\xi| \ll |\eta|$ and $|\boldsymbol{\eta'}| \lesssim A^{-1}N_1$, $|\boldsymbol{\eta_1'}| \lesssim A^{-1}N_1$, for fixed $\boldsymbol{\eta_1}$, it is easily observed
\begin{align*}
|\partial_{\xi_1}\Phi_{\xi,\boldsymbol{\eta}}(\xi_1,\boldsymbol{\eta_1} )| & = |3\xi(\xi-2 \xi_1) + \eta (\eta- 2 \eta_1) |
 + \mathcal{O}(A^{-2} N_1^2)\\
& \gtrsim |\eta| N_1 \sim \tilde{M}^{-1} N_1^2.
\end{align*}
This, $ |\eta| \sim \tilde{M}^{-1} N_1$ and $|\boldsymbol{\eta_1'}| \lesssim A^{-1} N_1$ complete the proof of \eqref{est06-prop3.26}.

Next we assume $|k_1-k_2| \leq 16$ and show
\begin{equation}
\begin{split}
& \left|\int_{*}{  |\xi| \, \ha{w}_{{N_0, L_0}}(\tau, \xi, \boldsymbol{\eta} ) 
\ha{u}_{N_1, L_1}|_{\tilde{\mathfrak{D}}_{k_1}^{\alpha A}} (\tau_1, \xi_1, \boldsymbol{\eta_1} )  
\ha{v}_{N_2, L_2}|_{\tilde{\mathfrak{D}}_{k_2}^{\alpha A}}  (\tau_2, \xi_2, \boldsymbol{\eta_2} ) 
}
d\sigma_1 d\sigma_2 \right| \\
& \qquad \qquad \qquad \lesssim  N_0^{\frac{d-2}{2}}  N_{1}^{-1}   
(L_0 L_1 L_2)^{\frac{1}{2}} \|\ha{u}_{N_1, L_1} \|_{L^2} \| \ha{v}_{N_2, L_2} \|_{L^2} 
\|\ha{w}_{{N_0, L_0}} \|_{L^2},
\end{split}\label{est07-prop3.26}
\end{equation}
Similarly to the proof of \eqref{est02-prop3.26}, we divide the proof into the two cases.\\
\underline{Case $|\eta_1+ \eta_2| \lesssim A^{-1}N_1$}

As we saw above, we may assume $|\xi_1+\xi_2| \lesssim (A\alpha)^{-1} N_1$. Thus Proposition \ref{prop3.25} 
implies \eqref{est07-prop3.26}. \\
\underline{Case $|\eta_1+ \eta_2| \gg A^{-1}N_1$}

We only need to follow the proof of \eqref{est02-prop3.26} in the case 
$|\eta_1+\eta_2| \gg M^{-1}N_1$. We omit the details. 

We now see that the two estimates \eqref{est02-prop3.26} and \eqref{est07-prop3.26} yield \eqref{est01-prop3.26}. For simplicity, we use
\begin{equation*}
I_{k_1,k_2}^{\alpha,M} =  \left|\int_{*}{  |\xi| \, \ha{w}_{{N_0, L_0}}(\tau, \xi, \boldsymbol{\eta} ) 
\ha{u}_{N_1, L_1}|_{\tilde{\mathfrak{D}}_{k_1}^{\alpha M}} (\tau_1, \xi_1, \boldsymbol{\eta_1} )  
\ha{v}_{N_2, L_2}|_{\tilde{\mathfrak{D}}_{k_2}^{\alpha M}}  (\tau_2, \xi_2, \boldsymbol{\eta_2} ) 
}
d\sigma_1 d\sigma_2 \right|.
\end{equation*}
It follows from \eqref{est02-prop3.26} and \eqref{est07-prop3.26} that
\begin{align*}
& \textnormal{(LHS) of \eqref{est01-prop3.26}} \leq \sum_{2 \leq  M \leq A} \sum_{16 \leq |k_1-k_2| \leq 32} I_{k_1,k_2}^{\alpha,M} + \sum_{|k_1-k_2|\leq 16} I_{k_1,k_2}^{\alpha, A}\\
& \lesssim \sum_{2 \leq M \leq A}  (M^{-\frac{1}{2}}+M^{\frac{1}{2}} A^{-\frac{1}{2}}) N_0^{\frac{d-2}{2}}  N_{1}^{-1}   
(L_0 L_1 L_2)^{\frac{1}{2}} \|\ha{u}_{N_1, L_1} \|_{L^2} \| \ha{v}_{N_2, L_2} \|_{L^2} 
\|\ha{w}_{{N_0, L_0}} \|_{L^2}\\ 
& \qquad \qquad \qquad +  N_0^{\frac{d-2}{2}}  N_{1}^{-1}   
(L_0 L_1 L_2)^{\frac{1}{2}} \|\ha{u}_{N_1, L_1} \|_{L^2} \| \ha{v}_{N_2, L_2} \|_{L^2} 
\|\ha{w}_{{N_0, L_0}} \|_{L^2}\\
& \lesssim N_0^{\frac{d-2}{2}}  N_{1}^{-1}   
(L_0 L_1 L_2)^{\frac{1}{2}} \|\ha{u}_{N_1, L_1} \|_{L^2} \| \ha{v}_{N_2, L_2} \|_{L^2} 
\|\ha{w}_{{N_0, L_0}} \|_{L^2},
\end{align*}
which completes the proof of \eqref{est01-prop3.26}.
\end{proof}
In the same manner as in the proof of \eqref{goal01-prop3.2} in the case \textnormal{(Ic)} (see p.\pageref{Proofof(4.1)}), Proposition \ref{prop3.26} gives Proposition \ref{prop3.24}. We omit the proof.
\section{Proof of the key bilinear estimate: Case 3}\label{sec:proof-bil-3}
Next we deal with $1 \ll N_2 \lesssim N_1 \sim N_0$. 
\begin{ass}\label{assumption3}
Let $\alpha$ be dyadic such that $2^{5} \leq \alpha \leq N_1^3$ and we assume that\\
(1) $1 \ll N_2 \lesssim N_1 \sim N_0$,\\
(2) $\alpha^{-1}N_1 \leq \max(|\xi_1|, |\xi_2|) \leq  2 \alpha^{-1}N_1$.
\end{ass}
\begin{prop}\label{prop3.27}
Assume \textit{Assumption} \textnormal{\ref{assumption3}}. Then we get
\begin{equation}
\begin{split}
& \left|\int_{*}{  |\xi| \, \ha{w}_{{N_0, L_0}}(\tau, \xi, \boldsymbol{\eta} ) 
\ha{u}_{N_1, L_1}(\tau_1, \xi_1, \boldsymbol{\eta_1} )  \ha{v}_{N_2, L_2}(\tau_2, \xi_2, \boldsymbol{\eta_2} ) 
}
d\sigma_1 d\sigma_2 \right| \\
& \qquad \qquad  \lesssim N_2^{\frac{d-4}{2}+2 \e}  N_1^{-\e}    
(L_0 L_1 L_2)^{\frac{1}{2}} \|\ha{u}_{N_1, L_1} \|_{L^2} \| \ha{v}_{N_2, L_2} \|_{L^2} 
\|\ha{w}_{{N_0, L_0}} \|_{L^2},\label{est01-prop3.27}
\end{split}
\end{equation}
where $d \sigma_j = d\tau_j d \xi_j d \boldsymbol{\eta}_j$ and $*$ denotes $(\tau, \xi, \boldsymbol{\eta}) = (\tau_1 + \tau_2, \xi_1+ \xi_2, \boldsymbol{\eta_1} + \boldsymbol{\eta_2}).$
\end{prop}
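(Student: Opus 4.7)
The strategy will mirror the proof of Proposition \ref{prop3.24}, exchanging the roles of $\ha{w}_{N_0,L_0}$ and $\ha{v}_{N_2,L_2}$. In the present setup $u$ and $w$ are the two spatial-frequency-comparable factors (scale $N_1$), while $v$ is the low-frequency factor (scale $N_2 \lesssim N_1$). The support condition $\max(|\xi_1|,|\xi_2|)\leq 2\alpha^{-1}N_1$ continues to imply $|\xi| = |\xi_1+\xi_2| \lesssim \alpha^{-1}N_1$, so that the $\partial_{x} w$ factor still contributes only $\sim\alpha^{-1}N_1$, even though $w$ is now a high-frequency factor. This observation is what makes the exchange viable.

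As in Section \ref{sec:proof-bil-2} I first dispose of $L_{012}^{\max}\gtrsim \alpha^{-1}N_1^{3}$: by almost orthogonality I may localize $u$ and $w$ to spatial balls of radius $\sim N_2$ (since $\ha{v}_{N_2,L_2}$ is supported in such a ball), and Proposition \ref{l4-strichartz} applied to the two high-frequency factors, combined with $|\xi|\lesssim\alpha^{-1}N_1$, yields the desired bound; the case $N_2\sim 1$ is further reduced to the argument of Subsection \ref{subsec:red}. Assuming therefore $L_{012}^{\max}\ll \alpha^{-1}N_1^{3}$ and $N_2\gg 1$, I will run the angular decomposition of case (Ic), now applied to $\boldsymbol{\eta_1}$ and $\boldsymbol{\eta}$ (the two high-frequency angular variables), paired with the anisotropic cubes $\mathcal{C}_k^{\alpha,A}$ of Section \ref{sec:proof-bil-2}. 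The core trilinear estimate, analogous to Proposition \ref{prop3.25} with $\ha{v}_{N_2,L_2}$ in the role formerly occupied by $\ha{w}_{N_0,L_0}$, will follow from the nonlinear Loomis-Whitney inequality (Proposition \ref{prop2.3}): after the anisotropic rescaling $(\tilde\xi_1,\tilde\xi)=N_1^{-1}(\xi_1,\xi)$ and exploiting $|\tilde\xi_j|\sim\alpha^{-1}$ together with $|\widehat{\eta_1}\widehat{\eta}' - \widehat{\eta}\widehat{\eta_1}'|\sim A^{-1}$, the transversality determinant computes to $\sim\alpha^{2}|\widehat{\eta_1}\widehat{\eta}'-\widehat{\eta}\widehat{\eta_1}'|(\tilde\xi_1^{2}+\tilde\xi_1\tilde\xi+\tilde\xi^{2})\sim (A\alpha)^{-1}$, exactly as in the transversality calculation at the end of the proof of Proposition \ref{prop3.25}. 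Summation over $A$ and the Whitney-type reorganization will then proceed as in Proposition \ref{prop3.26}.

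The main obstacle is to track the $N_2$-dependence correctly. The angular constraint $|\eta_1\eta'-\eta\eta_1'|\sim A^{-1}N_1^{2}$, combined with the fact that $|\boldsymbol{\eta_2}|=|\boldsymbol{\eta}-\boldsymbol{\eta_1}|\lesssim N_2$, restricts the admissible angles to $A^{-1}\lesssim N_2/N_1$, so the $A$-summation is truncated at $A\sim N_1/N_2$ rather than at $A\sim N_1^{3/2}$ as in Section \ref{sec:proof-bil-2}. This truncation is precisely what replaces the factor $N_0^{(d-2)/2}$ of Proposition \ref{prop3.24} by $N_2^{(d-4)/2+2\e}$ here, with the extra $N_1^{-\e}$ room coming from the $A \le (N_1/N_2)^{1-\delta}$ portion of the sum. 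The remaining contribution where $A\sim N_1/N_2$ is handled by a direct application of the transversal bilinear estimate from Proposition \ref{bilinear-transversal} on cubes of radius $\sim N_2$. Organizing these two regimes and carrying the $N_2$-bookkeeping consistently through the Whitney decomposition is the main difficulty; the underlying geometric and analytic input is the same as in Sections \ref{sec:proof-bil-1} and \ref{sec:proof-bil-2}.
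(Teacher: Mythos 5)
Your overall strategy is essentially the paper's: the paper reduces Proposition~\ref{prop3.27} to Proposition~\ref{prop3.28}, which is Proposition~\ref{prop3.26} with the roles of $\ha{w}_{N_0,L_0}$ and $\ha{v}_{N_2,L_2}$ exchanged, and then sums over $A$, $j_1$, $j_2$ following the case~(Ic) pattern. You describe the same plan (treat $v$ as the low-frequency output factor, keep the $|\xi|\lesssim\alpha^{-1}N_1$ gain from $\partial_x$, run the $\overline{\mathcal{S}}_j^A$ / anisotropic-tile / nonlinear Loomis--Whitney machinery of Section~\ref{sec:proof-bil-2}, then sum as in case~(Ic)). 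Your transversality identification is the right one up to a sign: after the role swap the two input variables are $\tilde\xi_1$ (for $u$) and $\tilde\xi$ (for $w$), and the symmetric form that appears is $\tilde\xi_1^{2}+\tilde\xi_1\tilde\xi_2+\tilde\xi_2^{2}=\tilde\xi_1^{2}-\tilde\xi_1\tilde\xi+\tilde\xi^{2}$, not $\tilde\xi_1^{2}+\tilde\xi_1\tilde\xi+\tilde\xi^{2}$; either way it is $\sim\alpha^{-2}$, so this is a typo rather than an error.

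Your $A$-bookkeeping, however, is internally inconsistent and does not, as written, deliver the claimed $N_2^{(d-4)/2+2\e}N_1^{-\e}$. You correctly observe that $|\boldsymbol{\eta_1}|\sim|\boldsymbol{\eta}|\sim N_1$ together with $|\boldsymbol{\eta_2}|=|\boldsymbol{\eta}-\boldsymbol{\eta_1}|\lesssim N_2$ forces the angular separation to satisfy $A^{-1}\lesssim N_2/N_1$, i.e.\ $A\gtrsim N_1/N_2$ --- a \emph{lower} bound on $A$. But you then attribute the ``extra $N_1^{-\e}$ room'' to the $A\leq(N_1/N_2)^{1-\delta}$ portion of the sum, which is empty under your own constraint, so this step cannot stand. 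The actual content to be established is the analogue of Proposition~\ref{prop3.28}, i.e.\ the bound $\alpha N_2^{(d-4)/2+2\e}N_1^{-1-\e}$ for fixed $j_1,j_2$, obtained by rerunning the Whitney decomposition of Proposition~\ref{prop3.26} with $v$ in the role of $w$ and tracking how the output scale $N_2$ enters each of the bilinear / Loomis--Whitney steps; the summation over $A$, $j_1$, $j_2$ then follows case~(Ic) as you say. The paper omits these details, so your proposal is not contradicted, but it has not actually supplied the missing bookkeeping, and the one concrete claim you make about where the $N_2$-gain originates is wrong as stated.
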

Since $|\xi|\lesssim \alpha^{-1} N_1$, \eqref{est01-prop3.27} is given by the following estimate.
\begin{prop}\label{prop3.28}
Assume \textit{Assumption} \textnormal{\ref{assumption3}}. Let $16 \leq |j_1-j_2| \leq 32$. Then we get
\begin{align*}
& \left|\int_{*}{  \ha{v}_{N_2, L_2}(\tau, \xi, \boldsymbol{\eta} ) 
\ha{u}_{N_1, L_1}|_{{\overline{\mathcal{S}}}_{j_1}^A }(\tau_1, \xi_1, \boldsymbol{\eta_1} ) 
\ha{w}_{{N_0, L_0}}|_{{\overline{\mathcal{S}}}_{j_2}^A }(\tau_2, \xi_2, \boldsymbol{\eta_2} ) 
}
d\sigma_1 d\sigma_2 \right| \\
& \qquad \  \lesssim \alpha N_2^{\frac{d-4}{2} + 2\e}  N_{1}^{-1-\e}   
(L_0 L_1 L_2)^{\frac{1}{2}} \|\ha{u}_{N_1, L_1}|_{{\overline{\mathcal{S}}}_{j_1}^A } \|_{L^2} 
\| \ha{v}_{N_2, L_2}\|_{L^2} 
\|\ha{w}_{{N_0, L_0}}|_{{\overline{\mathcal{S}}}_{j_2}^A } \|_{L^2},
\end{align*}
where $d \sigma_j = d\tau_j d \xi_j d \boldsymbol{\eta}_j$ and $*$ denotes $(\tau, \xi, \boldsymbol{\eta}) = (\tau_1 + \tau_2, \xi_1+ \xi_2, \boldsymbol{\eta_1} + \boldsymbol{\eta_2}).$
\end{prop}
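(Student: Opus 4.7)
The plan is to mirror, step by step, the proof of Proposition~\ref{prop3.26}, now with the roles of the low-frequency ``output'' $\ha{w}_{N_0,L_0}$ and the high-frequency input $\ha{v}_{N_2,L_2}$ interchanged. Structurally the trilinear form under consideration is the same as in Remark \ref{remark3.1}(ii): the two factors carrying $\overline{\mathcal{S}}_{j}^A$ restrictions ($\ha{u}_{N_1,L_1}$ and $\ha{w}_{N_0,L_0}$) are both high frequency, and the angular separation condition $16\le |j_1-j_2|\le 32$ produces the transversality in the $\boldsymbol{\eta}$-direction that drove all the estimates in Section \ref{sec:proof-bil-2}. The new feature is only that the small sum-frequency now lives on the $\ha{v}_{N_2,L_2}$ side, but this symmetry does not affect the geometry of the three characteristic surfaces: the same Whitney decomposition inputs (Proposition~\ref{prop3.25} and the bilinear Strichartz-type bounds of Proposition~\ref{prop3.3}) remain available.

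First I would refine the angular sectors ${\overline{\mathcal{S}}}_{j_i}^A$ by the dyadic tube decomposition $\tilde{\mathfrak{D}}_{k_i}^{\alpha M}$ for $2\le M\le A$, reducing to the tri-linear piece with $\ha{u}_{N_1,L_1}$ restricted to $\tilde{\mathfrak{D}}_{k_1}^{\alpha M}$ and $\ha{w}_{N_0,L_0}$ restricted to $\tilde{\mathfrak{D}}_{k_2}^{\alpha M}$, with $16\le |k_1-k_2|\le 32$. The Whitney summation over $M$ will produce at most a $\log$ loss, to be absorbed in $N_1^{\e}$. For the terminal scale $M\sim A$ with $|k_1-k_2|\le 16$ I fall back on the nonlinear Loomis--Whitney bound of Proposition~\ref{prop3.25} directly, exactly as in the corresponding step of Proposition~\ref{prop3.26}.

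For each $M$ I split according to $|\eta_1+\eta_2|$. When $|\eta_1+\eta_2|\lesssim M^{-1}N_1$, a short geometric computation with $(\xi_j,\eta_j)\in\tilde{\mathfrak{D}}_{k_j}^{\alpha M}$ (identical to the one appearing after \eqref{est02a-prop3.26}) gives $|\xi_1+\xi_2|\lesssim \alpha^{-1}M^{-1}N_1$; after a further dyadic decomposition of $|\eta_1+\eta_2|$ in strips $\mathbb{S}_{\alpha M}^{m}$ and freezing $\boldsymbol{\eta_j'}$, the three relevant characteristic surfaces satisfy a transversality of order $(A\alpha)^{-1}$ under the same rescaling used in Proposition~\ref{prop3.25}, and a direct application of the nonlinear Loomis--Whitney inequality delivers the analog of \eqref{est03-prop3.26}. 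When $|\eta_1+\eta_2|\gg M^{-1}N_1$, I pick the unique $\tilde M$ with $\tilde M^{-1} N_1\sim |\eta_1+\eta_2|$ and reuse the resonance lower bound $|\Phi|\gtrsim (\alpha\tilde M)^{-1}N_1^3$ established in \eqref{est03b-prop3.26} together with the three ``transversal'' $L^2$-bilinear estimates (the analogs of \eqref{bilinearStrichartz-4} and the two estimates immediately following it); since $|\eta_1|\sim |\eta_2|\gg |\eta_1+\eta_2|$, the gradient computation controlling $|E(\tau,\xi,\boldsymbol{\eta})|$ carries through verbatim with the roles of $\ha{v}$ and $\ha{w}$ swapped.

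The main obstacle I anticipate is precisely the transversality step: I must verify that when the output tangent plane sits at the low-frequency point $(\xi,\boldsymbol{\eta})$ with $|\boldsymbol{\eta}|\lesssim N_2$, the unit normals to the three surfaces in the rescaled picture still yield a determinant $\gtrsim (A\alpha)^{-1}$. The determinant expansion is the same multi-linear one as in Propositions \ref{prop3.7} and \ref{prop3.25}, so the leading contribution is $(\widehat\eta_1\widehat\eta_2'-\widehat\eta_2\widehat\eta_1')(\tilde\xi_1^2+\tilde\xi_1\tilde\xi_2+\tilde\xi_2^2)$. Using $|\eta_1\eta_2'-\eta_2\eta_1'|\sim A^{-1}N_1^2$ on the two high-frequency factors (available after a rotation in $\boldsymbol{\eta}$) and $\max(|\xi_1|,|\xi_2|)\sim \alpha^{-1}N_1$, the same $\alpha^2\cdot (A\alpha)^{-1}\cdot \alpha^{-2}= (A\alpha)^{-1}$ lower bound appears, and the perturbations from $|\boldsymbol{\eta_j'}|\lesssim A^{-1}N_1$ and $|\boldsymbol{\eta}|\lesssim N_2\le N_1$ are negligible. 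Once this is in place, the $\alpha$-factor and the $N_2^{(d-4)/2+2\e}N_1^{-1-\e}$ combination drop out by the same counting and Whitney summation as in Section~\ref{sec:proof-bil-2}. Finally, as in the passage after Proposition \ref{prop3.26}, the analog of the argument on p.~\pageref{Proofof(4.1)} upgrades the single $(j_1,j_2)$ estimate to the full sum, completing the proof.
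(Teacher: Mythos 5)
Your proposal is correct and is exactly the route the paper itself takes: the authors prove Proposition~\ref{prop3.28} by the single remark that one should ``exchange the roles of $\ha{w}_{N_0,L_0}$ and $\ha{v}_{N_2,L_2}$'' in the proof of Proposition~\ref{prop3.26}. Your write-up spells out what this swap means step by step — the same $\tilde{\mathfrak{D}}_{k}^{\alpha M}$ Whitney decomposition with the two high-frequency factors now being $\ha{u}$ and $\ha{w}$, the same split according to $|\eta_1+\eta_2|$ versus $M^{-1}N_1$, and the same nonlinear Loomis–Whitney and bilinear transversal inputs — and you correctly identify that the transversality determinant is invariant under permuting which factor plays the ``output'' role, since it is driven by the two angularly separated high-frequency factors. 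No gap.
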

By exchanging the roles of $\ha{w}_{N_0, L_0}$ and $\ha{v}_{N_2,
  L_2}$, we can establish Proposition \ref{prop3.28} in the same
manner as Proposition \ref{prop3.26}. 
In addition, by following the proof of \eqref{goal01-prop3.2} in the case \textnormal{(Ic)}, Proposition \ref{prop3.28} yields Proposition \ref{prop3.27}. We omit the details. 

Now we show \eqref{goal02-mthm} with the condition
\begin{equation}
\supp \ha{u}_{N_1, L_1} \cup \supp \ha{v}_{N_2, L_2} \subset 
\{(\tau,\xi,\boldsymbol{\eta} ) \in \R \times \R \times \R^{d-1} \, | \, |\xi| \leq 2^{-5} N_{012}^{\max}\}.
\label{assumption-xi}
\end{equation}
\begin{proof}[\underline{Proof of \eqref{goal02-mthm} under \eqref{assumption-xi}}]
By symmetry, we can assume $N_2 \leq N_1$. 
Let us consider $1 \ll N_0 \lesssim N_1 \sim N_2$. 
We define
\begin{align*}
& E_{\alpha}:=\{(\xi_1, \xi_2) \, | \, \alpha^{-1}N_1 \leq \max(|\xi_1|, |\xi_2|) \leq  2 \alpha^{-1}N_1\},\\ 
& F := \{ (\xi_1, \xi_2) \, | \, \max(|\xi_1|, |\xi_2|) \leq  N_1^{-2}\}.
\end{align*}
Applying dyadic decomposition to $\max(|\xi_1|, |\xi_2|)$, we see
\begin{align*}
 & \left|\int{ \left(\partial_{x} w_{N_0, L_0} \right) 
 u_{N_1, L_1}  v_{N_2, L_2}
} 
dt dx d \mathbf{y} \right|\\
& \lesssim \sum_{2^5 \leq \alpha \leq N_1^3} 
 \left|\int_{*}{  |\xi| \ha{w}_{{N_0, L_0}}(\tau, \xi, \boldsymbol{\eta}) {\mathbf{1}}_{E_{\alpha}}(\xi_1,\xi_2)
\ha{u}_{N_1, L_1}(\tau_1, \xi_1, \boldsymbol{\eta_1})  \ha{v}_{N_2, L_2}(\tau_2, \xi_2, \boldsymbol{\eta_2}) 
}
d\sigma_1 d\sigma_2 \right|\\
& + \quad  \left|\int_{*}{ |\xi| \ha{w}_{{N_0, L_0}}(\tau, \xi, \boldsymbol{\eta}) {\mathbf{1}}_{F}(\xi_1,\xi_2)
\ha{u}_{N_1, L_1}(\tau_1, \xi_1, \boldsymbol{\eta_1})  \ha{v}_{N_2, L_2}(\tau_2, \xi_2, \boldsymbol{\eta_2}) 
}
d\sigma_1 d\sigma_2 \right|.
\end{align*}
The first term can be handled by Proposition \ref{prop3.24} as follows.
\begin{align*}
& \sum_{2^5 \leq \alpha \leq N_1^3} 
 \left|\int_{*}{  |\xi| \ha{w}_{{N_0, L_0}}(\tau, \xi, \boldsymbol{\eta}) {\mathbf{1}}_{E_{\alpha}}(\xi_1,\xi_2)
\ha{u}_{N_1, L_1}(\tau_1, \xi_1, \boldsymbol{\eta_1})  \ha{v}_{N_2, L_2}(\tau_2, \xi_2, \boldsymbol{\eta_2}) 
}
d\sigma_1 d\sigma_2 \right|\\
& \lesssim \sum_{2^5 \leq \alpha \leq N_1^3}  N_0^{\frac{d-2}{2}}  N_{1}^{-1+\e}   
(L_0 L_1 L_2)^{\frac{1}{2}} \|\ha{u}_{N_1, L_1} \|_{L^2} \| \ha{v}_{N_2, L_2} \|_{L^2} 
\|\ha{w}_{{N_0, L_0}} \|_{L^2}\\
& \lesssim  N_0^{\frac{d-2}{2}}  N_{1}^{-1+2 \e}   
(L_0 L_1 L_2)^{\frac{1}{2}} \|\ha{u}_{N_1, L_1} \|_{L^2} \| \ha{v}_{N_2, L_2} \|_{L^2} 
\|\ha{w}_{{N_0, L_0}} \|_{L^2},
\end{align*}
which implies \eqref{goal02-mthm} since $s > (d-4)/2$ and $L_{012}^{\max} \ll N_1^3$. 
Next we consider the second term. The inequality $\max(|\xi_1|, |\xi_2|) \leq  N_1^{-2}$ implies 
$|\xi| \leq 2 N_1^{-2}$. Therefore, the $L^4$ Strichartz estimate is enough to verify the claim. 

By using Proposition \ref{prop3.27}, the case $1 \ll N_2 \lesssim N_0 \sim N_1$ can be treated in the similar way. We omit the details.
\end{proof}

\appendix

  \section{Proof of Proposition \ref{bilinear-transversal}}
First we consider \eqref{est01-bilinear-transversal}. 
Set $\tilde{B}_r(p) = \R \times {B}_r(p) $ and $\zeta=(\xi,\boldsymbol{\eta})$, $\zeta_j=(\xi_j,\boldsymbol{\eta}_j)$. By Plancherel's theorem, it suffices to show
\begin{equation}
\begin{split}
 \left\| \int  \ha{u}_{N_1, L_1}|_{\tilde{B}_r(p)}(\tau_1,\zeta_1) 
\ha{v}_{N_2, L_2} (\tau-\tau_1,\zeta- \zeta_1) d\tau_1 d\zeta_1 \right\|_{L^2}  \lesssim r^{\frac{d-1}{2}} K^{-\frac{1}{2}} 
(L_1 L_2)^{\frac{1}{2}} \| \widehat{u}_{N_1, L_1} \|_{L^2} \| \widehat{v}_{N_2, L_2}  \|_{L^2}.
\end{split}\label{est03-bilinear-transversal}
\end{equation}
By performing a harmless decomposition, we may replace $r$ with $r'$ such that 
$r' \ll d^{-1} r$ in the above. 
Furthermore, by the almost orthogonality, we can assume that there exists $p'\in \R^d$ such that 
$\zeta-\zeta_1\in  {B}_{r'}(p')$. 
Since $\varphi$ is a cubic polynomial, we deduce from $N_2 \leq N_1$ that 
\begin{equation*}
\sup_{1 \leq i,j \leq d}(|\partial_i \partial_j \varphi(\zeta_1)| + |\partial_i \partial_j \varphi(\zeta- \zeta_1)|) \lesssim N_1.
\end{equation*} 
Therefore, because $K \gtrsim r N_1$, we easily observe 
\begin{equation*}
|\nabla \varphi(\zeta)-\nabla \varphi(\zeta')| \ll K \quad \textnormal{if } \zeta, \zeta' \in B_{r'}(p).
\end{equation*}
This implies that there exists $j \in \N$ such that $1 \leq j \leq d$ and 
\begin{equation}
|\partial_j \varphi(\zeta_1)-\partial_j \varphi(\zeta_2)| \gtrsim K,\label{est04-bilinear-transversal}
\end{equation} 
for all $\zeta_1$, $\zeta_2$ which satisfy that there exist $\tau_1$ and $\tau_2$ such that 
$(\tau_1,\zeta_1) \in \supp \widehat{u}_{N_1,L_1} \cap \tilde{B}_{r'}(p) $ and $(\tau_2,\zeta_2) \in \supp \widehat{v}_{N_2,L_2} \cap \tilde{B}_{r'}(p')$, respectively. 
Now we turn to show \eqref{est03-bilinear-transversal}. By the Cauchy-Schwarz inequality, we get
\begin{align*}
\biggl\| \int  \ha{u}_{N_1, L_1}|_{\tilde{B}_r(p)}(\tau_1, \zeta_1)  &
\ha{v}_{N_2, L_2}  (\tau- \tau_1, \zeta-\zeta_1 ) d\tau_1 d \zeta_1 \biggr\|_{L^2}  \\
\leq & \left\|   \left(\left| 
\ha{u}_{N_1, L_1}  \right|^2 * 
\left|\ha{v}_{N_2, L_2}  \right|^2
 \right)^{1/2} |E(\tau, \zeta )|^{1/2} \right\|_{L^2} \\
\leq & \sup_{\tau, \zeta} |E(\tau, \zeta )|^{1/2} 
 \left\| \left| 
 \ha{u}_{N_1, L_1} \right|^2 * 
\left|\ha{v}_{N_2, L_2}  \right|^2
\right\|_{L^1}^{1/2}\\
\leq & \sup_{\tau, \zeta} |E(\tau, \zeta )|^{1/2} 
\|\ha{u}_{N_1, L_1}\|_{L^2} 
\| \ha{v}_{N_2, L_2}  \|_{L^2},
\end{align*}
where $E(\tau, \zeta) \subset \R^{d+1}$ is defined by
\begin{equation*}
E(\tau, \zeta) := \{ (\tau_1, \zeta_1) \in G_{N_1,L_1} \cap \tilde{B}_{r'}(p)
\, | \, (\tau-\tau_1, \zeta- \zeta_1) \in G_{N_2,L_2} \}.
\end{equation*}
Thus, it suffices to show
\begin{equation}
|E(\tau, \zeta)| \lesssim r^{d-1} K^{-1} L_1 L_2.\label{est05-bilinear-transversal}
\end{equation}
If we fix $\zeta_1$, it is easily observed that
\begin{equation}
 | \{ \tau_1 \, | \, (\tau_1, \zeta_1) \in E(\tau, \zeta) \}| 
\lesssim \min(L_1, L_2).\label{est06-bilinear-transversal}
\end{equation}
Next, if we fix 
$(\zeta_{1,1}, \ldots, \zeta_{1,j-1}, \zeta_{1,j+1} ,\ldots, \zeta_{1,d})$, since $\max (L_1, L_2)  \gtrsim  |\varphi(\zeta_1)+\varphi(\zeta-\zeta_1)|$, 
the inequality \eqref{est04-bilinear-transversal} implies that $\zeta_{1,j}$ is confined to an interval whose length is comparable to $\max (L_1, L_2)/K$. 
This, combined with \eqref{est06-bilinear-transversal} and $\zeta_1 \in B_{r'}(p)$, 
yields \eqref{est05-bilinear-transversal}.

To see \eqref{est02-bilinear-transversal}, it suffices to show
\begin{equation*}
|\zeta_1| \geq 2 |\zeta_2| \Longrightarrow 
|\nabla \varphi(\zeta_1)-\nabla \varphi(\zeta_2)| \gtrsim |\zeta_1|^2,
\end{equation*}
which is immediately verified by $|\partial_{\xi} \varphi(\xi,\boldsymbol{\eta})|=|3\xi^2 + |\boldsymbol{\eta}|^2| \geq |(\xi,\boldsymbol{\eta})|^2$ and 
$|\partial_{\xi_1} \varphi(\xi_1,\boldsymbol{\eta}_1)| \leq 3 |(\xi_1,\boldsymbol{\eta}_1)|^2$.\qed

\subsection*{Acknowledgment}
Financial support by the
  German Research Foundation (DFG) through the CRC 1283 ``Taming uncertainty and profiting from
  randomness and low regularity in analysis, stochastics and their
  applications'' is acknowledged.

\bibliographystyle{plain}
\bibliography{zk3d}

\begin{thebibliography}{10}

\bibitem{BHHT09}
I.~Bejenaru, S.~Herr, J.~Holmer, and D.~Tataru.
\newblock On the 2{D} {Z}akharov system with {$L^2$}-{S}chr\"{o}dinger data.
\newblock {\em Nonlinearity}, 22(5):1063--1089, 2009.

\bibitem{BH11}
Ioan Bejenaru and Sebastian Herr.
\newblock Convolutions of singular measures and applications to the {Z}akharov
  system.
\newblock {\em J. Funct. Anal.}, 261(2):478--506, 2011.

\bibitem{BHT10}
Ioan Bejenaru, Sebastian Herr, and Daniel Tataru.
\newblock A convolution estimate for two-dimensional hypersurfaces.
\newblock {\em Rev. Mat. Iberoam.}, 26(2):707--728, 2010.

\bibitem{B06}
Paul~M. Bellan.
\newblock {\em Fundamentals of Plasma Physics}.
\newblock Cambridge University Press, 2006.

\bibitem{BCW05}
Jonathan Bennett, Anthony Carbery, and James Wright.
\newblock A non-linear generalisation of the {L}oomis-{W}hitney inequality and
  applications.
\newblock {\em Math. Res. Lett.}, 12(4):443--457, 2005.

\bibitem{BFR19arxiv}
Debdeep Bhattacharya, Luiz~Gustavo Farah, and Svetlana Roudenko.
\newblock Global well-posedness for low regularity data in the 2d modified
  {Z}akharov-{K}uznetsov equation.
\newblock \texttt{arxiv:1906.05822}.

\bibitem{BL03}
H.~A. Biagioni and F.~Linares.
\newblock Well-posedness results for the modified {Z}akharov-{K}uznetsov
  equation.
\newblock In {\em Nonlinear equations: methods, models and applications
  ({B}ergamo, 2001)}, volume~54 of {\em Progr. Nonlinear Differential Equations
  Appl.}, pages 181--189. Birkh\"{a}user, Basel, 2003.

\bibitem{B93}
J.~Bourgain.
\newblock Fourier transform restriction phenomena for certain lattice subsets
  and applications to nonlinear evolution equations. {II}. {T}he
  {K}d{V}-equation.
\newblock {\em Geom. Funct. Anal.}, 3(3):209--262, 1993.

\bibitem{Candy2018}
Timothy Candy and Sebastian Herr.
\newblock Transference of bilinear restriction estimates to quadratic variation
  norms and the {D}irac-{K}lein-{G}ordon system.
\newblock {\em Anal. PDE}, 11(5):1171--1240, 2018.

\bibitem{CMPS16}
Rapha\"{e}l C\^{o}te, Claudio Mu\~{n}oz, Didier Pilod, and Gideon Simpson.
\newblock Asymptotic stability of high-dimensional {Z}akharov-{K}uznetsov
  solitons.
\newblock {\em Arch. Ration. Mech. Anal.}, 220(2):639--710, 2016.

\bibitem{Fa95}
A.~V. Faminski\u{\i}.
\newblock The {C}auchy problem for the {Z}akharov-{K}uznetsov equation.
\newblock {\em Differentsial' nye Uravneniya}, 31(6):1070--1081, 1103, 1995.

\bibitem{FLP12}
Luiz~G. Farah, Felipe Linares, and Ademir Pastor.
\newblock A note on the 2{D} generalized {Z}akharov-{K}uznetsov equation:
  local, global, and scattering results.
\newblock {\em J. Differential Equations}, 253(8):2558--2571, 2012.

\bibitem{GTV97}
J.~Ginibre, Y.~Tsutsumi, and G.~Velo.
\newblock On the {C}auchy problem for the {Z}akharov system.
\newblock {\em J. Funct. Anal.}, 151(2):384--436, 1997.

\bibitem{Gru15arxiv}
Axel Gr\"{u}nrock.
\newblock On the generalized {Z}akharov-{K}uznetsov equation at critical
  regularity.
\newblock \texttt{arxiv:1509.09146}.

\bibitem{GH14}
Axel Gr\"{u}nrock and Sebastian ~.
\newblock The {F}ourier restriction norm method for the {Z}akharov-{K}uznetsov
  equation.
\newblock {\em Discrete Contin. Dyn. Syst.}, 34(5):2061--2068, 2014.

\bibitem{Gru14}
Axel Gr\"{u}nrock.
\newblock A remark on the modified {Z}akharov-{K}uznetsov equation in three
  space dimensions.
\newblock {\em Math. Res. Lett.}, 21(1):127--131, 2014.

\bibitem{GPS09}
Axel Gr\"{u}nrock, Mahendra Panthee, and Jorge Drumond~Silva.
\newblock On {KP}-{II} type equations on cylinders.
\newblock {\em Ann. Inst. H. Poincar\'{e} Anal. Non Lin\'{e}aire},
  26(6):2335--2358, 2009.

\bibitem{KPV91}
Carlos~E. Kenig, Gustavo Ponce, and Luis Vega.
\newblock Oscillatory integrals and regularity of dispersive equations.
\newblock {\em Indiana Univ. Math. J.}, 40(1):33--69, 1991.

\bibitem{KPV96}
Carlos~E. Kenig, Gustavo Ponce, and Luis Vega.
\newblock A bilinear estimate with applications to the {K}d{V} equation.
\newblock {\em J. Amer. Math. Soc.}, 9(2):573--603, 1996.

\bibitem{Ki2019}
S.~Kinoshita.
\newblock {G}lobal {W}ell-posedness for the {C}auchy problem of the
  {Z}akharov-{K}uznetsov equation in 2{D}.
\newblock \texttt{arxiv:1905.01490}.

\bibitem{Ki2019mZK}
S.~Kinoshita.
\newblock Well-posedness for the {C}auchy problem of the modified
  {Z}akharov-{K}uznetsov equation.
\newblock \texttt{arxiv:1911.13265}.

\bibitem{KS15}
Herbert Koch and Stefan Steinerberger.
\newblock Convolution estimates for singular measures and some global nonlinear
  {B}rascamp-{L}ieb inequalities.
\newblock {\em Proc. Roy. Soc. Edinburgh Sect. A}, 145(6):1223--1237, 2015.

\bibitem{LS82}
E.~W. Laedke and K.~H. Spatschek.
\newblock Nonlinear ion-acoustic waves in weak magnetic fields.
\newblock {\em Phys. Fluids}, 25(6):985--989, 1982.

\bibitem{LLS13}
David Lannes, Felipe Linares, and Jean-Claude Saut.
\newblock The {C}auchy problem for the {E}uler-{P}oisson system and derivation
  of the {Z}akharov-{K}uznetsov equation.
\newblock In {\em Studies in phase space analysis with applications to {PDE}s},
  volume~84 of {\em Progr. Nonlinear Differential Equations Appl.}, pages
  181--213. Birkh\"{a}user/Springer, New York, 2013.

\bibitem{LP09}
Felipe Linares and Ademir Pastor.
\newblock Well-posedness for the two-dimensional modified
  {Z}akharov-{K}uznetsov equation.
\newblock {\em SIAM J. Math. Anal.}, 41(4):1323--1339, 2009.

\bibitem{LP11}
Felipe Linares and Ademir Pastor.
\newblock Local and global well-posedness for the 2{D} generalized
  {Z}akharov-{K}uznetsov equation.
\newblock {\em J. Funct. Anal.}, 260(4):1060--1085, 2011.

\bibitem{LS09}
Felipe Linares and Jean-Claude Saut.
\newblock The {C}auchy problem for the 3{D} {Z}akharov-{K}uznetsov equation.
\newblock {\em Discrete Contin. Dyn. Syst.}, 24(2):547--565, 2009.

\bibitem{LW49}
L.~H. Loomis and H.~Whitney.
\newblock An inequality related to the isoperimetric inequality.
\newblock {\em Bull. Amer. Math. Soc}, 55:961--962, 1949.

\bibitem{MP15}
Luc Molinet and Didier Pilod.
\newblock Bilinear {S}trichartz estimates for the {Z}akharov-{K}uznetsov
  equation and applications.
\newblock {\em Ann. Inst. H. Poincar\'{e} Anal. Non Lin\'{e}aire},
  32(2):347--371, 2015.

\bibitem{RV12-gZK}
Francis Ribaud and St\'{e}phane Vento.
\newblock A note on the {C}auchy problem for the 2{D} generalized
  {Z}akharov-{K}uznetsov equations.
\newblock {\em C. R. Math. Acad. Sci. Paris}, 350(9-10):499--503, 2012.

\bibitem{RV12-ZK}
Francis Ribaud and St\'{e}phane Vento.
\newblock Well-posedness results for the three-dimensional
  {Z}akharov-{K}uznetsov equation.
\newblock {\em SIAM J. Math. Anal.}, 44(4):2289--2304, 2012.

\bibitem{SS99}
Catherine Sulem and Pierre-Louis Sulem.
\newblock {\em The nonlinear {S}chr\"{o}dinger equation}, volume 139 of {\em
  Applied Mathematical Sciences}.
\newblock Springer-Verlag, New York, 1999.
\newblock Self-focusing and wave collapse.

\bibitem{ZK74}
V.~E. Zakharov and E.~A. Kuznetsov.
\newblock Three-dimensional solitons.
\newblock {\em Sov. Phys. JETP}, 39(2):285--286, 1974.

\end{thebibliography}

\end{document}